\def\RSthmtxt{theorem~}\newref{thm}{name = \RSthmtxt}}
\def\RSlemtxt{lemma~}\newref{lem}{name = \RSlemtxt}}
\numberwithin{equation}{section}
\numberwithin{figure}{section}
\theoremstyle{plain}
\newtheorem{thm}{\protect\theoremname}[section]
  \theoremstyle{plain}
  \newtheorem{prop}[thm]{\protect\propositionname}
  \theoremstyle{definition}
  \newtheorem{defn}[thm]{\protect\definitionname}
  \theoremstyle{remark}
  \newtheorem*{rem*}{\protect\remarkname}
  \theoremstyle{plain}
  \newtheorem{lem}[thm]{\protect\lemmaname}
  \theoremstyle{plain}
  \newtheorem{cor}[thm]{\protect\corollaryname}
   \theoremstyle{plain}
\providecommand{\corollaryname}{Corollary}
\providecommand{\definitionname}{Definition}
\providecommand{\lemmaname}{Lemma}
\providecommand{\propositionname}{Proposition}
\providecommand{\remarkname}{Remark}
\providecommand{\theoremname}{Theorem}
\DeclareMathOperator{\diag}{diag}
\DeclareMathOperator{\interior}{int}
\begin{document}

\title[Invasion of open space by two competitors]{Invasion of open space by two competitors: 
    spreading properties of monostable two-species competition--diffusion systems}

\author{L\'{e}o Girardin$^{1}$, King-Yeung Lam$^{2}$}

\thanks{This work has been carried out in the framework of the NONLOCAL project
(ANR-14-CE25-0013) funded by the French National Research Agency (ANR). 
The second author is partially supported by National Science Foundation under grant DMS-1853561.\\
$^{1}$: Laboratoire de Math\'{e}matiques d'Orsay, Universit\'{e} Paris Sud, CNRS,
Universit\'{e} Paris-Saclay, 91405 Orsay cedex, France. \\
$^{2}$: Department of Mathematics, The Ohio State University, Columbus, OH, USA}

\email{$^{1}$: leo.girardin@math.u-psud.fr, $^{2}$: lam.184@osu.edu}

\begin{abstract}
This paper is concerned with some spreading properties of monostable
Lotka--Volterra two-species competition--diffusion
systems when the initial values are null or exponentially decaying in a right
half-line. Thanks to a careful construction of super-solutions
and sub-solutions, we improve previously known results and settle
open questions. In particular, we show that if the weaker competitor
is also the faster one, then it is able to evade 
the stronger and slower competitor by invading first into unoccupied territories. The pair of speeds
depends on the initial values. If these are null in a right half-line,
then the first speed is the KPP speed of the fastest competitor and
the second speed is given by an exact formula describing the possibility
of nonlocal pulling. Furthermore, the unbounded set of pairs of speeds
achievable with exponentially decaying initial values is characterized,
up to a negligible set. 
\end{abstract}

\keywords{competition--diffusion system, spreading properties, propagating
terraces.}

\subjclass[2010]{35K45, 35K57, 92D25.}

\maketitle
\tableofcontents{}

\section{Introduction}

In this paper, we are interested in some spreading properties of the
classical monostable Lotka\textendash Volterra two-species competition\textendash diffusion
system

\begin{equation}
\left\{ \begin{matrix}\partial_{t}u-\partial_{xx}u=u\left(1-u-av\right) & \text{in }\left(0,+\infty\right)\times\mathbb{R}\\
\partial_{t}v-d\partial_{xx}v=rv\left(1-v-bu\right) & \text{in }\left(0,+\infty\right)\times\mathbb{R}\\
u\left(0,x\right)=u_{0}\left(x\right) & \text{for all }x\in\mathbb{R}\\
v\left(0,x\right)=v_{0}\left(x\right) & \text{for all }x\in\mathbb{R}
\end{matrix}\right.\label{eq:competition_diffusion_system}
\end{equation}
 with $d>0$, $a\in\left(0,1\right)$, $b>1$, $r>0$ and $u_{0},v_{0}\in\mathscr{C}\left(\mathbb{R},\left[0,1\right]\right)\backslash\left\{ 0\right\} $.
The assumptions on $a$ and $b$ mean that $u$ and $v$ are respectively
the stronger and the weaker competitor.

Recall from the classical literature \cite{Aronson_Weinbe,Fisher_1937,KPP_1937}
that the scalar Fisher-KPP equation
\[
\left\{ \begin{matrix}\partial_{t}w-\delta\partial_{xx}w=\rho w\left(1-w\right) & \text{in }\left(0,+\infty\right)\times\mathbb{R}\\
w\left(0,x\right)=w_{0}\left(x\right) & \text{for all }x\in\mathbb{R}
\end{matrix}\right.
\]
 with $\delta,\rho>0$ and $w_{0}\in\mathscr{C}_{b}\left(\mathbb{R}\right)$
with nonempty support included in $\left(-\infty,0\right]$ has the
following spreading property: there exists a unique $c_{\textup{KPP}}>0$ satisfying
\[
\left\{ \begin{matrix}\lim\limits _{t\to+\infty}\sup\limits _{\left|x\right|<ct}\left|w\left(t,x\right)-1\right|=0\text{ for each }c<c_{\textup{KPP}}\\
\lim\limits _{t\to+\infty}\sup\limits _{ct<x}\left|w\left(t,x\right)\right|=0\text{ for each }c>c_{\textup{KPP}}
\end{matrix}\right..
\]
These asymptotics describe the invasion of the unstable state $0$
by the stable state $1$ and $c_{\textup{KPP}}$ is consequently referred to
as the spreading speed of this invasion. Furthermore, $c_{\textup{KPP}}$ coincides
with the minimal speed of the traveling wave solutions, which are
particular entire solutions of the form $w:\left(t,x\right)\mapsto\varphi\left(x-ct\right)$
with $\varphi\geq0$, $\varphi\left(-\infty\right)=1$ and $\varphi\left(+\infty\right)=0$.
A striking result is the so-called \textit{linear determinacy} property:
there exists such a pair $\left(\varphi,c\right)$ if and only if
the linear equation 
\[
-\delta\varphi''-c\varphi'=\rho\varphi,
\]
namely, the linearization at $\varphi=0$ of the semilinear equation
satisfied by $\varphi$, admits a positive solution in $\mathbb{R}$. Consequently,
$c_{\textup{KPP}}=2\sqrt{\rho\delta}$. As far as the system (\ref{eq:competition_diffusion_system})
is concerned, this result shows that in the absence of the competitor,
$u$ and $v$ respectively spread at speed $2$ and $2\sqrt{rd}$. 

Recall also from the collection of works due to Lewis, Li and Weinberger
\cite{Lewis_Weinberg,Li_Weinberger_} that the competition\textendash diffusion
system
\begin{equation}
\left\{ \begin{matrix}\partial_{t}u-\partial_{xx}u=u\left(1-u-av\right) & \text{in }\left(0,+\infty\right)\times\mathbb{R}\\
\partial_{t}v-d\partial_{xx}v=rv\left(1-v-bu\right) & \text{in }\left(0,+\infty\right)\times\mathbb{R}\\
u\left(0,x\right)=\tilde{u}_{0}\left(x\right) & \text{for all }x\in\mathbb{R}\\
v\left(0,x\right)=1-\tilde{v}_{0}\left(x\right) & \text{for all }x\in\mathbb{R}
\end{matrix}\right.\label{eq:related_competition_diffusion_system}
\end{equation}
with $\tilde{u}_{0}$ and $\tilde{v}_{0}$ compactly supported and
$\tilde{u}_{0}$ nonnegative nonzero, has an analogous spreading property: there
exists a unique $c_{\textup{LLW}}>0$ satisfying
\[
\left\{ \begin{matrix}\lim\limits _{t\to+\infty}\sup\limits _{\left|x\right|<ct}\left(\left|u\left(t,x\right)-1\right|+\left|v\left(t,x\right)\right|\right)=0\text{ for each }c<c_{\textup{LLW}}\\
\lim\limits _{t\to+\infty}\sup\limits _{ct<\left|x\right|}\left(\left|u\left(t,x\right)\right|+\left|v\left(t,x\right)-1\right|\right)=0\text{ for each }c>c_{\textup{LLW}}
\end{matrix}\right.
\]
and describing the invasion of the unstable state $\left(0,1\right)$
by the stable state $\left(1,0\right)$. As in the KPP case, the spreading
speed $c_{\textup{LLW}}$ is the minimal speed of the monotonic traveling wave
solutions; linearizing at $\left(0,1\right)$, it is easily deduced
that the linear speed is $2\sqrt{1-a}$ and that $c_{\textup{LLW}}\geq2\sqrt{1-a}$.
However, contrarily to the KPP case, the converse inequality $c_{\textup{LLW}}\leq2\sqrt{1-a}$
is only sometimes true. Depending on the parameters, linear determinacy 
($c_{\textup{LLW}}=2\sqrt{1-a}$) holds true
in some cases (we refer for instance to Lewis--Li--Weinberger \cite{Lewis_Weinberg}, 
Huang \cite{Huang_2010}, Alhasanat--Ou \cite{Alhasanat_Ou}) but fails 
($c_{\textup{LLW}}>2\sqrt{1-a}$) in other cases (for instance, Huang--Han 
\cite{Huang_Han_2011}, Alhasanat--Ou \cite{Alhasanat_Ou}). Let us point out that a sharp necessary
and sufficient condition on the parameters for linear determinacy is unknown; to this day,
this is one of the greatest open problems on this system.
We also recall the related notions of \textit{pulled front} (roughly speaking, a front 
that is driven by its exponential tail) 
and \textit{pushed front} (roughly speaking, a front that is pushed by its back) as well as
the partially proved conjecture of Roques--Hosono--Bonnefon--Boivin \cite{Roques_Hosono}
stating the equivalence between the dichotomy ``pulled or pushed front'' and the dichotomy
``linear determinacy or failure of linear determinacy'' (namely, a front is pulled if and only
if linear determinacy holds true).

Independently of this linear determinacy issue, a rough upper estimate of 
$c_{\textup{LLW}}$ can be obtained by comparison
with the KPP equation satisfied by $u$ in the absence of $v$: $c_{\textup{LLW}}\leq2$
(the competition always slows down the invasion of $u$). The strict
inequality $c_{\textup{LLW}}<2$ is expected but, as far as we know, cannot
be established easily when linear determinacy fails. 

We focus now on the system (\ref{eq:competition_diffusion_system})
and observe that, when $u_{0}$ and $v_{0}$ are both null or exponentially decaying
in $\left[0,+\infty\right)$, the long-time behavior in $\left(0,+\infty\right)$ is unclear. It is the purpose of this
paper to address this question. 

If $rd>1$ and $u_{0}$ and $v_{0}$ are compactly supported, then for all small $\epsilon>0$,
\[
\lim\limits _{t\to+\infty}\sup\limits _{\left|x\right|<(c_{\textup{LLW}} - \epsilon)t}\left(\left|u\left(t,x\right)-1\right|+\left|v\left(t,x\right)\right|\right)=0,
\]
\[
\lim\limits _{t\to+\infty}\sup\limits _{(2+\epsilon)t<\left|x\right|<(2\sqrt{rd}-\epsilon)t}\left(\left|u\left(t,x\right)\right|+\left|v\left(t,x\right)-1\right|\right)=0,
\]
\[
\lim\limits _{t\to+\infty}\sup\limits _{(2\sqrt{rd} + \epsilon)t<\left|x\right|}\left(\left|u\left(t,x\right)\right|+\left|v\left(t,x\right)\right|\right)=0.
\]
This fact, which we are going to prove
in the forthcoming pages (see \propref{Hair-trigger_effect_vs_extinction})
by adapting very slightly arguments from the related literature and
which is therefore not really new, basically means the following: the open
space is first invaded by the faster competitor $v$ at speed $2\sqrt{rd}$
and then the replacement of $v$ by the stronger competitor $u$ occurs
somewhere in the area $c_{\textup{LLW}}\leq\frac{x}{t}\leq2$. In particular,
as far as spreading speeds are concerned, the first invasion ($\left(0,0\right)$
by $\left(0,1\right)$) is not influenced by the second invasion ($\left(0,1\right)$
by $\left(1,0\right)$): the competition exerted by the exponential
tail of $u$ in the area $2<\frac{x}{t}$ is negligible. 

It is then natural to investigate whether the converse statement is
true: is the second invasion influenced by the first one? Is it possible
to show that the speed $c_{2}$ of the second invasion is exactly
$c_{\textup{LLW}}$, or is there on the contrary a possibility of speed enhancement,
namely $c_{2}>c_{\textup{LLW}}$?

Heuristically, two distinct spreading phenomena occur simultaneously and might influence
the second speed.
\begin{enumerate}
    \item The first phenomenon occurs at the interface between $(1,0)$ and $(0,1)$.  
	As explained previously, its front is either pulled or pushed, with speed
	$c_{\textup{LLW}}$. This is a purely local phenomenon.
    \item The second phenomenon occurs far away on the right, at the interface between 
	$(0,1)$ and $(0,0)$. Here, an exponentially small quantity of 
	$u$ spreads in an environment where $v\simeq\mathbf{1}_{\{x \leq 2\sqrt{rd}t\}}$ 
	does not depend on $u$ anymore. The equation satisfied by $u$ here is a KPP equation with
	a uniformly positive heterogeneous intrinsic growth rate 
	$1-a\mathbf{1}_{\{x \leq 2\sqrt{rd}t\}}$: its front is 
	strongly expected to be a pulled front with linearly determined speed. However,
	this front spreads at most at speed $2$ whereas the surrounding environment spreads
	at speed $2\sqrt{rd}>2$. Hence this spreading phenomenon is nonlocal, in some sense, and 
	accordingly its front will hereafter be referred to as \textit{nonlocally pulled} and 
	its speed will be denoted $c_{\textup{nlp}}$. 
\end{enumerate}

In the first work on staged invasions of two competitors, due to
Shigesada and Kawasaki in 1997 \cite{Kawasaki_Shige}, the spreading 
speeds of the two competitors were estimated based on conjectures of linear 
determinacy and local determinacy.
They, however, noted that the conjectures might need to be revised, in view of the 
numerical results of Hosono \cite{Hosono_1989} illustrating the possible failure
of linear determinacy. Nevertheless, the purpose of the investigation of 
Shigesada--Kawasaki was more to raise
interesting mathematical problems than to rigorously solve them.
Rigorous analysis started more recently, in the last decade, with Carr\`{e}re 
\cite{Carrere_2017} and Lin and Li \cite{Lin_Li_2012}.

Carr\`{e}re studied the bistable case ($a>1$, $b>1$). She proved that
the second invasion has the speed
of the unique bistable traveling wave connecting $\left(0,1\right)$
to $\left(1,0\right)$: the two invasions are indeed independent, nonlocal pulling 
does not occur. However we point out that the bistable case is quite different from
the monostable one (based on the uniqueness 
of traveling wave speed and profile in the bistable case,  
the arguments used on the left of the second transition can
be used again on its right). 

As a matter of fact, Lin and Li investigated
the monostable case with stable coexistence ($a<1$, $b<1$) and the second speed
remained elusive. All three monostable cases (stable coexistence, stable $\left(1,0\right)$,
stable $\left(0,1\right)$) being handled quite similarly (see Lewis\textendash Li\textendash Weinberger
\cite{Lewis_Weinberg} for instance), the technical obstacles they
encountered should not depend on the sign of $b-1$. 

In the present paper, we adopt a new point of view: we aim directly
for the construction of (almost) optimal pairs of super-solutions
and sub-solutions. This point of view turns out to be highly fruitful. Indeed,
the forthcoming \thmref{Compactly_supported} states that, when the support of $u_{0}$
is included in a left half-line ($u_{0}$ is Heavyside-like
or compactly supported) and $v_{0}$ is compactly supported,
the actual invasion speed of $u$ is simply the maximum of $c_{\textup{LLW}}$ and $c_{\textup{nlp}}$
(the sign of $c_{\textup{nlp}}-c_{\textup{LLW}}$ can vary). 
Therefore speed enhancement does occur in some cases (depending on the parameters).
By taking into account the possibility of failure
of linear determinacy and by showing the possibility of nonlocal pulling, our work
completely settles the mathematical questions raised by Shigesada--Kawasaki \cite{Kawasaki_Shige}. 

In addition to this first result, our approach also delivers a general existence
and nonexistence result related to \textit{propagating terraces} (succession
of compatible traveling waves with decreasingly ordered speeds, first
described by Fife and McLeod \cite{Fife_McLeod_19}) having the unstable
steady state $\left(0,1\right)$ as intermediate steady state and corresponding
to exponentially decaying initial data. As
far as scalar terraces for reaction--diffusion equations
are concerned, Ducrot, Giletti and Matano \cite{Ducrot_Giletti_Matano}
showed quite generically that all intermediate states are stable from
below (see also Pol\'{a}\v{c}ik \cite{Polacik_2018} for a complete account in the general setting). 
In more sophisticated contexts (reaction--diffusion
systems, nonlocal equations, etc.), propagating terraces with unstable
intermediate states are observed numerically 
\cite[among others]{Faye_Holzer_20,Holzer_Scheel,Iida_Lui_Ninomiya,Nadin_Perthame_Tang,Petrovskii_Kawasaki_Takasu_Shigesada,Sherratt_1998,Venegas_Ortiz_}. 
Rigorous analytical studies are however very difficult and have only been carried 
out in simple cases and with Heavyside-like or compactly supported initial data 
\cite{Holzer_Scheel,Iida_Lui_Ninomiya}.
In this regard, the present paper is, to the best of our knowledge, unprecedented. 

Our approach relies heavily upon the comparison principle. Therefore it might be appropriate for
some cooperative systems of arbitrary size (let us recall however that a fully 
coupled cooperative system, 
namely a cooperative system where the positivity of any one component implies the 
positivity of all the others, necessarily has a single spreading speed). Unfortunately,
our approach cannot be adapted to settings devoid of comparison principle. 

Finally, let us point out that our forthcoming results would still
hold true if $u\left(1-u\right)$ and $rv\left(1-v\right)$ were replaced
by more general KPP reaction terms. In order to ease the reading,
however, we focus on the traditional logistic form. 

\subsection{Main results}\label{subsec:1.1}

Define the auxiliary function
\begin{equation}\label{eq:f}
\begin{matrix}f: & \left[2\sqrt{1-a},+\infty\right) & \to & \left(2\sqrt{a},2\left(\sqrt{1-a}+\sqrt{a}\right)\right]\\
 & c & \mapsto & c-\sqrt{c^{2}-4\left(1-a\right)}+2\sqrt{a}
\end{matrix}.
\end{equation}
 This function is decreasing and bijective and satisfies in particular
\[
f\left(2\right)=2,
\]
\[
f^{-1}:\tilde{c}\mapsto\frac{\tilde{c}}{2}-\sqrt{a}+\frac{2\left(1-a\right)}{\tilde{c}-2\sqrt{a}}.
\]

\subsubsection{Spreading properties of initially localized solutions}
\begin{thm}
\label{thm:Compactly_supported} Let $u_{0}\in\mathscr{C}\left(\mathbb{R},\left[0,1\right]\right)\backslash\left\{ 0\right\} $
with support included in a left half-line and $v_{0}\in\mathscr{C}\left(\mathbb{R},\left[0,1\right]\right)\backslash\left\{ 0\right\} $
with compact support. Let $\left(u,v\right)$ be the solution of \eqref{competition_diffusion_system}.
\begin{enumerate}
\item Assume $2\sqrt{rd}<2$. Then
\[
\lim_{t\to+\infty}\sup_{x\geq 0}\left|v\left(t,x\right)\right|=0,
\]
\[
\lim\limits _{t\to+\infty}\sup\limits _{0\leq x<\left(2-\varepsilon\right)t}\left|u\left(t,x\right)-1\right|=0\text{ for each }\varepsilon\in\left(0,2\right),
\]
\[
\lim\limits _{t\to+\infty}\sup\limits _{\left(2+\varepsilon\right)t<x}\left|u\left(t,x\right)\right|=0\text{ for each }\varepsilon>0.
\]
\item Assume $2\sqrt{rd}\in\left(2,f\left(c_{\textup{LLW}}\right)\right)$ and define
\[
c_{\textup{nlp}}=f^{-1}\left(2\sqrt{rd}\right)=\sqrt{rd}-\sqrt{a}+\frac{1-a}{\sqrt{rd}-\sqrt{a}}\in\left(c_{\textup{LLW}},2\right).
\]
Then
\[
\lim\limits _{t\to+\infty}\sup\limits _{0\leq x<\left(c_{\textup{nlp}}-\varepsilon\right)t}\left(\left|u\left(t,x\right)-1\right|+\left|v\left(t,x\right)\right|\right)=0\text{ for each }\varepsilon\in\left(0,c_{\textup{nlp}}\right),
\]
\[
\lim\limits _{t\to+\infty}\sup\limits _{\left(c_{\textup{nlp}}+\varepsilon\right)t<x<\left(2\sqrt{rd}-\varepsilon\right)t}\left(\left|u\left(t,x\right)\right|+\left|v\left(t,x\right)-1\right|\right)=0\text{ for each }\varepsilon\in\left(0,\frac{2\sqrt{rd}-c_{\textup{nlp}}}{2}\right),
\]
\[
\lim\limits _{t\to+\infty}\sup\limits _{\left(2\sqrt{rd}+\varepsilon\right)t<x}\left(\left|u\left(t,x\right)\right|+\left|v\left(t,x\right)\right|\right)=0\text{ for each }\varepsilon>0.
\]
\item Assume $2\sqrt{rd}\geq f\left(c_{\textup{LLW}}\right)$. Then
\[
\lim\limits _{t\to+\infty}\sup\limits _{0\leq x<\left(c_{\textup{LLW}}-\varepsilon\right)t}\left(\left|u\left(t,x\right)-1\right|+\left|v\left(t,x\right)\right|\right)=0\text{ for each }\varepsilon\in\left(0,c_{\textup{LLW}}\right),
\]
\[
\lim\limits _{t\to+\infty}\sup\limits _{\left(c_{\textup{LLW}}+\varepsilon\right)t<x<\left(2\sqrt{rd}-\varepsilon\right)t}\left(\left|u\left(t,x\right)\right|+\left|v\left(t,x\right)-1\right|\right)=0\text{ for each }\varepsilon\in\left(0,\frac{2\sqrt{rd}-c_{\textup{LLW}}}{2}\right),
\]
\[
\lim\limits _{t\to+\infty}\sup\limits _{\left(2\sqrt{rd}+\varepsilon\right)t<x}\left(\left|u\left(t,x\right)\right|+\left|v\left(t,x\right)\right|\right)=0\text{ for each }\varepsilon>0.
\]
\end{enumerate}
\end{thm}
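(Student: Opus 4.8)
The plan is to run the whole argument through the comparison principle, trapping $\left(u,v\right)$ between explicit sub- and super-solutions assembled from shifted exponentials, logistic waves, and the traveling waves already available in the literature; the three regimes will then appear according to where these building blocks change nature. Two crude bounds come for free: since $av\ge0$ and $bu\ge0$, one has $u\le\bar u$ and $v\le\bar v$, where $\bar u$ and $\bar v$ solve the decoupled logistic equations $\partial_t\bar u-\partial_{xx}\bar u=\bar u(1-\bar u)$ and $\partial_t\bar v-d\partial_{xx}\bar v=r\bar v(1-\bar v)$ with the same initial data, and the scalar Fisher--KPP spreading property gives $\bar u\to0$ on $\{x>(2+\varepsilon)t\}$ and $\bar v\to0$ on $\{x>(2\sqrt{rd}+\varepsilon)t\}$. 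This already settles the last limit of each of the three conclusions — in cases (2) and (3) one also uses $2\sqrt{rd}>2$, so that $u\le\bar u\to0$ there too — and it shows once and for all that $v$ never spreads faster than $2\sqrt{rd}$ and $u$ never spreads faster than $2$. What remains reduces to: (i) a lower bound driving $\left(u,v\right)$ to $\left(1,0\right)$ behind the claimed front speed $c_\star$ (equal to $2$, $c_{acc}$, or $c_{LLW}$ according to the case); and, in cases (2)--(3), (ii) a super-solution for $u$ that pushes $u\to0$ strictly ahead of $c_\star$, which, combined with a Fisher--KPP-type sub-solution for $v$ that is close to $1$ on the intermediate strip, forces $\left(u,v\right)\to\left(0,1\right)$ there.

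\emph{Case (1).} When $2\sqrt{rd}<2$ the species $v$ is at once the weaker and the slower competitor and must die out on $\{x\ge0\}$. From $v\le\bar v$ (and its left-sided analogue) $v$ is confined to a cloud $\{|x|\lesssim2\sqrt{rd}\,t\}$; ahead of it, on $\{x>(2\sqrt{rd}+\varepsilon)t\}$, $v$ is already negligible and $u$ advances unhindered at its free speed $2$, while on the cloud itself $u$ is at worst logistically slowed. A chasing/evaporation argument parallel to \propref{Hair-trigger_effect_vs_extinction} then shows that $v$ eventually evaporates and $u$ fills $[0,(2-\varepsilon)t)$ with values near $1$; in particular $u\ge\delta$ for some $\delta>1/b$ on every such moving interval, after which $v$ is a sub-solution of a linearly decaying equation on $\{x\ge0\}$, so $\sup_{x\ge0}|v|\to0$, and then $u\to1$ on $[0,(2-\varepsilon)t)$ by a logistic trapping.

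\emph{Cases (2)--(3): the core construction.} The heart of the matter is the pair $\left(\bar U,\underline V\right)$. For $v$ one builds a Fisher--KPP sub-solution $\underline V$ that spreads at $2\sqrt{rd}$, whose front layer near $x=2\sqrt{rd}\,t$ decays at the critical rate, and which equals $1-o(1)$ on $\{(c_{acc}+\varepsilon)t<x<(2\sqrt{rd}-\varepsilon)t\}$; this is essentially routine provided $bu$ is controlled there, which is exactly what $\bar U$ supplies. For $u$ one wants a super-solution of $\partial_tU-\partial_{xx}U\ge U(1-U-a\underline V)$ vanishing on $\{x>(c_{acc}+\varepsilon)t\}$. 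On the bulk of the strip, where $\underline V\approx1$, the natural candidate is a supercritical logistic wave $U$ of speed $c_{acc}$ invading $\underline V\approx1$, with tail $e^{-\lambda(x-c_{acc}t)}$ where $\lambda=\tfrac12\bigl(c_{acc}-\sqrt{c_{acc}^2-4(1-a)}\bigr)$ is the slow decay rate, at speed $c_{acc}$, of the linearization of the $u$-equation at $\left(0,1\right)$; far ahead, where $v$ is essentially absent, one falls back on $u\le\bar u$. The delicate zone is $v$'s front layer, where $v$ is not close to $1$ and $u$ grows almost freely: there $\bar U$ has to be reshaped, and the requirement that the exponential branch match both the front layer of $\underline V$ and the large-deviation decay of $\bar u$ along the ray $x=2\sqrt{rd}\,t$ (where $\bar u$ decays like $e^{-(rd-1)t}$) forces the balance $2\sqrt{rd}=c_{acc}-\sqrt{c_{acc}^2-4(1-a)}+2\sqrt a=f(c_{acc})$, i.e. $c_{acc}=\sqrt{rd}-\sqrt a+\frac{1-a}{\sqrt{rd}-\sqrt a}$ (and one checks $c_{acc}\in(c_{LLW},2)$, as in the statement). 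This is where the auxiliary function $f$ enters. When $2\sqrt{rd}\ge f(c_{LLW})$ the same balance would demand $c_{acc}\le c_{LLW}$; since the comparison principle already makes $u$ advance at least at $c_{LLW}$, the right super-solution is then built instead from the minimal-speed traveling waves of \eqref{eq:related_competition_diffusion_system} joining $\left(0,1\right)$ to $\left(1,0\right)$, which cap $u$ at $c_{LLW}$ — this is case (3).

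\emph{Closing the loop, and the main obstacle.} Once $u\le\bar U$ (hence $u\to0$ on $\{x>(c_\star+\varepsilon)t\}$ with $c_\star=c_{acc}$ or $c_{LLW}$) and $v\ge\underline V$ (hence $v\to1$ on the intermediate strip) are established, one still needs the matching lower bound $\left(u,v\right)\to\left(1,0\right)$ behind $c_\star$. The invasion of $\left(0,1\right)$ by $\left(1,0\right)$ at speed $c_{LLW}$ follows from the Lewis--Li--Weinberger theory: after a finite time our solution is squeezed between solutions of \eqref{eq:related_competition_diffusion_system}-type problems, $v$ having grown close to $1$ ahead of $u$. In case (2), the stronger fact that $u$ advances at least at $c_{acc}$ comes from a mirror-image sub-solution whose exponential tail is calibrated, through the same identity $f(c_{acc})=2\sqrt{rd}$, to the rate at which $u$ reaches $v$'s front layer from behind; the delicate input here is a matching lower estimate on $u$ near $x\approx2\sqrt{rd}\,t$, where $v$ is small and $u$ grows almost at its free rate. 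Finally, behind $c_\star$ one has $u\ge\delta>1/b$ on the relevant moving region, so $v$ is again trapped below a linearly decaying equation, $v\to0$, and then $u\to1$ by logistic trapping; promoting all the set-level spreading statements to the uniform limits $|u-1|+|v|\to0$ and $|u|+|v-1|\to0$ is routine parabolic regularity. I expect the genuine obstacle to be the simultaneous construction of $\bar U$ and $\underline V$ — each being a one-sided solution only on the region where the other is negligible — and, inside it, the reshaping of $\bar U$ across $v$'s front layer, which is precisely what produces the threshold $f(c_{LLW})$ and the exact value $c_{acc}$; the rest is either classical or a perturbation of it.
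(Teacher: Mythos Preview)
Your plan is essentially the paper's own: crude KPP upper bounds for the outer zone, a hair-trigger argument for the intermediate strip, and then carefully glued exponential/wave super- and sub-solutions whose matching across $v$'s front layer forces the relation $2\sqrt{rd}=f(c_{acc})$; the paper packages the latter as Propositions~\ref{prop:Sub-solution_nonexistence} and~\ref{prop:Super-solution_compactly_supported} and runs the final step as a contradiction argument on the speed. One small correction: in case~(3) the paper does \emph{not} switch to minimal-speed waves to cap $u$ at $c_{LLW}$ --- it uses the same super-critical construction with a speed $c_2^\delta$ slightly above $c_{LLW}$ (coming from a perturbed system $P(u,v)=F_\delta(u,v)$) and then lets $\delta\to0$; using the minimal wave directly would not give a usable super-solution because you have no a~priori control on its decay relative to $u$.
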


In the first case, $v$ goes extinct. In the second case, $v$ invades
first at speed $2\sqrt{rd}$ and is then replaced by $u$ at speed
$c_{\textup{nlp}}>c_{\textup{LLW}}$. In the third case, $v$ invades first at speed
$2\sqrt{rd}$ and is then replaced by $u$ at speed $c_{\textup{LLW}}$.

Notice that the limits above are chiefly concerned with $x\geq0$. This
is intentional, for the sake of brevity and clarity. In $\left(-\infty,0\right)$,
two behaviors are possible, depending on whether $u_{0}$ is compactly
supported or Heavyside-like. In the former case, all inequalities
above hold with $x$ replaced by $\left|x\right|$ (and this claim
is proved simply by symmetry). In the latter case, $\left(u,v\right)$
converges uniformly to $\left(1,0\right)$ in $\left(-\infty,0\right)$
(and this claim can be proved by a standard comparison argument).

\subsubsection{The set of admissible pairs of speeds for more general initial data}

Define the auxiliary function

\[
\begin{matrix}\lambda_{v}: & \left[2\sqrt{rd},+\infty\right) & \to & \left(0,\sqrt{\frac{r}{d}}\right]\\
 & c & \mapsto & \frac{1}{2d}\left(c-\sqrt{c^{2}-4rd}\right)
\end{matrix}.
\]

\begin{thm}
\label{thm:Nonexistence}  Let $c_{1}\in\left[2\sqrt{rd},+\infty\right)$ and $c_{2}\in\left[c_{\textup{LLW}},c_{1}\right]$. 
Let $\left(u,v\right)$ be a solution of (\ref{eq:competition_diffusion_system}) such that
\[
c_2=\sup\left\{ c>0\ |\ \lim\limits _{t\to+\infty}\sup\limits _{0\leq x\leq ct}\left(\left|u\left(t,x\right)-1\right|+\left|v\left(t,x\right)\right|\right)=0\right\}
\]
and such that at least one of the following two properties holds true:
\begin{enumerate}
\item $x\mapsto v\left(0,x\right)\textup{e}^{\lambda_{v}\left(c_{1}\right)x}$ is bounded in $\mathbb{R}$; or 
\item $c_1$ satisfies
\[
c_1\geq\inf\left\{ c>0\ |\ \lim\limits _{t\to+\infty}\sup\limits _{x\geq ct}\left|v\left(t,x\right)\right|=0\right\}.
\]
\end{enumerate}
Then $f\left( c_2 \right) \leq c_1$. 
\end{thm}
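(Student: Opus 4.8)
The goal is a *lower* bound on the second speed $c_2$, so the natural strategy is to construct a sub-solution $(\underline u,\underline v)$ for \eqref{eq:competition_diffusion_system} that spreads at speed at least $f^{-1}(c_1)$ and lies below $(u,v)$ initially. The core idea is that behind the $v$-front, which travels at speed (at most) $c_1$ with exponential decay rate $\lambda_v(c_1)$ ahead of it, the competitor $u$ sees an environment in which $v$ is only exponentially small; linearizing the $u$-equation at $(u,v)=(0,0)$ with this exponential ansatz for $v$ gives a KPP-type problem whose invasion speed is exactly $f^{-1}(c_1)$ — this is precisely the mechanism (3) described in the introduction, and $f^{-1}(c_1)=\sqrt{rd}-\sqrt a+\tfrac{1-a}{\sqrt{rd}-\sqrt a}$ is the reading off of the relevant characteristic exponent. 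So the first step is to set up this linear analysis: in the moving frame $z=x-ct$ with $c$ slightly below $f^{-1}(c_1)$, seek $\underline u(t,x)=\varphi(z)$ decaying like $\mathrm e^{-\mu z}$ and check that the inequality $-\varphi''-c\varphi'\le \varphi(1-av)$ with $v\lesssim \mathrm e^{-\lambda_v(c_1)(x-c_1t)}$ can be satisfied; the algebra here amounts to verifying that $f(c)=2\sqrt a+c-\sqrt{c^2-4(1-a)}$ crosses $c_1$ exactly when $c=f^{-1}(c_1)$.

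The second step is to reduce the two hypotheses (1) and (2) to a common usable form: under (1), $v(0,x)\le K\mathrm e^{-\lambda_v(c_1)x}$ directly, and by a scalar comparison of $v$ with the solution of its own KPP equation $\partial_t v-d\partial_{xx}v\le rv$ one propagates this bound forward in time as $v(t,x)\le K'\mathrm e^{-\lambda_v(c_1)(x-c_1 t)}$ for all $t>0$; under (2), the definition of the infimum gives that for every $\varepsilon>0$, eventually $v(t,x)$ is uniformly small for $x\ge (c_1+\varepsilon)t$, and combined again with the scalar KPP comparison (now with Gaussian-type estimates on the tail, or by comparing with an exponentially-decaying super-solution of the $v$-equation initialised at a large time) one recovers an estimate of the same exponential shape with rate arbitrarily close to $\lambda_v(c_1+\varepsilon)$. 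In both cases one ends up with: for every $c_1'>c_1$ there is $t_0,K$ with $v(t,x)\le K\mathrm e^{-\lambda_v(c_1')(x-c_1't)}$ for $t\ge t_0$, and letting $c_1'\downarrow c_1$ at the end recovers the sharp constant.

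The third step is the sub-solution construction itself: fix $c<f^{-1}(c_1)$ and $\varepsilon>0$ small, and build $\underline u$ of the form $\min\{\,\text{const},\, A\mathrm e^{-\mu(x-ct)}-B\mathrm e^{-(\mu+\eta)(x-ct)}\,\}$ truncated below by $0$, the standard two-exponential KPP sub-solution, with $\mu$ chosen as the smaller root of $\mu^2-c\mu+1-af^{-1}{}^{-1}(\cdots)=\dots$ — more precisely so that the linear operator applied to $A\mathrm e^{-\mu z}$ produces a term dominating $a\,\underline u\, v$ using the $v$-tail bound. One pairs this with $\underline v\equiv 0$. Since $b>1$ the $u$-on-$v$ competition only helps (it makes $0$ an even better sub-solution for $v$), and the $av$ term in the $u$-equation is controlled by the exponential tail bound precisely because $\mu<\lambda_v(c_1)+\text{(small)}$, which is where the constraint $c<f^{-1}(c_1)$ enters. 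Then the comparison principle gives $u(t,x)\ge \underline u(t,x)$, so $\liminf_{t\to\infty}\inf_{0\le x\le ct}u(t,x)>0$; upgrading this from "bounded below" to "converges to $1$" on $0\le x\le (c-\varepsilon)t$ is done by a second comparison with a compactly-supported-data solution of the full system, invoking the Lewis–Li–Weinberger spreading result \eqref{eq:related_competition_diffusion_system} at speed $c_{LLW}$, but since we only need the $c_2$-defining limit to hold, it suffices to quote that any solution bounded away from $0$ on an expanding interval actually converges to $1$ there by a standard squeezing argument for monostable systems. Finally, letting $c\uparrow f^{-1}(c_1)$ yields $c_2\ge f^{-1}(c_1)$.

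The main obstacle I expect is making the two-exponential sub-solution genuinely admissible near its truncation point and near the $v$-front simultaneously: the $av$ term is largest exactly where $x-c_1t$ is not large, i.e.\ near the leading edge of $v$, which is far ahead of the $u$-front when $c<c_1$, so one must check the sub-solution inequality in the overlap region $ct\lesssim x\lesssim c_1 t$ where both $\underline u$ is small and $v$ is small but neither is negligible relative to the other. Balancing the rates $\mu$ and $\lambda_v(c_1)$ so that $\mu<\lambda_v(c_1)$ *and* $\mu$ is close enough to the KPP-critical exponent to give speed near $f^{-1}(c_1)$ is the crux, and it is exactly solvable because of the identity $f^{-1}(c_1)=\lambda_v(c_1)^{-1}(1-a)+\text{(linear term)}$ built into the definition of $f$; verifying this identity and that it forces $f^{-1}(c_1)<c_1$ (so that the construction is consistent with $c_2\le c_1$) is the one computation I would carry out in detail rather than defer.
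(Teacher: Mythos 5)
There is a genuine gap, and it sits exactly where you placed your "main obstacle", but the obstacle is worse than you describe. Your sub-solution travels at speed $c\approx f^{-1}(c_1)$, so its bulk and its truncation point lie at $x=ct+O(1)$, strictly behind the $v$-front. On the whole range $ct\lesssim x\lesssim c_1t$ the tail bound $v\leq K\mathrm{e}^{-\lambda_v(c_1)(x-c_1t)}$ is vacuous and in the relevant regime $v$ is in fact close to $1$ there, not "small"; the reaction seen by $\underline{u}$ is only $u(1-a-u)$. A single two-exponential profile at speed $c>2\sqrt{1-a}$ can then satisfy the differential inequality only if its decay exponent is tuned to the growth rate $1-a$ (i.e.\ $\mu\leq\lambda(c)$), and in that case the comparison $u(T,\cdot)\geq\underline{u}(0,\cdot)$ requires $u$ to already possess a spatial tail no steeper than $\mathrm{e}^{-\mu x}$ --- which is not an assumption of the theorem and is essentially what has to be proven; for the application to \thmref{Compactly_supported}, $u_0$ is compactly supported, $u(T,\cdot)$ decays super-exponentially, and the initial ordering fails at $+\infty$ for any translate of your (right-half-line-supported) profile. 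Relatedly, the threshold is not obtained by balancing $\mu$ against $\lambda_v(c_1)$: $f$ depends only on $a$, via $\lambda(c)$, and $c_1$ enters only as the ray ahead of which $v$ is small; the constraint is the discriminant condition $\tilde{c}^{2}\geq4\left(1+\lambda(c)(\tilde{c}-c)\right)$, i.e.\ $\tilde{c}\geq f(c)$, coming from the $u$-tail in the region $x>c_1t$ where the growth rate is $\approx1$, not from the rate $\lambda_v(c_1)$ of the $v$-tail. Finally, your reduction of hypothesis (2) to a pointwise exponential bound does not go through: knowing $v$ is uniformly small on $x\geq(c_1+\varepsilon)t$ gives no decay in $x$, and the paper explicitly flags this implication as difficult and avoids it.

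The paper's route supplies precisely the ingredients your plan lacks. It argues by contradiction: assuming $c_2<f^{-1}(c_1)$, it picks $c>c_2$ with $c_1<f(c)$ and $\tilde{c}\in(c_1,f(c))$, and builds (\propref{Sub-solution_nonexistence}) a \emph{compactly supported} sub-solution for $u$ in three pieces: a stationary bump $\underline{\alpha_L}$, the traveling wave $\underline{\chi_c}$ of the reduced equation $\partial_t u-\partial_{xx}u=u\left(\frac{1-a}{2}-u\right)$ (a sub-solution wherever $v\leq1$, so no tail bound on $v$ is needed behind the front), and, crucially, a compactly supported sine-exponential cap $\underline{z_{\delta,c,\tilde{c}}}$ living \emph{ahead} of the $v$-front, at $x\geq\tilde{c}t+\zeta$, where hypothesis (1) or (2) is used only to guarantee $v\leq\frac{\delta}{2a}$; this cap moves at speed $\tilde{c}>c_1$ with amplitude decay $\mathrm{e}^{-\lambda(c)(\tilde{c}-c)t}$ and exists exactly because $\tilde{c}<f(c)$ makes the characteristic roots complex --- this is where the contradiction hypothesis enters. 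Compact support permits the initial ordering, which is extracted from the very definition of $c_2$ (at a large time $T$, $u\geq1-\frac{a}{2}$ on $\left[0,\frac{c_2}{2}T\right]$ and $u>0$ on the rest of the support, fixing $\kappa$), and the sub-solution then spreads at speed $c>c_2$, contradicting the supremum defining $c_2$; the upgrade to convergence to $(1,0)$ via the ODE comparison is the one step of your outline that matches the paper. Without an analogue of the fast-moving compactly supported piece ahead of the $v$-front and without the contradiction structure that furnishes the initial lower bound on $u$, your construction cannot be closed.
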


The assumption on $c_2$ basically means that $u$ spreads at speed $c_2$. However, in general, the spreading speed is ill-defined: 
the \textit{minimal spreading speed} of $u$,
\[
\sup\left\{ c>0\ |\ \lim\limits _{t\to+\infty}\sup\limits _{0\leq x\leq ct}\left|u\left(t,x\right)-1\right|=0\right\},
\]
might very well be smaller than its \textit{maximal spreading speed},
\[
\inf\left\{ c>0\ |\ \lim\limits _{t\to+\infty}\sup\limits _{x\geq ct}\left|u\left(t,x\right)\right|=0\right\}.
\]
On this problem, we refer to Hamel--Nadin \cite{Hamel_Nadin_2012}.

The properties $(1)$ and $(2)$ above are more or less equivalent. Indeed, on one hand, $(1)$ directly implies $(2$) by standard
comparison; on the 
other hand, if $(2)$ holds, then for all $\lambda\in\left(0,\lambda_{v}\left(c_1\right)\right)$, 
there exists $T_\lambda$ such that $x\mapsto v\left(T_{\lambda},x\right)\textup{e}^{\lambda x}$ is bounded in $\mathbb{R}$.
However the proof of the latter implication is difficult. In fact, instead of establishing it, we will directly prove the 
result in each case. 
We emphasize that although $(2)$ might be easier to understand in that it directly relates $c_1$ to the spreading of $v$, 
$(1)$ has the advantage of being easier to apply since it only requires knowledge of the initial condition.

In short, this theorem means that if $v$ spreads no faster than $c_1$ and if $u$ spreads at speed $c_2$, then 
$f\left( c_2 \right)\leq c_1$. The next theorem shows the sharpness of this threshold: any 
$c_1 > f\left( c_2 \right)$ can actually be achieved.

\begin{thm}
\label{thm:Generalized_terraces} Let $c_{1}\in\left(2\sqrt{rd},+\infty\right)$
and $c_{2}\in\left(c_{\textup{LLW}},c_{1}\right)$. 

Assume $c_{1}>f\left(c_{2}\right)$.
Then there exists $\left(u_{c_{1},c_{2}},v_{c_{1},c_{2}}\right)\in\mathscr{C}\left(\mathbb{R},\left[0,1\right]^{2}\right)$
such that the solution $\left(u,v\right)$ of (\ref{eq:competition_diffusion_system})
with initial value $\left(u_{0},v_{0}\right)=\left(u_{c_{1},c_{2}},v_{c_{1},c_{2}}\right)$
satisfies
\[
\lim\limits _{t\to+\infty}\sup\limits _{x<\left(c_{2}-\varepsilon\right)t}\left(\left|u\left(t,x\right)-1\right|+\left|v\left(t,x\right)\right|\right)=0\text{ for each }\varepsilon\in\left(0,c_{2}\right),
\]
\[
\lim\limits _{t\to+\infty}\sup\limits _{\left(c_{2}+\varepsilon\right)t<x<\left(c_{1}-\varepsilon\right)t}\left(\left|u\left(t,x\right)\right|+\left|v\left(t,x\right)-1\right|\right)=0\text{ for each }\varepsilon\in\left(0,\frac{c_{1}-c_{2}}{2}\right),
\]
\[
\lim\limits _{t\to+\infty}\sup\limits _{\left(c_{1}+\varepsilon\right)t<x}\left(\left|u\left(t,x\right)\right|+\left|v\left(t,x\right)\right|\right)=0\text{ for each }\varepsilon>0.
\]
\end{thm}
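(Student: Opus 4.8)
The plan is to produce $(u_{c_1,c_2},v_{c_1,c_2})$ explicitly as a pair of truncated exponentials and then to trap the solution between two coupled families of super- and sub-solutions, using that (\ref{eq:competition_diffusion_system}) is order preserving for the competitive order $(u_1,v_1)\preceq(u_2,v_2)\iff u_1\le u_2$ and $v_1\ge v_2$. The choice of data is dictated by the linearised dynamics. Since $v$ is a subsolution of the scalar Fisher--KPP equation with reaction $rv(1-v)$, a $v$-front issued from data decaying like $\mathrm{e}^{-\lambda_v(c_1)x}$ travels at speed exactly $c_1$ --- and no slower on any region where $u$ is already small --- so I would take $v_0$ equal to $1$ on a left half-line $(-\infty,Y]$ and to $\mathrm{e}^{-\lambda_v(c_1)(x-Y)}$ for $x>Y$. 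Since the linearisation of the first equation at the intermediate state $(0,1)$ is $\partial_t u-\partial_{xx}u=(1-a)u$, whose pulled front with tail $\mathrm{e}^{-\mu x}$ travels at speed $\mu+\tfrac{1-a}{\mu}$, I would take $u_0$ equal to $1$ on $(-\infty,0]$ and to $\mathrm{e}^{-\mu_2 x}$ for $x>0$, where $\mu_2=\tfrac{1}{2}\bigl(c_2-\sqrt{c_2^2-4(1-a)}\bigr)$ satisfies $\mu_2+\tfrac{1-a}{\mu_2}=c_2$ and $2\mu_2+2\sqrt a=f(c_2)$; the shift $Y$ is taken large enough that $u_0$ starts out embedded in the region $\{v\equiv1\}$. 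The hypothesis $c_1>f(c_2)$, equivalently $c_2>f^{-1}(c_1)$, is exactly what makes this slow tail of $u_0$ pin the $u$-front at $c_2$ rather than at the larger ``intrinsic'' speed $\max\bigl(c_{LLW},f^{-1}(c_1)\bigr)$ that the interaction with the wake of $v$ would otherwise impose; this is the quantitative counterpart of \thmref{Nonexistence}.

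The first and cheapest step is the comparison $v(t,x)\le\min\bigl(1,A\,\mathrm{e}^{-\lambda_v(c_1)(x-c_1t)}\bigr)$ with a scalar KPP super-solution, which already gives $\sup_{x>(c_1+\varepsilon)t}v\to0$. The heart of the matter is the upper bound on $u$: exploiting the order-preserving property, I would construct a pair $(\bar u,\underline v)$ --- a supersolution of the $u$-equation with $v$ replaced by $\underline v$, and a subsolution of the $v$-equation with $u$ replaced by $\bar u$ --- such that $\bar u$ vanishes ahead of $(c_2+\varepsilon)t$ while $\underline v$ exceeds $1-\varepsilon$ on $(c_2+\varepsilon)t<x<(c_1-\varepsilon)t$. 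Concretely $\bar u$ should interpolate, from $c_2t$ rightward, a translate of a minimal-speed LLW-type profile, then the exponential $\mathrm{e}^{-\mu_2(x-c_2t)}$ on $c_2t<x<c_1t$, then --- past $c_1t$, where the competition from $v$ weakens --- an exponential in the moving frame of $v$'s front, already exponentially small in $t$ at $x=c_1t$, so that no genuine $u$-front can be regenerated there. Checking that $(\bar u,\underline v)$ is a genuine super/sub pair on the overlap $c_2t<x<c_1t$ is exactly where $c_1>f(c_2)$ is consumed: it says that $v$'s tail $\mathrm{e}^{-\lambda_v(c_1)(x-c_1t)}$ reaches any given point before $u$'s tail $\mathrm{e}^{-\mu_2(x-c_2t)}$ has been amplified there by the reaction, so that the capping of $u$'s growth rate at $1-a$ in the wake of $v$ indeed holds. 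This yields $u\le\bar u$ and $v\ge\underline v$, hence $\sup_{x>(c_2+\varepsilon)t}u\to0$, $v\to1$ on the middle cone, and $\sup_{x>(c_1+\varepsilon)t}(u+v)\to0$.

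It remains to push $u$ up to $1$ and $v$ down to $0$ behind $c_2t$. On $\{x\le(c_2-\delta)t\}$ the barrier $\underline v$ already forces $v\ge1-\varepsilon$, so the $u$-equation there is a perturbation of $\partial_t u-\partial_{xx}u=u(1-a-u)$; a sub-solution $\underline u$ of the form $\mathrm{e}^{-\mu_2(x-c_2t)}\bigl(1-B\,\mathrm{e}^{-\nu(x-c_2t)}\bigr)$ (truncated where it turns negative), with $\nu>0$ small, which advances at speed $c_2$, then drags $u$ to $1$ in its wake, giving $u\ge\underline u\to1$ on $x<(c_2-\varepsilon)t$. Finally, since $b>1$ and $u\ge1-\varepsilon$ on $\{x\le(c_2-\delta)t\}$, the $v$-equation there is dominated by a linear equation with negative zeroth-order coefficient, so a super-solution $\bar v$ decaying exponentially in $t$ in that cone gives $v\le\bar v\to0$ on $x<(c_2-\varepsilon)t$. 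Collecting the four barriers and letting $\varepsilon\to0$ yields the three displayed limits.

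The main obstacle, as throughout this paper, is the joint construction of these coupled barriers across the two moving interfaces, and most delicately the behaviour of $u$ at the junction $x\approx c_2t$ between the $(1,0)$- and $(0,1)$-regions: there $v$ interpolates between $0$ and $1$, so that neither the ``$v\approx 1$'' nor the ``$v\approx 0$'' reduction of the $u$-equation is available, and the exponential rates, translation parameters and product forms of the sub- and super-solutions must be tuned so as to match across this transition layer. The same comparison that validates the pair $(\bar u,\underline v)$ --- i.e. the hypothesis $c_1>f(c_2)$ --- must also be used to rule out any acceleration of $u$ in the far field $x>c_1t$, where the competition from $v$ vanishes. I expect most of the estimates to be importable, with only changes of parameters, from the super- and sub-solutions already built for \thmref{Compactly_supported} and \thmref{Nonexistence}, the genuinely new ingredient being the choice of exponential tails for $u_0$ and $v_0$, which decouples $c_1$ from $2\sqrt{rd}$ and $c_2$ from $f^{-1}(c_1)$.
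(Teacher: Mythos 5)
Your overall strategy (trap the solution between coupled super-/sub-solution pairs built from waves, exponentials and correction terms, then finish with the ODE comparison and a contradiction argument for the middle cone) is exactly the strategy the paper packages in \propref{Super-sub-solution_terraces} and Subsection 3.3. However, your choice of initial datum for $u$ contains a genuine error that the barriers cannot repair. You prescribe the tail $u_0(x)=\mathrm{e}^{-\mu_2 x}$ with $\mu_2=\lambda(c_2)$. That rate is the decay of $u$ relative to its own front \emph{inside the wake of $v$}, where the growth rate of $u$ is capped at $1-a$; but ahead of the $v$-front, where $v\leq\min\left(1,C\mathrm{e}^{-\lambda_v(c_1)(x-c_1t)}\right)$ is uniformly exponentially small on $\{x\geq(c_1+\eta)t\}$, that same tail is amplified at rate close to $1$ and is pulled at speed $\lambda(c_2)+\frac{1}{\lambda(c_2)}=c_2+\frac{a}{\lambda(c_2)}$. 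The hypothesis $c_1>f(c_2)=2\lambda(c_2)+2\sqrt{a}$ does not control this quantity: for $c_2$ large (so $c_2>\max\left(c_{LLW},f(c_2),2\sqrt{rd}\right)$ and $\lambda(c_2)$ is small) one may take any $c_1\in\left(c_2,\,c_2+\frac{a}{\lambda(c_2)}\right)$, which is admissible for \thmref{Generalized_terraces}; then a standard KPP sub-solution of the form $\kappa\max\left(0,\mathrm{e}^{-\lambda(c_2)(x-st)}-K\mathrm{e}^{-(\lambda(c_2)+\nu)(x-st)}\right)$, supported in $\{x>(c_1+\eta)t\}$ where the $u$-equation is an arbitrarily small perturbation of $\partial_tu-\partial_{xx}u=u(1-u)$, shows that $u$ spreads at some speed $s>c_1$; the third (and hence the second) displayed limit fails for your data. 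Equivalently: the far-field super-solution you invoke ("exponentially small in $t$ at $x=c_1t$") is necessarily of the type $\overline{w_{c_2,c_1}}(t,x)=\mathrm{e}^{-\lambda(c_2)(c_1-c_2)t}\mathrm{e}^{-\Lambda(c_2,c_1)(x-c_1t)}$ with $\Lambda(c_2,c_1)>\lambda(c_2)$, and at $t=0$ it decays strictly faster than your $u_0$, so it can never be placed above your initial datum.

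The fix is the paper's: the admissible decay of $u_0$ in the far field is the steeper rate $\Lambda(c_2,c_1)$, not $\lambda(c_2)$ — this is the whole point of the heuristic in Subsection 1.1.3/1.2 and of the requirement $u_0\leq\overline{w_{c_2,c_1}}(0,\cdot)$ imposed in Subsection 3.3, where the datum is in addition sandwiched between the pairs of \propref{Super-sub-solution_terraces}. Relatedly, the hypothesis $c_1>f(c_2)$ is consumed where it makes $\Lambda(c_2,c_1)$ real, i.e.\ in the region ahead of the $v$-front, and not (as you state) in checking the super-/sub-solution pair on the overlap $c_2t<x<c_1t$; your misplacement of where this hypothesis acts is the same misidentification of the far-field decay. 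The rate $\lambda(c_2)$ does govern the shape of the barriers in the wake of $v$ (it enters through $\overline{\varphi_{\delta,c}}$, $\underline{\chi_c}$ and through the time factor $\mathrm{e}^{-\lambda(c_2)(c_1-c_2)t}$ of $\overline{w_{c_2,c_1}}$), but it must not be imposed as the spatial decay of $u_0$ at $+\infty$. With the datum corrected in this way, the remainder of your outline coincides with the paper's proof.
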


\begin{figure}\subfloat[ $c_{\textup{LLW}}=2\sqrt{1-a}$, $2\sqrt{rd}<f\left(c_{\textup{LLW}}\right)$. In this case \thmref{Compactly_supported}(2) applies with $(c_1,c_2) = (2\sqrt{rd}, c_{\textup{nlp}})$, where $c_{\textup{nlp}} > c_{\textup{LLW}}$.]{\resizebox{.65\hsize}{!}{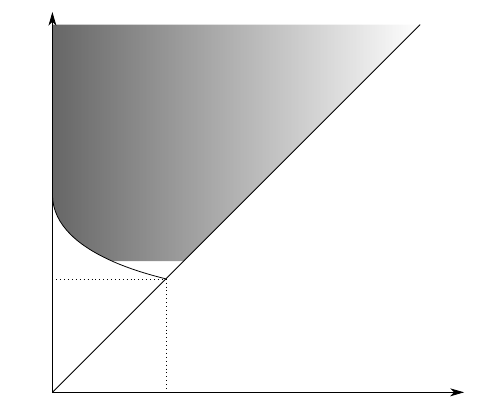}

}

\subfloat[$c_{\textup{LLW}}>2\sqrt{1-a}$, $2\sqrt{rd}>f\left(c_{\textup{LLW}}\right)$. In this case \thmref{Compactly_supported}(3) applies with $(c_1,c_2) = (2\sqrt{rd}, c_{\textup{LLW}})$.]{\resizebox{.65\hsize}{!}{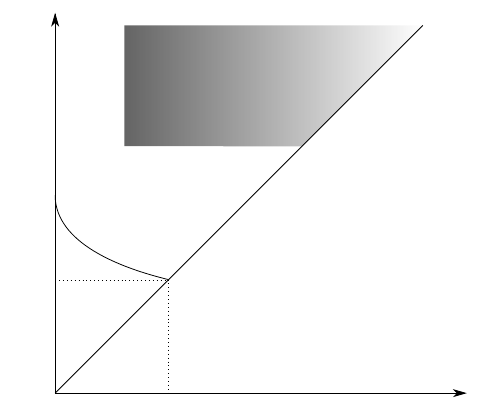}

}\caption{Examples of sets of admissible pairs of speeds $\left(c_{1},c_{2}\right)$}\label{fig:Set_of_speeds}

\end{figure}

Let us point out that this solution $\left(u,v\right)$ is not a 
proper propagating terrace in the sense of Ducrot\textendash Giletti\textendash Matano
\cite{Ducrot_Giletti_Matano}: the locally uniform convergence of
the profiles is missing (as in Carr\`{e}re \cite{Carrere_2017}). 

The fact that the set of admissible speeds is not always the maximal set 
\[
\left\{ \left(c_{1},c_{2}\right)\in\left[2\sqrt{rd},+\infty\right)\times\left[c_{\textup{LLW}},+\infty\right)\ |\ c_{1}>c_{2}\right\} .
\]
settles completely a question raised by the first author \cite{Girardin_2017}.

\subsubsection{The super-solutions and sub-solutions}\label{subsec:1.1.3}

The preceding theorems will be proved thanks to the following
three propositions, which are of independent interest and concern
existence results for super-solutions and sub-solutions (the precise
definition of these will be recalled in the next section).

Let 
\[
\begin{matrix}\lambda: & \left[2\sqrt{1-a},+\infty\right) & \to & \left(0,\sqrt{1-a}\right]\\
 & c & \mapsto & \frac{1}{2}\left(c-\sqrt{c^{2}-4\left(1-a\right)}\right)
\end{matrix},
\]
\[
\Lambda:\left(c,\tilde{c}\right)\mapsto\frac{1}{2}\left(\tilde{c}-\sqrt{\tilde{c}^{2}-4\left(\lambda\left(c\right)\left(\tilde{c}-c\right)+1\right)}\right).
\]
 The domain of $\Lambda$ is the set of all $\left(c,\tilde{c}\right)$
such that $c\geq2\sqrt{1-a}$ and $\tilde{c}\geq\max\left(c,f\left(c\right)\right)$. 

For all $c\geq c_{\textup{LLW}}$ and $\tilde{c}\geq\max\left(c,f\left(c\right)\right)$,
$\overline{w_{c,\tilde{c}}}$ denotes the function
\[
\overline{w_{c,\tilde{c}}}:\left(t,x\right)\mapsto\textup{e}^{-\lambda\left(c\right)\left(\tilde{c}-c\right)t}\textup{e}^{-\Lambda\left(c,\tilde{c}\right)\left(x-\tilde{c}t\right)}.
\]
\begin{prop}
\label{prop:Sub-solution_nonexistence} Let $c_{1}\geq2\sqrt{rd}$,
$c_{2}\geq c_{\textup{LLW}}$ and assume $c_{2}<c_{1}<f\left(c_{2}\right)$. 

There exist $c>c_{2}$, $\tilde{c}\in\left(c_{1},f\left(c\right)\right)$,
$L>0$ and $\delta^{\star}>0$ such that, for all $\delta\in\left(0,\delta^{\star}\right)$, all 
$\kappa\in\left(0,\min\left(\frac{1-a}{2},\frac{\delta}{2}\right)\right)$ and
all $\zeta>L$, 
there exists $R_{\delta}>0$ and a sub-solution 
$\left(\underline{u_{\delta,\zeta,\kappa}},\overline{v_{\delta,\zeta}}\right)$ 
of (\ref{eq:competition_diffusion_system}) 
satisfying the following properties:
\begin{enumerate}
\item $\underline{u_{\delta,\zeta,\kappa}}\left(0,x\right)\leq1-a$ for
all $x\in\mathbb{R}$;
\item the support of $x\mapsto\underline{u_{\delta,\zeta,\kappa}}\left(0,x\right)$
is included in $\left[0,L+\zeta+2R_{\delta}\right]$;
\item $\underline{u_{\delta,\zeta,\kappa}}\left(0,x\right)\leq\kappa$ for
all $x\in\left[L,L+\zeta+2R_{\delta}\right]$;
\item there exists $X>0$ such that $\underline{u_{\delta,\zeta,\kappa}}$ satisfies
\[
\partial_{t}\underline{u_{\delta,\zeta,\kappa}}-\partial_{xx}\underline{u_{\delta,\zeta,\kappa}}\leq\left(1-\delta\right)\underline{u_{\delta,\zeta,\kappa}}\text{ in }\left\{\left(t,x\right)\in\left[0,+\infty\right)\times\mathbb{R}\ |\ x>X+\tilde{c}t\right\}.
\]
\item there exists $C_{\delta}>0$ depending only on $\delta$ such that
\[
\overline{v}_{\delta,\zeta}\left(0,x\right)\geq\min\left(1,C_{\delta}\textup{e}^{-\lambda_{v}\left(\tilde{c}\right)\left(x-\zeta\right)}\right)\text{ for all }x\in\mathbb{R};
\]
\item the following spreading property holds true:
\[
\lim\limits _{t\to+\infty}\sup\limits _{L\leq x<\left(c-\varepsilon\right)t}\left|\underline{u_{\delta,\zeta,\kappa}}\left(t,x\right)-\frac{1-a}{2}\right|=0\text{ for all }\varepsilon\in\left(0,c\right).
\]
\end{enumerate}
\end{prop}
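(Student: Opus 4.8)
\emph{Strategy.} The plan is to construct both functions essentially explicitly, using two moving frames: the frame $x-ct$, in which $u$ should look like a travelling front of speed $c$, and the frame $x-\tilde c t$, which carries the $v$-super-front. The acceleration of $u$ beyond $c_{LLW}$ is produced by letting $\underline u$ keep a small ``tip'' in the zone where $\overline v$ has already decayed, so that $u$ there feels the full KPP rate $\simeq 1$ rather than $1-a$, this tip feeding the exponential tail of $\underline u$ across the $v$-front. For the parameters, since $f$ is continuous, strictly decreasing and $c_1<f(c_2)$, I would fix $c\in\bigl(c_2,\min(c_1,f^{-1}(c_1))\bigr)$, so that $c_2<c<c_1$ and $c_1<f(c)$, and then $\tilde c\in(c_1,f(c))$, which forces $\tilde c>\max(c,2\sqrt{rd})$ and, crucially, $\tilde c<f(c)$ with a strict margin. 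This last point is exactly the strict negativity of
\[
\tilde c^{2}-4\bigl(1+\lambda(c)(\tilde c-c)\bigr)=\bigl(\tilde c-f(c)\bigr)\bigl(\tilde c-2\lambda(c)+2\sqrt a\bigr),
\]
i.e.\ the regime in which the characteristic exponent of $u$'s leading edge ahead of the $v$-front is \emph{complex}; the margin keeps it complex after perturbing the rates by small $\delta$ and $\eta:=\delta/2$. Finally I would fix $L$ large (for an ignition argument below) and then $\delta^{\star}$ small.

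\emph{The super-solution} is the easy half: take $\overline{v_{\delta,\zeta}}(t,x)=\min\bigl(1,\,C_\delta\,\mathrm e^{-\lambda_v(\tilde c)(x-\tilde c t-\zeta)}\bigr)$, with $C_\delta$ (depending only on $\delta$) chosen so that $\overline{v_{\delta,\zeta}}\le\delta/(2a)$ on $\{x\ge\tilde c t+\zeta+L\}$. Both $1$ and the travelling exponential are super-solutions of the $v$-equation for any $\underline u\ge0$ — for the exponential one uses $1-\overline v-b\underline u\le1$ together with the identity $d\lambda_v(\tilde c)^{2}-\tilde c\lambda_v(\tilde c)+r=0$ — hence so is their minimum, and $(5)$ holds with equality at $t=0$.

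\emph{The sub-solution} $\underline{u_{\delta,\zeta,\kappa}}$ is the heart of the matter. I would assemble it, as a maximum of classical sub-solutions (so that every junction is $C^1$ or a convex kink), from: a sub-equilibrium plateau at height $\tfrac{1-a}{2}$, a strict classical sub-solution since $\tfrac{1-a}{2}(1-a-\tfrac{1-a}{2})>0$, carried at speed $c$; a shaved travelling exponential $\asymp\kappa\,\mathrm e^{-\lambda_\eta(c)(x-ct)}$ (where $\lambda_\eta$ is the analogue of $\lambda$ with $1-a$ replaced by $1-a-\eta$) on the band between the $u$-front and $x=\tilde c t+\zeta+L$, which is a sub-solution there because $1-a\overline v-\underline u\ge1-a-\kappa\ge1-a-\eta$ — this imposes $\kappa\le\eta$, hence the bound $\kappa<\delta/2$; a compactly supported \emph{tip} on $\{x\ge\tilde c t+\zeta+L\}$, namely the first positive arch of the travelling solution $\mathrm e^{-\lambda_\eta(c)(\tilde c-c)t}\,\mathrm e^{-\Lambda(x-\tilde c t-\zeta-L)}$ of the equation at rate $1-\delta$ (legitimate there since $\overline v\le\delta/(2a)$ and $\underline u\le\kappa<\delta/2$ give $1-a\overline v-\underline u\ge1-\delta$), where $\Lambda$ solves $\Lambda^{2}-\tilde c\Lambda+\bigl(1-\delta+\lambda_\eta(c)(\tilde c-c)\bigr)=0$ — a $\delta$-perturbation of the defining equation of $\Lambda(c,\tilde c)$, now with \emph{complex} roots precisely because $\tilde c<f(c)$ — the two free real constants in the arch being used to match value and slope to the exponential; one sets $\underline{u_{\delta,\zeta,\kappa}}\equiv0$ past the first zero of this arch and lets $R_\delta$ be half of its length, which depends only on $\delta$; and, near $t=0$, where $ct=0$ puts the plateau out of $\{x\ge0\}$, the plateau is instead seeded by a compactly supported ignition bump on $[0,L]$ — a rescaled Dirichlet principal eigenfunction, a sub-solution for $L$ large, which the dynamics pushes up to $\tfrac{1-a}{2}$. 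From this, $(1)$–$(3)$ are read off at $t=0$ (height $\le\tfrac{1-a}{2}\le1-a$; support $[0,L+\zeta+2R_\delta]$, since the tip's first zero is at distance $2R_\delta$ past $\zeta+L$; the exponential is $\le\kappa$ for $x\ge L$, inherited by the tip by continuity); $(4)$ holds with $X=\zeta+L$, since on $\{x>\tilde c t+\zeta+L\}$ the function is the tip (solving the rate-$(1-\delta)$ equation) or $0$; and $(6)$ follows because the plateau travels at speed exactly $c$, so $\underline{u_{\delta,\zeta,\kappa}}\equiv\tfrac{1-a}{2}$ on $[L,ct-M_\delta]\supset[L,(c-\varepsilon)t]$ for large $t$, while $\underline{u_{\delta,\zeta,\kappa}}\le\tfrac{1-a}{2}$ throughout by construction.

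\emph{The hard part.} The genuine difficulty is turning the above into a bona fide (generalised) sub-solution of the \emph{system}, on two points. First, the hand-over across the $v$-front: the exponential tail uses the conservative rate $1-a-\eta$ (valid wherever $\overline v\le1$) and must pass to the tip's generous rate $1-\delta$ (valid only once $\overline v\le\delta/(2a)$); one places the junction at $x=\tilde c t+\zeta+L$ and checks that the $C^1$-matched pieces, each solving its own linear equation, sit below the true reaction term — this is where the choices of $C_\delta$, $\eta$, $L$ and the arch's phase all get spent. Second, and more delicate, the behaviour near the plateau edge: a compactly supported seed fed only from behind spreads at the scalar rate $2\sqrt{1-a}<c$, so one must show that the tip really does keep the tail — hence the plateau — advancing at the faster speed $c$, and that the changeover from ignition bump to plateau near $t=0$ introduces no forbidden upward jump in time. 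I expect this second point — establishing $(6)$ jointly with the sub-solution property along the front — to be the main obstacle. Finally, the only essential use of $\tilde c<f(c)$ is that it makes the tip \emph{compactly supported} (complex exponents force the arch back to $0$ at the finite distance $2R_\delta$); this is what makes $(2)$ true and, downstream, is precisely what will allow this pair to be slipped beneath a true solution whose two components are only known to be bounded below by small positive constants on bounded sets.
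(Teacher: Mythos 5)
Your overall architecture is the right one and matches the paper's: a stationary bump on $[0,L]$, a piece moving at speed $c$, a compactly supported oscillatory tip moving at speed $\tilde c$ whose compactness is exactly the complex-root consequence of $\tilde c<f(c)$ (this is indeed the only essential use of that hypothesis), and the exponential super-solution $\min\bigl(1,C_\delta\mathrm e^{-\lambda_v(\tilde c)(x-\tilde ct-\zeta)}\bigr)$ for $v$. But there is a genuine gap exactly where you yourself flag ``the main obstacle'': the junction between the plateau at height $\frac{1-a}{2}$ and the small exponential tail of height at most $\kappa\le\eta$. As described this cannot be turned into a generalized sub-solution: the gluing mechanism (maxima of classical sub-solutions, and its piecewise variant) requires that at each moving interface the glued function be \emph{locally the maximum} of the two adjacent pieces, i.e.\ the right-hand piece must cross the left-hand piece from below as $x$ increases. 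A decreasing exponential capped at $\kappa$ (or a shaved hump of height $O(\kappa)$) never crosses the constant $\frac{1-a}{2}$ from below, so at the plateau edge the glued function would either jump down or acquire a concave kink, both forbidden for sub-solutions; and ``the dynamics pushes it up'' is not available, because property (6) is a property of the explicit sub-solution itself, not of solutions lying above it. (Two smaller points: negativity of the discriminant needs $\tilde c>f(c)-4\sqrt a$ in addition to $\tilde c<f(c)$ — your own factorization shows this — which is why the paper takes $\tilde c\in\bigl(\max(c_1,f(c)-4\sqrt a),f(c)\bigr)$; and the margin must absorb the $+4\delta$ coming from replacing the rate $1$ by $1-\delta$.)

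The missing device, which is the paper's key idea, is to replace your plateau-plus-tail by a \emph{single} classical sub-solution spanning the whole transition: the traveling front $\underline{\chi_c}$ of the auxiliary KPP equation $\partial_t u-\partial_{xx}u=u\bigl(\tfrac{1-a}{2}-u\bigr)$ at speed $c$ (supercritical since $c>c_2\ge 2\sqrt{1-a}$). Because $\tfrac{1-a}{2}\le 1-a\overline v$ whenever $\overline v\le1$, this front is a sub-solution of the $u$-equation wherever needed; it tends to $\tfrac{1-a}{2}$ behind, which yields (6) with no gluing at the back of the front; its value $\kappa$ level set gives (3) by placing the interface with the stationary bump $\underline{\alpha_L}$ there; and it decays ahead, so it can be crossed in the correct local-max order with $\underline{\alpha_L}$ on the left and with the sine-arch tip (the paper's $\underline{z_{\delta,c,\tilde c}}$, the analogue of your arch) on the right, via the paper's interface lemmas. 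Your $C^1$ matching of tail and tip in the $\tilde c$-frame is plausible since both carry the same time factor $\mathrm e^{-\lambda(c)(\tilde c-c)t}$, but without a front-type profile carrying the descent from $\tfrac{1-a}{2}$ to $O(\kappa)$ at speed $c$, properties (1), (3) and (6) cannot all be realized by a bona fide sub-solution, and this is precisely the step your proposal leaves open.
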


\begin{prop}
\label{prop:Super-solution_compactly_supported} Let $c_{2}\in\left(\max\left(c_{\textup{LLW}},f^{-1}\left(2\sqrt{rd}\right)\right),2\right)$. 

There exists $\delta^{\star}>0$ and $\left(c_{1}^{\delta},c_{2}^{\delta}\right)_{\delta\in\left(0,\delta^{\star}\right)}$
such that 
\[
c_{2}<c_{2}^{\delta}<c_{1}^{\delta}<2\sqrt{rd}\text{ for all }\delta\in\left(0,\delta^{\star}\right),
\]
\[
\lim_{\delta\to0}\left(c_{1}^{\delta},c_{2}^{\delta}\right)=\left(2\sqrt{rd},c_{2}\right),
\]
 and, for all $\delta\in\left(0,\delta^{\star}\right)$, there exists
a super-solution  $\left(\overline{u_{\delta}},\underline{v_{\delta}}\right)$ of (\ref{eq:competition_diffusion_system}) satisfying the following properties:
\begin{enumerate}
\item there exists $y_0\in\mathbb{R}$ such that, for all $y\geq y_0$ and $t\geq0$, 
\[
\overline{u_{\delta}}\left(0,x-y-\frac{\left(\Lambda\left(c_{2},2\sqrt{rd}\right)\right)^{2}+1}{\Lambda\left(c_{2},2\sqrt{rd}\right)}t\right)\geq\min\left(1,\overline{w_{c_{2},2\sqrt{rd}}}\left(t,x\right)\right)\text{ for all }x\in\mathbb{R};
\]
\item $x\mapsto\underline{v_{\delta}}\left(0,x\right)$ is compactly supported;
\item $\underline{v_{\delta}\left(0,x\right)}\leq1-\delta$ for all $x\in\mathbb{R}$;
\item the following spreading property holds true:
\[
\lim\limits _{t\to+\infty}\sup\limits _{\left(c_{2}^{\delta}+\varepsilon\right)t<x<\left(c_{1}^{\delta}-\varepsilon\right)t}\left(\left|\overline{u_{\delta}}\left(t,x\right)\right|+\left|\underline{v_{\delta}}\left(t,x\right)-\left(1-2\delta\right)\right|\right)=0\text{ for all }\varepsilon\in\left(0,\frac{c_{1}^{\delta}-c_{2}^{\delta}}{2}\right).
\]
\end{enumerate}
\end{prop}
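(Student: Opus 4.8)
The plan is to construct $(\overline{u_\delta},\underline{v_\delta})$ essentially explicitly, by gluing capped exponential profiles for $\overline{u_\delta}$ to a ``$1-2\delta$ minus exponential corrections'' profile for $\underline{v_\delta}$ cut off to a compact support, and then to verify the two differential inequalities of a super-solution in the competitive order, namely $\partial_t\overline{u_\delta}-\partial_{xx}\overline{u_\delta}\geq\overline{u_\delta}(1-\overline{u_\delta}-a\underline{v_\delta})$ and $\partial_t\underline{v_\delta}-d\partial_{xx}\underline{v_\delta}\leq r\underline{v_\delta}(1-\underline{v_\delta}-b\overline{u_\delta})$, zone by zone, using only the quadratic identities defining $\lambda$ and $\Lambda$. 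Write $\tilde c:=2\sqrt{rd}$ and $\Lambda_0:=\Lambda(c_2,\tilde c)$; since $f$ is decreasing and $c_2>f^{-1}(\tilde c)$, one has $f(c_2)<\tilde c$, so $(c_2,\tilde c)$ lies in the interior of the domain of $\Lambda$, the discriminant $\tilde c^2-4(\lambda(c_2)(\tilde c-c_2)+1)$ is \emph{strictly} positive, $\Lambda_0$ is well defined, and together with $\lambda(c_2)\le\sqrt{1-a}<1\le\tilde c/2$ this gives both $\lambda(c_2)<\Lambda_0$ and the identity $\Lambda_0^2+1=\tilde c\,\Lambda_0-\lambda(c_2)(\tilde c-c_2)$, which rewrites $\overline{w_{c_2,2\sqrt{rd}}}(t,x)=\exp\!\bigl(-\Lambda_0\bigl(x-\tfrac{\Lambda_0^2+1}{\Lambda_0}t\bigr)\bigr)$ as a single exponential. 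This strict positivity is the sole place where $c_2>f^{-1}(2\sqrt{rd})$ is used; the hypothesis $c_2>c_{LLW}$ is what makes $\overline{w_{c_2,2\sqrt{rd}}}$ a legitimate object in the sense of the earlier part of the paper and what guarantees that an exponential $u$-profile with front speed $c_2$ genuinely sits above the actual invasion of $(0,1)$ by $(1,0)$, $c_{LLW}$ being precisely the threshold speed of that invasion.

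For the parameters, for $\delta>0$ small I would pick: a speed $c_2^\delta\downarrow c_2$ and the exponent $\lambda_\delta$, the smaller root of $X^2-c_2^\delta X+1-a(1-2\delta)$, so that $e^{-\lambda_\delta(x-c_2^\delta t)}$ solves $\partial_tu-\partial_{xx}u=(1-a(1-2\delta))u$ and $\lambda_\delta\to\lambda(c_2)$; a speed $c_1^\delta\uparrow\tilde c$ with $c_1^\delta<2\sqrt{(1-2\delta)rd}$ (so that the scalar logistic equation $\partial_tv-d\partial_{xx}v=rv(1-2\delta-v)$ invades faster than $c_1^\delta$); a transition speed $\beta_\delta\in(\max(c_2^\delta,f(c_2)),c_1^\delta)$, also $\uparrow\tilde c$, whose interval is nonempty precisely because $f(c_2)<\tilde c$; and an exponent $\Lambda_\delta$, chosen in the interval on which $e^{-\Lambda_\delta(x-\beta_\delta t)-\lambda_\delta(\beta_\delta-c_2^\delta)t}$ is a super-solution of $\partial_tu-\partial_{xx}u=u$, and additionally required to satisfy $\Lambda_\delta\le\Lambda_0$. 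Continuity of $f$, $\lambda$ and $\Lambda$ makes all of this consistent, with $(c_1^\delta,c_2^\delta)\to(\tilde c,c_2)$, provided $c_1^\delta$ and $\beta_\delta$ are let tend to $\tilde c$ much faster than $c_2^\delta$ tends to $c_2$, so that in $\Lambda_\delta\le\Lambda_0$ the ``$c_2^\delta>c_2$'' effect, which lowers $\Lambda$, dominates the ``$\beta_\delta<\tilde c$'' effect, which raises it.

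I would then set $\overline{u_\delta}:=\min\bigl(1,\;e^{-\lambda_\delta(x-c_2^\delta t)},\;e^{-\Lambda_\delta(x-\beta_\delta t)-\lambda_\delta(\beta_\delta-c_2^\delta)t}\bigr)$: it equals $1$ left of $x=c_2^\delta t$, decays at rate $\lambda_\delta$ up to $x=\beta_\delta t$, then at rate $\Lambda_\delta$; its two corners are concave (since $\Lambda_\delta>\lambda_\delta$), the favourable sign for a super-solution; and at $t=0$ it reduces to $\min(1,e^{-\Lambda_\delta x})$. For $\underline{v_\delta}$ I would glue, with $\max(0,\cdot)$ truncations and convex corners (the favourable sign for a sub-solution): near the $u$-front, a fast-growing piece $\max(0,\eta(e^{\gamma(x-c_2^\delta t)}-1))$ with $\gamma$ chosen so large that $\gamma c_2^\delta+d\gamma^2\geq rb$ — which is what lets $\underline{v_\delta}$ be a sub-solution despite the strong eviction $-b\overline{u_\delta}$ there (using $b>1$), and so large that $\underline{v_\delta}$ reaches the level $1-2\delta$ well before $\overline{u_\delta}$ has decayed; then the plateau $1-2\delta$, maintained up to a region of speed close to $c_1^\delta$ (legitimate wherever $b\overline{u_\delta}\le2\delta$), interpolated where needed by a ``$(1-2\delta)$ minus exponential corrections'' shape dominating $(1-2\delta)-\tfrac1a e^{-\lambda_\delta(x-c_2^\delta t)}$; and finally a compactly supported KPP front at speed $c_1^\delta$ bringing $\underline{v_\delta}$ down to $0$. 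The $u$-inequality is then trivial where $\overline{u_\delta}=1$ (there $\partial_t\overline{u_\delta}-\partial_{xx}\overline{u_\delta}=0\geq-a\underline{v_\delta}$) and on $\{x>\beta_\delta t\}$ (there $\partial_t\overline{u_\delta}-\partial_{xx}\overline{u_\delta}=\overline{u_\delta}\geq\overline{u_\delta}(1-\overline{u_\delta}-a\underline{v_\delta})$), while on $\{c_2^\delta t<x<\beta_\delta t\}$, using $\partial_t\overline{u_\delta}-\partial_{xx}\overline{u_\delta}=(1-a(1-2\delta))\overline{u_\delta}$, it reduces \emph{exactly} to the coupling constraint $\overline{u_\delta}+a\underline{v_\delta}\geq a(1-2\delta)$, which holds by the choice of $\underline{v_\delta}$; the $v$-inequality is trivial where $\underline{v_\delta}=0$, reduces to $b\overline{u_\delta}\leq2\delta$ wherever $\underline{v_\delta}=1-2\delta$, and is the content of the fast-growing and correction computations elsewhere. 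Properties (2) and (3) are then built in; property (4) follows from $\overline{u_\delta}(t,x)\leq e^{-\lambda_\delta\varepsilon t}$ on $\{x\geq(c_2^\delta+\varepsilon)t\}$ together with the classical convergence to $1-2\delta$, on windows of speed in $(c_2^\delta,c_1^\delta)$, of the subcritical-speed KPP sub-solution (whose active region trails the $u$-front at the subcritical speed $c_2^\delta<2\sqrt{(1-2\delta)rd}$); and property (1) is immediate, because $\min(1,\overline{w_{c_2,2\sqrt{rd}}}(t,\cdot))$ is the nonincreasing profile $z\mapsto\min(1,e^{-\Lambda_0 z})$ composed with $x\mapsto x-\tfrac{\Lambda_0^2+1}{\Lambda_0}t$, so $\Lambda_\delta\le\Lambda_0$ yields $\overline{u_\delta}(0,\cdot-y)=\min(1,e^{-\Lambda_\delta(\cdot-y)})\geq\min(1,e^{-\Lambda_0\cdot})$ for every $y\geq y_0:=0$.

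The hard part is not any single inequality but forcing them to hold \emph{simultaneously} in the overlap between the two fronts: $\overline{u_\delta}$ must already be small where $\underline{v_\delta}$ is close to $1-2\delta$, which is why the $\Lambda_\delta$-tail of $\overline{u_\delta}$ must be pushed out past the speed $f(c_2)$ (hence $\tilde c>f(c_2)$ is needed), while $\underline{v_\delta}$ must still be evicted by $u$ near the $u$-front, where $b\overline{u_\delta}$ is of order one. Reconciling ``$\underline{v_\delta}$ is a genuine sub-solution of the $v$-equation'', ``$\overline{u_\delta}+a\underline{v_\delta}\geq a(1-2\delta)$'', ``$\underline{v_\delta}$ climbs to $1-2\delta$ and then vanishes at speed $c_1^\delta$'' and ``$\underline{v_\delta}(0,\cdot)$ is compactly supported'' is the crux; the degrees of freedom that make it work are the very large rate $\gamma$ of the growing piece, the extra exponential corrections, and the freedom in the rates at which the perturbed speeds converge. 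This is the step I expect to be the most delicate, and the one most likely to require splitting into parameter subcases (in particular the sign of $ab-1$ and the size of $d$ relative to $c_2/\lambda(c_2)$ enter in the correction terms).
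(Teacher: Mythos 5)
Your construction of the two outer zones is fine (the reduction of property (1) to the $t$-independent inequality $\min(1,e^{-\Lambda_\delta z})\geq\min(1,e^{-\Lambda_0 z})$ with $\Lambda_\delta\leq\Lambda_0$, the choice $c_1^\delta<2\sqrt{(1-2\delta)rd}$, the far-right compactly supported KPP piece), but the crux you yourself flag is where the proposal breaks, and it breaks irreparably rather than delicately. Once $\overline{u_\delta}$ equals the pure exponential $e^{-\lambda_\delta(x-c_2^\delta t)}$ on the whole stretch between the cap and the $\Lambda_\delta$-tail, the $u$-inequality there \emph{forces}, as you note, $\underline{v_\delta}\geq(1-2\delta)-\tfrac1a\,\overline{u_\delta}$ pointwise. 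Feed this into the $v$-inequality: writing $\xi=x-c_2^\delta t$, $s=e^{-\lambda_\delta\xi}$ and $w=(1-2\delta)-\underline{v_\delta}\in[0,\tfrac1a s]$, one gets $1-\underline{v_\delta}-b\overline{u_\delta}\leq 2\delta-(b-\tfrac1a)s$, so whenever $ab>1$ and $s\gg\delta$ the sub-solution inequality requires $dw''+c_2^\delta w'\leq-\kappa s$ with $\kappa$ of order $r(b-\tfrac1a)$, on an interval whose length grows like $\ln(1/\delta)$. Setting $W=e^{\lambda_\delta\xi}w\in[0,\tfrac1a]$ and integrating the resulting inequality $dW''+(c_2^\delta-2d\lambda_\delta)W'-\lambda_\delta(c_2^\delta-d\lambda_\delta)W\leq-\kappa$ over that interval shows no such $W$ exists for small $\delta$ unless $\lambda(c_2)\bigl(c_2-d\lambda(c_2)\bigr)$ dominates a quantity of order $r(ab-1)$. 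This is a linear-determinacy-type restriction on $(d,r,a,b,c_2)$ which does not follow from $c_2>\max\bigl(c_{LLW},f^{-1}(2\sqrt{rd})\bigr)$, and it is expected to fail exactly in the Huang--Han regime $c_{LLW}>2\sqrt{1-a}$ with $c_2$ close to $c_{LLW}$ --- the regime the proposition must cover. No choice of $\gamma$, $\eta$, or finitely many extra exponential corrections can rescue this, because the obstruction is to the existence of \emph{any} admissible $\underline{v_\delta}$ paired with that $u$-ansatz; splitting into subcases according to the sign of $ab-1$ only isolates the cases where your construction can possibly work. (A secondary structural problem: by \thmref{Generalized_super-solutions}, junctions of the $v$-pieces must be local maxima (convex kinks), and your growing piece $\eta(e^{\gamma\xi}-1)$ crosses the plateau $1-2\delta$ from below, so that junction is oriented the wrong way as written.)

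The missing idea is the one the paper uses: in the overlap zone the super-solution cannot be assembled from uncoupled exponentials, so one takes it to be the exact monotone traveling wave $\bigl(\overline{\varphi_{\delta,c_2^\delta}},\underline{\psi_{\delta,c_2^\delta}}\bigr)$ of the $\delta$-perturbed system $P(u,v)=F_\delta(u,v)$, which satisfies the coupled equations with the favourable sign deviations $(\delta\overline{\varphi_\delta},-2r\delta\underline{\psi_\delta})$ built in. Its existence is what the hypothesis $c_2>c_{LLW}$ buys, via the continuity of $\delta\mapsto c_{LLW}^\delta$ (\lemref{Continuity_of_underline_c_delta}), and its exponential decay rates (\lemref{Exponential_estimates_super-critical_waves}, proved in the appendix) are what allow the gluings: a $\underline{\theta_\delta}$-type piece behind (\lemref{interface_1}), the exponential $\overline{w_{\delta,c_2^\delta,c_1^\delta}}$ ahead of $x_2(t)$ (\lemref{interface_2}), and the bump $\underline{\omega_{\delta,R_\delta}}$ moving at $c_1^\delta=2\sqrt{r(1-2\delta)d}-\delta$ ahead of $x_3(t)$ (\lemref{interface_3_small_c_1}). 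Finally, the paper calibrates $c_2^\delta$ by solving $\Lambda_\delta(c_2^\delta,c_1^\delta)=\Lambda(c_2,2\sqrt{rd})$; your weaker requirement $\Lambda_\delta\leq\Lambda_0$ would also suffice for property (1), but without the traveling-wave bulk the rest of your middle zone cannot be completed.
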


\begin{prop}
\label{prop:Super-sub-solution_terraces} Let $c_{1}>2\sqrt{rd}$,
$c_{2}>c_{\textup{LLW}}$ and assume $c_{1}>\max\left(c_{2},f\left(c_{2}\right)\right)$. 

There exists $\delta^{\star}>0$ and $\left(c_{2}^{\delta}\right)_{\delta\in\left(0,\delta^{\star}\right)}$
such that 
\[
c_{2}^{\delta}>c_{2}\text{ for all }\delta\in\left(0,\delta^{\star}\right),
\]
\[
\lim_{\delta\to0}c_{2}^{\delta}=c_{2},
\]
 and, for all $\delta\in\left(0,\delta^{\star}\right)$, there exists
a super-solution of (\ref{eq:competition_diffusion_system}) $\left(\overline{u_{\delta}},\underline{v_{\delta}}\right)$
and a sub-solution of (\ref{eq:competition_diffusion_system}) $\left(\underline{u_{\delta}},\overline{v_{\delta}}\right)$
satisfying the following properties:
\begin{enumerate}
\item there exists $y_0\in\mathbb{R}$ such that, for all $y\geq y_0$ and $t\geq0$, 
\[
\overline{u_{\delta}}\left(0,x-y-\frac{\left(\Lambda\left(c_{2},c_{1}\right)\right)^{2}+1}{\Lambda\left(c_{2},c_{1}\right)}t\right)\geq\min\left(1,\overline{w_{c_{2},c_{1}}}\left(t,x\right)\right)\text{ for all }x\in\mathbb{R};
\]
\item the support of $\underline{v_{\delta}}$ is a right half-line and there exists $\left(y,z\right)\in\mathbb{R}^{2}$
such that
\[
\frac{1}{2}\leq\textup{e}^{\lambda_{v}\left(c_{1}\right)\left(x-c_{1}t\right)}\underline{v_{\delta}}\left(0,x-y\right)\leq1\text{ for all }t\geq 0\text{ and }x\geq z;
\]
\item $\underline{u_{\delta}}\left(0,x\right)\leq\overline{u_{\delta}}\left(0,x\right)$
and $\underline{v_{\delta}}\left(0,x\right)\leq\overline{v_{\delta}}\left(0,x\right)$
for all $x\in\mathbb{R}$;
\item the following spreading properties hold true:
\[
\lim\limits _{t\to+\infty}\sup\limits _{x<\left(c_{2}-\varepsilon\right)t}\left|\underline{u_{\delta}}\left(t,x\right)-\left(1-a\right)\right|=0\text{ for all }\varepsilon\in\left(0,c_{2}\right),
\]
\[
\lim\limits _{t\to+\infty}\sup\limits _{\left(c_{2}^{\delta}+\varepsilon\right)t<x<\left(c_{1}-\varepsilon\right)t}\left(\left|\overline{u_{\delta}}\left(t,x\right)\right|+\left|\underline{v_{\delta}}\left(t,x\right)-\left(1-2\delta\right)\right|\right)=0\text{ for all }\varepsilon\in\left(0,\frac{c_{1}-c_{2}^{\delta}}{2}\right).
\]
\[
\lim\limits _{t\to+\infty}\sup\limits _{\left(c_{1}+\varepsilon\right)t<x}\left(\left|\overline{u_{\delta}}\left(t,x\right)\right|+\left|\overline{v_{\delta}}\left(t,x\right)\right|\right)=0\text{ for all }\varepsilon>0,
\]
\end{enumerate}
\end{prop}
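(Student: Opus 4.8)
The natural tool is the comparison principle in the competitive order $(u_1,v_1)\preceq(u_2,v_2)\iff u_1\le u_2$ and $v_1\ge v_2$: a \emph{super-solution} is a pair $(\overline u,\underline v)$ with $\partial_t\overline u-\partial_{xx}\overline u\ge\overline u(1-\overline u-a\underline v)$ and $\partial_t\underline v-d\partial_{xx}\underline v\le r\underline v(1-\underline v-b\overline u)$, a \emph{sub-solution} the reverse inequalities, and two ordered data are propagated in order by the flow; so it suffices to exhibit the four functions $\overline{u_\delta},\underline{u_\delta},\overline{v_\delta},\underline{v_\delta}$ with the stated differential inequalities, the initial relations (1)--(3), and the asymptotics (4). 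The construction is organised around three moving interfaces: the $u$-front near $c_2t$ (resp. $c_2^\delta t$ for the super-solution), the $v$-front at $c_1t$, and the plateau $v\simeq1-2\delta$ in between; each of the four functions is a truncation ($\min$ with $1$ or $\max$ with $0$) of a continuous gluing of two exponentials, a slow one governing the band where the other species sits at its plateau, and a steep one governing the far field beyond the $v$-front, the gluing curves and the two transition layers being fixed by a bootstrap that is the crux of the argument.

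\textbf{The super-solution $(\overline{u_\delta},\underline{v_\delta})$.} Since $-a\underline{v_\delta}\,\overline{u_\delta}\le0$, it is enough for $\overline{u_\delta}$ to over-solve $\partial_t w-\partial_{xx}w\ge(1-a\underline{v_\delta})w-w^2$; where $\underline{v_\delta}\ge1-2\delta$ this is implied by over-solving $w_t-w_{xx}=(1-a+2a\delta)w$, for which $e^{-\lambda(c_2)(x-c_2^\delta t)}$ works as soon as $c_2^\delta\ge\lambda(c_2)+\tfrac{1-a+2a\delta}{\lambda(c_2)}$, and taking equality defines $c_2^\delta$, which exceeds $c_2=\lambda(c_2)+\tfrac{1-a}{\lambda(c_2)}$ and tends to $c_2$ as $\delta\to0$. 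Beyond the $v$-front $\underline{v_\delta}$ is exponentially small, so there $\overline{u_\delta}$ must over-solve $w_t-w_{xx}=w$; this is exactly what $\overline{w_{c_2,c_1}}$ does, since $\partial_t\overline{w_{c_2,c_1}}-\partial_{xx}\overline{w_{c_2,c_1}}=\overline{w_{c_2,c_1}}$ by the definition of $\Lambda(c_2,c_1)$, and, $\Lambda(c_2,c_1)$ being larger than $\lambda(c_2)$, this branch decays faster and is glued below the slow one, the gluing curve kept within the $v$-plateau so that the resulting minimum over-solves the true $u$-equation on either side (in the viscosity sense at the corner). Property (1) then merely records that the resulting $\overline{u_\delta}(0,\cdot)$ dominates a left-translate of $\min(1,e^{-\Lambda(c_2,c_1)\,\cdot})=\min(1,\overline{w_{c_2,c_1}}(0,\cdot))$, the shift speed $\tfrac{\Lambda(c_2,c_1)^2+1}{\Lambda(c_2,c_1)}=\Lambda(c_2,c_1)+\tfrac1{\Lambda(c_2,c_1)}$ being the propagation speed of $e^{-\Lambda(c_2,c_1)(\cdot)}$ for $w_t-w_{xx}=w$. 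As for $\underline{v_\delta}$, one takes a function supported on a right half-line that rises from $0$ to $1-2\delta$ and then decays along a fat $e^{-\lambda_v(c_1)(x-c_1t)}$ tail, whence (2) and, for a large amplitude, the $v$-part of (3): on the plateau $0=\partial_t\underline{v_\delta}-d\partial_{xx}\underline{v_\delta}\le r(1-2\delta)(2\delta-b\overline{u_\delta})$ forces $\overline{u_\delta}\le2\delta/b$, so the plateau is made to start where $\overline{u_\delta}$ has decayed that far; on the tail $\lambda_v(c_1)$ is chosen so that $(1-2\delta)e^{-\lambda_v(c_1)(x-c_1t)}$ solves $v_t-dv_{xx}=rv$, and the quadratic slack left by the level $1-2\delta<1$, together with the negligibility of $\overline{u_\delta}=\overline{w_{c_2,c_1}}$ there, turns this into the required sub-solution inequality; the rising layer is covered by a standard sub-solution of the $v$-logistic.

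\textbf{The sub-solution $(\underline{u_\delta},\overline{v_\delta})$ and assembly.} Since $\overline{v_\delta}\le1$, $\underline{u_\delta}$ only needs $\partial_t\underline{u_\delta}-\partial_{xx}\underline{u_\delta}\le\underline{u_\delta}(1-a-\underline{u_\delta})$, so one takes a truncation of the traveling wave $\phi_{c_2}$ of $w_t-w_{xx}=w(1-a-w)$ of speed $c_2$; as $c_2>c_{LLW}\ge2\sqrt{1-a}$, this wave connects $1-a$ to $0$ with tail $e^{-\lambda(c_2)(\cdot)}$ and hence spreads to the level $1-a$ at speed exactly $c_2$, giving the first line of (4). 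For $\overline{v_\delta}$ take $\min(1,C_\delta e^{-\lambda_v(c_1)(x-c_1t)})$, which over-solves $v_t-dv_{xx}=rv\ge rv(1-v-b\underline{u_\delta})$ and vanishes right of $c_1t+O(1)$, giving the $v$-part of (3) for $C_\delta$ large and the last line of (4). The middle line of (4) follows from $\overline{u_\delta}\le e^{-\lambda(c_2)\varepsilon t}\to0$ and $\underline{v_\delta}\to1-2\delta$ on $(c_2^\delta+\varepsilon)t<x<(c_1-\varepsilon)t$, and the third from $\overline{u_\delta},\overline{v_\delta}\to0$ for $x>(c_1+\varepsilon)t$, using $\tfrac{\Lambda(c_2,c_1)^2+1}{\Lambda(c_2,c_1)}<c_1$.

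\textbf{Main obstacle.} I expect the hard part to be the mutual positioning of the three interfaces when $c_2<2$. For $\overline{u_\delta}$ to spread at $c_2^\delta$ rather than at the KPP speed $2$ of its linearization, its leading edge must sit where $\underline{v_\delta}$ has already climbed near $1-2\delta$, so that $u$ effectively sees the reduced reaction $1-a+2a\delta$; but $\underline{v_\delta}$ can climb that high only where $\overline{u_\delta}$ has already dropped below $\sim\delta$, which is the very region one is trying to reach. Closing this circular constraint — choosing the interfaces $c_2^\delta t$ and $c_1t$ and the two $O(\log(1/\delta))$-wide transition layers between them so that all four differential inequalities hold (in the viscosity sense across the gluing corners), while still arranging $\underline{u_\delta}(0,\cdot)\le\overline{u_\delta}(0,\cdot)$ although the two functions carry different exponential tails near their fronts — is the technical core; the remainder reduces to bookkeeping with exponentials and to the scalar KPP facts recalled in the introduction, together with the phase-plane existence of the fat-tailed wave $\phi_{c_2}$.
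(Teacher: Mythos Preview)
Your architecture is right and matches the paper's (two fronts at $c_2^\delta t$ and $c_1t$, a $v$-plateau in between, exponential tails governed by $\lambda(c_2)$, $\Lambda(c_2,c_1)$ and $\lambda_v(c_1)$), and your $\overline{v_\delta}=\min(1,C_\delta e^{-\lambda_v(c_1)(x-c_1t)})$ is exactly what the paper uses. But the proposal, as written, does not close the circular constraint you yourself identify as the crux, and two concrete steps fail.

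\textbf{The transition layer for $(\overline{u_\delta},\underline{v_\delta})$.} Your $\overline{u_\delta}=\min\bigl(1,e^{-\lambda(c_2)(x-c_2^\delta t)}\bigr)$ satisfies $\partial_t\overline{u}-\partial_{xx}\overline{u}=(1-a+2a\delta)\overline{u}$ on the exponential branch. The super-solution inequality requires $(1-a+2a\delta)\ge 1-\overline{u}-a\underline{v_\delta}$, i.e.\ $\overline{u}+a\underline{v_\delta}\ge a(1-2\delta)$; this fails wherever $\overline{u}$ has already dropped below $a(1-2\delta)$ but $\underline{v_\delta}$ has not yet risen. Symmetrically, in the rising layer of $\underline{v_\delta}$ where $\overline{u_\delta}=1$, a ``standard sub-solution of the $v$-logistic'' is useless: one needs $\partial_t\underline{v}-d\partial_{xx}\underline{v}\le r\underline{v}(1-\underline{v}-b)$ with $b>1$, so the effective reaction is \emph{negative}, and a logistic sub-solution goes the wrong way. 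The paper resolves both issues at once by taking, in this layer, the pair $(\overline{u_\delta},\underline{v_\delta})=(\overline{\varphi_{\delta,c_2^\delta}},\underline{\psi_{\delta,c_2^\delta}})$, the \emph{actual} monostable traveling-wave profile of the $\delta$-perturbed competitive system $P(u,v)=F_\delta(u,v)$; being an exact solution with the built-in $\delta$ margins, both differential inequalities hold automatically. On the left of this wave the paper glues a function $\underline{\theta_\delta}$ solving $-d\theta''-c\theta'=r\theta(1-\delta-b)$ (the worst-case linearization with $\overline{u}=1$), not a logistic sub-solution. This use of the perturbed competitive wave, together with its exact exponential asymptotics (\lemref{Exponential_estimates_super-critical_waves}), is the missing idea.

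\textbf{The tail ordering $\underline{u_\delta}(0,\cdot)\le\overline{u_\delta}(0,\cdot)$.} Your $\underline{u_\delta}$ is a translate of the wave $\phi_{c_2}$ of $w_t-w_{xx}=w(1-a-w)$, whose tail is $\sim e^{-\lambda(c_2)x}$; your $\overline{u_\delta}$ has far-field tail $\sim e^{-\Lambda(c_2,c_1)x}$ with $\Lambda(c_2,c_1)>\lambda(c_2)$, hence decays \emph{faster}. Thus $\underline{u_\delta}(0,x)>\overline{u_\delta}(0,x)$ for large $x$, and property (3) fails outright. The paper fixes this by appending to $\underline{u_\delta}$ a second piece $\underline{w_{c_2,c_1,A,\eta}}$ with the same spatial decay $\Lambda(c_2,c_1)$ as $\overline{w_{c_2,c_1}}$ (and a correction term so that it remains a sub-solution against $\overline{v_\delta}$, cf.\ \lemref{w_sub-solution}); the $\underline{\chi}$ used behind it is the wave of the \emph{halved} reaction $u\bigl(\tfrac{1-a}{2}-u\bigr)$, providing the slack needed at the gluing point. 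Without this second piece, neither (3) nor the use of $(\underline{u_\delta},\overline{v_\delta})$ as initial data in the proof of \thmref{Generalized_terraces} can go through.
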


The forms of the super- and sub-solutions of \propref{Sub-solution_nonexistence} and  \propref{Super-solution_compactly_supported} are illustrated in \figref{1.4} and \figref{1.5} respectively. Those of \propref{Super-sub-solution_terraces} are illustrated in  \figref{1.6a} and \figref{1.6b}.

\subsection{The quantities $f\left(c_2\right)$, $\lambda\left(c_2\right)$, $\Lambda\left(c_2,c_1\right)$}

Let us explain by a heuristic argument how these three quantities come out naturally in the problem 
and what is their ecological meaning. 

Assume that $v$ invades the uninhabited territory at some speed $c_1\geq 2\sqrt{rd}$ and that $u$ chases $v$ at some speed 
$c_2\in\left[c_{\textup{LLW}},c_1\right)$. In the area where $v\simeq 1$, $u$ looks like the exponential tail of the monostable 
traveling wave connecting $\left(0,1\right)$ to $\left(1,0\right)$ at speed $c_2$, that is 
\[
u\left(t,x\right)\simeq \textup{e}^{-\lambda\left(c_2\right)\left(x-c_2 t\right)}.
\]
Accordingly, in a neighborhood of $x=\tilde{c}t$ with $\tilde{c}\in\left(c_2,c_1\right)$, we can observe non-negligible 
quantities only if we consider the rescaled function 
\[
w:\left(t,x\right)\mapsto u\left(t,x\right)\textup{e}^{\lambda\left(c_2\right)\left(x-c_2 t\right)}
\]
instead of $u$ itself.

Yet, in a neighborhood of $x=\tilde{c}t$ with $\tilde{c}>c_1$, where $\left(u,v\right)\simeq\left(0,0\right)$, $w$ satisfies
at the first order 
\[
\partial_{t}w - \partial_{xx}w=\left(1+\lambda\left(c_2\right)\left(\tilde{c}-c_2\right)\right)w
\]
whence the exponential ansatz $w\left(t,x\right)=\textup{e}^{-\Lambda\left(x-\tilde{c} t\right)}$
leads to the equation
\[
\Lambda^2-\tilde{c}\Lambda+\left(1+\lambda\left(c_2\right)\left(\tilde{c}-c_2\right)\right)=0.
\]
The minimal zero of this equation being precisely 
\[
\Lambda\left(c_2,\tilde{c}\right)=\frac{1}{2}\left(\tilde{c}-\sqrt{\tilde{c}^{2}-4\left(\lambda\left(c_2\right)\left(\tilde{c}-c_2\right)+1\right)}\right),
\]
we deduce then that $\tilde{c}$ has to satisfy
\[
\tilde{c}^{2}-4\left(\lambda\left(c_2\right)\left(\tilde{c}-c_2\right)+1\right)\geq 0
\]
that is $\tilde{c}\geq f\left(c_2\right)$. Passing to the limit $\tilde{c}\to c_1$, we find indeed $c_1\geq f\left(c_2\right)$.

\subsection{Organization of the paper}

In Section 2, we recall the comparison principle for (\ref{eq:competition_diffusion_system})
and define super-solutions and sub-solutions.

In Section 3, we prove \thmref{Compactly_supported}, \thmref{Nonexistence} and \thmref{Generalized_terraces}
assuming \propref{Sub-solution_nonexistence}, \propref{Super-solution_compactly_supported}
and \propref{Super-sub-solution_terraces} are true. 

In Section 4, we prove \propref{Sub-solution_nonexistence}, \propref{Super-solution_compactly_supported}
and \propref{Super-sub-solution_terraces}. These constructions are
rather delicate and require several objects and preliminary lemmas, which we summarize in a table at the beginning of \subsecref{4.1}.

In Section 5, we comment on the results and provide some future perspectives.

\section{Competitive comparison principle}

\subsection{Competitive comparison principle}

In what follows, vectors in $\mathbb{R}^{2}$ are always understood
as column vectors.

We define the competitive ordering $\preceq$ in $\mathbb{R}^{2}$ 
as follows: for all $\left(u_{1},v_{1}\right)\in\mathbb{R}^{2}$,
$\left(u_{2},v_{2}\right)\in\mathbb{R}^{2}$, 
\[
\left(u_{1},v_{1}\right)\preceq\left(u_{2},v_{2}\right)\text{ if }u_{1}\leq u_{2}\text{ and }v_{1}\geq v_{2}.
\]
 The strict competitive ordering $\prec$ is defined by
\[
\left(u_{1},v_{1}\right)\prec\left(u_{2},v_{2}\right)\text{ if }u_{1}<u_{2}\text{ and }v_{1}>v_{2}.
\]

We define also the operators
\[
P:\left(u,v\right)\mapsto\partial_{t}\left(u,v\right)-\diag\left(1,d\right)\partial_{xx}\left(u,v\right),
\]
\[
F:\left(u,v\right)\mapsto\left(\begin{matrix}u\left(1-u-av\right)\\
rv\left(1-v-bu\right)
\end{matrix}\right).
\]
With these notations, (\ref{eq:competition_diffusion_system}) can be written as 
\[
\left\{ \begin{matrix}P\left(u,v\right)=F\left(u,v\right) & \text{in }\left(0,+\infty\right)\times\mathbb{R}\\
\left(u,v\right)\left(0,x\right)=\left(u_{0},v_{0}\right)\left(x\right) & \text{for all }x\in\mathbb{R}
\end{matrix}\right..
\]
\begin{defn}
A classical super-solution of (\ref{eq:competition_diffusion_system})
is a pair 
\[
\left(\overline{u},\underline{v}\right)\in\mathscr{C}^{1}\left(\left(0,+\infty\right),\mathscr{C}^{2}\left(\mathbb{R},\left[0,1\right]^{2}\right)\right)\cap\mathscr{C}\left(\left[0,+\infty\right)\times\mathbb{R},\left[0,1\right]^{2}\right)
\]
 satisfying 
\[
P\left(\overline{u},\underline{v}\right)\succeq F\left(\overline{u},\underline{v}\right)\text{ in }\left(0,+\infty\right)\times\mathbb{R}.
\]

A classical sub-solution of (\ref{eq:competition_diffusion_system})
is a pair 
\[
\left(\underline{u},\overline{v}\right)\in\mathscr{C}^{1}\left(\left(0,+\infty\right),\mathscr{C}^{2}\left(\mathbb{R},\left[0,1\right]^{2}\right)\right)\cap\mathscr{C}\left(\left[0,+\infty\right)\times\mathbb{R},\left[0,1\right]^{2}\right)
\]
 satisfying 
\[
P\left(\underline{u},\overline{v}\right)\preceq F\left(\underline{u},\overline{v}\right)\text{ in }\left(0,+\infty\right)\times\mathbb{R}.
\]
 The unbounded domain $\left(0,+\infty\right)\times\mathbb{R}$ can
be replaced in the above definition by a bounded parabolic cylinder
$\left(0,T\right)\times\left(-R,R\right)$. In such a case, the required
regularity is $\mathscr{C}^{1}\left(\left(0,T\right),\mathscr{C}^{2}\left(\left(-R,R\right),\left[0,1\right]^{2}\right)\right)\cap\mathscr{C}\left(\left[0,T\right]\times\left[-R,R\right],\left[0,1\right]^{2}\right)$.
\end{defn}

We also recall that it is possible to extend the theory of super-
and sub-solutions to Sobolev spaces. The full extension is outside
of the scope of this reminder, however a very partial extension will
be necessary later on. More precisely, we will use the following notion
of Lipschitz-continuous super- and sub-solutions, acting against smooth compactly supported
test functions.
In what follows, $\mathscr{C}^{0,1}$ and $\mathscr{D}$ denote as usual 
the sets of Lipschitz-continuous and smooth compactly supported functions respectively 
and $\circ$ denotes the Hadamard product
$\left(u_{1},v_{1}\right)\circ\left(u_{2},v_{2}\right)=\left(u_{1}u_{2},v_{1}v_{2}\right)$.

\begin{defn}
A generalized super-solution of (\ref{eq:competition_diffusion_system})
is a pair 
\[
\left(\overline{u},\underline{v}\right)\in\mathscr{C}^{0,1}\left(\left(0,+\infty\right)\times\mathbb{R},\left[0,1\right]^{2}\right)
\]
 satisfying, for all $\left(U,V\right)\in\mathscr{D}\left(\left(0,+\infty\right)\times\mathbb{R},\left[0,1\right]^{2}\right)$,
\[
\int\partial_{t}\left(\overline{u},\underline{v}\right)\circ\left(U,V\right)+\diag\left(1,d\right)\partial_{x}\left(\overline{u},\underline{v}\right)\circ\partial_{x}\left(U,V\right)\succeq\int F\left(\overline{u},\underline{v}\right)\circ\left(U,V\right).
\]

A generalized sub-solution of (\ref{eq:competition_diffusion_system})
is a pair 
\[
\left(\underline{u},\overline{v}\right)\in\mathscr{C}^{0,1}\left(\left(0,+\infty\right)\times\mathbb{R},\left[0,1\right]^{2}\right)
\]
 satisfying, for all $\left(U,V\right)\in\mathscr{D}\left(\left(0,+\infty\right)\times\mathbb{R},\left[0,1\right]^{2}\right)$,
\[
\int\partial_{t}\left(\underline{u},\overline{v}\right)\circ\left(U,V\right)+\diag\left(1,d\right)\partial_{x}\left(\underline{u},\overline{v}\right)\circ\partial_{x}\left(U,V\right)\preceq\int F\left(\underline{u},\overline{v}\right)\circ\left(U,V\right).
\]
\end{defn}

Again, the unbounded domain $\left(0,+\infty\right)\times\mathbb{R}$
can be replaced by a bounded parabolic cylinder $\left(0,T\right)\times\left(-R,R\right)$.
The following important theorem, that will be used repeatedly thereafter,
actually uses the local definition.
\begin{thm}
\label{thm:Generalized_super-solutions}Let $R>0$, $T>0$, $Q=\left(0,T\right)\times\left(-R,R\right)$
and
\[
\left(\overline{u}_1,\overline{u}_2,\underline{v}_1,\underline{v}_2\right)\in\mathscr{C}^{1}\left(\left[0,T\right],\mathscr{C}^{2}\left(\left[-R,R\right],\left[0,1\right]^{4}\right)\right)\cap\mathscr{C}\left(\left[0,T\right]\times\left[-R,R\right],\left[0,1\right]^{4}\right).
\]
\begin{enumerate}
\item Assume that $\left(\overline{u}_1,\underline{v}_1\right)$ and $\left(\overline{u}_1,\underline{v}_2\right)$ are both classical
super-solutions in $Q$. Then 
$\left(\overline{u}_1, \max\left(\underline{v}_1,\underline{v}_2\right)\right)$ is a generalized super-solution in $Q$. 
\item Assume that $\left(\overline{u}_1,\underline{v}_1\right)$ and $\left(\overline{u}_2,\underline{v}_1\right)$ are both classical
super-solutions in $Q$. Then
$\left(\min\left(\overline{u}_1,\overline{u}_2\right),\underline{v}_1\right)$ is a generalized super-solution in $Q$. 
\end{enumerate}
\end{thm}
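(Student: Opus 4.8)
The plan is to reduce the statement to the classical fact that the maximum (resp.\ minimum) of two scalar super-solutions of a scalar parabolic equation is a generalized (weak) super-solution, applied componentwise, while being careful about which component is ``held fixed'' so that the competitive monotonicity of $F$ is respected. Recall that $P=\partial_t - \operatorname{diag}(1,d)\partial_{xx}$ acts componentwise, and that the first component of $F$, namely $g_1(u,v)=u(1-u-av)$, is nonincreasing in $v$, while the second component $g_2(u,v)=rv(1-v-bu)$ is nonincreasing in $u$. This sign structure is exactly what makes the competitive order $\preceq$ compatible with the system, and it is what the proof must exploit.

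Consider part (1); part (2) is entirely symmetric (exchange the roles of the two equations and of $\succeq$/$\preceq$, using that $g_2$ is nonincreasing in $u$ in the same way $g_1$ is nonincreasing in $v$). The $\overline{u}_1$ component is common to both super-solutions and is left untouched, so the first line of the vector inequality $P(\overline{u}_1,\max(\underline{v}_1,\underline{v}_2))\succeq F(\overline{u}_1,\max(\underline{v}_1,\underline{v}_2))$ reads
\[
\partial_t \overline{u}_1 - \partial_{xx}\overline{u}_1 \;\geq\; \overline{u}_1\bigl(1-\overline{u}_1-a\max(\underline{v}_1,\underline{v}_2)\bigr).
\]
For this we only need it to hold with $\max(\underline{v}_1,\underline{v}_2)$ replaced by the \emph{smaller} of the two, because $g_1$ is nonincreasing in $v$; and indeed at every point $\max(\underline{v}_1,\underline{v}_2)$ equals one of $\underline{v}_1,\underline{v}_2$, say $\underline v_j$, and then the hypothesis that $(\overline{u}_1,\underline{v}_j)$ is a classical super-solution gives $\partial_t\overline{u}_1-\partial_{xx}\overline{u}_1\geq \overline{u}_1(1-\overline{u}_1-a\underline v_j)\geq \overline{u}_1(1-\overline{u}_1-a\max(\underline v_1,\underline v_2))$. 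So the first component inequality is in fact a \emph{pointwise classical} inequality, valid a.e., and integrates trivially against any test function $U\geq 0$.

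The real content is the second component: one must show that $V:=\max(\underline{v}_1,\underline{v}_2)$ satisfies, weakly,
\[
\partial_t V - d\,\partial_{xx}V \;\leq\; rV\bigl(1-V-b\overline{u}_1\bigr)
\]
in the sense of the test-function inequality (note the direction flips because $\underline v$ sits on the ``sub'' side of the competitive order, which is why in the definition of generalized super-solution the second-component inequality is the $\leq$ one hidden inside $\succeq$). Here $\overline{u}_1$ is fixed, so this is a \emph{scalar} parabolic inequality for $V$ with a fixed reaction term $h(t,x,s):=rs(1-s-b\overline{u}_1(t,x))$, and both $\underline v_1$ and $\underline v_2$ are classical sub-solutions of it (that is what ``$(\overline u_1,\underline v_i)$ classical super-solution'' means for the $v$-line). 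The standard lemma — the pointwise maximum of two classical sub-solutions of a scalar parabolic equation is a weak (distributional/$H^1$-sense) sub-solution — now applies verbatim; the hypothesis $(\overline u_1,\overline u_2,\underline v_1,\underline v_2)\in \mathscr C^1(\mathscr C^2)\cap\mathscr C$ guarantees that $V\in\mathscr C^{0,1}$ and that the max is Lipschitz in $(t,x)$ so the weak formulation makes sense. I would either cite this scalar lemma or include its one-paragraph proof: away from the coincidence set $\{\underline v_1=\underline v_2\}$, $V$ coincides locally with one $\underline v_i$ and the inequality is classical; near the coincidence set one approximates $\max$ by a smooth convex increasing function $\max_\eta$ of the pair, checks that $\max_\eta(\underline v_1,\underline v_2)$ is a classical sub-solution up to an error controlled by convexity (the ``bad'' second-derivative term has the good sign), and lets $\eta\to 0$, or more simply one tests directly and uses that on $\{V=\underline v_i\}$ of positive measure one has $\partial_t V=\partial_t\underline v_i$ and $\partial_x V=\partial_x\underline v_i$ a.e.

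The main obstacle — really the only subtlety — is the \emph{matching of the monotone variable}: the max/min lemma for scalar equations requires the coefficient that is \emph{not} being maxed over to be a \emph{fixed} function of $(t,x)$, so one cannot naively combine $(\overline u_1,\underline v_1)$ and $(\overline u_2,\underline v_2)$ with \emph{both} components varying; that is precisely why the theorem is stated with a common $\overline u_1$ in part (1) and a common $\underline v_1$ in part (2). Once this is recognized the proof is a bookkeeping exercise: apply the scalar max-of-subsolutions fact to the $v$-line of part (1) with $\overline u_1$ frozen, observe the $u$-line holds classically by monotonicity of $g_1$ in $v$, assemble the two into the vector test-function inequality defining a generalized super-solution, and invoke symmetry for part (2).
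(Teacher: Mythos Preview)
Your proposal is correct and follows the same overall strategy as the paper: treat the $\overline{u}_1$-component pointwise using the monotonicity of $g_1(u,\cdot)$, and reduce the $\underline{v}$-component to the scalar fact that the maximum of two classical sub-solutions (with $\overline{u}_1$ frozen) is a weak sub-solution. The only difference is in how that scalar step is carried out: the paper does it by an explicit integration by parts on the two regions $\{\underline{v}_1>\underline{v}_2\}$ and $\{\underline{v}_2>\underline{v}_1\}$, assuming for simplicity that the coincidence set $\{\underline{v}_1=\underline{v}_2\}$ is a smooth hypersurface (and citing Sattinger for the general case), so that the boundary contribution $(\partial_x\underline{v}_1-\partial_x\underline{v}_2)\nu\leq 0$ on that interface has the right sign. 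Your convex-regularization sketch ($\max_\eta$) is a standard alternative that avoids any regularity assumption on the coincidence set; note that it ultimately uses the concavity of $v\mapsto rv(1-v-b\overline{u}_1)$, which does hold here. Your ``direct testing'' variant, once you integrate by parts on each region to access $\partial_{xx}\underline{v}_i$, is exactly the paper's computation.
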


\begin{rem*}
We state this theorem in a bounded parabolic cylinder in order to
be able to construct later on more complex super- and sub-solutions,
for instance super-solutions $\left(\overline{u},\underline{v}\right)$
with $\overline{u}$ of the form
\[
\overline{u}\left(t,x\right)=\left\{ \begin{matrix}\overline{u}_{1}\left(t,x\right) & \text{if }x<x\left(t\right)\\
\overline{u}_{2}\left(t,x\right) & \text{if }x\in\left[x\left(t\right),y\left(t\right)\right]\\
\overline{u}_{3}\left(t,x\right) & \text{if }x>y\left(t\right)
\end{matrix}\right.,
\]
where $x\left(t\right)<y\left(t\right)$ and $\overline{u}_{1}$,
$\overline{u}_{2}$ and $\overline{u}_{3}$ are such that $\overline{u}_{1}\left(t,x\right)\leq\overline{u}_{2}\left(t,x\right)$
if $x<x\left(t\right)$, $\overline{u}_{2}\left(t,x\right)\leq\overline{u}_{1}\left(t,x\right)$
in a right-sided neighborhood of $x\left(t\right)$, $\overline{u}_{2}\left(t,x\right)\leq\overline{u}_{3}\left(t,x\right)$
in a left-sided neighborhood of $y\left(t\right)$ and $\overline{u}_{3}\left(t,x\right)\leq\overline{u}_{2}\left(t,x\right)$
if $x>y\left(t\right)$. Although we do not have any global information
on $\overline{u}_{1}-\overline{u}_{2}$, $\overline{u}_{1}-\overline{u}_{3}$
and $\overline{u}_{2}-\overline{u}_{3}$, the local theorem shows
that the construction is still valid.
\end{rem*}
\begin{proof}
Since the second statement is proved similarly, we only prove the
first one. 

For simplicity, we only consider the special case where 
$\Gamma=\left(\underline{v}_1-\underline{v}_2\right)^{-1}\left(\left\{ 0\right\} \right)$ 
is a smooth hypersurface, which is always satisfied for our purposes. 
A proof that does not require such a regularity assumption can be found for instance 
in \cite{Sattinger_1972}.

Define $\underline{v}=\max\left(\underline{v}_1,\underline{v}_2\right)$ and let 
$\left(U,V\right)\in\mathscr{D}\left(\overline{Q},\left[0,1\right]^{2}\right)$.
On one hand, 
\[
\partial_{t}\overline{u}_1-\partial_{xx}\overline{u}_1\geq F_{1}\left(\overline{u}_1,\underline{v}\right)
\]
 is satisfied in the classical sense (using for instance $-a\overline{u}_1\underline{v}_1\geq-a\overline{u}_1\underline{v}$).
On the other hand, 
we have assumed that $\Gamma=\left(\underline{v}_1-\underline{v}_2\right)^{-1}\left(\left\{ 0\right\} \right)$
is a smooth hypersurface, so that we may integrate by parts. Denoting 
$Q_{1}=\left(\underline{v}_1-\underline{v}_2\right)^{-1}\left(\left[0,1\right]\right)$,
$Q_{2}=\left(\underline{v}_2-\underline{v}_1\right)^{-1}\left(\left[0,1\right]\right)$, $\nu$
the outward unit normal to $Q_{1}$, we find $\Gamma=\partial Q_{1}\backslash\partial Q=\partial Q_{2}\backslash\partial Q$
and $\displaystyle 
\left(\partial_{x}\underline{v}_1-\partial_{x}\underline{v}_2\right)\nu\leq0$ on $\Gamma$, 
whence
\begin{align*}
&\quad \int_{Q}\partial_{t}\underline{v}V+d\partial_{x}\underline{v}\partial_{x}V \\
&=\int_{Q_{1}}\partial_{t}\underline{v}_1V+d\partial_{x}\underline{v}_1\partial_{x}V+\int_{Q_{2}}\partial_{t}\underline{v}_2V+d\partial_{x}\underline{v}_2\partial_{x}V\\
 & =\int_{Q_{1}}\left(\partial_{t}\underline{v}_1-d\partial_{xx}\underline{v}_1\right)V+\int_{\partial Q_{1}}\partial_{x}\underline{v}_1V\nu+\int_{Q_{2}}\left(\partial_{t}\underline{v}_2-d\partial_{xx}\underline{v}_2\right)V+\int_{\partial Q_{2}}\partial_{x}\underline{v}_1V\left(-\nu\right)\\
 & \leq\int_{Q_{1}}F_{2}\left(\overline{u}_1,\underline{v}_1\right)V+\int_{Q_{2}}F_{2}\left(\overline{u}_1,\underline{v}_2\right)V+\int_{\Gamma}\left(\partial_{x}\underline{v}_1-\partial_{x}\underline{v}_2\right)V\nu\\
 & \leq\int_{Q}F_{2}\left(\overline{u}_1,\underline{v}\right)V.
\end{align*}
This completes the proof. 
\end{proof}
An inversion of the roles yields a similar statement on sub-solutions. 
\begin{thm}
\label{thm:Generalized_sub-solutions}Let $R>0$, $T>0$, $Q=\left(0,T\right)\times\left(-R,R\right)$
and
\[
\left(\underline{u}_1,\underline{u}_2,\overline{v}_1,\overline{v}_2\right)\in\mathscr{C}^{1}\left(\left[0,T\right],\mathscr{C}^{2}\left(\left[-R,R\right],\left[0,1\right]^{4}\right)\right)\cap\mathscr{C}\left(\left[0,T\right]\times\left[-R,R\right],\left[0,1\right]^{4}\right).
\]
\begin{enumerate}
\item Assume that $\left(\underline{u}_1,\overline{v}_1\right)$ and $\left(\underline{u}_1,\overline{v}_2\right)$ are both classical
sub-solutions in $Q$. Then 
$\left(\underline{u}_1, \min\left(\overline{v}_1,\overline{v}_2\right)\right)$ is a generalized sub-solution in $Q$. 
\item Assume that $\left(\underline{u}_1,\overline{v}_1\right)$ and $\left(\underline{u}_2,\overline{v}_1\right)$ are both classical
sub-solutions in $Q$. Then
$\left(\max\left(\underline{u}_1,\underline{u}_2\right),\overline{v}_1\right)$ is a generalized sub-solution in $Q$. 
\end{enumerate}
\end{thm}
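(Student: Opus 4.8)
The plan is to derive this statement as the mirror image of \thmref{Generalized_super-solutions}, exchanging the roles of the two components $u$ and $v$ and reversing every inequality in accordance with the competitive ordering $\preceq$. As in the proof of that theorem, I would first reduce to the case where the coincidence set is a smooth hypersurface --- namely $\Gamma=\left(\overline{v}_1-\overline{v}_2\right)^{-1}\left(\left\{0\right\}\right)$ for assertion (1) and $\Gamma=\left(\underline{u}_1-\underline{u}_2\right)^{-1}\left(\left\{0\right\}\right)$ for assertion (2) --- which is satisfied by all the super- and sub-solutions constructed later in the paper, the general case being handled as in \cite{Sattinger_1972}. Since the two assertions are structurally identical, I would prove (1) in detail and merely indicate the changes for (2).

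For (1), set $\overline{v}=\min\left(\overline{v}_1,\overline{v}_2\right)$. The inequality on the first component is purely classical: from $\partial_t\underline{u}_1-\partial_{xx}\underline{u}_1\leq\underline{u}_1\left(1-\underline{u}_1-a\overline{v}_1\right)$ together with $0\leq\overline{v}\leq\overline{v}_1$, hence $-a\overline{v}_1\leq-a\overline{v}$ and (using $\underline{u}_1\geq0$) $\underline{u}_1\left(1-\underline{u}_1-a\overline{v}_1\right)\leq F_1\left(\underline{u}_1,\overline{v}\right)$, one obtains $\partial_t\underline{u}_1-\partial_{xx}\underline{u}_1\leq F_1\left(\underline{u}_1,\overline{v}\right)$ pointwise, hence weakly. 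The inequality on the second component is where the integration by parts across $\Gamma$ enters, exactly as in the cited proof but with the signs flipped. Writing $Q_1=\left(\overline{v}_1-\overline{v}_2\right)^{-1}\left(\left[-1,0\right]\right)$ (the region where $\overline{v}=\overline{v}_1$), $Q_2=\left(\overline{v}_2-\overline{v}_1\right)^{-1}\left(\left[-1,0\right]\right)$ and $\nu$ the outward unit normal to $Q_1$, one has $\Gamma=\partial Q_1\backslash\partial Q=\partial Q_2\backslash\partial Q$ and, since $\overline{v}_1-\overline{v}_2\leq0$ in $Q_1$ and vanishes on $\Gamma$, the jump relation $\left(\partial_x\overline{v}_1-\partial_x\overline{v}_2\right)\nu\geq0$ holds on $\Gamma$ --- the sign opposite to the super-solution case. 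Then, for every admissible test pair $(U,V)$ (so that $V\geq0$),
\[
\int_Q\partial_t\overline{v}\,V+d\,\partial_x\overline{v}\,\partial_xV=\int_{Q_1}\bigl(\partial_t\overline{v}_1-d\partial_{xx}\overline{v}_1\bigr)V+\int_{Q_2}\bigl(\partial_t\overline{v}_2-d\partial_{xx}\overline{v}_2\bigr)V+\int_\Gamma d\bigl(\partial_x\overline{v}_1-\partial_x\overline{v}_2\bigr)V\nu,
\]
the contributions on $\partial Q$ dropping as before. Since $\left(\underline{u}_1,\overline{v}_i\right)$ is a classical sub-solution we have $\partial_t\overline{v}_i-d\partial_{xx}\overline{v}_i\geq F_2\left(\underline{u}_1,\overline{v}_i\right)$ on $Q_i$, and the boundary term on $\Gamma$ is nonnegative, so the right-hand side is $\geq\int_{Q_1}F_2\left(\underline{u}_1,\overline{v}_1\right)V+\int_{Q_2}F_2\left(\underline{u}_1,\overline{v}_2\right)V=\int_Q F_2\left(\underline{u}_1,\overline{v}\right)V$. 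Combined with the first-component inequality, this is exactly the statement that $\left(\underline{u}_1,\overline{v}\right)$ is a generalized sub-solution, proving (1).

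For (2), set $\underline{u}=\max\left(\underline{u}_1,\underline{u}_2\right)$ and run the same scheme with the two components interchanged: the second-component inequality $\partial_t\overline{v}_1-d\partial_{xx}\overline{v}_1\geq F_2\left(\underline{u}_1,\overline{v}_1\right)\geq F_2\left(\underline{u},\overline{v}_1\right)$ is classical (using $\overline{v}_1\geq0$ and $\underline{u}_1\leq\underline{u}$, so $-b\underline{u}_1\geq-b\underline{u}$), while the first-component inequality is obtained by integrating by parts across $\Gamma=\left(\underline{u}_1-\underline{u}_2\right)^{-1}\left(\left\{0\right\}\right)$, where $\underline{u}_1-\underline{u}_2\geq0$ on the region where $\underline{u}=\underline{u}_1$, so the jump of the normal derivative of $\underline{u}_1-\underline{u}_2$ across $\Gamma$ carries exactly the sign needed to keep the weak inequality oriented as ``$\leq$''. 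The only delicate point --- which I expect to be a source of sign slips rather than a genuine obstacle --- is the bookkeeping: ensuring that each inequality points the right way under $\preceq$ and that the normal-derivative jump of $\overline{v}_1-\overline{v}_2$ (respectively $\underline{u}_1-\underline{u}_2$) enters the boundary integral with a favorable sign. Once those signs are fixed, the computation is the verbatim transpose of the one already displayed for \thmref{Generalized_super-solutions}.
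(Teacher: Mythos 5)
Your proposal is correct and is essentially the paper's own argument: the paper proves this statement simply by inverting the roles in the proof of \thmref{Generalized_super-solutions}, and you carry out exactly that inversion, with the jump condition $\left(\partial_x\overline{v}_1-\partial_x\overline{v}_2\right)\nu\geq0$ on $\Gamma$ (and its analogue for $\underline{u}_1-\underline{u}_2$) correctly oriented so that the boundary integral has the favorable sign for the reversed inequalities. Nothing further is needed.
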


Since a classical super- or sub-solution is \textit{a fortiori} a
generalized super- or sub-solution respectively, from now on, we omit
the adjectives classical and generalized and always have in mind the
generalized notion.

The comparison principle for (\ref{eq:competition_diffusion_system}),
directly derived from the comparison principle for cooperative systems
(see Protter\textendash Weinberger \cite{Protter_Weinberger}) via
the transformation $w=1-v$, reads as follows.
\begin{thm}
\label{thm:Competitive_comparison_principle}Let $\left(\overline{u},\underline{v}\right)$
and $\left(\underline{v},\overline{u}\right)$ be respectively a super-solution
and a sub-solution of (\ref{eq:competition_diffusion_system}). Assume
that 
\[
\left(\overline{u},\underline{v}\right)\left(0,x\right)\succeq\left(\underline{u},\overline{v}\right)\left(0,x\right)\text{ for all }x\in\mathbb{R}.
\]

Then 
\[
\left(\overline{u},\underline{v}\right)\succeq\left(\underline{u},\overline{v}\right)\text{ in }\left[0,+\infty\right)\times\mathbb{R}.
\]

Furthermore, if there exists $\left(T,x\right)\in\left(0,+\infty\right)\times\mathbb{R}$
such that $\overline{u}\left(T,x\right)=\underline{u}\left(T,x\right)$
or $\underline{v}\left(T,x\right)=\overline{v}\left(T,x\right)$,
then 
\[
\left(\overline{u},\underline{v}\right)=\left(\underline{u},\overline{v}\right)\text{ in }\left[0,T\right]\times\mathbb{R}.
\]
\end{thm}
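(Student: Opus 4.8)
The plan is to reduce the whole statement to the classical comparison and strong maximum principles for quasi-monotone nondecreasing (i.e.\ cooperative) parabolic systems, through the substitution $w=1-v$ alluded to in the statement.

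First I would record how this change of variables acts on the system, on the order, and on the notion of sub-/super-solution. Setting $\overline{w}=1-\underline{v}$ and $\underline{w}=1-\overline{v}$, a short computation turns the $v$-inequalities into $w$-inequalities of the \emph{same} sign, so that $\left(\overline{u},\overline{w}\right)$ and $\left(\underline{u},\underline{w}\right)$ become, respectively, a super-solution and a sub-solution --- in the ordinary componentwise sense --- of the system
\[
\left\{\begin{matrix}\partial_{t}u-\partial_{xx}u=u\left(1-a-u+aw\right)\\ \partial_{t}w-d\partial_{xx}w=r\left(1-w\right)\left(bu-w\right)\end{matrix}\right.,
\]
whose reaction term is quasi-monotone nondecreasing on $\left[0,1\right]^{2}$ because its two off-diagonal partial derivatives there are $au\geq0$ and $rb\left(1-w\right)\geq0$. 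Moreover $\left(u_{1},v_{1}\right)\preceq\left(u_{2},v_{2}\right)$ is the same as $u_{1}\leq u_{2}$ together with $1-v_{1}\leq1-v_{2}$, so the competitive order on $\left(u,v\right)$ is exactly the componentwise order on $\left(u,w\right)$; thus the hypothesis reads $\left(\overline{u},\overline{w}\right)\left(0,\cdot\right)\geq\left(\underline{u},\underline{w}\right)\left(0,\cdot\right)$ and the weak conclusion reads $\left(\overline{u},\overline{w}\right)\geq\left(\underline{u},\underline{w}\right)$.

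Second I would invoke the comparison principle for cooperative parabolic systems of Protter--Weinberger. The one point deserving a word is the unbounded spatial domain: since all functions involved are $\left[0,1\right]$-valued, hence bounded, one may either apply that principle directly on $\mathbb{R}$ or localize to $\left(-R,R\right)$ with a standard barrier on the lateral boundary and let $R\to+\infty$. This settles the weak statement for classical sub-/super-solutions; for the generalized (Lipschitz) ones it is enough to observe that $w=1-v$ also respects the weak formulations and then to run the usual truncation argument, testing the $\left(u,w\right)$-inequalities against $\left(\underline{u}-\overline{u}\right)^{+}$ and $\left(\underline{w}-\overline{w}\right)^{+}$; alternatively one may simply reduce to the classical case, since the generalized sub-/super-solutions we later build are maxima/minima of classical ones.

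Finally, for the strong statement I would pass to the differences $p=\overline{u}-\underline{u}\geq0$ and $q=\overline{w}-\underline{w}\geq0$. By the mean value theorem applied to the reaction term, $\left(p,q\right)$ is a supersolution of a linear cooperative system with bounded coefficients and with nonnegative off-diagonal couplings $a\underline{u}$ and $rb\underline{v}$. If $\overline{u}\left(T,x_{0}\right)=\underline{u}\left(T,x_{0}\right)$ for some $T>0$, then $p$ attains an interior minimum value $0$ while satisfying a scalar inequality $\partial_{t}p-\partial_{xx}p-cp\geq a\underline{u}\,q\geq0$, so the scalar strong maximum principle forces $p\equiv0$ on $\left[0,T\right]\times\mathbb{R}$; feeding this back gives $a\underline{u}\,q\equiv0$, hence $q\equiv0$ wherever $\underline{u}>0$, and a second application of the scalar strong maximum principle to $q$ (which then satisfies $\partial_{t}q-d\partial_{xx}q-c'q\geq0$) propagates this to $q\equiv0$ on $\left[0,T\right]\times\mathbb{R}$; the case of an equality $\underline{v}\left(T,x_{0}\right)=\overline{v}\left(T,x_{0}\right)$ is symmetric. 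Translating back yields $\left(\overline{u},\underline{v}\right)=\left(\underline{u},\overline{v}\right)$ on $\left[0,T\right]\times\mathbb{R}$. I expect this last paragraph to be the main obstacle: one must carry the vanishing of a single component through the off-diagonal coupling into the other component --- which is exactly where the cooperative structure of the transformed system is genuinely used --- whereas the weak statement is essentially a careful transcription of Protter--Weinberger once the substitution is in place.
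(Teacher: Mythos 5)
Your proposal is correct and follows essentially the same route as the paper: the paper offers no proof of this theorem, deriving it directly from the Protter--Weinberger comparison principle for cooperative systems via the substitution $w=1-v$, which is precisely your reduction, with the competitive order becoming the componentwise order. Your added detail for the equality case (propagating the vanishing of one component to the other through the off-diagonal couplings $a\underline{u}$ and $rb\underline{v}$ via the scalar strong maximum principle) is the standard argument; note only that this transfer requires the relevant coupling coefficient to be positive somewhere, which is satisfied in every situation where the paper invokes the theorem.
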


In other words, $\left(\overline{u},\underline{v}\right)\succ\left(\underline{u},\overline{v}\right)$
holds at $t=0$ if and only if it holds at all $t\geq0$.

Finally, we recall an important existence--comparison result that will be used
later on.
\begin{thm}
\label{thm:Existence_by_comparison}Let $\left(\overline{u},\underline{v}\right)$
and $\left(\underline{v},\overline{u}\right)$ be respectively a super-solution
and a sub-solution of (\ref{eq:competition_diffusion_system}). Assume
that for some $(u_0,v_0) \in C(\mathbb{R}, [0,1]^2)$ we have
$$
(\overline{u}, \underline{v})(0,x) \succeq (u_0,v_0)(x)\succeq (\underline{u}, \overline{v})(0,x) \quad \text{ for all }x \in \mathbb{R},
$$
then the solution $\left(u,v\right)$ of (\ref{eq:competition_diffusion_system}) with initial data $(u_0,v_0)$ satisfies
\[
\left(\overline{u},\underline{v}\right)\succeq\left(u,v\right)\succeq\left(\underline{u},\overline{v}\right)\text{ in }\left[0,+\infty\right)\times\mathbb{R}.
\]
\end{thm}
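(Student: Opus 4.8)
The plan is to produce the solution $(u,v)$ and then sandwich it between the prescribed sub- and super-solution by two applications of the competitive comparison principle \thmref{Competitive_comparison_principle}. First I would recall that $[0,1]^{2}$ is an invariant region for \eqref{competition_diffusion_system}: since $a,b,r,d>0$, the constant $0$ is a sub-solution and the constant $1$ a super-solution of the scalar $u$-equation whenever $v\ge0$, and symmetrically for the $v$-equation whenever $u\ge0$, so the bounds $0\le u\le1$, $0\le v\le1$ propagate in time. Combined with the local Lipschitz character of $F$ and standard parabolic theory, this yields, for any continuous $[0,1]^{2}$-valued datum $(u_{0},v_{0})$, a unique global solution $(u,v)\in\mathscr{C}\left([0,+\infty)\times\mathbb{R},[0,1]^{2}\right)\cap\mathscr{C}^{1}\left((0,+\infty),\mathscr{C}^{2}(\mathbb{R},[0,1]^{2})\right)$ with $(u,v)(0,\cdot)=(u_{0},v_{0})$; in particular ``the solution'' in the statement is well defined.

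The key observation is that such a solution is, trivially, simultaneously a (classical, hence generalized) super-solution and a sub-solution of \eqref{competition_diffusion_system}, the defining differential inequalities $P(u,v)\succeq F(u,v)$ and $P(u,v)\preceq F(u,v)$ both holding with equality. I would then apply \thmref{Competitive_comparison_principle} twice. Applied to the super-solution $(\overline{u},\underline{v})$ and the sub-solution $(u,v)$, whose initial values satisfy $(\overline{u},\underline{v})(0,\cdot)\succeq(u_{0},v_{0})(\cdot)=(u,v)(0,\cdot)$ by hypothesis, it gives $(\overline{u},\underline{v})\succeq(u,v)$ on $[0,+\infty)\times\mathbb{R}$. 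Applied to the super-solution $(u,v)$ and the sub-solution $(\underline{u},\overline{v})$, whose initial values satisfy $(u,v)(0,\cdot)=(u_{0},v_{0})(\cdot)\succeq(\underline{u},\overline{v})(0,\cdot)$, again by hypothesis, it gives $(u,v)\succeq(\underline{u},\overline{v})$. Chaining the two inequalities yields exactly $(\overline{u},\underline{v})\succeq(u,v)\succeq(\underline{u},\overline{v})$ in $[0,+\infty)\times\mathbb{R}$.

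There is no genuine obstacle here: the whole content of the statement is carried by \thmref{Competitive_comparison_principle}, and the only points deserving a word of care are that the solution is global and stays in $[0,1]^{2}$ (the invariant-rectangle remark above) and that a merely continuous datum is admissible (standard, the solution being smooth for $t>0$ and continuous down to $t=0$). If one prefers an argument that does not invoke global existence as a black box, I would instead construct $(u,v)$ directly as the monotone limit of the iteration scheme for the cooperative system obtained from \eqref{competition_diffusion_system} through $w=1-v$, initialized at the given sub- and super-solution; this folds existence and the two-sided bound into a single construction, at the cost of a slightly longer write-up.
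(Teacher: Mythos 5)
Your argument is correct and is precisely the justification the paper has in mind: the paper states \thmref{Existence_by_comparison} without proof, as a standard consequence of well-posedness in the invariant rectangle $\left[0,1\right]^{2}$ together with the comparison principle of \thmref{Competitive_comparison_principle} (itself derived from the cooperative case via $w=1-v$). Your two applications of \thmref{Competitive_comparison_principle}, using that the solution is simultaneously a super- and a sub-solution, plus the remark on global existence and regularity for continuous $\left[0,1\right]^{2}$-valued data, is exactly the intended route, so there is nothing to add.
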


\section{Proofs of \thmref{Compactly_supported}, \thmref{Nonexistence} and \thmref{Generalized_terraces}}

In this section, we assume \propref{Sub-solution_nonexistence}, \propref{Super-solution_compactly_supported}
and \propref{Super-sub-solution_terraces} are true.

\subsection{Proof of \thmref{Nonexistence} }\label{subsec:3.1}
\begin{proof}
Let $c_{1}\geq2\sqrt{rd}$ and $c_{2}\geq c_{\textup{LLW}}$ such that $c_{1}\geq c_{2}$
and $c_{1}<f\left(c_{2}\right)$. 

First, we consider the case where $x\mapsto v_{0}\left(x\right)\textup{e}^{\lambda_{v}\left(c_{1}\right)x}$ is globally bounded.
By contradiction, assume the existence of a solution $\left(u,v\right)$ such that both the boundedness of
$x\mapsto v_{0}\left(x\right)\textup{e}^{\lambda_{v}\left(c_{1}\right)x}$
and the equality 
\[
c_{2}=\sup\left\{ c>0\ |\ \lim\limits _{t\to+\infty}\sup\limits _{0\leq x\leq ct}\left(\left|u\left(t,x\right)-1\right|+\left|v\left(t,x\right)\right|\right)=0\right\} 
\]
are true. 

Define $c$, $\tilde{c}$, $\delta^{\star}$, $\delta=\frac{\delta^{\star}}{2}$,
$R_{\delta}$, $L$, $\underline{u_{\delta,\zeta,\kappa}}$, $\overline{v_{\delta,\eta}}$ as in \propref{Sub-solution_nonexistence}. Note that $c>c_2$, $\tilde{c}>c_1$.

In view of the equality satisfied by $c_{2}$, there exists $T\geq\frac{2L}{c_{2}}$
such that, for all $x\in\left[0,\frac{c_{2}}{2}T\right]$, $u\left(T,x\right)\geq1-\frac{a}{2}$.

We claim that 
$\left(t,x\right)\mapsto v\left(t,x\right)\textup{e}^{\lambda_{v}\left(c_{1}\right)\left(x-c_{1}t\right)}$
is globally bounded in $\left[0,+\infty\right)\times\mathbb{R}$. To see this, it suffices to observe that, by definition of $\lambda_v(c_1)$, $C e^{-\lambda_v(c_1)(x-c_1t)}$ is a supersolution of the equation of $v$ for any constant $C>0$. Hence standard comparison implies that
$$
v(t,x) \leq \left(\sup_{x \in \mathbb{R}} v(0,x) e^{\lambda_v(c_1)x}\right)e^{-\lambda_v(c_1)(x-c_1 t)}.
$$

Since $c_1 < \tilde{c}$ and $\lambda_v$ is decreasing, we have $\lambda_{v}\left(c_{1}\right)>\lambda_{v}\left(\tilde{c}\right)$.
Hence, there exists $\zeta>L$ such that
\[
v\left(T,x\right)\leq\overline{v_{\delta,\zeta}}\left(0,x\right)\text{ for all }x\in\mathbb{R}.
\]

Now, we fix 
\[
\kappa=\frac{1}{2}\min\left(\min\left(\frac{1-a}{2},\frac{\delta}{2}\right),\min_{x\in\left[L,L+\zeta+2R_{\delta}\right]}u\left(T,x\right)\right).
\]
It follows that 
\[
u(T,x) \geq 
\left\{
\begin{array}{ll}
1-a  &\text{ for }x \in [0,L],\\
\kappa &\text{ for }x \in (L, L+\zeta+ 2R_\delta],\\
0 &\text{ for  }x \in \mathbb{R} \setminus [0, L+\zeta+ 2R_\delta],
\end{array}
\right.
\]
whence $u(T,x) \geq \underline{u_{\delta,\zeta,\kappa}}\left(0,x\right)$ for $x \in \mathbb{R}$. Then 
\[
\left(\underline{u},\overline{v}\right):\left(t,x\right)\mapsto\left(\underline{u_{\delta,\zeta,\kappa}}\left(t-T,x\right),\overline{v}\left(t-T,x\right)\right)
\]
 is a sub-solution of (\ref{eq:competition_diffusion_system}) which
satisfies $\left(\underline{u},\overline{v}\right)\preceq\left(u,v\right)$
at $t=T$, whence by the comparison principle of \thmref{Competitive_comparison_principle}
it satisfies the same inequality at any time $t\geq T$. 

Now, due to the spreading property satisfied by $\underline{u}$,
for all $\varepsilon\in\left(0,c\right)$, there exists $T_{\varepsilon}\geq T$
such that, for all $t\geq T_{\varepsilon}$, 
\[
\inf_{L\leq x<\left(c-\varepsilon\right)t}u\left(t,x\right)\geq\frac{1-a}{4}.
\]

Assume now the existence of sequences $\left(t_n\right)_{n\in\mathbb{N}}$, 
$\left(x_n\right)_{n\in\mathbb{N}}$ and of $\hat{c} \in (0,c-\varepsilon)$ such that, 
as $n\to+\infty$, $t_n\to+\infty$, $\frac{x_n}{t_n}\to \hat{c}$ and 
\[
    \limsup \left( \left|u\left( t_n,x_n \right)-1\right| + \left|v\left(t_n, x_n\right)\right|\right) >0.
\]

Denote $\left(c_n\right)_{n\in\mathbb{N}}=\left(\frac{x_n}{t_n}\right)_{n\in\mathbb{N}}$.

For all $n\in\mathbb{N}$, define $\tau_n=\frac{c_n}{\hat{c}}t_n=\frac{x_n}{\hat{c}}$ and
\[
    \left( u_n,v_n \right):\left(t,x\right)\mapsto \left(u,v\right)\left(t + \frac{\hat{c}}{c_n}\tau_n, x + \hat{c} \tau_n\right).
\]

By classical parabolic estimates (see Lieberman \cite{Lieberman_2005}),
$\left(\left(u_n,v_n\right)\right)_{n\in\mathbb{N}}$ converges up to
a diagonal extraction in $\mathscr{C}_{loc}\left(\mathbb{R}^2,\left[0,1\right]\right)$
to a limit $\left(u_\infty,v_\infty\right)$ which is an entire solution of 
(\ref{eq:competition_diffusion_system}) and satisfies $u_\infty\geq\frac{1-a}{4}$.

By comparison of $\left(u_\infty,v_\infty\right)$ with the spatially homogeneous
sub-solution $\left(\underline{U},\overline{V}\right)$ satisfying, for any arbitrary 
$t_0\in\mathbb{R}$, the system 
\[
\left\{ \begin{matrix}\underline{U}'\left(t\right)=\underline{U}\left(t\right)\left(1-\underline{U}\left(t\right)-a\overline{V}\left(t\right)\right) & \text{for all }t\in\left(T_{\varepsilon},+\infty\right)\\
\overline{V}'\left(t\right)=r\overline{V}\left(t\right)\left(1-\overline{V}\left(t\right)-b\underline{U}\left(t\right)\right) & \text{for all }t\in\left(T_{\varepsilon},+\infty\right)\\
\underline{U}\left(t_0\right)=\frac{1-a}{4}\\
\overline{V}\left(t_0\right)=1
\end{matrix}\right.,
\]
whose convergence to $\left(1,0\right)$ is well-known, we find 
$\left( u_\infty,v_\infty \right)=\left( 1,0 \right)$, which directly contradicts the existence
of $\left( t_n \right)$, $\left( x_n \right)$ and $\left( c_n \right)$. 

Therefore
\[
    \lim\limits _{t\to+\infty}\sup\limits _{\varepsilon t\leq x<\left(c-2\varepsilon\right)t}\left(\left|u\left(t,x\right)-1\right|+\left|v\left(t,x\right)\right|\right)=0\text{ for all }\varepsilon\in\left(0,\frac{c}{3}\right).
\]

%
%

This means $c_2 \geq c$, and directly contradicts the choice of $c>c_{2}$ made at the beginning of the proof.

Next, we consider the case where 
\[
c_1\geq\inf\left\{ c>0\ |\ \lim\limits _{t\to+\infty}\sup\limits _{x\geq ct}\left|v\left(t,x\right)\right|=0\right\}.
\]
Since the proof is mostly the same, we only sketch it. Again, we argue by contradiction and use \propref{Sub-solution_nonexistence}.
Using the assumption on the spreading of $v$, we can establish the following estimate:
\[
v\left(t,x-\hat{c}t\right)\leq \mathbf{1}_{y\leq y_0}\left(x-\hat{c}t\right) + \frac{\delta}{2a}\mathbf{1}_{y\geq y_0}\left(x-\hat{c}t\right),
\]
for some $y_0\in\mathbb{R}$ and with $\hat{c}=\frac{\tilde{c}+c_1}{2}$. Thanks to this, we can directly use $\eta\underline{u_{\delta,\zeta,\kappa}}$, for some small $\eta>0$, as sub-solution
for $u$ and deduce a contradiction. We point out that in this case, we do not use the competitive comparison principle but 
instead use the scalar one. 
\end{proof}

\subsection{Proof of \thmref{Compactly_supported}}

\subsubsection{Hair-trigger effect and extinction}
\begin{prop}
\label{prop:Hair-trigger_effect_vs_extinction}Let $u_{0}\in\mathscr{C}\left(\mathbb{R},\left[0,1\right]\right)\backslash\left\{ 0\right\} $
with support included in a left half-line and $v_{0}\in\mathscr{C}\left(\mathbb{R},\left[0,1\right]\right)\backslash\left\{ 0\right\} $
with compact support. Let $\left(u,v\right)$ be the solution of \ref{eq:competition_diffusion_system}. 
\begin{enumerate}
\item If $2\sqrt{rd}>2$, then
\[
\lim\limits _{t\to+\infty}\sup\limits _{0\leq x<\left(c_{\textup{LLW}}-\varepsilon\right)t}\left(\left|u\left(t,x\right)-1\right|+\left|v\left(t,x\right)\right|\right)=0\text{ for all }\varepsilon\in\left(0,c_{\textup{LLW}}\right),
\]
\[
\lim\limits _{t\to+\infty}\sup\limits _{\left(2\sqrt{rd}+\varepsilon\right)t<x}\left(\left|u\left(t,x\right)\right|+\left|v\left(t,x\right)\right|\right)=0\text{ for all }\varepsilon>0,
\]
\[
\lim\limits _{t\to+\infty}\sup\limits _{\left(2+\varepsilon\right)t<x<\left(2\sqrt{rd}-\varepsilon\right)t}\left(\left|u\left(t,x\right)\right|+\left|v\left(t,x\right)-1\right|\right)=0\text{ for all }\varepsilon\in\left(0,\frac{2\sqrt{rd}-c_{\textup{LLW}}}{2}\right).
\]
\item If $2\sqrt{rd}<2$, then
\[
\lim_{t\to+\infty}\sup_{x \in \mathbb{R}}\left|v\left(t,x\right)\right|=0,
\]
\[
\lim\limits _{t\to+\infty}\sup\limits _{0\leq x<\left(2-\varepsilon\right)t}\left|u\left(t,x\right)-1\right|=0\text{ for all }\varepsilon\in\left(0,2\right),
\]
\[
\lim\limits _{t\to+\infty}\sup\limits _{\left(2+\varepsilon\right)t<x}\left|u\left(t,x\right)\right|=0\text{ for all }\varepsilon>0.
\]
\end{enumerate}
\end{prop}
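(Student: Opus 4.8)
The overall plan is to reduce everything to scalar Fisher--KPP spreading estimates, to the Lewis--Li--Weinberger invasion result \eqref{eq:related_competition_diffusion_system}, and to the comparison principles of \thmref{Competitive_comparison_principle}--\thmref{Existence_by_comparison}; this is close to what is done by Lin--Li and Lewis--Li--Weinberger, and I would only adapt it. Since $v\le 1$, the function $u$ is a sub-solution of $\partial_t w-\partial_{xx}w=w(1-w)$, hence $u\le\overline{u}$ where $\overline{u}$ is the scalar KPP solution issued from $u_0$; as $u_0$ has support in a left half-line, $\overline{u}\to 0$ for $x>(2+\varepsilon)t$ and $\overline{u}\to 1$ for $0\le x<(2-\varepsilon)t$. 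Likewise, using $-rbuv\le 0$, $v$ is a sub-solution of $\partial_t w-d\partial_{xx}w=rw(1-w)$, hence $v\le\overline{v}$, the scalar KPP solution issued from the compactly supported $v_0$, so that $\overline{v}\to 0$ for $\lvert x\rvert>(2\sqrt{rd}+\varepsilon)t$. This already yields all the ``far-field vanishing'' statements (for $u$ in both cases, and for $u+v$ beyond speed $2\sqrt{rd}$ in Case 1) as well as, in Case 2, the upper estimate $u\le\overline u\to1$ for $0\le x<(2-\varepsilon)t$.

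For the behind-the-front behavior I would first record that $(u,v)\to(1,0)$ locally uniformly: since $v\le1$, $u$ is a super-solution of $\partial_t w-\partial_{xx}w=w(1-a-w)$, so $u$ is bounded below by a fixed positive constant on any fixed interval for $t$ large, and comparison with the spatially homogeneous kinetic system $\dot U=U(1-U-aV)$, $\dot V=rV(1-V-bU)$ --- whose unique stable equilibrium in $\{U>0\}$ is $(1,0)$ by Lotka--Volterra competitive exclusion --- squeezes $(u,v)$ to $(1,0)$ on compacts. In Case 1, to sharpen the inner speed from $2\sqrt{1-a}$ to the optimal $c_{LLW}$, I would pick $T$ large enough that $u(T,\cdot)$ dominates a small nonzero compactly supported bump $\psi$; the pair $(\psi,1)$ is an admissible initial datum for \eqref{eq:related_competition_diffusion_system}, so by \thmref{Competitive_comparison_principle} the solution $(u,v)(T+\cdot,\cdot)$ stays above, in the competitive order, the corresponding Lewis--Li--Weinberger solution, which converges to $(1,0)$ on $\lvert x\rvert<(c_{LLW}-\varepsilon)t$; together with $0\le v\le\overline v$ this gives the claim on $0\le x<(c_{LLW}-\varepsilon)t$.

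For the intermediate plateau of Case 1 ($u\to0$ and $v\to1$ on $(2+\varepsilon)t<x<(2\sqrt{rd}-\varepsilon)t$), the estimate $u\le\overline u\to0$ settles $u$. For the lower bound on $v$ I would check that $(\overline u,\underline v)$ is a super-solution of \eqref{eq:competition_diffusion_system}, where $\underline v$ solves $\partial_t\underline v-d\partial_{xx}\underline v=r\underline v(1-\underline v-b\overline u)$ with datum $v_0$ (the inequality $\partial_t\overline u-\partial_{xx}\overline u\ge\overline u(1-\overline u-a\underline v)$ being clear and the $v$-line being an equality), whence $v\ge\underline v$ by \thmref{Existence_by_comparison}; then, fixing a small $\eta>0$ and $T$ with $\overline u\le\eta$ on $\{x\ge(2+\varepsilon/2)t,\ t\ge T\}$, on that moving half-line $\underline v$ dominates the solution of $\partial_t w-d\partial_{xx}w=rw(1-b\eta-w)$ with zero Dirichlet data on the line $x=(2+\varepsilon/2)t$ started from a tiny bump; as this boundary advances strictly slower than the KPP speed $2\sqrt{rd(1-b\eta)}$, that solution converges to $1-b\eta$ in the interior, and letting $\eta\to0$ gives $v\to1$ in the band.

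The remaining --- and hardest --- point is Case 2: the uniform extinction $\sup_{x}\lvert v\rvert\to0$ together with $u\to1$ up to the free KPP speed $2$. Here I would run a bootstrap: set $\underline u^{(0)}\equiv0$ and, for $k\ge1$, let $\overline v^{(k)}$ solve $\partial_t w-d\partial_{xx}w=rw(1-w-b\underline u^{(k-1)})$ with datum $v_0$ and $\underline u^{(k)}$ solve $\partial_t w-\partial_{xx}w=w(1-w-a\overline v^{(k)})$ with datum $u_0$; scalar comparison gives $\underline u^{(k)}\le u$ and $v\le\overline v^{(k)}\le\overline v$ for every $k$, and the heights of the plateaus carried by the $\underline u^{(k)}$ and the ceilings of the $\overline v^{(k)}$ obey a contractive affine recursion mirroring the convergence of the kinetic system to $(1,0)$, so that after finitely many steps the ceiling of $v$ on any fixed linearly growing window is pushed below any prescribed level while the front of $\underline u^{(k)}$ reaches speed $2$ (this is where $2\sqrt{rd}<2$ enters, ensuring the front of $\underline u^{(k)}$ detaches from the region still occupied by $\overline v^{(k)}$ and runs into empty space); combining this with $v\le\overline v\to0$ for $\lvert x\rvert>(2\sqrt{rd}+\varepsilon)t$ closes the argument. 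The main obstacle is exactly the bookkeeping of this bootstrap: the auxiliary scalar problems live on moving half-lines or annuli, their reaction rates are space--time dependent through the other component, and the precise geometry of the plateaus branches according to the sign of $b(1-a)-1$ and the ordering of $2\sqrt{1-a}$ and $2\sqrt{rd}$ --- although, once organized, each individual step is only a routine application of Fisher--KPP spreading theory.
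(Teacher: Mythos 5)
Your reductions for the far fields and for the $c_{LLW}$ lower bound are fine and match the paper's (comparison with the scalar KPP solutions and with the Lewis--Li--Weinberger problem started from $(\psi,1)$). Your treatment of the intermediate wedge in Case 1 is also workable in principle, but note that the statement you invoke as known --- that the solution of $\partial_t w-d\partial_{xx}w=rw(1-b\eta-w)$ in the moving half-line $\{x>(2+\varepsilon/2)t\}$ with Dirichlet data, started from a small bump, converges to $1-b\eta$ \emph{uniformly on wedges} $\{(2+\varepsilon)t\le x\le (2\sqrt{rd(1-b\eta)}-\varepsilon)t\}$ --- is precisely the nontrivial content here; the paper does not cite it but proves it directly, via a family of traveling bump sub-solutions $\eta\,\mathrm{e}^{-\frac{\tilde c}{2d}(x-\tilde c t)}\psi_{4R}(x-\tilde c t-x_2)$ for \emph{all} speeds $\tilde c$ in a compact interval, followed by a compactness/Liouville argument (classification of entire KPP solutions). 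So for Case 1 you are black-boxing the hard step rather than avoiding it.

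The genuine gap is Case 2. Your bootstrap $\underline u^{(0)}\equiv 0$, $\overline v^{(k)}$, $\underline u^{(k)}$ is correctly ordered (it is the standard monotone iteration, and indeed $\underline u^{(k)}\le u\le$\,scalar KPP, $v\le\overline v^{(k)}$), but the claims that drive it are not established and are each of the same difficulty as the proposition itself: at every stage you need the spreading speed and the plateau level, uniformly on linearly growing wedges, of a KPP equation whose growth rate is a space--time heterogeneity shifting at speed $2\sqrt{rd}$ (respectively determined by the previous iterate). Already for $\underline u^{(1)}$, the assertion that its front ``reaches speed $2$'' with value close to $1$ on $(2\sqrt{rd}+\varepsilon)t<x<(2-\varepsilon)t$ is exactly the wedge lemma you are trying to prove, and calling it ``a routine application of Fisher--KPP spreading theory'' does not discharge it; moreover the level bookkeeping is not a \emph{contractive} affine recursion in general (the map $V\mapsto 1-b+abV$ has slope $ab$, which may exceed $1$; the iteration still reaches $0$ in finitely many steps, but your stated mechanism is wrong), and you never address the uniform-in-$x$ extinction $\sup_{x\in\mathbb{R}}|v|\to 0$, including $x<0$ and the matching of the finitely many wedges across iterations. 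The paper's route is much shorter and you miss it: the hard wedge lemma of Case 1 is proved without ever using $a<1$, $b>1$, hence is symmetric in $(u,v)$ and applies with the roles swapped when $2\sqrt{rd}<2$, giving $u\to1$, $v\to0$ on $(2\sqrt{rd}+\varepsilon)t<x<(2-\varepsilon)t$ directly; one then extends $(1,0)$ down to $0\le x<(2-\varepsilon)t$ by sliding the Du--Lin half-line stationary sub-solution $\underline\varphi$ of $-\underline\varphi''=\underline\varphi(1-a-\underline\varphi)$ in a frame moving at speed $2-2\varepsilon$ and comparing with the spatially homogeneous ODE system, which also yields the uniform extinction of $v$. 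As written, your Case 2 is a plan whose key steps are unproven, not a proof.
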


\begin{rem*} 
The inequality regarding $\left(2+\varepsilon\right)t<x<\left(2\sqrt{rd}-\varepsilon t\right)$
is by far the more interesting and the less trivial. It basically
means that $u$ does not exert any competition far ahead of its own
territory. It was first proved by Ducrot, Giletti and Matano \cite{Ducrot_Giletti_Matano_2}
in the case of predator\textendash prey interactions (the conclusion
being then that no predation occurs far ahead of the territory of
the predator), and by Lin and Li \cite{Lin_Li_2012} in case of two-species competition (the conclusion being that the region of coexistence falls behind the territory where the faster diffuser dominates). 
The proof of Ducrot \textit{et al.} was sufficiently robust and generic to
be reused by Carr\`{e}re \cite{Carrere_2017} in the bistable competitive
case and to be reused again here, in the monostable case. Although
it would certainly be interesting to write the result of Ducrot \textit{et al.} 
in the most general form possible (with more than two species and
minimal assumptions on the interactions), this is far beyond the scope
of this paper. Therefore we simply adapt the main idea of their proof. 

\end{rem*}
\begin{proof}
First, applying the comparison principle with the solution of 
\[
\left\{ \begin{matrix}\partial_{t}u_{\textup{KPP}}-\partial_{xx}u_{\textup{KPP}}=u_{\textup{KPP}}\left(1-u_{\textup{KPP}}\right) & \text{in }\left(0,+\infty\right)\times\mathbb{R}\\
u_{\textup{KPP}}\left(0,x\right)=u_{0}\left(x\right) & \text{for all }x\in\mathbb{R}
\end{matrix}\right.,
\]
 we find directly $u\leq u_{\textup{KPP}}$, whence
\[
\lim\limits _{t\to+\infty}\sup\limits _{x>\left(2+\varepsilon\right)t}u\left(t,x\right)=0\text{ for all }\varepsilon>0.
\]
Similarly,
\[
\lim\limits _{t\to+\infty}\sup\limits _{x>\left(2\sqrt{rd}+\varepsilon\right)t}v\left(t,x\right)=0\text{ for all }\varepsilon>0.
\]
 Furthermore, $\left(u,v\right)$ satisfies also $\left(u,v\right)\succeq \left(u_{\textup{LLW}},v_{\textup{LLW}}\right)$, 
where $\left(u_{\textup{LLW}},v_{\textup{LLW}}\right)$
is the solution of (\ref{eq:competition_diffusion_system}) with initial
data $\left(u_{0},1\right)$, and by Lewis\textendash Li\textendash Weinberger
\cite{Lewis_Weinberg}, this yields 
\[
\lim\limits _{t\to+\infty}\sup\limits _{0\leq x<\left(c_{\textup{LLW}}-\varepsilon\right)t}\left|u\left(t,x\right)-1\right|+\left|v\left(t,x\right)\right|=0\text{ for all }\varepsilon\in\left(0,c_{\textup{LLW}}\right).
\]

Next, let us prove that if $2\sqrt{rd}<2$ and provided
\[
\lim\limits _{t\to+\infty}\sup\limits _{\left(2\sqrt{rd}+\varepsilon\right)t<x<\left(2-\varepsilon\right)t}\left(\left|u\left(t,x\right)-1\right|+\left|v\left(t,x\right)\right|\right)=0\text{ for all }\varepsilon\in\left(0,\frac{2-2\sqrt{rd}}{2}\right),
\]
then in fact the above limit can be reinforced as 
\[
\lim\limits _{t\to+\infty}\sup\limits _{0\leq x<\left(2-\varepsilon\right)t}\left(\left|u\left(t,x\right)-1\right|+\left|v\left(t,x\right)\right|\right)=0\text{ for all }\varepsilon\in\left(0,2\right).
\]
 Let $\varepsilon\in\left(0,\frac{2-2\sqrt{rd}}{3}\right)$. It is
well-known (see Du\textendash Lin \cite{Du_Lin_2010,Du_Lin_2010_er})
that there exists a unique solution of 
\[
\left\{ \begin{matrix}-\underline{\varphi}''=\underline{\varphi}\left(1-a-\underline{\varphi}\right) & \text{in }\left(0,+\infty\right)\\
\underline{\varphi}\left(0\right)=0\\
\underline{\varphi}\left(x\right)>0 & \text{for all }x>0
\end{matrix}\right..
\]
Furthermore, this solution is increasing in $\left(0,+\infty\right)$
and converges to $1-a$ at $+\infty$. In view of the assumption on
the limit of $\left(u,v\right)$ in $\left(2\sqrt{rd}+\varepsilon\right)t<x<\left(2-\varepsilon\right)t$,
there exists $T\geq0$ and $x_{0}>0$ such that 
\[
u\left(t, y +\left(2-2\varepsilon\right)t\right)>1-a > \underline\varphi(y + (2-2\varepsilon)t - x_0)\text{ for }(y,t) \in \{0\} \times [T,+\infty),
\]
\[
u(t,y + (2-2\varepsilon)t) \geq \underline{\varphi}(y + (2-2\varepsilon)t - x_0) \text{ for } (y,t) \in (-\infty,0]\times \{T\},
\]
\[
\underline\varphi(y + (2-2\varepsilon)t - x_0) >0 \text{ when }(y,t)=(0,T).
\]
Let $\tilde{u}(t,y)=u(t,y + (2-2\varepsilon)t)$ and $\underline{\tilde u}(t,y) = \underline\varphi(y + (2-2\varepsilon)t - x_0)$.  Then they satisfy for all $(t,y) \in (0,+\infty) \times \mathbb{R}$,
\[
\partial_t \underline{\tilde u} - \partial_{xx}\underline{\tilde u} -(2-2\varepsilon)\partial_x\underline{\tilde u} - \underline{\tilde u} (1- \underline{\tilde u}  - av) \leq 0 = \partial_{xx}{\tilde u} - (2-2\varepsilon)\partial_x \tilde{u} - {\tilde u} (1- {\tilde u}  - av), 
\]
it follows by virtue of the scalar comparison principle 
and of a change of variable $y=x-\left(2-2\varepsilon\right)t$ 
that 
\[
\underline{u}\left(t,x\right)\leq u\left(t,x\right)\text{ for all }t\geq T\text{ and }x\leq\left(2-2\varepsilon\right)t.
\]
Consequently, 
\[
\liminf_{t\to+\infty}\inf_{\varepsilon t<x<\left(2-2\varepsilon\right)t}u\left(t,x\right)\geq1-a,
\]
whence there exists $T'\geq0$ such that 
\[
\inf_{\varepsilon t<x<\left(2-2\varepsilon\right)t}u\left(t,x\right)\geq\frac{1-a}{2}>0\text{ for all }t\geq T'.
\]

Repeating a limiting argument developed earlier in the course of the proof of 
\thmref{Nonexistence} showing the locally uniform convergence of $(u,v)$ to $(1,0)$ 
in the cone defined by $2\varepsilon t<x<\left( 2-3\varepsilon \right)t$ (with suitable
$\varepsilon$) and recalling that we have the estimate
\[
\lim\limits _{t\to+\infty}\sup\limits _{0\leq x<\left(c_{\textup{LLW}}-\varepsilon\right)t}\left|u\left(t,x\right)-1\right|+\left|v\left(t,x\right)\right|=0\text{ for all }\varepsilon\in\left(0,c_{\textup{LLW}}\right),
\]
the claim is now proved.

%
%

It now remains to prove the most difficult part, namely
\[
 \lim\limits _{t\to+\infty}\sup\limits _{\left(2+\varepsilon\right)t<x<\left(2\sqrt{rd}-\varepsilon\right)t}\left(\left|u\left(t,x\right)\right|+\left|v\left(t,x\right)-1\right|\right)=0, \quad \text{ if }2\sqrt{rd}>2,
\]
and 
\[
\lim\limits _{t\to+\infty}\sup\limits _{\left(2\sqrt{rd}+\varepsilon\right)t<x<\left(2-\varepsilon\right)t}\left(\left|u\left(t,x\right)-1\right|+\left|v\left(t,x\right)\right|\right)=0, \quad \text{ if }2\sqrt{rd} < 2.
\]

Since this is a symmetric statement and since the forthcoming proof
does not rely upon the assumptions $a<1$ and $b>1$, we only do the
case $2\sqrt{rd}>2$ (when $v$ spreads faster than $u$) and the proof will be valid for the other case (when $u$ spreads faster than $v$) 
as well.

\textbf{Step 1:} Let $\bar{u}:\left(t,x\right)\mapsto \min\left(1,e^{-\left(x-2t-x_1\right)}\right)$, where $x_1$ is chosen such that $\bar{u}\left(0,x\right) \geq u\left(0,x\right)$ for all $x\in \mathbb{R}$. Then, by standard scalar comparison, 
\[
u\left(t,x\right) \leq \bar{u}\left(t,x\right) \quad \text{ for all }t\geq 0\text{ and }x\in \mathbb{R}.
\]

\textbf{Step 2:} We show that for each $c \in (2,2\sqrt{rd})$, there exist positive constants $\delta,x_2,\eta_1,R$ such that 
\begin{equation}\label{eq:hair-trigger_prep}
v\left(t',x + x_2+ct\right) \geq \eta_1 \quad \text{ for all }t\geq 1, x\in\left(-2R,2R\right) \text{ and }t'\in \left[\left(1-\delta\right)t,\left(1+\delta\right)t\right].  
\end{equation}

To show (\ref{eq:hair-trigger_prep}), 
fix $c \in \left(2,2\sqrt{dr}\right)$ and fix $\delta$ so small that 
\[
2 < \frac{c}{1+\delta} <\frac{c}{1-\delta}  < 2\sqrt{dr}.
\]

Let $\eta>0$, $R>0$, $x_{2}\in\mathbb{R}$, $\tilde{c} \in \left[\frac{c}{1+\delta},\frac{c}{1-\delta}\right]$ and define 
\[
\underline{v}^{\tilde{c}}:\left(t,x\right)\mapsto\eta\textup{e}^{-\frac{\tilde{c}}{2d}\left(x-\tilde{c}t\right)}\psi_{4R}\left(x-\tilde{c}t-x_{2}\right),
\]
where $\left(\lambda_{4R},\psi_{4R}\right)$ is the Dirichlet principal
eigenpair defined by
\[
\left\{ \begin{matrix}-d\psi_{4R}''=\lambda_{4R}\psi_{4R} & \text{in }\left(-4R,4R\right)\\
\psi_{4R}\left(\pm4R\right)=0\\
\psi_{4R}\left(x\right)>0 & \text{for all }x\in\left(-4R,4R\right)\\
\max\psi_{4R}=1
\end{matrix}\right..
\]
 The principal eigenvalue $\lambda_{4R}$ is positive, vanishes as
$R\to+\infty$ and $\psi_{4R}$ is extended into $\mathbb{R}$ by
setting $\psi_{4R}\left(x\right)=0$ if $\left|x\right|>4R$. 

Obviously, 
\[
\partial_{t}\underline{v}^{\tilde{c}} - d\partial_{xx}\underline{v}^{\tilde{c}} - r\underline{v}^{\tilde{c}}\left(1-\underline{v}^{\tilde{c}}-bu\right)\leq\partial_{t}\underline{v}^{\tilde{c}} - d\partial_{xx}\underline{v}^{\tilde{c}} - r\underline{v}^{\tilde{c}}\left(1-\underline{v}^{\tilde{c}}-b\overline{u}\right),
\]
whence the left-hand side above divided by $\eta\textup{e}^{-\frac{\tilde{c}}{2d}\left(x-\tilde{c}t\right)}$ is 
\textit{a fortiori} smaller than or equal to
\begin{align*}
&\quad \frac{\tilde{c}^{2}}{2d}\psi_{4R}-\tilde{c}\psi_{4R}'-d\left(\psi_{4R}''+\frac{\tilde{c}^{2}}{4d^{2}}\psi_{4R}-\frac{\tilde{c}}{d}\psi_{4R}'\right)-r\psi_{4R}\left(1-\underline{v}^{\tilde{c}}-b\overline{u}\right) \\
& \leq\left(\frac{\tilde{c}^{2}}{4d}+\lambda_{4R}-r+r\left(\underline{v}^{\tilde{c}}+b\overline{u}\right)\right)\psi_{4R} \\
 & \leq\left(\lambda_{4R}+r\left(\underline{v}^{\tilde{c}}+b\overline{u} - \gamma\right)\right)\psi_{4R},
\end{align*}
where the last inequality holds provided we choose the  constant $\gamma>0$ so small that 
\[
2\sqrt{r(1-\gamma)d} > \frac{c}{1-\delta}  \geq \tilde{c}.
\]

Therefore, by choosing $R$ so large that $\lambda_{4R} < r\frac{\gamma}{4}$, $x_2$ so large that 
\[
\bar{u}\left(t,x\right) \leq \frac{\gamma}{4b}\text{ for all }t\geq 0\text{ and }x\geq 2t+x_2 - 4R
\]
(which is possible by Step 1), and $\eta$ so small that 
\[
\eta\sup_{\hat{c}\in\left[\frac{c}{1+\delta}, \frac{c}{1-\delta} \right]}\sup_{\xi\in\left(-4R+x_2,4R+x_2\right)}\left(\textup{e}^{-\frac{\tilde{c}}{2d}\xi}\psi_{4R}\left(\xi-x_{2}\right)\right)\leq\frac{\gamma}{4},
\]
\[
\eta\sup_{\hat{c}\in\left[\frac{c}{1+\delta}, \frac{c}{1-\delta} \right]}\left(\textup{e}^{-\frac{\hat{c}}{2d}\left(x-\hat{c}\right)}\psi_{4R}\left(x-\hat{c}-x_{2}\right)\right)\leq v\left(1,x\right)\text{ for all }x\in\mathbb{R},
\]
we deduce that $\underline{v}^{\tilde{c}}$ is a sub-solution for the single parabolic equation satisfied by $v$. 

By scalar comparison, $v(t,x) \geq \underline{v}^{\tilde{c}}(t,x)$ for all $t \geq 1$ and $x \in \mathbb{R}$. It follows 
then that 
\begin{align*}
v\left(\frac{c}{\tilde{c}}t, x + x_2 + ct\right) &\geq \underline{v}^{\tilde{c}}\left(\frac{c}{\tilde{c}}t, x + x_2 + ct\right)\\
&= \eta e^{- \frac{\tilde c}{2d}(x + x_2)} \psi_{4R}(x)\\
&\geq \eta e^{-\frac{\tilde{c}}{2d}x_2} e^{-\frac{\tilde c}{d}R} \min_{[-2R, 2R]} \psi_{4R}
\end{align*}
for all $t \geq 1$ and $x \in \left[ -2R, 2R\right]$.
Noticing that the last expression on the right-hand side above is constant and denoting
\[
\eta_1=\eta \min_{[-2R, 2R]} \psi_{4R}\inf_{\hat{c}\in\left[\frac{c}{1+\delta}, \frac{c}{1-\delta} \right]}\left(e^{-\frac{\hat{c}}{2d}x_2} e^{-\frac{\hat c}{d}R}\right),
\]
we may take the infimum over all $\tilde{c} \in \left[ \frac{c}{1+\delta}, \frac{c}{1-\delta}\right]$ and obtain indeed (\ref{eq:hair-trigger_prep}).

\textbf{Step 3:} We are now in position to show that, for any small $\varepsilon>0$, 
\[
\lim_{t \to +\infty} \sup_{(2+\varepsilon)t < x < (2\sqrt{rd} - \varepsilon)t} |v\left(t,x\right)-1| = 0.
\]

Assume by contradiction the existence of sequences $\left(t_n\right)_{n\in\mathbb{N}}$, 
$\left(x_n\right)_{n\in\mathbb{N}}$ and of $c \in (2,2\sqrt{rd})$ such that, as $n\to+\infty$,
$t_n\to+\infty$, $\frac{x_n}{t_n}\to c$ and $\limsup v\left(t_n, x_n\right) <1$.

Denote $\left(c_n\right)_{n\in\mathbb{N}}=\left(\frac{x_n}{t_n}\right)_{n\in\mathbb{N}}$,  assume without loss of generality that $\left| \frac{c}{c_n} -1\right| < \delta/2$, where $\delta = \delta(c)$ is specified in Step 2.

For all $n\in\mathbb{N}$, define $\tau_n=\frac{c_n}{c}t_n=\frac{x_n}{c}$ and
\[
v_n:\left(t,x\right)\mapsto v\left(t + \frac{c}{c_n}\tau_n, x + x_2+ c \tau_n\right).
\]
By Step 2 (with $t'=t+\frac{c}{c_n}\tau_n$ and $t = \tau_n$), we deduce that
\[
v_n(t,x) \geq \eta_1 \text{ if }  |x| \leq 2R\text{ and }\left| t + \left(\frac{c}{c_n}-1\right)\tau_n\right| < \delta \tau_n, 
\]
and hence (using $\left| \frac{c}{c_n} -1\right| < \frac{\delta}{2}$) if $|x - x_2| \leq 2R$ 
and $\left| t\right| < \frac{\delta}{2} \tau_n$.

By classical parabolic estimates (see Lieberman \cite{Lieberman_2005}),
$\left(v_{n}\right)_{n\in\mathbb{N}}$ converges up to
a diagonal extraction in $\mathscr{C}_{loc}\left(\mathbb{R}^2,\left[0,1\right]\right)$
to a limit $v_{\infty}$ which satisfies (using the fact that
$u(t+\frac{c}{c_n}\tau_n, x+ c\tau_n) \to 0$ in $\mathscr{C}_{loc}\left(\mathbb{R}^2,\left[0,1\right]\right)$ by Step 1, since $(c\tau_n)/(\frac{c}{c_n}\tau_n) = c_n \geq \frac{c}{1+\delta} > 2$ for all $n$) 
\[
\partial_{t}v_{\infty}-d\partial_{xx}v_{\infty}-rv_{\infty}\left(1-v_{\infty}\right)=0\text{ in }\mathbb{R}^{2}
\]
and, in view of the above estimates, 
\[
v_{\infty}\left(t,x+x_{2}\right)\geq\eta_{1}\text{ for all }t \in \mathbb{R} \text{ and }x\in\left[-2R,2R\right].
\]
By standard classification of the entire solutions of the KPP equation, this implies $v_\infty\equiv 1$. In particular,
$$
v(t_n,x_n) = v\left( \frac{c}{c_n}\tau_n,c\tau_n\right) = v_n(0,-x_2) \to 1.
$$
This directly contradicts $\limsup v(t_n,x_n) <1$.
\end{proof}

In view of \propref{Hair-trigger_effect_vs_extinction}, in order to prove \thmref{Compactly_supported},
we only have to prove that for each sufficiently small $\varepsilon>0$,
\[
\lim\limits _{t\to+\infty}\sup\limits _{x<\left(c^{\star}-\varepsilon\right)t}\left(\left|u\left(t,x\right)-1\right|+\left|v\left(t,x\right)\right|\right)=0,
\]
\[
\lim\limits _{t\to+\infty}\sup\limits _{\left(c^{\star}+\varepsilon\right)t<x<\left(2\sqrt{rd}-\varepsilon\right)t}\left(\left|u\left(t,x\right)\right|+\left|v\left(t,x\right)-1\right|\right)=0,
\]
where
\[
c^{\star}=\max\left(c_{\textup{LLW}},f^{-1}\left(2\sqrt{rd}\right)\right).
\]

\subsubsection{Proof of \thmref{Compactly_supported}}\label{subsec:3.2.2}

We begin with an algebraic lemma.
\begin{lem}
\label{lem:Estimate_speed_w} Let $c_{2}\geq2\sqrt{1-a}$ and $c_{1}>c_{2}$
such that $c_{1}\geq f\left(c_{2}\right)$. Then 
\[
\frac{\left(\Lambda\left(c_{2},c_1\right)\right)^{2}+1}{\Lambda\left(c_{2},c_1\right)}<c_{1}.
\]
\end{lem}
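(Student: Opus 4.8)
The plan is to exploit the quadratic equation that defines $\Lambda(c_2,c_1)$ and reduce the claimed inequality to the positivity of a manifestly positive quantity, so that the lemma becomes a one-line computation.

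First I would recall that, by construction, $\Lambda:=\Lambda\left(c_2,c_1\right)$ is the smaller root of the quadratic
\[
X^{2}-c_1X+\bigl(\lambda\left(c_2\right)\left(c_1-c_2\right)+1\bigr)=0,
\]
and that this root is real precisely because, as established in the heuristic discussion preceding the statement (using $\lambda\left(c_2\right)^{2}-c_2\lambda\left(c_2\right)=-\left(1-a\right)$), the hypothesis $c_1\geq f\left(c_2\right)$ is exactly equivalent to the nonnegativity of the discriminant $c_1^{2}-4\left(\lambda\left(c_2\right)\left(c_1-c_2\right)+1\right)$. I would then note that $\Lambda>0$: the product of the two roots equals $\lambda\left(c_2\right)\left(c_1-c_2\right)+1$, which is $>1$ since $\lambda\left(c_2\right)>0$ and $c_1>c_2$, while their sum equals $c_1>0$; hence both roots are positive. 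All the sign information used here is legitimate because $\lambda$ takes values in $\left(0,\sqrt{1-a}\right]$ and $c_1>c_2\geq2\sqrt{1-a}>0$.

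Next, from $\Lambda^{2}-c_1\Lambda+\lambda\left(c_2\right)\left(c_1-c_2\right)+1=0$ I would isolate $\Lambda^{2}+1=c_1\Lambda-\lambda\left(c_2\right)\left(c_1-c_2\right)$ and divide through by $\Lambda>0$ to obtain
\[
\frac{\Lambda^{2}+1}{\Lambda}=c_1-\frac{\lambda\left(c_2\right)\left(c_1-c_2\right)}{\Lambda},
\]
and since $\lambda\left(c_2\right)>0$, $c_1-c_2>0$ and $\Lambda>0$, the subtracted term is strictly positive, yielding the strict inequality $\frac{\Lambda^{2}+1}{\Lambda}<c_1$. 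There is no substantial obstacle here, the lemma being purely algebraic; the only point deserving a line of justification is that $\Lambda\left(c_2,c_1\right)$ is a well-defined \emph{positive real} number under the stated hypotheses, i.e. that the discriminant is nonnegative (this is where $c_1\geq f\left(c_2\right)$ is used) and that the smaller root is positive rather than zero or negative (this uses $c_1>c_2$).
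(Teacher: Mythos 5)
Your proof is correct and follows essentially the same route as the paper: it uses the defining quadratic identity $\Lambda^{2}-c_{1}\Lambda+\lambda\left(c_{2}\right)\left(c_{1}-c_{2}\right)+1=0$ to reduce the claimed inequality to the strict positivity of $\lambda\left(c_{2}\right)\left(c_{1}-c_{2}\right)$. The extra verification that $\Lambda\left(c_{2},c_{1}\right)$ is a well-defined positive real number is a sound (if brief) addition, implicitly needed in the paper's argument as well.
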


\begin{proof}
First, $\Lambda(c_2,c_1)$ is well-defined as $c_1 \geq \max\{c_2, f(c_2)\}$. Noticing that 
\[
\left(\Lambda\left(c_{2},c_{1}\right)\right)^{2}-c_{1}\Lambda\left(c_{2},c_{1}\right)+\lambda\left(c_{2}\right)\left(c_{1}-c_{2}\right)+1=0,
\]
we find that the claimed inequality is equivalent to 
$-\lambda\left(c_{2}\right)\left(c_{1}-c_{2}\right)<0.$
\end{proof}

Now we prove the remaining part of \thmref{Compactly_supported}.
Assume $2\sqrt{rd}>2$, define $c^{\star}$ as above, fix $u_{0}\in\mathscr{C}\left(\mathbb{R},\left[0,1\right]\right)\backslash\left\{ 0\right\} $
with support included in a left half-line and $v_{0}\in\mathscr{C}\left(\mathbb{R},\left[0,1\right]\right)\backslash\left\{ 0\right\} $
with compact support and let $\left(u,v\right)$ be the solution of
\eqref{competition_diffusion_system}. 
\begin{proof}
By virtue of \thmref{Nonexistence} and \propref{Hair-trigger_effect_vs_extinction}, 
\[
\sup\left\{ c>0\ |\ \lim\limits _{t\to+\infty}\sup\limits _{x\leq ct}\left(\left|u\left(t,x\right)-1\right|+\left|v\left(t,x\right)\right|\right)=0\right\} \geq c^{\star}.
\]
It remains to verify that the quantity 
\[
\overline{c}=\inf\left\{ 0 < c <2\sqrt{rd}\ |\ \lim\limits _{t\to+\infty}\sup\limits _{ct\leq x\leq\frac{2\sqrt{rd}+c}{2}t}\left(\left|u\left(t,x\right)\right|+\left|v\left(t,x\right)-1\right|\right)=0\right\}
\]
satisfies $\overline{c} \leq c^{\star}$. Notice that by \propref{Hair-trigger_effect_vs_extinction}, $\overline{c}\leq 2$.

Assume by contradiction $\overline{c} \in \left(c^{\star},2\right]$ and let $c_{2}\in\left(c^{\star},\overline{c}\right)$.
Define $\delta^{\star}$ as in \propref{Super-solution_compactly_supported}, let 
$\delta\in\left(0,\delta^{\star}\right)$ so small that $c_{2}^{\delta}<\overline{c}$ and define subsequently 
$\left(\overline{u}_{\delta},\underline{v}_{\delta}\right)$.

By standard comparison, 
\[
u\left(t,x\right)\leq\min\left(1,\overline{w_{c_{2},2\sqrt{rd}}}\left(t,x\right)\right)\text{ for all }\left(t,x\right)\in\left(0,+\infty\right)\times\mathbb{R}.
\]

By virtue of \propref{Super-solution_compactly_supported},
there exists $y_0\in\mathbb{R}$ such that, for all $y\geq y_0$ and $t\geq0$, 
\[
\overline{u_{\delta}}\left(0,x-y-\frac{\left(\Lambda\left(c_{2},2\sqrt{rd}\right)\right)^{2}+1}{\Lambda\left(c_{2},2\sqrt{rd}\right)}t\right)\geq\min\left(1,\overline{w_{c_{2},2\sqrt{rd}}}\left(t,x\right)\right)\text{ for all }x\in\mathbb{R},
\]

Since 
\[
c_{2}<\overline{c}\leq2<2\sqrt{rd},
\]
\[
c_{2}>c^{\star}=\max\left(c_{\textup{LLW}},f^{-1}\left(2\sqrt{rd}\right)\right)\geq \max\left(2\sqrt{1-a},f^{-1}\left(2\sqrt{rd}\right)\right),
\]
\lemref{Estimate_speed_w} yields
\[
\frac{\left(\Lambda\left(c_{2},2\sqrt{rd}\right)\right)^{2}+1}{\Lambda\left(c_{2},2\sqrt{rd}\right)}<2\sqrt{rd}.
\]

Choose $c>0$ such that
\[
\max\left(\frac{1}{2}\left(2\sqrt{rd}+\frac{\left(\Lambda\left(c_{2},2\sqrt{rd}\right)\right)^{2}+1}{\Lambda\left(c_{2},2\sqrt{rd}\right)}\right),2\right) < c < 2 \sqrt{rd}.
\]

By virtue of \propref{Super-solution_compactly_supported}, 
$x\mapsto \underline{v_{\delta}}\left(0,x-y-\frac{\left(\Lambda\left(c_{2},2\sqrt{rd}\right)\right)^{2}+1}{\Lambda\left(c_{2},2\sqrt{rd}\right)}t\right)$ 
is compactly supported for all $y\geq y_0$ and $t\geq 0$. 
Since also $2 < c < 2\sqrt{rd}$,  
by virtue of \propref{Hair-trigger_effect_vs_extinction}, there exists $T_0\geq 0$ such that, for all $T\geq T_0$,

\[
\underline{v_{\delta}}\left(0,x-cT\right)\leq v\left(T,x\right)\text{ for all }x\in\mathbb{R}.
\]

Now, relating the parameters $y$ and $T$ as follows, 
\[
cT=y+\frac{\left(\Lambda\left(c_{2},2\sqrt{rd}\right)\right)^{2}+1}{\Lambda\left(c_{2},2\sqrt{rd}\right)}T, \quad \text{ where we have }c > \frac{\left(\Lambda\left(c_{2},2\sqrt{rd}\right)\right)^{2}+1}{\Lambda\left(c_{2},2\sqrt{rd}\right)},
\]
we find the existence of $y\geq y_0$ and $T\geq T_0$ such that
\[
\underline{v_{\delta}}\left(0,x-y-\frac{\left(\Lambda\left(c_{2},2\sqrt{rd}\right)\right)^{2}+1}{\Lambda\left(c_{2},2\sqrt{rd}\right)}T\right)\leq v\left(T,x\right)\text{ for all }x\in\mathbb{R}.
\]
 Then 
\[
\left(\overline{u},\underline{v}\right):\left(t,x\right)\mapsto\left(\overline{u_{\delta}},\underline{v_{\delta}}\right)\left(t-T,x-y-\frac{\left(\Lambda\left(c_{2},2\sqrt{rd}\right)\right)^{2}+1}{\Lambda\left(c_{2},2\sqrt{rd}\right)}T\right)
\]
 is a super-solution of (\ref{eq:competition_diffusion_system}) which
satisfies $\left(u,v\right)\preceq\left(\overline{u},\underline{v}\right)$
at $t=T$, whence by the comparison principle of \thmref{Competitive_comparison_principle}
it satisfies the same inequality at any time $t\geq T$. 

A contradiction follows from \propref{Super-solution_compactly_supported} and $c_{2}^{\delta}<\overline{c}$, as in
the proof of \thmref{Nonexistence}. 
\end{proof}

\subsection{Proof of \thmref{Generalized_terraces}}\label{subsec:3.3}

Let $c_{1}>2\sqrt{rd}$, $c_{2}>c_{\textup{LLW}}$ and assume $c_{1}>\max\left(c_{2},f\left(c_{2}\right)\right)$. 
\begin{proof}
Fix $\delta^{\star}$, $\delta=\frac{\delta^{\star}}{2}$ and $c_{2}^{\delta}$, 
and define the super- and sub-solutions $(\overline{u_\delta},\underline{v_\delta})$ and $(\underline{u_\delta},\overline{v_\delta})$ as in \propref{Super-sub-solution_terraces}. 

First, let $\left(u_{0},v_{0}\right)\in\mathscr{C}\left(\mathbb{R},\left[0,1\right]^{2}\right)$
be a pair such that 
\[
\left(\underline{u_{\delta}},\overline{v_{\delta}}\right)\left(0,x\right)\preceq\left(u_{0},v_{0}\right)\left(x\right)\preceq\left(\overline{u_{\delta}},\underline{v_{\delta}}\right)\left(0,x\right)\text{ for all }x\in\mathbb{R}
\]
and satisfying also 
\[
u_{0}\left(x\right)\le\overline{w_{c_{2},c_{1}}}\left(0,x\right)\text{ for all }x\in\mathbb{R}.
\]
 By virtue of \thmref{Existence_by_comparison}, there exists a (unique)
solution $\left(u,v\right)$ of (\ref{eq:competition_diffusion_system})
such that 
\[
\left(u,v\right)\left(0,x\right)=\left(u_{0},v_{0}\right)\left(x\right)\text{ for all }x\in\mathbb{R},
\]
\[
\left(\underline{u_{\delta}},\overline{v_{\delta}}\right)\left(t,x\right)\preceq\left(u,v\right)\left(t,x\right)\preceq\left(\overline{u_{\delta}},\underline{v_{\delta}}\right)\left(t,x\right)\text{ for all }t\in\left(0,+\infty\right)\text{ and }x\in\mathbb{R},
\]
\[
u\left(t,x\right)\le\min\left(1,\overline{w_{c_{2},c_{1}}}\left(t,x\right)\right)\text{ for all }t\in\left(0,+\infty\right)\text{ and }x\in\mathbb{R}.
\]

Next, in view of the spreading properties of the super-solution and
the sub-solution and thanks to the comparison argument with the ODE
system detailed in the proof of \thmref{Nonexistence},
it only remains to show that the quantity
\[
\bar{c}=\inf\left\{ c>0\ |\ \lim\limits _{t\to+\infty}\sup\limits _{ct\leq x\leq\frac{c_{1}+c}{2}t}\left(\left|u\left(t,x\right)\right|+\left|v\left(t,x\right)-1\right|\right)=0\right\}
\]
satisfies $\bar{c}\leq c_2$.

Now, the choice of super- and sub-solutions above proves that $\bar{c} \in [c_2, c^\delta_2]$. Suppose to the contrary that $\bar{c} > c_2$. Then we can fix a sufficiently small $\delta' \in (0,\delta)$ such that $c^{\delta'}_2 \in (c_2,\bar{c})$. (This is possible since $c^{\delta'}_2 \searrow c_2$ as $\delta' \searrow 0$, by \propref{Super-sub-solution_terraces}.) 
Then, thanks to
\begin{itemize}
\item the estimate $u\le\min\left(1,\overline{w_{c_{2},c_{1}}}\right)$,
\item \lemref{Estimate_speed_w} which controls from above the speed of
$\overline{w_{c_{2},c_{1}}}$,
\item the control from below of the exponential tail of $v$, 
\end{itemize}
we can use the super-solution $\left(\overline{u_{\delta'}},\underline{v_{\delta'}}\right)$
associated with a sufficiently small $\delta'\in\left(0,\delta\right)$
as barrier after some large time $T_{\delta'}$ to slow down the invasion
of $u$ in an impossible way. More precisely, just as in the proof of \thmref{Compactly_supported},
there exist large $T'$ and $y_0$ such that, for all $x\in\mathbb{R}$,
\[
u(T',x) \leq \min\left(1,\overline{w_{c_{2},c_{1}}}(T',x)\right) \leq \overline{u_{\delta'}}(0, x - y_0)
\]
and
\[
v(T',x) \geq \underline{v_{\delta'}}(0,x-y_0).
\]
This implies that 
\[
\left(u\left(t,x\right), v\left(t,x\right)\right) \preceq \left(\overline{u_{\delta'}}\left(t-T', x - y_0\right),\underline{v_{\delta'}}\left(t-T',x-y_0\right)\right)\text{ for all }t \geq T' \text{ and }x \in \mathbb{R} 
\]
and  
$\bar{c} \leq c^{\delta'}_2$, which is a contradiction. This ends the proof.
\end{proof}

\section{Proofs of \propref{Sub-solution_nonexistence}, \propref{Super-solution_compactly_supported}
and \propref{Super-sub-solution_terraces}}



\subsection{Several useful objects}\label{subsec:4.1}

In this subjection, we will define components which will be used for our later constructions. For ease of reading, we suggest the readers to skip Subsection \ref{subsec:4.1} and only refer to it when a specific object is being used.

{\small

\noindent \begin{tabular}{ |p{2cm}||p{3.4cm}|p{2.9cm}|p{3.2cm}|  }
 \hline
 \multicolumn{4}{|c|}{\textbf{List of Objects}} \\
 \hline
\textit{Object(s)} & \textit{Defined in} & \textit{Used in} & \textit{Property}\\
 \hline
 
$f(c)$   & Sect. \ref{subsec:1.1}, (\ref{eq:f})    &  &   $f(c) = c  + 2\sqrt{a}\quad\quad$   $\qquad$ $ \,\qquad -\sqrt{c^2 - 4(1-a)} $\\

\hline

$c_{\textup{nlp}}$   & \thmref{Compactly_supported}(2)    &  &  $c_{\textup{nlp}} > c_{\textup{LLW}}$\\

\hline
 
 $ \lambda(c)$ & Sect. \ref{subsec:1.1.3} &  & $\lambda(c)= \lambda_\delta(c)\big|_{\delta=0}$\\
 \hline
 
  $ \Lambda(c,\tilde{c})$ & Sect. \ref{subsec:1.1.3}; \lemref{Estimate_speed_w} &  & $\Lambda(c,\tilde{c})= \Lambda_\delta(c,\tilde{c})\big|_{\delta=0}$\\
 \hline

  $\overline{w_{c,\tilde{c}}}$ & Sect. \ref{subsec:1.1.3} &  & \\
 \hline

 $\underline{u_{\delta,\zeta,\kappa}},\overline{v_{\delta,\zeta}}$ & \propref{Sub-solution_nonexistence} & Sect. \ref{subsec:3.1}  & Fig. \ref{fig:1.4}\\
 \hline

 $\overline{u_{\delta}},\underline{v_{\delta}}$ & \propref{Super-solution_compactly_supported} & Sect. \ref{subsec:3.2.2}  & Fig. \ref{fig:1.5}\\
 \hline

 $(\overline{u_{\delta}},\underline{v_{\delta}}),(\underline{u_{\delta}},\overline{v_{\delta}})$ & \propref{Super-sub-solution_terraces} & Sect. \ref{subsec:3.3}  & Fig. \ref{fig:1.6a} and Fig. \ref{fig:1.6b}\\
 \hline

$\lambda_v(c)$   & Sect. \ref{subsec:4.1.1}    & Sect. \ref{subsec:4.3.2} and \ref{subsec:4.5.1}&   $d\lambda^2 -c\lambda + r =0$\\

\hline

$a_\delta$ &   Sect. \ref{subsec:4.1.2}  &    & $a_\delta \to a$ as $\delta \to 0$ \\

\hline

$\lambda_\delta(c)$ &Sect. \ref{subsec:4.1.3} & & $\lambda^2 - c\lambda + (1-a_\delta)=0$ \\

\hline

 $c^{\delta}_{\textup{LLW}}$     &Sect. \ref{subsec:4.1.4}; \lemref{Continuity_of_underline_c_delta} & &  Minimal wave speed of $P(u,v) = F_\delta(u,v)$\\

\hline

 $F_\delta$&   Sect. \ref{subsec:4.1.4}  & & \\

\hline

 $(\overline{\varphi_{\delta,c}}, \underline{\psi_{\delta,c}})$ & Sect. \ref{subsec:4.1.5}  & Sect. \ref{subsec:4.3.1} and \ref{subsec:4.4.1}   & Monotone wave profile of $P(u,v) = F_\delta(u,v)$ \\

\hline

 $\underline{\theta_{\delta,c,A}}$ & Sect. \ref{subsec:4.1.6}; \lemref{theta_sub-solution}  & Sect. \ref{subsec:4.3.1} and \ref{subsec:4.4.1} &  (\ref{eq:theta_sub-solution})\\

\hline

 $\underline{\omega_{\delta,R}}$, $R^{\omega}_{\delta}$, $x_{\delta,R}$ & Sect. \ref{subsec:4.1.7}; \lemref{estimates_omega}  & Sect. \ref{subsec:4.4.1}& (\ref{eq:4.1.7}) \\
 \hline
 
 $\underline{\pi_{\delta,c,h}}$, $h^\star$ & Sect. \ref{subsec:4.1.8}; \lemref{pi_subsolution_and_estimate} & Sect. \ref{subsec:4.3.1} & (\ref{eq:4.1.8})\\
 
  \hline
 
 $\underline{\beta_{c,B,\eta}}, \xi_\beta, K_\beta$ & Sect. \ref{subsec:4.1.9}; \lemref{beta_subsolution} & Sect. \ref{subsec:4.3.1} & Exp. decay of $\underline{v}$ at $+\infty$\\
 
  \hline
  
 $\underline{\alpha_l}, L_\alpha, x_l$ & Sect. \ref{subsec:4.1.10}; \lemref{estimates_alpha} & Sect. \ref{subsec:4.5.1} & (\ref{eq:4.1.10})\\  
 \hline
 
 $\underline{\chi_c}$ & Sect. \ref{subsec:4.1.11} & Sect. \ref{subsec:4.3.2} and \ref{subsec:4.5.1} & (\ref{eq:4.1.11}) \\
 \hline
 
 $f_\delta(c), \Lambda_\delta(c,\tilde{c})$ & Sect. \ref{subsec:4.1.12} & Beginning, Sect. \ref{subsec:4.4} & (\ref{eq:Lambda_delta}) \\
 \hline

 $\overline{w_{\delta,c,\tilde{c}}}$ & Sect. \ref{subsec:4.1.13}; \lemref{w_super-solution} & Sect. \ref{subsec:4.3.1} and \ref{subsec:4.4.1} & (\ref{eq:w_super-solution}) \\
 \hline

 $\underline{w_{c,\tilde{c},A,\eta}}, X_w$ & Sect. \ref{subsec:4.1.14}; \lemref{w_sub-solution} & Sect. \ref{subsec:4.3.2} & (\ref{eq:w_sub-solution}) \\ 
 
 \hline
 
 $\underline{z_{\delta,c,\tilde{c}}}, R_z$ & Sect. \ref{subsec:4.1.15}; \lemref{z_sub-solution} & Sect. \ref{subsec:4.5.1} & (\ref{eq:z_sub-solution}) \\ 
 
 \hline
 
 $\lambda^{-\infty}(c)$ & \lemref{Exponential_estimates_super-critical_waves} & & \\ 
 
 \hline

 \end{tabular}
 
 }
 
 \smallskip
 
 {\small
 
\noindent  \begin{tabular}{ |p{1.4cm}||p{1.8cm}|p{2.1cm}|p{6.2cm}|  }
 \hline
 \multicolumn{4}{|c|}{\textbf{List of Intersection Points}} \\
 \hline
\textit{Symbol} & \textit{Defined in} & \textit{Used in} & \textit{Relation}\\
 \hline
$x_0(t)$, $\zeta_0$ & \lemref{interface_0} & Sect. \ref{subsec:4.3.2} &  $\underline{\chi_c}(x_0(t) - ct + \zeta_0) = \underline{w_{c,\tilde{c},A,\eta}}(t,x_0(t))$ \\ 

\hline 

$\xi_{1,\kappa}, \zeta_{1,\kappa}$, $A_\kappa$ & \lemref{interface_1} & Sect. \ref{subsec:4.3.1} and Sect. \ref{subsec:4.4.1} &  $\underline{\theta_{\delta,c,A_\kappa}}(\xi_{1,\kappa}) = \underline{\psi_{\delta,c}}(\xi_{1,\kappa}-\zeta_{1,\kappa})$ \\

\hline

$x_2(t), \zeta_2$ & \lemref{interface_2} & Sect. \ref{subsec:4.3.1} and Sect. \ref{subsec:4.4.1} &  $\overline{\varphi_{\delta,c}}(x_2(t) - ct)= \overline{w_{\delta,c,\tilde{c}}}(t, x_2(t) - \zeta_2)$ \\

\hline

\multirow{2}{*}{$x_3(t), \zeta_3$}& \lemref{interface_3_small_c_1} & Sect. \ref{subsec:4.4.1} & 
 ${\underline{\psi_{\delta,c}}(\hat{x}_3(t) - ct)} \hspace{5cm}$ $=\underline{\omega_{\delta,R_\delta}}(\hat{x}_3(t) - (2\sqrt{r(1-2\delta)d} - \delta) t - \hat{\zeta}_3)$ 
\\

\cline{2-4}

    & \lemref{interface_3_large_c_1} & Sect. \ref{subsec:4.3.1} &  $\underline{\psi_{\delta,c}}(x_3(t) - ct) = \underline{\pi_{\delta,\tilde{c},h}}(x_3(t) - \tilde{c}t - \zeta_3)$ \\

\hline

$\xi_4, \zeta_4$ & \lemref{interface_4} & Sect. \ref{subsec:4.3.1} &  $\underline{\pi_{\delta,c,h}}(\xi_4) = \underline{\beta_{c,B,\eta}}(\xi_4 - \zeta_4)$\\

\hline

$x_{0,\kappa}(t), \zeta_0$ & \lemref{interface_0_nonexistence} & Sect. \ref{subsec:4.5.1} & $\underline{\alpha_L}(x_{0,\kappa}(t)) = \underline{\chi_c}(x_{0,\kappa}(t) - ct - \zeta_0)$\\

\hline

$x_1(t)$ & \lemref{interface_1_nonexistence} & Sect. \ref{subsec:4.5.1} & $\underline{\chi_c}(x_1(t)-ct) = \frac{\underline{\chi_c}(\zeta)}{\underline{z_{c,\tilde{c},\delta}}(0,X_z)}\underline{z_{c,\tilde{c},\delta}}(t, x_1(t) - \zeta)$\\

\hline
 
\end{tabular}
}

\subsubsection{$\lambda_{v}$}\label{subsec:4.1.1}

The function $\lambda_{v}$ is defined as

\[
\begin{matrix}\lambda_{v}: & \left[2\sqrt{rd},+\infty\right) & \to & \left(0,\sqrt{\frac{r}{d}}\right]\\
 & c & \mapsto & \frac{1}{2d}\left(c-\sqrt{c^{2}-4rd}\right)
\end{matrix}.
\]

\subsubsection{$a_{\delta}$} \label{subsec:4.1.2}
For all $\delta\in\left[0,\frac{1}{2}\right)$, we denote
\[
a_{\delta}=\frac{\left(1-2\delta\right)a}{1+\delta}.
\]
Notice that $a_0=a$ and that $\delta\mapsto a_{\delta}$ is decreasing.

\subsubsection{$\lambda_{\delta}$} \label{subsec:4.1.3}

For all $\delta\in\left[0,\frac{1}{2}\right]$, the function $\lambda_{\delta}$
is defined as
\[
\begin{matrix}\lambda_{\delta}: & \left[2\sqrt{1-a_{\delta}},+\infty\right) & \to & \left(0,\sqrt{1-a_{\delta}}\right]\\
 & c & \mapsto & \frac{1}{2}\left(c-\sqrt{c^{2}-4\left(1-a_{\delta}\right)}\right)
\end{matrix}.
\]
The family $\left(\lambda_{\delta}\right)_{\delta\in\left[0,\frac{1}{2}\right]}$
is continuous and increasing in $\delta$. Note that $\lambda_{0}=\lambda$, the latter being introduced in \subsecref{1.1.3}.

\subsubsection{$c_{\textup{LLW}}^{\delta}$}\label{subsec:4.1.4}

For all $\delta\in\left[0,\frac{1}{2}\right)$, $c_{\textup{LLW}}^{\delta}$
denotes the minimal wave speed of the problem $P\left(u,v\right)=F_{\delta}\left(u,v\right)$,
where 
\[
F_{\delta}:\left(u,v\right)\mapsto\left(\begin{matrix}u\left(1+\delta-u-av\right)\\
rv\left(1-2\delta-v-bu\right)
\end{matrix}\right).
\]

Notice that $\left(u,v\right)$ is a solution of $P\left(u,v\right)=F_{\delta}\left(u,v\right)$
if and only if 
\[
\left(U,V\right):\left(t,x\right)\mapsto\left(\frac{u}{1+\delta},\frac{v}{1-2\delta}\right)\left(\frac{t}{1+\delta},\frac{x}{\sqrt{1+\delta}}\right)
\]
 is a solution of 
\[
P\left(U,V\right)=\left(\begin{matrix}U\left(1-U-\frac{\left(1-2\delta\right)a}{1+\delta}V\right)\\
\frac{\left(1-2\delta\right)r}{1+\delta}V\left(1-V-\frac{\left(1+\delta\right)b}{1-2\delta}U\right)
\end{matrix}\right).
\]
Therefore
\[
c_{\textup{LLW}}^{\delta} = \sqrt{1+\delta}\hat{c}_{\textup{LLW}}^{\delta},
\]
where $\hat{c}_{\textup{LLW}}^{\delta}$ is the minimal wave speed of \eqref{related_competition_diffusion_system}
where $\left(r,a,b\right)$ is replaced by $\left(\frac{\left(1-2\delta\right)r}{1+\delta},a_{\delta},\frac{\left(1+\delta\right)b}{1-2\delta}\right)$.
As such, $c^\delta_{\textup{LLW}}$ satisfies 
\[
2\sqrt{\left(1-a_{\delta}\right)\left(1+\delta\right)}\leq c_{\textup{LLW}}^{\delta}\leq 2\sqrt{1+\delta}.
\]

\subsubsection{$\left(\overline{\varphi_{\delta,c}},\underline{\psi_{\delta,c}}\right)$}\label{subsec:4.1.5}

For all $\delta\in\left[0,\frac{1}{2}\right)$ and $c\geq c_{\textup{LLW}}^{\delta}$,
$\left(\overline{\varphi_{\delta,c}},\underline{\psi_{\delta,c}}\right)$
denotes a component-wise monotonic profile of traveling wave with
speed $c$ for the problem $P\left(u,v\right)=F_{\delta}\left(u,v\right)$, connecting $\left(1+\delta,0\right)$ to 
$\left(0,1-2\delta\right)$ and satisfying the normalization $\underline{\psi_{\delta,c}}\left(0\right)=\frac{1-2\delta}{2}$.

The existence of such a profile is well-known (and proved for instance in \cite{Li_Weinberger_}). 
In fact, in the appendix, we will prove that any profile of traveling wave is component-wise monotonic and show that the 
condition 
\[
\frac{c+\sqrt{c^2+4rd}}{2d}\geq\frac{c-\sqrt{c^2-4\left(1-a\right)}}{2}
\]
implies the uniqueness, up to translation, of the profile associated with a particular speed $c\geq c_{\textup{LLW}}$. However
these results are not actually required here. What is required indeed is the forthcoming \lemref{Exponential_estimates_super-critical_waves}.

\subsubsection{$\underline{\theta_{\delta,c,A}}$}\label{subsec:4.1.6}

For all $\delta\in\left[0,\frac{1}{2}\right)$, $c\geq c_{\textup{LLW}}^{\delta}$
and $A>0$, define the function 
\[
\underline{\theta_{\delta,c,A}}:\xi\mapsto A\textup{e}^{\frac{1}{2d}\left(\sqrt{c^{2}+4rd\left(b-1+\delta\right)}-c\right)\left(\xi-\xi_{\theta}\right)}-\textup{e}^{\frac{1}{2d}\left(-\sqrt{c^{2}+4rd\left(b-1+\delta\right)}-c\right)\left(\xi-\xi_{\theta}\right)}
\]
where the constant 
\[
\xi_{\theta}=\frac{d\ln A}{\sqrt{c^{2}+4rd\left(b-1+\delta\right)}}
\]
is fixed so that $\underline{\theta_{\delta,c,A}}\left(0\right)=0$.
This function is increasing in $\mathbb{R}$.

\subsubsection{$\underline{\omega_{\delta,R}}$ and $R_{\delta}^{\omega}$}\label{subsec:4.1.7}

For all $\delta\in\left[0,1\right)$ and all $R>0$ large enough,
$\underline{\omega_{\delta,R}}:\left[-R,R\right]\to\left[0,+\infty\right)$
denotes the unique nonnegative nonzero solution of
\begin{equation}\label{eq:4.1.7}
\left\{ \begin{matrix}-d\omega''-\left(2\sqrt{r\left(1-\delta\right)d}-\delta\right)\omega'=r\omega\left(1-\delta-\omega\right) & \text{in }\left(-R,R\right)\\
\omega\left(\pm R\right)=0
\end{matrix}\right..
\end{equation}
It is well-known that this problem admits a solution if and only if
$R$ is larger than or equal to some $R_{\delta}^{\omega}>0$. 

We extend the definition of $\underline{\omega_{\delta,R}}$ into
the whole real line by setting $\underline{\omega_{\delta,R}}\left(\xi\right)=0$
if $\left|\xi\right|>R$.

\subsubsection{$\underline{\pi_{\delta,c,h}}$}\label{subsec:4.1.8}

For all $\delta\in\left[0,1\right)$, $c\geq 2\sqrt{rd}$ and all $h \in \mathbb{R}$, $\underline{\pi_{\delta,c,h}}$ denotes 
\begin{equation}\label{eq:4.1.8}
\underline{\pi_{\delta,c,h}}:(\xi)\mapsto \underline{\pi_{\delta,c}}(\xi)+ h\xi,
\end{equation}
where $\underline{\pi_{\delta,c}}$ 
denotes the unique (decreasing) profile of traveling wave solution of 
\[
\partial_{t}v-d\partial_{xx}v=rv\left(1-\delta-v\right)
\]
connecting $0$ to $1-\delta$ at speed $c$ and satisfying $\underline{\pi_{\delta,c}}\left(0\right)=\frac{1-\delta}{2}$.

\subsubsection{$\underline{\beta_{c,B,\eta}}$}\label{subsec:4.1.9}

For all $c>2\sqrt{rd}$, $\eta\in\left(0,\frac{1}{d}\sqrt{c^{2}-4rd}\right)$
and $B>0$, $\underline{\beta_{c,B,\eta}}$ denotes 
\[
\underline{\beta_{c,B,\eta}}:\xi\mapsto\max\left(0,\textup{e}^{-\lambda_{v}\left(c\right)\left(\xi+\xi_{\beta}\right)}-K_{\beta}\textup{e}^{-\left(\lambda_{v}\left(c\right)+\eta\right)\left(\xi+\xi_{\beta}\right)}\right),
\]
where the constants 
\[
K_{\beta}=\frac{r\left(1+bB\right)}{\eta\left(\sqrt{c^{2}-4rd}-d\eta\right)}
\]
and $\xi_{\beta}=\frac{\ln K_{\beta}}{\eta}$ are fixed so that $\underline{\beta_{c,B,\eta}}$
is positive in $\left(0,+\infty\right)$ and null elsewhere.

\subsubsection{$\underline{\alpha_{l}}$ and $L_{\alpha}$}\label{subsec:4.1.10}

Similarly to the construction of $\underline{\omega_{\delta,R}}$
and $R_{\delta}^{\omega}$, we define $\underline{\alpha_{l}}:\mathbb{R}\to\left[0,+\infty\right)$
and $L_{\alpha}>0$ such that, for all $l\geq L_{\alpha}$, 
\begin{equation}\label{eq:4.1.10}
\left\{ \begin{matrix}-\underline{\alpha_{l}}''=\underline{\alpha_{l}}\left(1-a-\underline{\alpha_{l}}\right) & \text{in }\left(0,l\right)\\
\underline{\alpha_{l}}\left(0\right)=\underline{\alpha_{l}}\left(l\right)=0\\
\underline{\alpha_{l}}\left(x\right)=0 & \text{if }\left|x-\frac{l}{2}\right|>\frac{l}{2}
\end{matrix}\right..
\end{equation}

\subsubsection{$\underline{\chi_{c}}$}\label{subsec:4.1.11}

For all $c\geq2\sqrt{\frac{1-a}{2}}$, $\underline{\chi_{c}}$ denotes
the unique (decreasing) profile of traveling wave solution of 
\begin{equation}\label{eq:4.1.11}
\partial_{t}u-\partial_{xx}u=u\left(\frac{1-a}{2}-u\right)
\end{equation}
connecting $0$ to $\frac{1-a}{2}$ at speed $c$ and satisfying $\underline{\chi_{c}}\left(0\right)=\frac{1-a}{4}$.

\subsubsection{$f_{\delta}\left(c\right)$ and $\Lambda_{\delta}\left(c,\tilde{c}\right)$}\label{subsec:f_delta_Lambda_delta}\label{subsec:4.1.12}

For all $\delta\in\left[0,\frac{1}{2}\right)$, $f_{\delta}$ denotes
the function 
\[
\begin{matrix}f_{\delta}: & \left[2\sqrt{1-a_{\delta}},+\infty\right) & \to & \left(0,2\left(\sqrt{1-a_{\delta}}+\sqrt{a_{\delta}}\right)\right]\\
 & c & \mapsto & c-\sqrt{c^{2}-4\left(1-a_{\delta}\right)}+2\sqrt{a_{\delta}}
\end{matrix}.
\]
 Notice right now that provided $\tilde{c}-f_{\delta}\left(c\right)>-4\sqrt{a_{\delta}}$,
$\tilde{c}-f_{\delta}\left(c\right)$ has exactly the sign of $\tilde{c}^{2}-4\left(\lambda_{\delta}\left(c\right)\left(\tilde{c}-c\right)+1\right)$.
Indeed, by the fact that $\left(\lambda_{\delta}\left(c\right)\right)^2 - c\lambda_{\delta}\left(c\right) + 1-a_{\delta}=0$,
\begin{align*}
\tilde{c}^{2}-4\left(\lambda_{\delta}\left(c\right)\left(\tilde{c}-c\right)+1\right) & =\left(\tilde{c}-2\lambda_{\delta}\left(c\right)\right)^{2}-4\left(1-\lambda_{\delta}\left(c\right)c+\left(\lambda_{\delta}\left(c\right)\right)^{2}\right)\\
 & =\left(\tilde{c}-c+\sqrt{c^{2}-4\left(1-a_{\delta}\right)}\right)^{2}-4a_{\delta}\\
 & =\left(\tilde{c}-f_{\delta}\left(c\right)+2\sqrt{a_{\delta}}\right)^{2}-\left(2\sqrt{a_{\delta}}\right)^2\\
 & =\left(\tilde{c}-f_{\delta}\left(c\right)\right)\left(\tilde{c}-f_{\delta}\left(c\right)+4\sqrt{a_{\delta}}\right).
\end{align*}

For all $\delta\in\left[0,\frac{1}{2}\right)$, $\Lambda_{\delta}$
denotes the function
\[
\Lambda_{\delta}:\left(c,\tilde{c}\right)\mapsto\frac{1}{2}\left(\tilde{c}-\sqrt{\tilde{c}^{2}-4\left(\lambda_{\delta}\left(c\right)\left(\tilde{c}-c\right)+1\right)}\right).
\]
 Its domain is the set of all $\left(c,\tilde{c}\right)$ such that
$c\geq2\sqrt{1-a_{\delta}}$ and $\tilde{c}\geq\max\left(c,f_{\delta}\left(c\right)\right)$
and it is decreasing with respect to both $c$ and $\tilde{c}$. As
a function of $c$ only, with a fixed $\tilde{c}$, it bijectively maps $\left[2\sqrt{1-a_{\delta}},+\infty\right)$ onto 
\[
\left(\frac{1}{2}\left(\tilde{c}-\sqrt{\tilde{c}^{2}-4\left(a_{\delta}+1\right)}\right),\frac{1}{2}\left(\tilde{c}-\sqrt{\tilde{c}^{2}-4\left(\tilde{c}\sqrt{1-a_{\delta}}+2a_{\delta}-1\right)}\right)\right].
\] 
The family $\left(\Lambda_{\delta}\right)_{\delta\in\left[0,\frac{1}{2}\right)}$
is increasing. Recalling the earlier definition of $\Lambda$, we find $\Lambda_0 = \Lambda$. 

Finally, by construction, for all $\left(c,\tilde{c}\right)$
such that $c\geq2\sqrt{1-a_{\delta}}$ and $\tilde{c}\geq\max\left(c,f_{\delta}\left(c\right)\right)$,
$\Lambda_{\delta}\left(c,\tilde{c}\right)$ satisfies
\begin{equation}
\left(\Lambda_{\delta}\left(c,\tilde{c}\right)\right)^{2}-\tilde{c}\Lambda_{\delta}\left(c,\tilde{c}\right)+\lambda_{\delta}\left(c\right)\left(\tilde{c}-c\right)+1=0.\label{eq:Lambda_delta}
\end{equation}

\subsubsection{$\overline{w_{\delta,c,\tilde{c}}}$}\label{subsec:4.1.13}

For all $\delta\in\left[0,\frac{1}{2}\right)$, $c\geq c_{\textup{LLW}}^{\delta}$
and $\tilde{c}\geq\max\left(c,f_{\delta}\left(c\right)\right)$, $\overline{w_{\delta,c,\tilde{c}}}$
denotes the function
\[
\overline{w_{\delta,c,\tilde{c}}}:\left(t,x\right)\mapsto\textup{e}^{-\lambda_{\delta}\left(c\right)\left(\tilde{c}-c\right)t}\textup{e}^{-\Lambda_{\delta}\left(c,\tilde{c}\right)\left(x-\tilde{c}t\right)}.
\]
 In view of (\ref{eq:Lambda_delta}),
\[
\overline{w_{\delta,c,\tilde{c}}}\left(t,x\right)=\textup{e}^{\left(\left(\Lambda_{\delta}\left(c,\tilde{c}\right)\right)^{2}+1\right)t}\textup{e}^{-\Lambda_{\delta}\left(c,\tilde{c}\right)x}\text{ for all }\left(t,x\right)\in\left[0,+\infty\right)\times\mathbb{R}.
\]
Recalling the earlier definition of $\overline{w_{c,\tilde{c}}}$, we find $\overline{w_{c,\tilde{c}}}=\overline{w_{0,c,\tilde{c}}}$.

\subsubsection{$\underline{w_{c,\tilde{c},A,\eta}}$}\label{subsec:4.1.14}

For all $c\geq c_{\textup{LLW}}$, $\tilde{c}\geq c$ such that $\tilde{c}>f\left(c\right)$,
$\eta\in\left(0,\min\left(\Lambda\left(c,\tilde{c}\right),\sqrt{\tilde{c}^{2}-4\left(\lambda\left(c\right)\left(\tilde{c}-c\right)+1\right)}\right)\right)$
and $A>0$, $\underline{w_{c,\tilde{c},A,\eta}}$ denotes 
\[
\underline{w_{c,\tilde{c},A,\eta}}:\left(t,x\right)\mapsto\textup{e}^{-\lambda\left(c\right)\left(\tilde{c}-c\right)t}\max\left(0,\textup{e}^{-\Lambda\left(c,\tilde{c}\right)\left(x-\tilde{c}t+x_{w}\right)}-K_{w}\textup{e}^{-\left(\Lambda\left(c,\tilde{c}\right)+\eta\right)\left(x-\tilde{c}t+x_{w}\right)}\right),
\]
where 
\[
K_{w}=\max\left(1,\frac{1+aA}{\eta\left(\sqrt{\tilde{c}^{2}-4\left(\lambda\left(c\right)\left(\tilde{c}-c\right)+1\right)}-\eta\right)}\right)=\max\left(1,\frac{1+aA}{\eta\left(\tilde{c}-\eta-2\Lambda_{0}\left(c,\tilde{c}\right)\right)}\right)
\]
and $x_{w}=\frac{\ln K_{w}}{\eta}$ is fixed so that, for all $t\geq0$,
$x\mapsto\underline{w_{c,\tilde{c},A,\eta}}\left(t,x\right)$ is positive
in $\left(\tilde{c}t,+\infty\right)$, null elsewhere, increasing
in $\left(\tilde{c}t,\frac{\ln\left(\Lambda\left(c,\tilde{c}\right)+\eta\right)-\ln\left(\Lambda\left(c,\tilde{c}\right)\right)}{\eta}+\tilde{c}t\right)$
and decreasing in $\left(\frac{\ln\left(\Lambda\left(c,\tilde{c}\right)+\eta\right)-\ln\left(\Lambda\left(c,\tilde{c}\right)\right)}{\eta}+\tilde{c}t,+\infty\right)$.
Hereafter, the point where the global maximum is attained at $t=0$
is denoted $X_{w}$.

\subsubsection{$\underline{z_{\delta,c,\tilde{c}}}$}\label{subsec:4.1.15}

For all $c\geq c_{\textup{LLW}}$, $\tilde{c}\in\left(f\left(c\right)-4\sqrt{a},f\left(c\right)\right)$
and $\delta\in\left[0,\frac{1}{4}\left(-\tilde{c}^{2}+4\left(\lambda\left(c\right)\left(\tilde{c}-c\right)+1\right)\right)\right)$,
$\underline{z_{c,\tilde{c},\delta}}$ denotes the function defined
by
\[
\underline{z_{\delta,c,\tilde{c}}}\left(t,x\right)=\left\{ \begin{matrix}\textup{e}^{-\lambda\left(c\right)\left(\tilde{c}-c\right)t}\textup{e}^{-\frac{\tilde{c}}{2}\left(x-\tilde{c}t\right)}\sin\left(\frac{\pi}{2R_{z}}\left(x-\tilde{c}t\right)\right) & \text{if }x-\tilde{c}t\in\left[0,2R_{z}\right]\\
0 & \text{otherwise}
\end{matrix}\right..
\]
 where 
\[
R_{z}=\frac{\pi}{\sqrt{-\tilde{c}^{2}+4\left(\lambda\left(c\right)\left(\tilde{c}-c\right)+1-\delta\right)}}.
\]
Hereafter, the point where the global maximum is attained at $t=0$
is denoted $X_{z}$.

\subsection{Several useful lemmas}
\begin{lem}
\label{lem:Exponential_estimates_super-critical_waves} Let $c>c_{\textup{LLW}}$ and $\left(\varphi,\psi\right)$ be a profile 
of traveling wave solution of (\ref{eq:competition_diffusion_system}) with speed $c$. 

Then there exist $A>0$ and $B>0$ such that
\[
\varphi\left(\xi\right)=A\textup{e}^{-\lambda\left(c\right)\xi}+\text{h.o.t. as }\xi\to+\infty
\]
and
\[
\psi\left(\xi\right)=B\textup{e}^{\lambda^{-\infty}\left(c\right)\xi}+\text{h.o.t. as }\xi\to-\infty
\]
where
\[
\lambda^{-\infty}\left(c\right)=\frac{1}{2d}\left(\sqrt{c^{2}+4rd\left(b-1\right)}-c\right),
\]
\end{lem}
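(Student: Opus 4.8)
The plan is to linearize the traveling-wave profile equations at the two limiting states $(1,0)$ and $(0,1)$ and to read the tail rates off those linearizations. Viewing the profile as an orbit of the first-order system $\varphi'=p$, $p'=-cp-\varphi(1-\varphi-a\psi)$, $\psi'=q$, $dq'=-cq-r\psi(1-\psi-b\varphi)$, this orbit is heteroclinic from $(1,0,0,0)$ as $\xi\to-\infty$ to $(0,0,1,0)$ as $\xi\to+\infty$, and $\lambda(c)$, $\lambda^{-\infty}(c)$ are respectively the smaller modulus of the two negative roots of the linearized $\varphi$-equation at $(0,1)$ and the positive root of the linearized $\psi$-equation at $(1,0)$ (each of these blocks being autonomous at leading order), which is why they control the respective tails. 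Before anything else I would record the standard facts: $0<\varphi<1$ and $0<\psi<1$ on $\mathbb{R}$ (maximum principle, via $w=1-\psi$ which turns the system cooperative); the profile is component-wise monotone with $(\varphi,\psi)(-\infty)=(1,0)$, $(\varphi,\psi)(+\infty)=(0,1)$; and, because $c>c_{LLW}\geq 2\sqrt{1-a}$, both end states are hyperbolic rest points of the first-order system, so the orbit tends to them exponentially fast — in particular $\varphi$, $1-\psi$ and their derivatives decay exponentially as $\xi\to+\infty$, while $1-\varphi$, $\psi$ and their derivatives decay exponentially as $\xi\to-\infty$.

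The estimate on $\psi$ at $-\infty$ is the easy half. At $(1,0)$ the $\psi$-equation decouples from $\varphi$ at leading order, since $\partial_{u}\big(rv(1-v-bu)\big)=-rbv$ vanishes at $v=0$: near $-\infty$, $\psi$ solves $d\psi''+c\psi'-r(b-1)\psi=g$ where $g=-r\psi\big(b(1-\varphi)-\psi\big)$ is a product of two exponentially small factors, hence $g=o\big(\text{e}^{\lambda^{-\infty}(c)\xi}\big)$ as $\xi\to-\infty$. The homogeneous roots are $\lambda^{-\infty}(c)=\frac{1}{2d}\big(\sqrt{c^{2}+4rd(b-1)}-c\big)>0$ and a negative one; boundedness of $\psi$ near $-\infty$ forces the coefficient of the negative-rate mode to vanish, and a classical asymptotic theorem for linear second-order equations with exponentially converging coefficients then gives $\psi(\xi)=B\,\text{e}^{\lambda^{-\infty}(c)\xi}+\text{h.o.t.}$ As for $B>0$: $B\geq 0$ is clear, and $B=0$ would make $\psi$ decay strictly faster, upon which the same analysis applied to the now even smaller forcing would force $\psi$ to decay faster than every exponential — impossible for a nonzero solution of a linear second-order ODE with bounded coefficients.

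The estimate on $\varphi$ at $+\infty$ is the serious half. I would write the $\varphi$-equation as $\varphi''+c\varphi'+(1-a)\varphi=\varphi\big(\varphi-a(1-\psi)\big)$; since $c>2\sqrt{1-a}$ the homogeneous operator has two distinct negative roots, the slow $-\lambda(c)$ and the fast $-(c-\lambda(c))$, and the right-hand side is exponentially small. The same asymptotic theorem yields either $\varphi(\xi)=A\,\text{e}^{-\lambda(c)\xi}+\text{h.o.t.}$ with $A\geq 0$, or decay of $\varphi$ at the strictly faster rate $c-\lambda(c)$; excluding the latter is the whole game (it also yields $A\neq 0$). The model is the scalar argument: for a KPP wave, $\varphi''+c\varphi'+(1-a)\varphi=\varphi^{2}\geq 0$ makes $\xi\mapsto\text{e}^{(c-\lambda(c))\xi}\big(\varphi'+\lambda(c)\varphi\big)$ nondecreasing on all of $\mathbb{R}$, so fast decay would force $\varphi'+\lambda(c)\varphi<0$ everywhere, that is, $\text{e}^{\lambda(c)\xi}\varphi$ strictly decreasing, which contradicts $\text{e}^{\lambda(c)\xi}\varphi\to 0$ as $\xi\to-\infty$. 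For the system the forcing $\varphi\big(\varphi-a(1-\psi)\big)$ is not sign-definite (near $-\infty$ it is $\approx 1-a>0$), so this global monotonicity is lost. I would therefore first locate the decay rate of $1-\psi$ at $+\infty$, using its own linearized equation $d(1-\psi)''+c(1-\psi)'-r(1-\psi)=-rb\varphi+\text{h.o.t.}$ and boundedness (giving that $1-\psi$ decays no slower than $\min\{c-\lambda(c),|\mu_{-}|\}$, with $\mu_{-}<0$ the negative root of $d\mu^{2}+c\mu-r=0$), thereby fixing the sign of $\varphi-a(1-\psi)$ for $\xi$ large, and then run the monotonicity argument on a half-line supplemented by a direct analysis of the four-dimensional linearization at $(0,0,1,0)$ — showing the heteroclinic orbit is not tangent to the strong stable manifold there — or, alternatively, replace the whole step by a singularity analysis of the one-sided Laplace transform of $\varphi$.

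The step I expect to be the real obstacle is exactly this exclusion of fast decay for $\varphi$: the competitive coupling makes the scalar trick fail as stated, and a half-line version alone produces no contradiction, so one genuinely needs the preliminary control of the $\psi$-tail at $+\infty$ together with a non-degeneracy (transversality) input on the heteroclinic orbit, whose verification — tracking the interplay of the two linearized blocks in the various parameter sub-cases — is the delicate point.
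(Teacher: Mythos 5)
Your treatment of the $-\infty$ tail of $\psi$, and your reduction of the $+\infty$ behavior of $\varphi$ to a dichotomy ``slow rate $\lambda(c)$ with $A\geq 0$, or strictly faster rate $c-\lambda(c)$'', are sound and correspond to what the paper obtains in \lemref{Exponential_decays_profiles}, \corref{decays_near_stable_state} and \corref{decays_near_unstable_state} by a phase-plane classification (quoted from Kan-on and Morita--Tachibana). But the actual content of the lemma is the exclusion of the fast decay for $\varphi$ when $c>c_{LLW}$ (the paper's \corref{Precise_decays_near_unstable_state}), and on this point your proposal does not give a proof: you offer two candidate strategies (non-tangency of the heteroclinic orbit to the strong stable manifold at $(0,0,1,0)$, or a Laplace-transform singularity analysis) and then explicitly flag their verification as the unresolved ``real obstacle''. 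The first strategy is moreover circular as stated, since ``the orbit is not tangent to the strong stable manifold'' is exactly a reformulation of the claim $A>0$, not an input one can invoke; and no purely local analysis at the rest point can succeed, because at $c=c_{LLW}$ the fast (pushed) decay can genuinely occur when linear determinacy fails, so the hypothesis $c>c_{LLW}$ must enter in a global way, which your outline never arranges.

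The paper's mechanism for this step is different in kind and worth knowing: it is a global two-profile sliding comparison, \lemref{ordering_of_the_decays_of_the_profiles}, which says that for two profiles with speeds $c\leq\hat c$ it is impossible that \emph{both} components of the faster wave's tails at $+\infty$ be $o(\,\cdot\,)$ of the corresponding tails of the slower wave. Combining this with the classification of admissible rates and the monotonicity in $c$ of $\lambda_{1,P}^{+\infty}$, $\lambda_{2,P}^{+\infty}$, $\lambda_{3,P}^{+\infty}$, a super-critical profile with the fast rate $\lambda_{2,P}^{+\infty}(c)=c-\lambda(c)$ would have both tails strictly dominated by those of the minimal-speed wave, a contradiction; this is where $c>c_{LLW}$ is used. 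Your outline contains no substitute for this comparison ingredient (nor for the transversality claim), so the central assertion $\varphi(\xi)=A\mathrm{e}^{-\lambda(c)\xi}+\text{h.o.t.}$ with $A>0$ remains unproved.
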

The proof of this lemma is quite lengthy. Therefore it is postponed to the appendix (see 
\corref{decays_near_stable_state} and \corref{Precise_decays_near_unstable_state}).

\begin{lem}
\label{lem:Continuity_of_underline_c_delta} The function $\delta\mapsto c_{\textup{LLW}}^{\delta}$
is continuous and nondecreasing in $\left[0,\frac{1}{2}\right)$.
\end{lem}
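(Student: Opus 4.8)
The plan is to obtain the monotonicity from the competitive comparison principle and the continuity from a compactness argument on traveling wave profiles, the exclusion of an upward jump being the genuinely delicate point.

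\textbf{Monotonicity.} Fix $0\le\delta_1<\delta_2<\frac12$. I would first record the elementary fact that, for $(u,v)\in[0,+\infty)^2$, the first component of $F_\delta(u,v)$ is nondecreasing in $\delta$ while the second is nonincreasing, so that $F_{\delta_1}(u,v)\preceq F_{\delta_2}(u,v)$. Let $u_0\in\mathscr{C}(\mathbb{R},[0,1])\backslash\{0\}$ be compactly supported and let $(u_i,v_i)$ solve $P(u,v)=F_{\delta_i}(u,v)$ with datum $(u_0,1-2\delta_i)$; the box $[0,1+\delta_i]\times[0,1-2\delta_i]$ is invariant under $F_{\delta_i}$ and the change of unknown $w=(1-2\delta_i)-v$ makes the system cooperative there, so the comparison theory of Section~2 applies verbatim. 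Then $(u_1,v_1)$ is a sub-solution of the $F_{\delta_2}$-problem, and since $(u_1,v_1)(0,\cdot)=(u_0,1-2\delta_1)\succeq(u_0,1-2\delta_2)=(u_2,v_2)(0,\cdot)$, \thmref{Competitive_comparison_principle} gives $u_1\le u_2$ and $v_1\ge v_2$ everywhere. By Lewis--Li--Weinberger (after the rescaling of \subsecref{4.1.4}), $u_i$ spreads at speed $c_{LLW}^{\delta_i}$; feeding $u_2\ge u_1$ and $v_2\le v_1$ into the cone-wise ODE comparison used in the proof of \thmref{Nonexistence} shows that $(u_2,v_2)\to(1+\delta_2,0)$ in every cone $\{|x|<ct\}$ with $c<c_{LLW}^{\delta_1}$, whence $c_{LLW}^{\delta_1}\le c_{LLW}^{\delta_2}$.

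\textbf{Lower semicontinuity.} A nondecreasing function has one-sided limits everywhere, so continuity amounts to excluding jumps. For $\delta_n\uparrow\delta_\star$, I would take monotone profiles $(\varphi_n,\psi_n)$ of speed $c_n:=c_{LLW}^{\delta_n}$ for $F_{\delta_n}$, normalised so that $\varphi_n(0)=\frac{1+\delta_n}2$. Interior elliptic estimates yield a $C^2_{loc}$ subsequential limit $(\varphi_\infty,\psi_\infty)$, a monotone profile for $F_{\delta_\star}$ of speed $\ell:=\lim c_n\le c_{LLW}^{\delta_\star}$, which is nonconstant because $\varphi_\infty(0)=\frac{1+\delta_\star}2$. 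Since $(0,0)$ is a linearly unstable equilibrium of $F_{\delta_\star}$ and $(1+\delta_\star,1-2\delta_\star)$ is not an equilibrium at all, the sign analysis of the reaction terms — the same one that classifies the limit states of a monotone wave — prevents the $\varphi$- and $\psi$-transition layers from drifting apart, so $(\varphi_\infty,\psi_\infty)$ genuinely connects $(1+\delta_\star,0)$ to $(0,1-2\delta_\star)$; hence $\ell\ge c_{LLW}^{\delta_\star}$ and therefore $\ell=c_{LLW}^{\delta_\star}$.

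\textbf{Upper semicontinuity — the obstacle.} It remains to exclude $\lim_{\delta\downarrow\delta_\star}c_{LLW}^\delta=L>c_{LLW}^{\delta_\star}$, and here the profile-limit argument is of no help, since it only produces a wave of $F_{\delta_\star}$ of speed $L$, which is consistent. What I would use instead is that, for $c\in(c_{LLW}^{\delta_\star},L)$, the system $F_{\delta_\star}$ possesses a strictly supercritical wave of speed $c$, and that such waves are nondegenerate: the operator obtained by linearising the profile system about them is invertible modulo translation in a suitable exponentially weighted space, a property anchored in the sharp exponential asymptotics of \lemref{Exponential_estimates_super-critical_waves}. An implicit-function argument then perturbs this wave into a wave of $F_\delta$ of speed $c$ for all $\delta>\delta_\star$ sufficiently close to $\delta_\star$, forcing $c_{LLW}^\delta\le c<L$, a contradiction. (Alternatively, one may simply invoke the continuous dependence of the Lewis--Li--Weinberger spreading speed on the parameters $(r,a,b)$, transported through the rescaling of \subsecref{4.1.4}.) This weighted-space perturbation is the main difficulty; everything else is comparison-principle bookkeeping.
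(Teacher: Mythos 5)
Your monotonicity argument is the paper's: it is obtained by the comparison principle, exactly as stated in the proof of \lemref{Continuity_of_underline_c_delta} (one slip: with the competitive order of Section~2 you have $(u_0,1-2\delta_1)\preceq(u_0,1-2\delta_2)$, not $\succeq$, since the $v$-component is larger for $\delta_1$; the conclusion $u_1\leq u_2$, $v_1\geq v_2$ that you draw is nevertheless the correct one, and the extra cone-wise ODE step is unnecessary). For continuity, the paper does precisely what you relegate to a parenthesis: via the rescaling of \subsecref{4.1.4}, $c_{LLW}^{\delta}=\sqrt{1+\delta}\,\hat{c}_{LLW}^{\delta}$, where $\hat{c}_{LLW}^{\delta}$ is the LLW minimal wave speed of \eqref{eq:related_competition_diffusion_system} with parameters depending continuously on $\delta$, and continuity is then quoted from Kan-on's theorem \cite{Kan_On_1997} on continuous dependence of that speed on $(r,a,b)$. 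So your ``alternative'' is the intended proof, and had you led with it the whole compactness machinery would be unnecessary.

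Your primary route has a genuine gap at the upper-semicontinuity step. The nondegeneracy you invoke for strictly supercritical waves --- invertibility of the linearized profile operator modulo translation in an exponentially weighted space --- is neither proved nor available in the paper, and it is not ``comparison-principle bookkeeping'': the appendix establishes uniqueness of the profile up to translation only under the extra condition $\lambda_{1,P}^{+\infty}\geq\lambda_{3,P}^{+\infty}$ (\propref{uniqueness_of_the_profiles}, \corref{uniqueness_of_the_profiles_for_all_speeds}), and where even uniqueness is open, an implicit-function argument modulo translation cannot be set up; for monostable waves one must in addition locate the essential spectrum in the weight and exclude further kernel elements, none of which is sketched. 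The lower-semicontinuity step is essentially sound but your justification that the two transition layers ``cannot drift apart'' is asserted rather than proved; fortunately it is not needed, since if the $\psi$-layer escapes to $+\infty$ the limit is a monotone wave of the scalar KPP equation for $u$ (or a wave into $(0,0)$), whose speed is at least $2\sqrt{1+\delta_{\star}}\geq c_{LLW}^{\delta_{\star}}$, so the inequality $\ell\geq c_{LLW}^{\delta_{\star}}$ survives in the degenerate cases as well. In short: monotonicity and the parenthetical continuity argument reproduce the paper; the weighted-space perturbation argument, as it stands, does not close.
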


\begin{proof}
Recalling that 
\[
c_{\textup{LLW}}^{\delta} = \sqrt{1+\delta}\hat{c}_{\textup{LLW}}^{\delta},
\]
where $\hat{c}_{\textup{LLW}}^{\delta}$ is the minimal wave speed of the system (\ref{eq:related_competition_diffusion_system})
where $\left(r,a,b\right)$ is replaced by $\left(\frac{\left(1-2\delta\right)r}{1+\delta},a_{\delta},\frac{\left(1+\delta\right)b}{1-2\delta}\right)$, the continuity of $\delta\mapsto c_{\textup{LLW}}^{\delta}$ follows directly
from the theorem due to Kan-on \cite{Kan_On_1997} establishing the continuity of the spreading speed of 
(\ref{eq:related_competition_diffusion_system}) with respect to the parameters $\left(r,a,b\right)$.

The monotonicity follows from the comparison principle.
\end{proof}

\begin{lem}
\label{lem:estimates_omega}Let $\delta\in\left[0,1\right)$. Then
for all $R\geq R_{\delta}^{\omega}$,
\[
\max_{\left[-R,R\right]}\underline{\omega_{\delta,R}}<1-\delta.
\]

Furthermore, if $\delta>0$, then there exists $R_{\delta}\geq R_{\delta}^{\omega}$
such that, for all $R\geq R_{\delta}$,
\[
\max_{\left[-R,R\right]}\underline{\omega_{\delta,R}}\geq1-2\delta,
\]
and there exists a unique $x_{\delta,R}\in\left(-R,R\right)$ such
that $\underline{\omega_{\delta,R}}$ is increasing in $\left[-R,x_{\delta,R}\right]$,
decreasing in $\left[x_{\delta,R},R\right]$ and maximal at $x_{\delta,R}$.
\end{lem}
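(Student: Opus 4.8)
The plan is to rely on standard facts about the one-dimensional Dirichlet logistic problem, and a single comparison argument against a family of cut-off supersolutions to pin down the behaviour as $R\to+\infty$. Throughout I write $g_\delta(\omega)=r\omega(1-\delta-\omega)$ for the reaction term, so that $\underline{\omega_{\delta,R}}$ solves $-d\omega''-(2\sqrt{r(1-\delta)d}-\delta)\omega'=g_\delta(\omega)$ on $(-R,R)$ with zero Dirichlet data; after the change of variables $\xi\mapsto\xi$ and absorbing the constant drift by the classical substitution $\omega(\xi)=\mathrm{e}^{-\frac{1}{2d}(2\sqrt{r(1-\delta)d}-\delta)\xi}\,\tilde\omega(\xi)$ one sees this is (equivalent to) a self-adjoint logistic eigenvalue problem, so the existence threshold $R_\delta^\omega$ and the general structure come from the well-known theory (e.g. Du--Lin, cited in the excerpt, or Berestycki--Nirenberg type arguments).

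\emph{Step 1: the bound $\max_{[-R,R]}\underline{\omega_{\delta,R}}<1-\delta$.} The constant function $1-\delta$ is a supersolution of the equation (indeed $-d(1-\delta)''-(\cdots)(1-\delta)'=0=g_\delta(1-\delta)$) and it dominates $\underline{\omega_{\delta,R}}$ on the boundary $\{\pm R\}$, hence by the comparison principle $\underline{\omega_{\delta,R}}\le 1-\delta$ on $[-R,R]$. The inequality is strict at interior points by the strong maximum principle applied to the difference, since $\underline{\omega_{\delta,R}}\not\equiv 1-\delta$ (it vanishes on the boundary).

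\emph{Step 2: the lower bound $\max_{[-R,R]}\underline{\omega_{\delta,R}}\ge 1-2\delta$ for $R$ large, when $\delta>0$.} This is where the drift term matters, and it is the main point of the lemma. Fix $\delta>0$. I would argue by contradiction: suppose there is a sequence $R_n\to+\infty$ with $m_n:=\max_{[-R_n,R_n]}\underline{\omega_{\delta,R_n}}<1-2\delta$. Translate each $\underline{\omega_{\delta,R_n}}$ so that its maximum is attained at the origin (this uses only that a positive solution vanishing at both endpoints attains an interior max), and pass to the limit using interior elliptic estimates: a subsequence converges in $\mathscr{C}^2_{loc}(\mathbb{R})$ to a bounded function $\omega_\infty\in[0,1-2\delta]$, with $\omega_\infty(0)=\max\omega_\infty=:m_\infty\le 1-2\delta$, solving $-d\omega_\infty''-(2\sqrt{r(1-\delta)d}-\delta)\omega_\infty'=g_\delta(\omega_\infty)$ on all of $\mathbb{R}$. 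Now observe that the chosen drift speed $2\sqrt{r(1-\delta)d}-\delta$ is \emph{strictly less} than the KPP speed $2\sqrt{r(1-\delta)d}$ of the equation $\partial_t v-d\partial_{xx}v=g_\delta(v)$; since $m_\infty\le 1-2\delta<1-\delta$, the function $\omega_\infty$ is a bounded entire solution of the travelling-wave ODE at a subcritical speed that stays bounded away from $1-\delta$, and no such solution exists (the only bounded nonnegative solutions of this ODE at a speed below $2\sqrt{r(1-\delta)d}$ are $0$ and $1-\delta$, because a subcritical exponential rate $\lambda$ solving $d\lambda^2+(2\sqrt{r(1-\delta)d}-\delta)\lambda+r(1-\delta)=0$ is complex, forcing sign changes near the unstable state $0$ — exactly as in the classical KPP analysis). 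Hence $m_\infty\in\{0,1-\delta\}$; since $m_\infty\le 1-2\delta$ we get $m_\infty=0$, i.e. $\omega_\infty\equiv 0$. But one checks $m_n=\max\underline{\omega_{\delta,R_n}}$ is bounded below away from $0$ uniformly in $n$ once $R_n$ is large — e.g. by comparison with a fixed Dirichlet principal-eigenfunction subsolution $\varepsilon\psi_{4R_0}$ on a fixed sub-window, which is a genuine subsolution as soon as $\varepsilon$ is small and $R_0$ large, and which is $\le\underline{\omega_{\delta,R_n}}$ by the comparison principle for all $R_n\ge R_0$. This contradicts $m_\infty=0$. Therefore for all $R$ large enough, $m_R\ge 1-2\delta$; call the threshold $R_\delta\ge R_\delta^\omega$.

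\emph{Step 3: uniqueness of the maximum point and monotonicity on each side.} For the qualitative shape, I would invoke the sliding/reflection method: the domain $(-R,R)$ and the boundary data are symmetric, but the equation is \emph{not} reflection-symmetric because of the first-order term; nonetheless, the standard argument using the transformed self-adjoint problem from the preamble applies. After the substitution $\underline{\omega_{\delta,R}}(\xi)=\mathrm{e}^{-\mu\xi}\tilde\omega(\xi)$ with $\mu=\frac{1}{2d}(2\sqrt{r(1-\delta)d}-\delta)$, $\tilde\omega$ solves $-d\tilde\omega''=(\mu^2 d)\tilde\omega+r\mathrm{e}^{-\mu\xi}(1-\delta)\tilde\omega-r\mathrm{e}^{-2\mu\xi}... $; more cleanly, one simply differentiates the equation for $\underline{\omega_{\delta,R}}$ once and applies the maximum principle to $\omega':=(\underline{\omega_{\delta,R}})'$, which solves a linear equation $-d\omega'''-(2\sqrt{r(1-\delta)d}-\delta)\omega''=g_\delta'(\underline{\omega_{\delta,R}})\,\omega'$: since $\omega'>0$ near $-R$ (the solution leaves $0$) and $\omega'<0$ near $R$, and $\omega'$ cannot have an interior positive local minimum or negative local maximum unless $g_\delta'(\underline{\omega_{\delta,R}})\ge 0$ there, a short case analysis using $g_\delta'(\omega)=r(1-\delta-2\omega)$ together with $\underline{\omega_{\delta,R}}<1-\delta\le \tfrac{1-\delta}{?}$... here one uses precisely the bound from Step 1 — rules out any additional sign change of $\omega'$, so $\omega'$ changes sign exactly once, at a single point $x_{\delta,R}$, which is then the unique interior maximum, with monotonicity as claimed.

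\emph{Expected main obstacle.} The routine parts are Step 1 and the abstract existence threshold $R_\delta^\omega$. The genuine work is Step 2: making rigorous that the limiting entire solution $\omega_\infty$ at the \emph{subcritical} drift speed $2\sqrt{r(1-\delta)d}-\delta$ cannot take a maximum value in $(0,1-\delta)$. One must be careful that the relevant ODE is $d\omega_\infty''+(2\sqrt{r(1-\delta)d}-\delta)\omega_\infty'+r\omega_\infty(1-\delta-\omega_\infty)=0$ and invoke the correct non-existence statement — essentially the fact that monostable fronts exist only at speeds $\ge 2\sqrt{r(1-\delta)d}$ and that bounded entire solutions at subcritical speed are only the two constants. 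Equivalently, one can avoid the contradiction argument entirely and instead compare $\underline{\omega_{\delta,R}}$ from below with (a large translated truncation of) the sub-solution $\underline{\omega_{\delta',R'}}$ or with a travelling-wave-type sub-solution of the form $\min(1-\delta,\,\mathrm{const}\cdot\mathrm{e}^{-\lambda(\xi-\xi_0)})$ for $\lambda$ small; but these alternate routes have the same essential content. I would present the contradiction/compactness version, as it is the shortest and matches the style of the hair-trigger argument already carried out in the excerpt.
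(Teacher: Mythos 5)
Your Steps 1 and 2 are sound in strategy. Step 1 is the same standard argument as the paper's (which gets the strict bound from the first- and second-order conditions at an interior maximum plus the strong maximum principle). For the middle claim the paper simply invokes the well-known locally uniform convergence of $\underline{\omega_{\delta,R}}$ to $1-\delta$ as $R\to+\infty$, so your compactness/Liouville argument is a legitimate, more self-contained substitute; but as written it has loose ends you should tighten: (i) $\varepsilon\psi_{4R_0}$ is \emph{not} a subsolution in the presence of the drift term, whose sign is not controlled -- you need the drift-adapted profile $\varepsilon\,\mathrm{e}^{-\frac{c_\delta}{2d}\xi}\psi_{4R_0}(\xi)$ with $c_\delta=2\sqrt{r(1-\delta)d}-\delta$, exactly as in the hair-trigger step of the paper, which works because $c_\delta$ is subcritical so the relevant principal eigenvalue is negative for $R_0$ large; (ii) after recentering at the maximum point, the limiting solution may live on a half-line $(-\infty,L)$ with a Dirichlet condition at $L$ rather than on all of $\mathbb{R}$, and your Liouville step must cover that case as well; (iii) the classification ``bounded nonnegative solutions at subcritical speed are only $0$ and $1-\delta$'' needs more than the complex-eigenvalue remark, which only excludes solutions that converge to $0$: one also needs, e.g., the energy $E=\frac{d}{2}(\omega')^2+G(\omega)$ with $G'=g_\delta$, for which $E'=-c_\delta(\omega')^2\le0$, to force convergence to equilibria at $\pm\infty$, after which the pointwise bound $\omega_\infty\le 1-2\delta$ excludes the limit $1-\delta$. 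All of this is repairable by standard arguments, so this part is acceptable, just heavier than the paper's one-line citation.

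The genuine gap is Step 3. Your maximum-principle argument for $p=\underline{\omega_{\delta,R}}'$ does not go through as stated: $p$ solves $-dp''-c_\delta p'=g_\delta'(\underline{\omega_{\delta,R}})\,p$ with $g_\delta'(\omega)=r(1-\delta-2\omega)$, whose sign changes on $(0,1-\delta)$, so the maximum principle does not directly forbid extra sign changes of $p$, and the case analysis you allude to is left unfinished (your sentence literally breaks off). The fix is the paper's one-line observation, which bypasses the derivative equation entirely: since $0<\underline{\omega_{\delta,R}}<1-\delta$ in $(-R,R)$ (by Step 1), one has $-d\underline{\omega_{\delta,R}}''-c_\delta\underline{\omega_{\delta,R}}'=r\underline{\omega_{\delta,R}}\left(1-\delta-\underline{\omega_{\delta,R}}\right)>0$ there; hence at any interior critical point, $\underline{\omega_{\delta,R}}'=0$ forces $\underline{\omega_{\delta,R}}''<0$, so every interior critical point is a strict local maximum. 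A smooth function vanishing at $\pm R$, positive inside, all of whose interior critical points are strict local maxima, has exactly one critical point $x_{\delta,R}$, and the claimed monotonicity on $\left[-R,x_{\delta,R}\right]$ and $\left[x_{\delta,R},R\right]$ follows; the same inequality at an interior maximum also re-proves the first claim of the lemma. Replace your Step 3 by this argument.
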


\begin{proof}
The first inequality follows very classically from the first and second
order conditions at any local maximum and from the strong maximum
principle.

The second inequality comes from the locally uniform convergence of
$\underline{\omega_{\delta,R}}$ to $1-\delta$ as $R\to+\infty$.
This fact is also well-known and its proof is not detailed here. 

Finally, the piecewise strict monotonicity comes from the inequality
\[
-d\underline{\omega_{\delta,R}}''-\left(2\sqrt{r\left(1-\delta\right)d}-\delta\right)\underline{\omega_{\delta,R}}'>0\text{ in }\left(-R,R\right),
\]
which implies the nonexistence of local minima. 
\end{proof}
The function $\underline{\alpha_{l}}$ satisfies of course a similar
property. 
\begin{lem}
\label{lem:estimates_alpha} For all $l\geq L_{\alpha}$,
\[
\max_{\left[0,l\right]}\underline{\alpha_{l}}<1-a.
\]

Furthermore, there exists $L\geq L_{\alpha}$ such that, for all $l\ge L$,
\[
\max_{\left[0,l\right]}\underline{\alpha_{l}}\geq\frac{1-a}{2},
\]
and there exists a unique $x_{l}\in\left(-l,l\right)$ such that $\underline{\alpha_{l}}$
is increasing in $\left[0,x_{l}\right]$, decreasing in $\left[x_{l},l\right]$
and maximal at $x_{l}$.
\end{lem}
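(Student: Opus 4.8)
The plan is to follow the proof of \lemref{estimates_omega} almost verbatim, the only structural difference being that the equation defining $\underline{\alpha_{l}}$ in \subsecref{4.1.10} carries no first-order term. Throughout, write $g(s)=s(1-a-s)$, so that $\underline{\alpha_{l}}$ solves $-\underline{\alpha_{l}}''=g(\underline{\alpha_{l}})$ on $(0,l)$, is positive in the interior, vanishes at $0$ and $l$, and satisfies $\underline{\alpha_{l}}'(0)>0$ by the Hopf lemma.

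First I would prove the strict upper bound $\max_{[0,l]}\underline{\alpha_{l}}<1-a$. At an interior maximum point $x_{0}$ one has $\underline{\alpha_{l}}'(x_{0})=0$ and $\underline{\alpha_{l}}''(x_{0})\le0$, hence $g(\underline{\alpha_{l}}(x_{0}))\ge0$; since $\underline{\alpha_{l}}(x_{0})>0$ and $g$ is nonnegative exactly on $[0,1-a]$, this gives $\underline{\alpha_{l}}(x_{0})\le1-a$. If equality held, then $\underline{\alpha_{l}}$ and the constant function $1-a$ would be two solutions of the autonomous ODE $-w''=g(w)$ sharing the Cauchy data $(1-a,0)$ at $x_{0}$; by uniqueness for the ODE initial value problem---equivalently, by the strong maximum principle applied to the nonnegative function $1-a-\underline{\alpha_{l}}$---this forces $\underline{\alpha_{l}}\equiv1-a$, contradicting $\underline{\alpha_{l}}(0)=0$. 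Hence the inequality is strict.

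For the lower bound, I would invoke the locally uniform convergence $\underline{\alpha_{l}}\to1-a$ as $l\to+\infty$, a classical property of the logistic Dirichlet problem on an expanding interval (see Du\textendash Lin \cite{Du_Lin_2010,Du_Lin_2010_er}); evaluating this convergence at a fixed interior point yields $L\ge L_{\alpha}$ such that $\max_{[0,l]}\underline{\alpha_{l}}\ge\frac{1-a}{2}$ for all $l\ge L$.

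Finally, for the shape statement I would argue, as for $\underline{\omega_{\delta,R}}$, the absence of interior local minima: at any interior critical point $x^{\star}$ the value $\underline{\alpha_{l}}(x^{\star})$ lies in $(0,1-a)$ by the first step, so $\underline{\alpha_{l}}''(x^{\star})=-g(\underline{\alpha_{l}}(x^{\star}))<0$ and $x^{\star}$ is a strict local maximum; two distinct interior critical points would therefore enclose an interior local minimum, which is excluded. Together with $\underline{\alpha_{l}}(0)=\underline{\alpha_{l}}(l)=0$, $\underline{\alpha_{l}}>0$ on $(0,l)$ and $\underline{\alpha_{l}}'(0)>0$, this shows there is exactly one interior critical point $x_{l}$, that $\underline{\alpha_{l}}$ is strictly increasing on $[0,x_{l}]$, strictly decreasing on $[x_{l},l]$, and maximal at $x_{l}$. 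The only point requiring slightly more care than in \lemref{estimates_omega} is this last one: in the absence of an advection term one cannot rule out interior local minima purely from a differential inequality, so one must instead exploit the explicit sign of $g$ on $(0,1-a)$ together with ODE uniqueness to exclude a critical value equal to $1-a$ (or $0$).
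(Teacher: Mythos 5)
Your proposal is correct and takes essentially the same route as the paper, which simply asserts that the proof of \lemref{estimates_omega} carries over: first and second order conditions plus the strong maximum principle (or ODE uniqueness) for the strict bound $\max\underline{\alpha_{l}}<1-a$, locally uniform convergence of $\underline{\alpha_{l}}$ to $1-a$ for the lower bound, and nonexistence of interior local minima for the piecewise monotonicity. Your closing caveat is unnecessary: since the first step gives $0<\underline{\alpha_{l}}<1-a$ on $\left(0,l\right)$, the strict differential inequality $-\underline{\alpha_{l}}''>0$ does hold there and rules out interior local minima exactly as the inequality $-d\underline{\omega_{\delta,R}}''-\left(2\sqrt{r\left(1-\delta\right)d}-\delta\right)\underline{\omega_{\delta,R}}'>0$ does in \lemref{estimates_omega}, the advection term playing no role in that argument.
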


\begin{lem}
\label{lem:w_super-solution} For all $\delta\in\left[0,\frac{1}{2}\right)$,
$c\geq c_{\textup{LLW}}^{\delta}$ and $\tilde{c}\geq\max\left(c,f_{\delta}\left(c\right)\right)$,
$\overline{w_{\delta,c,\tilde{c}}}$ satisfies
\[
\partial_{t}\overline{w_{\delta,c,\tilde{c}}}-\partial_{xx}\overline{w_{\delta,c,\tilde{c}}}=\overline{w_{\delta,c,\tilde{c}}}\text{ in }\left[0,+\infty\right)\times\mathbb{R}.
\]
\end{lem}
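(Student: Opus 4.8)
The plan is to reduce everything to a one-line differentiation by exploiting the alternative expression for $\overline{w_{\delta,c,\tilde{c}}}$ already recorded in \subsecref{4.1.13}. Writing $\Lambda=\Lambda_{\delta}\left(c,\tilde{c}\right)$ for brevity, the quadratic identity (\ref{eq:Lambda_delta}) satisfied by $\Lambda$ gives $-\lambda_{\delta}\left(c\right)\left(\tilde{c}-c\right)+\tilde{c}\Lambda=\Lambda^{2}+1$, whence
\[
\overline{w_{\delta,c,\tilde{c}}}\left(t,x\right)=\text{e}^{-\lambda_{\delta}\left(c\right)\left(\tilde{c}-c\right)t}\text{e}^{-\Lambda\left(x-\tilde{c}t\right)}=\text{e}^{\left(\Lambda^{2}+1\right)t}\text{e}^{-\Lambda x}.
\]
First I would observe that this function is smooth and positive on $\left[0,+\infty\right)\times\mathbb{R}$, so that the derivatives below are classical and there is no regularity issue to worry about.

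Then the computation is immediate: from the last expression, $\partial_{t}\overline{w_{\delta,c,\tilde{c}}}=\left(\Lambda^{2}+1\right)\overline{w_{\delta,c,\tilde{c}}}$, while $\partial_{x}\overline{w_{\delta,c,\tilde{c}}}=-\Lambda\,\overline{w_{\delta,c,\tilde{c}}}$ and hence $\partial_{xx}\overline{w_{\delta,c,\tilde{c}}}=\Lambda^{2}\,\overline{w_{\delta,c,\tilde{c}}}$. Subtracting,
\[
\partial_{t}\overline{w_{\delta,c,\tilde{c}}}-\partial_{xx}\overline{w_{\delta,c,\tilde{c}}}=\bigl(\left(\Lambda^{2}+1\right)-\Lambda^{2}\bigr)\overline{w_{\delta,c,\tilde{c}}}=\overline{w_{\delta,c,\tilde{c}}},
\]
which is the claimed identity.

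There is essentially no obstacle here. The only point worth stating explicitly is that $\Lambda_{\delta}\left(c,\tilde{c}\right)$ is well-defined under the hypotheses: $c\geq c_{LLW}^{\delta}$ together with the lower bound $c_{LLW}^{\delta}\geq2\sqrt{\left(1-a_{\delta}\right)\left(1+\delta\right)}\geq2\sqrt{1-a_{\delta}}$ from \subsecref{4.1.4} gives $c\geq2\sqrt{1-a_{\delta}}$, and combined with $\tilde{c}\geq\max\left(c,f_{\delta}\left(c\right)\right)$ this places $\left(c,\tilde{c}\right)$ in the domain of $\Lambda_{\delta}$, so that (\ref{eq:Lambda_delta}) genuinely holds. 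Alternatively, one could verify the identity directly from the first expression by using $\partial_{t}=-\lambda_{\delta}\left(c\right)\left(\tilde{c}-c\right)+\tilde{c}\partial_{\xi}$ and $\partial_{xx}=\partial_{\xi\xi}$ along $\xi=x-\tilde{c}t$ and arriving at the same quadratic relation for $\Lambda$; the second form merely makes the bookkeeping trivial.
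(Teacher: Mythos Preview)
Your proof is correct and essentially the same as the paper's: both reduce the identity to the quadratic relation (\ref{eq:Lambda_delta}) for $\Lambda_{\delta}(c,\tilde{c})$, you via the simplified expression $\text{e}^{(\Lambda^{2}+1)t}\text{e}^{-\Lambda x}$ already recorded in \subsecref{4.1.13}, the paper by differentiating the original form directly. Your extra remark verifying that $(c,\tilde{c})$ lies in the domain of $\Lambda_{\delta}$ is a nice touch the paper leaves implicit.
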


\begin{proof}
The following equality being straightforward, 
\begin{equation}\label{eq:w_super-solution}
\partial_{t}\overline{w_{\delta,c,\tilde{c}}}-\partial_{xx}\overline{w_{\delta,c,\tilde{c}}}-\overline{w_{\delta,c,\tilde{c}}}=\left(-\lambda_{\delta}\left(c\right)\left(\tilde{c}-c\right)+\tilde{c}\Lambda_{\delta}\left(c,\tilde{c}\right)-\left(\Lambda_{\delta}\left(c,\tilde{c}\right)\right)^{2}-1\right)\overline{w_{\delta,c}},
\end{equation}
the conclusion follows from (\ref{eq:Lambda_delta}).
\end{proof}
Quite similarly, we have the following lemma.
\begin{lem}
\label{lem:z_sub-solution} For all $c\geq c_{\textup{LLW}}$, $\tilde{c}\in\left(f\left(c\right)-4\sqrt{a},f\left(c\right)\right)$
and $\delta\in\left[0,\frac{1}{4}\left(-\tilde{c}^{2}+4\left(\lambda\left(c\right)\left(\tilde{c}-c\right)+1\right)\right)\right)$,
$\underline{z_{\delta,c,\tilde{c}}}$ satisfies
\begin{equation}\label{eq:z_sub-solution}
\partial_{t}\underline{z_{\delta,c,\tilde{c}}}-\partial_{xx}\underline{z_{\delta,c,\tilde{c}}}=\left(1-\delta\right)\underline{z_{\delta,c,\tilde{c}}}\text{ in }\left[0,+\infty\right)\times\mathbb{R}.
\end{equation}
\end{lem}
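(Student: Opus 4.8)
The plan is to argue exactly as in the proof of Lemma~\ref{lem:w_super-solution}: pass to the travelling frame, differentiate on the open set where $\underline{z_{\delta,c,\tilde{c}}}$ is smooth, and observe that the value of $R_{z}$ has been chosen precisely so that the resulting coefficient equals $1-\delta$; the corners then cause no trouble because everything is meant in the generalized sense of Section~2.

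First I would record that the hypotheses $\tilde{c}\in\left(f\left(c\right)-4\sqrt{a},f\left(c\right)\right)$ and $\delta<\tfrac14\bigl(-\tilde{c}^{2}+4\left(\lambda\left(c\right)\left(\tilde{c}-c\right)+1\right)\bigr)$ are exactly what make $R_{z}$ a well-defined positive real number (using the sign identity of \subsecref{4.1.12}, which gives $\tilde{c}^{2}-4\left(\lambda\left(c\right)\left(\tilde{c}-c\right)+1\right)=\left(\tilde{c}-f\left(c\right)\right)\left(\tilde{c}-f\left(c\right)+4\sqrt{a}\right)<0$ here). Write $\mu=\lambda\left(c\right)\left(\tilde{c}-c\right)$, $k=\tfrac{\pi}{2R_{z}}$ and $g\left(\xi\right)=\text{e}^{-\frac{\tilde{c}}{2}\xi}\sin\left(k\xi\right)$, so that on the strip $\left\{\left(t,x\right)\ :\ x-\tilde{c}t\in\left(0,2R_{z}\right)\right\}$ one has $\underline{z_{\delta,c,\tilde{c}}}\left(t,x\right)=\text{e}^{-\mu t}g\left(x-\tilde{c}t\right)$. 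A direct computation gives $\partial_{t}\underline{z_{\delta,c,\tilde{c}}}=\text{e}^{-\mu t}\left(-\mu g-\tilde{c}g'\right)$ and $\partial_{xx}\underline{z_{\delta,c,\tilde{c}}}=\text{e}^{-\mu t}g''$, hence
\[
\partial_{t}\underline{z_{\delta,c,\tilde{c}}}-\partial_{xx}\underline{z_{\delta,c,\tilde{c}}}=\text{e}^{-\mu t}\left(-\mu g-\tilde{c}g'-g''\right).
\]
Expanding $g'$ and $g''$, the $\cos\left(k\xi\right)$ terms cancel because the exponential weight is exactly $\tilde{c}/2$ (the usual ``half the speed'' normalization), leaving $-\mu g-\tilde{c}g'-g''=\bigl(-\mu+\tfrac{\tilde{c}^{2}}{4}+k^{2}\bigr)g$. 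Since $k^{2}=\tfrac{\pi^{2}}{4R_{z}^{2}}=\tfrac14\bigl(-\tilde{c}^{2}+4\left(\mu+1-\delta\right)\bigr)=-\tfrac{\tilde{c}^{2}}{4}+\mu+1-\delta$ by the very definition of $R_{z}$, this coefficient collapses to $1-\delta$, so (\ref{eq:z_sub-solution}) holds classically throughout the strip; outside its closure $\underline{z_{\delta,c,\tilde{c}}}\equiv0$ and both sides of (\ref{eq:z_sub-solution}) vanish.

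It then remains only to deal with the two characteristic lines $x-\tilde{c}t\in\left\{0,2R_{z}\right\}$, where $\underline{z_{\delta,c,\tilde{c}}}$ has a corner; there the identity is to be read in the generalized sense, as for all objects built in Section~\ref{subsec:4.1}. Since $g\left(0\right)=g\left(2R_{z}\right)=0$ the function is continuous, and since $g'\left(0^{+}\right)=k>0$ while $g'\left(2R_{z}^{-}\right)=-k\text{e}^{-\tilde{c}R_{z}}<0$, at each corner the one-sided $x$-derivative of $\underline{z_{\delta,c,\tilde{c}}}$ jumps upward as $x$ increases, so its distributional second derivative carries a nonnegative Dirac mass on each line; integrated against a nonnegative test function this only reinforces the sub-solution inequality $\partial_{t}\underline{z_{\delta,c,\tilde{c}}}-\partial_{xx}\underline{z_{\delta,c,\tilde{c}}}\leq\left(1-\delta\right)\underline{z_{\delta,c,\tilde{c}}}$, which is the form actually invoked in \subsecref{4.5.1}. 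I do not anticipate any genuine obstacle: the computation is routine once the travelling-frame substitution is made, and the sole point deserving care — the behaviour at the two corners — is settled immediately by inspecting the signs of the one-sided derivatives.
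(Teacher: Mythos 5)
Your computation is correct and is essentially the paper's own proof: after the travelling-frame substitution, everything reduces to the algebraic identity $-\lambda\left(c\right)\left(\tilde{c}-c\right)+\frac{\tilde{c}^{2}}{4}+\left(\frac{\pi}{2R_{z}}\right)^{2}=1-\delta$, which holds by the very definition of $R_{z}$. Your additional check of well-definedness of $R_{z}$ and of the sign of the derivative jumps on the lines $x-\tilde{c}t\in\left\{0,2R_{z}\right\}$ (where the identity only holds in the generalized sub-solution sense) is a harmless refinement of what the paper leaves implicit.
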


\begin{proof}
It suffices to verify 
\[
-\lambda\left(c\right)\left(\tilde{c}-c\right)+\frac{\tilde{c}^{2}}{4}+\left(\frac{\pi}{2R_{z}}\right)^{2}-\left(1-\delta\right)=0,
\]
which, in view of the definition of $R_{z}$, is equivalent to
\[
-\lambda\left(c\right)\left(\tilde{c}-c\right)+\frac{\tilde{c}^{2}}{4}+\left(\frac{-\tilde{c}^{2}+4\lambda\left(c\right)\left(\tilde{c}-c\right)+4\left(1-\delta\right)}{4}\right)-\left(1-\delta\right)=0.
\]
The last statement obviously holds.
\end{proof}

\begin{lem}
\label{lem:w_sub-solution} For all $c\geq c_{\textup{LLW}}$, $\tilde{c}\geq c$
such that $\tilde{c}>f\left(c\right)$, $\eta\in\left(0,\min\left(\Lambda\left(c,\tilde{c}\right),\sqrt{\tilde{c}^{2}-4\left(\lambda\left(c\right)\left(\tilde{c}-c\right)+1\right)}\right)\right)$
and $A>0$, the function $\underline{w_{c,\tilde{c},A,\eta}}$ satisfies,
for all $\sigma\geq\eta$,
\begin{equation}\label{eq:w_sub-solution}
\partial_{t}\underline{w_{c,\tilde{c},A,\eta}}-\partial_{xx}\underline{w_{c,\tilde{c},A,\eta}}\leq\underline{w_{c,\tilde{c},A,\eta}}\left(1-\underline{w_{c,\tilde{c},A,\eta}}-aA\textup{e}^{-\sigma\left(x-\tilde{c}t+x_{w}\right)}\right).
\end{equation}
\end{lem}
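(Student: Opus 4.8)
The plan is to verify \eqref{eq:w_sub-solution} by a direct one-variable computation in the region where $\underline{w_{c,\tilde{c},A,\eta}}$ is positive, after disposing of the free boundary. Throughout, write $\xi=x-\tilde{c}t+x_{w}$ and abbreviate $\mu=\lambda\left(c\right)\left(\tilde{c}-c\right)$, $\Lambda=\Lambda\left(c,\tilde{c}\right)$, $\nu=\sqrt{\tilde{c}^{2}-4\left(\lambda\left(c\right)\left(\tilde{c}-c\right)+1\right)}$; note $\mu\geq0$ because $\tilde{c}\geq c$ and $\lambda\left(c\right)>0$, that $\nu=\tilde{c}-2\Lambda>0$ by the strict inequality $\tilde{c}>f\left(c\right)$, and that by hypothesis $0<\eta<\min\left(\Lambda,\nu\right)$. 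For $x\leq\tilde{c}t$ the function $\underline{w_{c,\tilde{c},A,\eta}}$ vanishes identically and both sides of the claimed inequality are zero; on the line $x=\tilde{c}t$ the function is continuous with left derivative $0$ and right derivative $\text{e}^{-\mu t}\eta\text{e}^{-\Lambda x_{w}}>0$, i.e. it has a convex corner there, so in the generalized sense the sub-solution inequality holds automatically. Hence it suffices to establish the classical inequality on the open set $\left\{ \left(t,x\right)\ :\ x>\tilde{c}t\right\} $, where $\underline{w_{c,\tilde{c},A,\eta}}\left(t,x\right)=\text{e}^{-\mu t}\left(\text{e}^{-\Lambda\xi}-K_{w}\text{e}^{-\left(\Lambda+\eta\right)\xi}\right)$ and $\xi>x_{w}=\frac{\ln K_{w}}{\eta}\geq0$.

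First I would compute $\partial_{t}\underline{w_{c,\tilde{c},A,\eta}}-\partial_{xx}\underline{w_{c,\tilde{c},A,\eta}}$ on that set by differentiating the explicit expression. Using $\tilde{c}\Lambda-\Lambda^{2}=\mu+1$, which is exactly \eqref{eq:Lambda_delta} with $\delta=0$, for the coefficient of $\text{e}^{-\Lambda\xi}$, and $\tilde{c}\left(\Lambda+\eta\right)-\left(\Lambda+\eta\right)^{2}=\left(\mu+1\right)+\eta\left(\tilde{c}-2\Lambda-\eta\right)=\left(\mu+1\right)+\eta\left(\nu-\eta\right)$ for the coefficient of $\text{e}^{-\left(\Lambda+\eta\right)\xi}$, all the terms proportional to $\mu+1$ reassemble into $\left(\mu+1\right)\underline{w_{c,\tilde{c},A,\eta}}$ and cancel the contribution $-\mu\,\underline{w_{c,\tilde{c},A,\eta}}$ coming from the time derivative of the prefactor $\text{e}^{-\mu t}$, leaving the clean identity
\[
\partial_{t}\underline{w_{c,\tilde{c},A,\eta}}-\partial_{xx}\underline{w_{c,\tilde{c},A,\eta}}=\underline{w_{c,\tilde{c},A,\eta}}-\eta\left(\nu-\eta\right)K_{w}\,\text{e}^{-\mu t}\text{e}^{-\left(\Lambda+\eta\right)\xi}\quad\text{on }\left\{ x>\tilde{c}t\right\} .
\]

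It then remains to bound the nonlinear terms on the right-hand side of \eqref{eq:w_sub-solution} by that correction term. On $\left\{ x>\tilde{c}t\right\} $ one has $\xi\geq0$ and $0\leq\underline{w_{c,\tilde{c},A,\eta}}\leq\text{e}^{-\mu t}\text{e}^{-\Lambda\xi}$, whence
\[
\underline{w_{c,\tilde{c},A,\eta}}^{\,2}+aA\,\underline{w_{c,\tilde{c},A,\eta}}\,\text{e}^{-\sigma\xi}\leq\text{e}^{-2\mu t}\text{e}^{-2\Lambda\xi}+aA\,\text{e}^{-\mu t}\text{e}^{-\left(\Lambda+\sigma\right)\xi}\leq\left(1+aA\right)\text{e}^{-\mu t}\text{e}^{-\left(\Lambda+\eta\right)\xi},
\]
where the last step uses $\mu\geq0$ and $t\geq0$ (so $\text{e}^{-2\mu t}\leq\text{e}^{-\mu t}$), $\Lambda>\eta$ with $\xi\geq0$ (so $\text{e}^{-2\Lambda\xi}\leq\text{e}^{-\left(\Lambda+\eta\right)\xi}$), and $\sigma\geq\eta$ with $\xi\geq0$ (so $\text{e}^{-\left(\Lambda+\sigma\right)\xi}\leq\text{e}^{-\left(\Lambda+\eta\right)\xi}$). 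Finally, the choice $K_{w}=\max\left(1,\frac{1+aA}{\eta\left(\nu-\eta\right)}\right)$ together with $\eta\left(\nu-\eta\right)>0$ gives $1+aA\leq K_{w}\eta\left(\nu-\eta\right)$, so the right-hand side above is $\leq\eta\left(\nu-\eta\right)K_{w}\text{e}^{-\mu t}\text{e}^{-\left(\Lambda+\eta\right)\xi}$. Subtracting this from the identity of the previous paragraph yields $\partial_{t}\underline{w_{c,\tilde{c},A,\eta}}-\partial_{xx}\underline{w_{c,\tilde{c},A,\eta}}\leq\underline{w_{c,\tilde{c},A,\eta}}\left(1-\underline{w_{c,\tilde{c},A,\eta}}-aA\text{e}^{-\sigma\xi}\right)$, which is exactly \eqref{eq:w_sub-solution}.

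There is no genuine obstacle here: the argument is a routine verification. The only point requiring care is the algebraic cancellation in the first step — it is precisely the relation \eqref{eq:Lambda_delta} that makes the $\text{e}^{-\Lambda\xi}$ term recombine with the linear part without leaving a remainder, the only surviving term being the negative one carrying the factor $\eta\left(\nu-\eta\right)$ — and the bookkeeping of the three exponential comparisons in the last step, which is where the hypotheses $\eta<\Lambda\left(c,\tilde{c}\right)$, $\eta<\sqrt{\tilde{c}^{2}-4\left(\lambda\left(c\right)\left(\tilde{c}-c\right)+1\right)}$, $\sigma\geq\eta$ and $\tilde{c}\geq c$ are each used; the value of $K_{w}$ is tailored exactly to absorb the resulting constant $1+aA$.
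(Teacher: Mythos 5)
Your proof is correct and is essentially the paper's own argument: both reduce to the region $x>\tilde{c}t$, use the relation \eqref{eq:Lambda_delta} (at $\delta=0$) to make the linear part collapse to $\underline{w_{c,\tilde{c},A,\eta}}$ minus the term carrying $K_{w}\eta\left(\tilde{c}-2\Lambda\left(c,\tilde{c}\right)-\eta\right)$, and then absorb $\underline{w_{c,\tilde{c},A,\eta}}^{2}+aA\underline{w_{c,\tilde{c},A,\eta}}\mathrm{e}^{-\sigma\xi}$ into that term via $\eta\leq\min\left(\Lambda\left(c,\tilde{c}\right),\sigma\right)$, $\xi\geq0$ and the choice of $K_{w}$. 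The only cosmetic differences are your explicit remark about the convex corner at $x=\tilde{c}t$ (the paper relegates this to the "weak sense" remark) and a slightly different but equivalent bookkeeping of the exponential comparisons.
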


\begin{rem*}
The above inequality is to be understood in the weak sense associated with
generalized sub-solutions.
\end{rem*}
\begin{proof}
For $x - \tilde{c}t  <0$, $\underline{w_{c,\tilde{c},A,\eta}}$ is trivial and the inequality obviously holds.  

We focus on the case $x-\tilde{c}t>0$, where $\underline{w_{c,\tilde{c},A,\eta}}$ reduces to
\[
\left(t,x\right)\mapsto\textup{e}^{-\lambda\left(c\right)\left(\tilde{c}-c\right)t}\left(\textup{e}^{-\Lambda\left(c,\tilde{c}\right)\left(x-\tilde{c}t+x_{w}\right)}-K_{w}\textup{e}^{-\left(\Lambda\left(c,\tilde{c}\right)+\eta\right)\left(x-\tilde{c}t+x_{w}\right)}\right).
\]

First, differentiating, we find:
\begin{align*}
\partial_{t}\underline{w_{c,\tilde{c},A,\eta}}\left(t,x\right) & =-\lambda\left(c\right)\left(\tilde{c}-c\right)\underline{w_{c,\tilde{c},A,\eta}}\left(t,x\right)\\
 & \quad+\tilde{c}\Lambda\left(c,\tilde{c}\right)\textup{e}^{-\lambda\left(c\right)\left(\tilde{c}-c\right)t}\textup{e}^{-\Lambda\left(c,\tilde{c}\right)\left(x-\tilde{c}t+x_{w}\right)}\\
 & \quad-K_{w}\left(\tilde{c}\left(\Lambda\left(c,\tilde{c}\right)+\eta\right)\right)\textup{e}^{-\lambda\left(c\right)\left(\tilde{c}-c\right)t}\textup{e}^{-\left(\Lambda\left(c,\tilde{c}\right)+\eta\right)\left(x-\tilde{c}t+x_{w}\right)},
\end{align*}
\[
\partial_{xx}\underline{w_{c,\tilde{c},A,\eta}}=\textup{e}^{-\lambda\left(c\right)\left(\tilde{c}-c\right)t}\left(\left(\Lambda\left(c,\tilde{c}\right)\right)^{2}\textup{e}^{-\Lambda\left(c,\tilde{c}\right)\left(x-\tilde{c}t+x_{w}\right)}-K_{w}\left(\Lambda\left(c,\tilde{c}\right)+\eta\right)^{2}\textup{e}^{-\left(\Lambda\left(c,\tilde{c}\right)+\eta\right)\left(x-\tilde{c}t+x_{w}\right)}\right),
\]
so that the auxiliary function 
\[
Q:\left(t,x\right)\mapsto \textup{e}^{\lambda\left(c\right)\left(\tilde{c}-c\right)t}\left(-\partial_{t}\underline{w_{c,\tilde{c},A,\eta}}+\partial_{xx}\underline{w_{c,\tilde{c},A,\eta}}+\underline{w_{c,\tilde{c},A,\eta}}\right)\left(t,x\right)
\]
satisfies
\begin{align*}
Q\left(t,x\right) & =\left(\lambda\left(c\right)\left(\tilde{c}-c\right)-\tilde{c}\Lambda\left(c,\tilde{c}\right)+\left(\Lambda\left(c,\tilde{c}\right)\right)^{2}+1\right)\textup{e}^{-\Lambda\left(c,\tilde{c}\right)\left(x-\tilde{c}t+x_{w}\right)}\\
 & \quad-K_{w}\left(\lambda\left(c\right)\left(\tilde{c}-c\right)-\tilde{c}\left(\Lambda\left(c,\tilde{c}\right)+\eta\right)+\left(\Lambda\left(c,\tilde{c}\right)+\eta\right)^{2}+1\right)\textup{e}^{-\left(\Lambda\left(c,\tilde{c}\right)+\eta\right)\left(x-\tilde{c}t+x_{w}\right)}.
\end{align*}
Using (\ref{eq:Lambda_delta}), it follows 
\[
Q\left(t,x\right)=K_{w}\eta\left(\tilde{c}-\eta-2\Lambda\left(c,\tilde{c}\right)\right)\textup{e}^{-\left(\Lambda\left(c,\tilde{c}\right)+\eta\right)\left(x-\tilde{c}t+x_{w}\right)},
\]
that is, recalling the definition of $\Lambda_{\delta}\left(c,\tilde{c}\right)$
as well as that of $K_{w}$, 
\[
Q\left(t,x\right)\geq\left(1+aA\right)\textup{e}^{-\left(\Lambda\left(c,\tilde{c}\right)+\eta\right)\left(x-\tilde{c}t+x_{w}\right)}.
\]

Next, getting rid of all the negative terms and using 
$\textup{e}^{-\lambda\left(c\right)\left(\tilde{c}-c\right)t}\leq 1$, we find
\[
\textup{e}^{\lambda\left(c\right)\left(\tilde{c}-c\right)t}\underline{w_{c,\tilde{c},A,\eta}}\left(\underline{w_{c,\tilde{c},A,\eta}}+aA\textup{e}^{-\sigma\left(x-\tilde{c}t+x_{w}\right)}\right)\leq\textup{e}^{-\Lambda\left(c,\tilde{c}\right)\left(x-\tilde{c}t+x_{w}\right)}\left(\textup{e}^{-\Lambda\left(c,\tilde{c}\right)\left(x-\tilde{c}t+x_{w}\right)}+aA\textup{e}^{-\sigma\left(x-\tilde{c}t+x_{w}\right)}\right)
\]

Finally, using $x>\tilde{c}t$, $x_{w}=\frac{1}{\eta}\ln K_{w}\geq 0$
as well as the assumption $0 < \eta \leq \min\{ \Lambda(c,\tilde{c}), \sigma\}$, we find 
\begin{align*}
\textup{e}^{\eta\left(x-\tilde{c}t+x_{w}\right)}\left(\textup{e}^{-\Lambda\left(c,\tilde{c}\right)\left(x-\tilde{c}t+x_{w}\right)}+aA\textup{e}^{-\sigma\left(x-\tilde{c}t+x_{w}\right)}\right) & \leq\textup{e}^{-\left(\Lambda\left(c,\tilde{c}\right)-\eta\right)x_{w}}+aA\textup{e}^{-\left(\sigma-\eta\right)x_{w}}\\
 & \leq1+aA
\end{align*}
and the proof is therefore ended.
\end{proof}
With an analogous proof, we obtain directly the following lemma.
\begin{lem}
\label{lem:beta_subsolution}For all $c>2\sqrt{rd}$, $\eta\in\left(0,\min\left(\lambda_{v}\left(c\right),\frac{1}{d}\sqrt{c^{2}-4rd}\right)\right)$
and $B>0$, $\underline{\beta_{c,B,\eta}}$ satisfies, for all $\sigma\geq\eta$,
\[
-d\underline{\beta_{c,B,\eta}}''-c\underline{\beta_{c,B,\eta}}'\leq r\underline{\beta_{c,B,\eta}}\left(1-\underline{\beta_{c,B,\eta}}-bB\textup{e}^{-\sigma\left(\xi+\xi_{\beta}\right)}\right)\text{ in }\left(\mathbb{R}\backslash\left\{ 0\right\} \right).
\]
\end{lem}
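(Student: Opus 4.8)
The plan is to transcribe the proof of \lemref{w_sub-solution} almost verbatim, replacing $\left(1,1,\lambda\left(c\right),\Lambda\left(c,\tilde{c}\right),x_{w}\right)$ by $\left(d,r,\lambda_{v}\left(c\right),\lambda_{v}\left(c\right)+\eta,\xi_{\beta}\right)$; throughout, write $\underline{\beta}$ for $\underline{\beta_{c,B,\eta}}$. On the half-line $\left\{\xi<0\right\}$ the function $\underline{\beta}$ and all its derivatives vanish, so both sides of the claimed inequality are $0$; at $\xi=0$ the profile is continuous with $\underline{\beta}'\left(0^{-}\right)=0<\underline{\beta}'\left(0^{+}\right)$, i.e.\ it has a convex corner, which is the favourable sign for a generalized sub-solution (equivalently, $\underline{\beta}=\max\left(0,\underline{\beta}\right)$ near $0$), so nothing must be checked there. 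It remains to work on $\xi>0$, where $\underline{\beta}\left(\xi\right)=\text{e}^{-\lambda_{v}\left(c\right)\left(\xi+\xi_{\beta}\right)}-K_{\beta}\text{e}^{-\left(\lambda_{v}\left(c\right)+\eta\right)\left(\xi+\xi_{\beta}\right)}$.

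On that region I would differentiate twice (a routine computation) and form the linear defect $\mathcal{Q}:=-d\underline{\beta}''-c\underline{\beta}'-r\underline{\beta}$. Since $\lambda_{v}\left(c\right)$ solves $d\lambda^{2}-c\lambda+r=0$, the coefficient of $\text{e}^{-\lambda_{v}\left(c\right)\left(\xi+\xi_{\beta}\right)}$ in $\mathcal{Q}$ cancels; using in addition $2d\lambda_{v}\left(c\right)=c-\sqrt{c^{2}-4rd}$, the coefficient of $\text{e}^{-\left(\lambda_{v}\left(c\right)+\eta\right)\left(\xi+\xi_{\beta}\right)}$ simplifies to $-K_{\beta}\eta\left(\sqrt{c^{2}-4rd}-d\eta\right)$, which by the very definition of $K_{\beta}$ equals $-r\left(1+bB\right)$. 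Thus $\mathcal{Q}\left(\xi\right)=-r\left(1+bB\right)\text{e}^{-\left(\lambda_{v}\left(c\right)+\eta\right)\left(\xi+\xi_{\beta}\right)}$ for $\xi>0$, and the asserted inequality becomes equivalent to
\[
\underline{\beta}\left(\xi\right)^{2}+bB\,\underline{\beta}\left(\xi\right)\text{e}^{-\sigma\left(\xi+\xi_{\beta}\right)}\leq\left(1+bB\right)\text{e}^{-\left(\lambda_{v}\left(c\right)+\eta\right)\left(\xi+\xi_{\beta}\right)}.
\]

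Since $0<\underline{\beta}\left(\xi\right)<\text{e}^{-\lambda_{v}\left(c\right)\left(\xi+\xi_{\beta}\right)}$ on $\left\{\xi>0\right\}$, the left-hand side is bounded above by $\text{e}^{-2\lambda_{v}\left(c\right)\left(\xi+\xi_{\beta}\right)}+bB\,\text{e}^{-\left(\lambda_{v}\left(c\right)+\sigma\right)\left(\xi+\xi_{\beta}\right)}$, so after dividing by $\text{e}^{-\left(\lambda_{v}\left(c\right)+\eta\right)\left(\xi+\xi_{\beta}\right)}$ it suffices to verify $\text{e}^{-\left(\lambda_{v}\left(c\right)-\eta\right)\left(\xi+\xi_{\beta}\right)}+bB\,\text{e}^{-\left(\sigma-\eta\right)\left(\xi+\xi_{\beta}\right)}\leq1+bB$, which holds once $\xi+\xi_{\beta}\geq0$, because $\eta<\lambda_{v}\left(c\right)$ and $\eta\leq\sigma$ make both exponents nonpositive. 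The only step that is not a pure transcription of \lemref{w_sub-solution} — and hence the main (if modest) obstacle — is the check that $\xi_{\beta}\geq0$, equivalently $K_{\beta}\geq1$, equivalently $\eta\left(\sqrt{c^{2}-4rd}-d\eta\right)\leq r\left(1+bB\right)$; this is precisely why $K_{\beta}$ need not be truncated from below, unlike $K_{w}$ in \lemref{w_sub-solution}. It is here that the admissibility window $\eta\in\left(0,\min\left(\lambda_{v}\left(c\right),\tfrac{1}{d}\sqrt{c^{2}-4rd}\right)\right)$ enters: using $\eta<\lambda_{v}\left(c\right)$ and $\sqrt{c^{2}-4rd}-d\eta<\sqrt{c^{2}-4rd}=c-2d\lambda_{v}\left(c\right)$ one gets
\[
\eta\left(\sqrt{c^{2}-4rd}-d\eta\right)<\lambda_{v}\left(c\right)\left(c-2d\lambda_{v}\left(c\right)\right)=c\lambda_{v}\left(c\right)-2d\left(\lambda_{v}\left(c\right)\right)^{2}=r-d\left(\lambda_{v}\left(c\right)\right)^{2}<r\leq r\left(1+bB\right),
\]
the middle equality being once more $d\left(\lambda_{v}\left(c\right)\right)^{2}-c\lambda_{v}\left(c\right)+r=0$. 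As in the remark following \lemref{w_sub-solution}, the inequality at $\xi=0$ is to be read in the generalized sub-solution sense.
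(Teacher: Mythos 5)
Your proof is correct and follows exactly the route the paper intends: the paper justifies this lemma only by declaring its proof analogous to \lemref{w_sub-solution}, and your argument is precisely that transcription, with $\left(1,1,\lambda\left(c\right),\Lambda\left(c,\tilde{c}\right),x_{w}\right)$ replaced by $\left(d,r,\lambda_{v}\left(c\right),\lambda_{v}\left(c\right)+\eta,\xi_{\beta}\right)$ and the algebra carried out correctly. The one point not covered by the analogy — that $K_{\beta}>1$, hence $\xi_{\beta}>0$, because $\eta<\lambda_{v}\left(c\right)$ gives $\eta\left(\sqrt{c^{2}-4rd}-d\eta\right)<r\leq r\left(1+bB\right)$, so no truncation of $K_{\beta}$ at $1$ is needed unlike for $K_{w}$ — is exactly the detail that must be supplied, and you handle it correctly.
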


\begin{lem}
\label{lem:pi_subsolution_and_estimate} For all $\delta\in\left(0,1\right)$,
$c>2\sqrt{rd}$ and $h>0$, $\underline{\pi_{\delta,c,h}}$
satisfies 
\[
-d\underline{\pi_{\delta,c,h}}''-c\underline{\pi_{\delta,c,h}}'\leq r\underline{\pi_{\delta,c,h}}\left(1-\delta-\underline{\pi_{\delta,c,h}}\right)\text{ in }\left[-\sqrt{\frac{c}{rh}},\sqrt{\frac{c}{rh}}\right].
\]

Furthermore, there exists $h^{\star}>0$ such that, for all $h\in\left(0,h^{\star}\right]$,
\[
\max_{\left[-\sqrt{\frac{c}{rh}},0\right]}\underline{\pi_{\delta,c,h}}\geq1-2\delta,
\]
\[
\max_{\left[-\sqrt{\frac{c}{rh}},0\right]}\underline{\pi_{\delta,c,h}}>\max\left(\underline{\pi_{\delta,c,h}}\left(0\right),\underline{\pi_{\delta,c,h}}\left(-\sqrt{\frac{c}{rh}}\right)\right).
\]
\end{lem}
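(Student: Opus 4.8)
The plan is to use the explicit decomposition $\underline{\pi_{\delta,c,h}}\left(\xi\right)=\underline{\pi_{\delta,c}}\left(\xi\right)+h\xi$ together with the scalar wave equation $-d\underline{\pi_{\delta,c}}''-c\underline{\pi_{\delta,c}}'=r\underline{\pi_{\delta,c}}\left(1-\delta-\underline{\pi_{\delta,c}}\right)$ and the known qualitative features of $\underline{\pi_{\delta,c}}$: it is $\mathscr{C}^{2}$, strictly decreasing, with $\underline{\pi_{\delta,c}}\left(0\right)=\frac{1-\delta}{2}$, $\underline{\pi_{\delta,c}}\left(-\infty\right)=1-\delta$ and $\underline{\pi_{\delta,c}}\left(+\infty\right)=0$. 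For the sub-solution inequality, write $p=\underline{\pi_{\delta,c}}$ and $g\left(s\right)=rs\left(1-\delta-s\right)$. Since $\left(h\xi\right)''=0$, one gets $-d\underline{\pi_{\delta,c,h}}''-c\underline{\pi_{\delta,c,h}}'=g\left(p\right)-ch$, so the claim reduces to $g\left(p+h\xi\right)-g\left(p\right)\geq-ch$. Expanding, $g\left(p+h\xi\right)-g\left(p\right)=rh\xi\left(1-\delta-2p-h\xi\right)=rh\,\xi\left(1-\delta-2p\right)-rh^{2}\xi^{2}$. The first term is nonnegative for every $\xi\in\mathbb{R}$ because $\underline{\pi_{\delta,c}}$ is decreasing through $\frac{1-\delta}{2}$ at $\xi=0$, so $\xi\left(1-\delta-2\underline{\pi_{\delta,c}}\left(\xi\right)\right)\geq0$; and on $\left[-\sqrt{c/\left(rh\right)},\sqrt{c/\left(rh\right)}\right]$ one has $rh^{2}\xi^{2}\leq ch$. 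This gives the inequality in the classical (hence also generalized) sense on that interval.

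For the bound $\max_{\left[-\sqrt{c/\left(rh\right)},0\right]}\underline{\pi_{\delta,c,h}}\geq1-2\delta$, I would fix once and for all a point $\xi_{0}<0$ depending only on $\delta,c,r$ (not on $h$) with $\underline{\pi_{\delta,c}}\left(\xi_{0}\right)\geq1-\tfrac{3}{2}\delta$, which is possible since $\underline{\pi_{\delta,c}}\left(-\infty\right)=1-\delta$. Then, as soon as $h\left|\xi_{0}\right|\leq\tfrac{\delta}{2}$ and $\left|\xi_{0}\right|\leq\sqrt{c/\left(rh\right)}$, the point $\xi_{0}$ lies in $\left[-\sqrt{c/\left(rh\right)},0\right]$ and $\underline{\pi_{\delta,c,h}}\left(\xi_{0}\right)=\underline{\pi_{\delta,c}}\left(\xi_{0}\right)+h\xi_{0}\geq1-\tfrac{3}{2}\delta-\tfrac{\delta}{2}=1-2\delta$.

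For the strict inequality $\max_{\left[-\sqrt{c/\left(rh\right)},0\right]}\underline{\pi_{\delta,c,h}}>\max\left(\underline{\pi_{\delta,c,h}}\left(0\right),\underline{\pi_{\delta,c,h}}\left(-\sqrt{c/\left(rh\right)}\right)\right)$, the idea is to show that for $h$ small $\underline{\pi_{\delta,c,h}}$ is strictly decreasing at the right endpoint and strictly increasing at the left endpoint, which forces the maximum over the interval to exceed both endpoint values. At $\xi=0$, $\underline{\pi_{\delta,c,h}}'\left(0\right)=\underline{\pi_{\delta,c}}'\left(0\right)+h$, and $\underline{\pi_{\delta,c}}'\left(0\right)<0$ strictly: if $\underline{\pi_{\delta,c}}'$ vanished somewhere, the wave equation would make that point a strict interior maximum, contradicting strict monotonicity; hence $\underline{\pi_{\delta,c,h}}'\left(0\right)<0$ once $h<\left|\underline{\pi_{\delta,c}}'\left(0\right)\right|$. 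At $\xi=-\sqrt{c/\left(rh\right)}$, $\underline{\pi_{\delta,c,h}}'=\underline{\pi_{\delta,c}}'\left(-\sqrt{c/\left(rh\right)}\right)+h$, and since $1-\delta$ is a linearly stable state of the scalar reaction the front approaches it exponentially, $\underline{\pi_{\delta,c}}'\left(\xi\right)=O\left(\text{e}^{\mu\xi}\right)$ as $\xi\to-\infty$ for some $\mu>0$ (a classical fact about scalar monostable fronts); because $\sqrt{c/\left(rh\right)}\to+\infty$ only like $h^{-1/2}$, we obtain $\left|\underline{\pi_{\delta,c}}'\left(-\sqrt{c/\left(rh\right)}\right)\right|<h$ for $h$ small, hence $\underline{\pi_{\delta,c,h}}'\left(-\sqrt{c/\left(rh\right)}\right)>0$. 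Choosing $h^{\star}>0$ smaller than each of the finitely many thresholds $\left|\underline{\pi_{\delta,c}}'\left(0\right)\right|$, the one just produced for the left endpoint, $\tfrac{\delta}{2\left|\xi_{0}\right|}$, and $\tfrac{c}{r\xi_{0}^{2}}$ finishes the proof.

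The main obstacle is precisely this last step: one cannot separate the maximum from the left-endpoint value by a crude size estimate, since $\underline{\pi_{\delta,c,h}}\left(-\sqrt{c/\left(rh\right)}\right)\to1-\delta$ as $h\to0$ while the maximum also lies just below $1-\delta$. What saves the argument is the quantitative comparison between the \emph{polynomial} width $\sqrt{c/\left(rh\right)}$ of the window and the \emph{exponential} approach of $\underline{\pi_{\delta,c}}$ to the stable state $1-\delta$, which guarantees that the window reaches far enough into the nearly-flat region for the endpoint slope of $\underline{\pi_{\delta,c}}+h\xi$ to be dominated by the term $h$; everything else is an elementary computation.
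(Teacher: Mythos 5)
Your proposal is correct and follows essentially the same route as the paper: the same algebraic rewriting of $-d\underline{\pi_{\delta,c,h}}''-c\underline{\pi_{\delta,c,h}}'$ (your expansion of $g(p+h\xi)-g(p)$ is the paper's identity in disguise, with the same two observations $\xi\left(1-\delta-2\underline{\pi_{\delta,c}}(\xi)\right)\geq0$ and $h\xi^{2}\leq c/r$ on the window), and the same endpoint-slope argument $\underline{\pi_{\delta,c,h}}'(0)=\underline{\pi_{\delta,c}}'(0)+h<0$, $\underline{\pi_{\delta,c,h}}'\left(-\sqrt{c/(rh)}\right)=\underline{\pi_{\delta,c}}'\left(-\sqrt{c/(rh)}\right)+h>0$ based on the exponential decay of $\underline{\pi_{\delta,c}}'$ at $-\infty$ versus the merely polynomial width $\sqrt{c/(rh)}$. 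The only (immaterial) difference is that for the bound $\max\geq1-2\delta$ you evaluate at a fixed interior point $\xi_{0}$ rather than at the left endpoint $-\sqrt{c/(rh)}$ as the paper does.
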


\begin{rem*}
It should be achievable to prove that the global maximum of $\underline{\pi_{\delta,c,h}}$
in $\left(-\sqrt{\frac{c}{rh}},0\right)$ is actually unique and that
$\underline{\pi_{\delta,c,h}}$ is increasing in $\left(-\infty,\xi^{\star}\right)$
and decreasing in $\left(\xi^{\star},0\right)$ but this is really
unnecessary for our purpose.
\end{rem*}
\begin{proof}
Recalling that $\underline{\pi_{\delta,c,h}}(\xi)= \underline{\pi_{\delta,c}}(\xi) + h\xi$ (see Subsection \ref{subsec:4.1.8}), we have 
\begin{align*}
&\quad -d\underline{\pi_{\delta,c,h}}''\left(\xi\right)-c\underline{\pi_{\delta,c,h}}'\left(\xi\right) \\
& =r\underline{\pi_{\delta,c}}\left(\xi\right)\left(1-\delta-\underline{\pi_{\delta,c}}\left(\xi\right)\right)-ch\\
 & =r\underline{\pi_{\delta,c,h}}\left(\xi\right)\left(1-\delta-\underline{\pi_{\delta,c,h}}\left(\xi\right)\right)-hr\left(\xi\left(1-\delta-\underline{\pi_{\delta,c}}\left(\xi\right)\right)+\frac{c}{r}-\underline{\pi_{\delta,c,h}}\left(\xi\right)\xi\right)\\
 & =r\underline{\pi_{\delta,c,h}}\left(\xi\right)\left(1-\delta-\underline{\pi_{\delta,c,h}}\left(\xi\right)\right)-hr\left(-h\xi^{2}+\left(1-\delta-2\underline{\pi_{\delta,c}}\left(\xi\right)\right)\xi+\frac{c}{r}\right).
\end{align*}
It is easily verified that, in $\left[-\sqrt{\frac{c}{rh}},\sqrt{\frac{c}{rh}}\right]$,
\[
-h\xi^{2}+\left(1-\delta-2\underline{\pi_{\delta,c}}\left(\xi\right)\right)\xi+\frac{c}{r}>-h\xi^{2}+\frac{c}{r}\geq 0,
\]
where we used the facts
$$
\underline{\pi_{\delta,c,h}} > \frac{1-\delta}{2} \quad \text{ for }\xi <0, \quad \text{ and }\quad \underline{\pi_{\delta,c,h}} < \frac{1-\delta}{2} \quad \text{ for }\xi >0.
$$
And the stated differential inequality is established.

The maximum of $\underline{\pi_{\delta,c,h}}$ in $\left[-\sqrt{\frac{c}{rh}},0\right]$
is larger than or equal to 
\[
\underline{\pi_{\delta,c,h}}\left(-\sqrt{\frac{c}{rh}}\right)=\underline{\pi_{\delta,c}}\left(-\sqrt{\frac{c}{rh}}\right)-\sqrt{\frac{ch}{r}},
\]
which is itself larger than or equal to $1-2\delta$ if $h$ is small enough. 

Finally, since $\underline{\pi'_{\delta,c}}\left(-\sqrt{\frac{c}{rh}}\right)$
vanishes exponentially as $h\to 0$,
\[
\underline{\pi'_{\delta,c,h}}\left(-\sqrt{\frac{c}{rh}}\right)=\underline{\pi'_{\delta,c}}\left(-\sqrt{\frac{c}{rh}}\right)+h>0, \quad  \text{ and }\quad  \underline{\pi_{\delta,c,h}}'(0) =  \underline{\pi_{\delta,c}}'(0) + h <0,
\]
for all sufficiently small $h$. This implies that the values at $\xi =0$ and $-\sqrt{\frac{c}{rh}}$ are smaller than the 
aforementioned maximum.
\end{proof}

\begin{lem}
\label{lem:theta_sub-solution} For all $\delta\in\left[0,\frac{1}{2}\right)$,
$c\geq c_{\textup{LLW}}^{\delta}$ and $A>0$, $\underline{\theta_{\delta,c,A}}$
satisfies
\begin{equation}\label{eq:theta_sub-solution}
-d\underline{\theta_{\delta,c,A}}''-c\underline{\theta_{\delta,c,A}}'-r\underline{\theta_{\delta,c,A}}\left(1-\delta-b\right)=0\text{ in }\mathbb{R}.
\end{equation}
\end{lem}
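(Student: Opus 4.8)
The plan is to recognize that the differential identity (\ref{eq:theta_sub-solution}) is nothing but the statement that $\underline{\theta_{\delta,c,A}}$ solves a linear homogeneous second-order ordinary differential equation with constant coefficients, and that, by construction, $\underline{\theta_{\delta,c,A}}$ is a linear combination of the two exponential fundamental solutions of that equation; the verification is then immediate by substitution.

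Concretely, I would rewrite (\ref{eq:theta_sub-solution}) in the form $d\theta'' + c\theta' + r\left(1-\delta-b\right)\theta = 0$ and examine its characteristic polynomial $d\mu^{2} + c\mu + r\left(1-\delta-b\right)$. Because $b>1$ and $\delta\geq0$, the number $b-1+\delta$ is strictly positive, so the discriminant $c^{2} - 4dr\left(1-\delta-b\right) = c^{2} + 4rd\left(b-1+\delta\right)$ is strictly positive and the polynomial has two real distinct roots
\[
\mu_{\pm} = \frac{-c \pm \sqrt{c^{2}+4rd\left(b-1+\delta\right)}}{2d}.
\]
Comparing with the definition of $\underline{\theta_{\delta,c,A}}$ given in Subsection~\ref{subsec:4.1.6}, its first summand is exactly $A\,\text{e}^{\mu_{+}\left(\xi-\xi_{\theta}\right)}$ and its second summand is $\text{e}^{\mu_{-}\left(\xi-\xi_{\theta}\right)}$; hence $\underline{\theta_{\delta,c,A}}$ is a linear combination of $\xi\mapsto\text{e}^{\mu_{+}\xi}$ and $\xi\mapsto\text{e}^{\mu_{-}\xi}$, each of which is annihilated by the operator $\theta\mapsto d\theta''+c\theta'+r\left(1-\delta-b\right)\theta$, and therefore so is $\underline{\theta_{\delta,c,A}}$. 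This already establishes (\ref{eq:theta_sub-solution}).

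For the record I would also check the two normalization claims attached to $\underline{\theta_{\delta,c,A}}$ in Subsection~\ref{subsec:4.1.6}, both of which come for free: evaluating at $\xi=0$ gives $A\,\text{e}^{-\mu_{+}\xi_{\theta}}-\text{e}^{-\mu_{-}\xi_{\theta}}$, which vanishes precisely when $A=\text{e}^{\left(\mu_{+}-\mu_{-}\right)\xi_{\theta}}=\text{e}^{\xi_{\theta}\sqrt{c^{2}+4rd\left(b-1+\delta\right)}/d}$, that is, when $\xi_{\theta}$ takes the stated value; and since $\mu_{+}>0>\mu_{-}$ while the two coefficients $A$ and $1$ are positive, $\underline{\theta_{\delta,c,A}}$ is increasing on all of $\mathbb{R}$. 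There is no real obstacle here: the lemma is a one-line constant-coefficient ODE verification, and the only point worth flagging is the use of $b>1$ to guarantee that the characteristic roots are real, so that the two building blocks are genuine exponentials rather than oscillatory functions.
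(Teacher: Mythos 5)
Your proof is correct and is essentially the paper's own argument: both observe that $\underline{\theta_{\delta,c,A}}$ is, by construction, a linear combination of the exponentials $\xi\mapsto\text{e}^{\frac{1}{2d}\left(\pm\sqrt{c^{2}+4rd\left(b-1+\delta\right)}-c\right)\xi}$, whose exponents are precisely the two real roots of the characteristic polynomial of the constant-coefficient ODE in (\ref{eq:theta_sub-solution}), so the identity follows by substitution. The extra checks of the normalization $\underline{\theta_{\delta,c,A}}\left(0\right)=0$ and of monotonicity are accurate but not needed for the lemma itself.
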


\begin{proof}
Note that $\underline{\theta_{\delta,c,A}}$ is a
linear combination of $\xi\mapsto\textup{e}^{\frac{1}{2d}\left(\pm\sqrt{c^{2}+4rd(b-1+\delta)}-c\right)\xi},$
where $\frac{1}{2d}\left(\pm\sqrt{c^{2}+4rd(b-1+\delta)}-c\right)$
are the two distinct roots of the characteristic polynomial associated
with the above linear ODE (\ref{eq:theta_sub-solution}). 
\end{proof}

\begin{lem}
\label{lem:interface_0} For all $c>2\sqrt{1-a}$, $\tilde{c}\geq c$
such that 
$$
\tilde{c}>f\left(c\right), \quad \eta\in\left(0,\min\left(\sqrt{\tilde{c}^{2}-4\left(\lambda\left(c\right)\left(\tilde{c}-c\right)+1\right)}\right),\lambda_{v}\left(\tilde{c}\right)\right), \quad \text{ and }\quad 
A>0,
$$
there exists $\zeta_{0}\in\mathbb{R}$ such that the equation
\[
\underline{\chi_{c}}\left(x-ct+\zeta_{0}\right)=\underline{w_{c,\tilde{c},A,\eta}}\left(t,x\right)
\]
admits for all $t\geq0$ an isolated solution $x_{0}\left(t\right)\in\mathbb{R}$
such that 
\begin{enumerate}
\item $\underline{\chi_{c}}\left(x-ct+\zeta_{0}\right)>\underline{w_{c,\tilde{c},A,\eta}}\left(t,x\right)$
in a left-sided neighborhood of $x_{0}\left(t\right)$;
\item $\underline{\chi_{c}}\left(x-ct+\zeta_{0}\right)<\underline{w_{c,\tilde{c},A,\eta}}\left(t,x\right)$
in a right-sided neighborhood of $x_{0}\left(t\right)$;
\item $\tilde{c}t<x_{0}\left(t\right)<X_{w}+\tilde{c}t$.
\end{enumerate}
Furthermore, $x_{0}\in\mathscr{C}^{1}\left(\left[0,+\infty\right),\left(0,+\infty\right)\right)$.
\end{lem}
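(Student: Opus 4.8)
The plan is to study the \emph{gap function}
\[
g\left(t,x\right)=\underline{\chi_{c}}\left(x-ct+\zeta_{0}\right)-\underline{w_{c,\tilde{c},A,\eta}}\left(t,x\right)
\]
and to choose the shift $\zeta_{0}$ so that, on the window $I_{t}=\left(\tilde{c}t,\tilde{c}t+X_{w}\right)$, the map $g\left(t,\cdot\right)$ is positive at the left endpoint, negative at the right endpoint, and strictly decreasing in between; an intermediate value argument then yields $x_{0}\left(t\right)$, and the implicit function theorem upgrades it to a $\mathscr{C}^{1}$ curve.

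First I would record the shapes of the two competing functions. The profile $\underline{\chi_{c}}$ is smooth, strictly decreasing, runs from $\frac{1-a}{2}$ at $-\infty$ to $0$ at $+\infty$, and by a standard linearization of its traveling-wave ODE near $0$ it decays exponentially at $+\infty$, so that there are constants $C_{\chi}>0$, $\xi_{0}\in\mathbb{R}$ and a rate $\mu>0$ with $\underline{\chi_{c}}\left(\xi\right)\leq C_{\chi}\text{e}^{-\mu\xi}$ for $\xi\geq\xi_{0}$. On the other side, by the construction of $\underline{w_{c,\tilde{c},A,\eta}}$ recalled in \subsecref{4.1.14}, for each $t\geq0$ the map $x\mapsto\underline{w_{c,\tilde{c},A,\eta}}\left(t,x\right)$ vanishes on $\left(-\infty,\tilde{c}t\right]$, is strictly increasing on $I_{t}$, strictly decreasing on $\left(\tilde{c}t+X_{w},+\infty\right)$, and attains its maximum $M_{w}\text{e}^{-\lambda\left(c\right)\left(\tilde{c}-c\right)t}$ at $x=\tilde{c}t+X_{w}$, for a fixed constant $M_{w}>0$.

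Two elementary observations then drive the argument. For every $\zeta_{0}$ and $t$ one has $g\left(t,\tilde{c}t\right)=\underline{\chi_{c}}\left(\left(\tilde{c}-c\right)t+\zeta_{0}\right)>0$, since $\underline{\chi_{c}}$ is everywhere positive; and on $I_{t}$ the map $g\left(t,\cdot\right)$ is strictly decreasing, being the sum of the strictly decreasing $x\mapsto\underline{\chi_{c}}\left(x-ct+\zeta_{0}\right)$ and of minus the strictly increasing $x\mapsto\underline{w_{c,\tilde{c},A,\eta}}\left(t,x\right)$. Hence it suffices to pick $\zeta_{0}$ with $g\left(t,\tilde{c}t+X_{w}\right)<0$ for all $t\geq0$, i.e.
\[
\underline{\chi_{c}}\left(\left(\tilde{c}-c\right)t+X_{w}+\zeta_{0}\right)<M_{w}\text{e}^{-\lambda\left(c\right)\left(\tilde{c}-c\right)t}\qquad\text{for all }t\geq0.
\]
Writing $\sigma=\left(\tilde{c}-c\right)t\geq0$ and using the exponential bound, once $X_{w}+\zeta_{0}\geq\xi_{0}$ the left-hand side is bounded by $C_{\chi}\text{e}^{-\mu\left(X_{w}+\zeta_{0}\right)}\text{e}^{-\mu\sigma}$, and the displayed inequality reduces to comparing the two exponential envelopes and then taking $\zeta_{0}$ large enough. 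I expect this uniform-in-$t$ step — reconciling the spatial decay rate of $\underline{\chi_{c}}$ with the temporal rate $\lambda\left(c\right)\left(\tilde{c}-c\right)$ that the definition of $\Lambda\left(c,\tilde{c}\right)$ builds into $\underline{w_{c,\tilde{c},A,\eta}}$, and the attendant quantitative choice of $\zeta_{0}$ — to be the crux; it is precisely where the hypotheses $\tilde{c}>f\left(c\right)$, $\tilde{c}\geq c$ and the normalization of $\underline{\chi_{c}}$ must be used.

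Granting such a $\zeta_{0}$, the rest is routine. For each $t\geq0$, $g\left(t,\cdot\right)$ is continuous and strictly decreasing on $I_{t}$ with $g\left(t,\tilde{c}t\right)>0>g\left(t,\tilde{c}t+X_{w}\right)$, so it has a unique zero $x_{0}\left(t\right)\in I_{t}$; strict monotonicity makes this zero isolated and forces $\underline{\chi_{c}}\left(\cdot-ct+\zeta_{0}\right)-\underline{w_{c,\tilde{c},A,\eta}}\left(t,\cdot\right)$ to be positive in a left-sided and negative in a right-sided neighborhood of $x_{0}\left(t\right)$, giving (1) and (2), while (3) is the statement $x_{0}\left(t\right)\in I_{t}$. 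Finally $\partial_{x}g\left(t,x_{0}\left(t\right)\right)<0\neq0$ and $g$ is $\mathscr{C}^{1}$ near $\left(t,x_{0}\left(t\right)\right)$ — note $x_{0}\left(t\right)>\tilde{c}t$, so one stays away from the corner of $\underline{w_{c,\tilde{c},A,\eta}}$ along $x=\tilde{c}t$ — so the implicit function theorem gives $x_{0}\in\mathscr{C}^{1}\left(\left[0,+\infty\right),\mathbb{R}\right)$, and $x_{0}\left(t\right)>\tilde{c}t\geq0$ shows that $x_{0}$ takes values in $\left(0,+\infty\right)$.
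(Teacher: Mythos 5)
Your skeleton coincides with the paper's: positivity of the gap at $x=\tilde{c}t$ (where $\underline{w_{c,\tilde{c},A,\eta}}$ vanishes), strict monotonicity of the gap on the window $\left(\tilde{c}t,\tilde{c}t+X_{w}\right)$, an intermediate value argument, and the implicit function theorem away from the corner $x=\tilde{c}t$. The problem is the step you yourself flag as the crux and then leave open: the uniform-in-$t$ inequality at the peak, $\underline{\chi_{c}}\left(X_{w}+\left(\tilde{c}-c\right)t+\zeta_{0}\right)<M_{w}\,\text{e}^{-\lambda\left(c\right)\left(\tilde{c}-c\right)t}$ with $M_{w}=\max_{y}\underline{w_{c,\tilde{c},A,\eta}}\left(0,y\right)$. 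As you have set it up, with only ``some exponential rate $\mu>0$'' for the tail of $\underline{\chi_{c}}$, the announced reduction ``compare the two exponential envelopes and take $\zeta_{0}$ large enough'' does not work: enlarging $\zeta_{0}$ only shrinks a multiplicative constant, while the ratio of the two sides carries the factor $\text{e}^{\left(\lambda\left(c\right)-\mu\right)\left(\tilde{c}-c\right)t}$, which blows up as $t\to+\infty$ whenever $\tilde{c}>c$ and $\mu<\lambda\left(c\right)$. So the lemma hinges on pinning down $\mu$, and that identification is exactly what is missing from your argument; it is not a routine envelope comparison but the actual content of the proof.

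The paper closes this step by invoking the precise tail asymptotics of the supercritical KPP front: since $c>2\sqrt{1-a}$, after a translation one has $\underline{\chi_{c}}\left(x+\zeta_{0,1}\right)\sim\text{e}^{-\lambda\left(c\right)x}$ as $x\to+\infty$, i.e.\ the spatial decay rate is exactly the $\lambda\left(c\right)$ that was deliberately built into $\underline{w_{c,\tilde{c},A,\eta}}$ through the temporal factor $\text{e}^{-\lambda\left(c\right)\left(\tilde{c}-c\right)t}$. One then chooses $\zeta_{0}$ so that $\underline{\chi_{c}}\left(x+\zeta_{0}\right)\leq\frac{1}{2}\,\text{e}^{-\lambda\left(c\right)x}M_{w}$ for all $x\geq0$; the shift of the $\underline{\chi_{c}}$-tail by $\left(\tilde{c}-c\right)t$ then exactly cancels the temporal decay of the bump, and the peak inequality reduces to $2>\text{e}^{-\lambda\left(c\right)X_{w}}$, which is trivially true since $\lambda\left(c\right)>0$ and $X_{w}>0$. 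Supply this decay statement — it is also where the hypothesis $c>2\sqrt{1-a}$ is genuinely used, since at the critical speed the tail would carry an extra linear factor and the rate would no longer match — and the rest of your argument (uniqueness of the crossing on the window, the sign conditions (1)--(2), the bounds (3), and the $\mathscr{C}^{1}$ regularity via the implicit function theorem) goes through as you describe and as in the paper.
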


\begin{proof}
Recall from standard results on the KPP equation that, since $c>2\sqrt{1-a}$,
there exists $\zeta_{0,1}\in\mathbb{R}$ such that
\[
\underline{\chi_{c}}\left(x+\zeta_{0,1}\right)\sim\textup{e}^{-\lambda\left(c\right)x}\text{ as }x\to+\infty.
\]
Hence there exists $\zeta_{0}\in\mathbb{R}$ such that, for all $x\geq0$,
\[
\underline{\chi_{c}}\left(x+\zeta_{0}\right)\leq\frac{1}{2}e^{-\lambda(c)x}\max_{y\in\mathbb{R}}\underline{w_{c,\tilde{c},A,\eta}}\left(0,y\right)\leq\frac{1}{2}\max_{y\in\mathbb{R}}\underline{w_{c,\tilde{c},A,\eta}}\left(0,y\right)
\]
with $\max\limits _{y\in\mathbb{R}}\underline{w_{c,\tilde{c},A,\eta}}\left(0,y\right)$
uniquely attained at $X_{w}$.

From the intermediate value theorem and the respective strict monotonicities
of $\underline{\chi_{c}}$ in $\mathbb{R}$ and $x\mapsto\underline{w_{c,\tilde{c},A,\eta}}\left(0,x\right)$
in $\left[0,X_{w}\right]$, it clearly follows that $\underline{\chi_{c}}\left(x+\zeta_{0}\right)=\underline{w_{c,\tilde{c},A,\eta}}\left(0,x\right)$
admits a unique solution $x_{0}\left(0\right)$ in $\left(0,X_{w}\right)$.

Next, to define in the same way $x_{0}\left(t\right)$, it suffices
to verify that for all $t>0$, 
\[
\underline{w_{c,\tilde{c},A,\eta}}\left(t,X_{w}+\tilde{c}t\right)>\underline{\chi_{c}}\left(X_{w}+\left(\tilde{c}-c\right)t+\zeta_{0}\right).
\]
Since $X_{w}+\tilde{c}t\geq0$, it is \textit{a fortiori} sufficient
to verify that for all $t\geq0$,
\[
\textup{e}^{-\lambda\left(c\right)\left(\tilde{c}-c\right)t}\max_{x\in\mathbb{R}}\underline{w_{c,\tilde{c},A,\eta}}\left(0,x\right)>\frac{1}{2}\max_{x\in\mathbb{R}}\underline{w_{c,\tilde{c},A,\eta}}\left(0,x\right)\textup{e}^{-\lambda\left(c\right)\left(X_{w}+\left(\tilde{c}-c\right)t\right)}.
\]
This inequality reduces in fact to $2>\textup{e}^{-\lambda\left(c\right)X_{w}},$
which holds as $\lambda\left(c\right)$ and $X_{w}$ are both positive. The existence of $x_{0}\left(t\right)$ for
all $t>0$ follows.

Finally, the regularity of $x_{0}$ follows from the aforementioned
monotonicities and the implicit function theorem.
\end{proof}
\begin{lem}
\label{lem:interface_1} For all $\delta\in\left[0,\frac{1}{2}\right)$,
$c\geq c_{\textup{LLW}}^{\delta}$ and $\kappa\in\left(0,\delta\right]$, there
exists $\zeta_{1,\kappa}\in\mathbb{R}$ and $A_{\kappa}>0$ such that
the equation 
\[
\underline{\theta_{\delta,c,A_{\kappa}}}\left(\xi\right)=\underline{\psi_{\delta,c}}\left(\xi-\zeta_{1,\kappa}\right)
\]
admits an isolated solution $\xi_{1,\kappa}\in\mathbb{R}$ such that
\begin{enumerate}
\item $\underline{\theta_{\delta,c,A_{\kappa}}}\left(\xi\right)>\underline{\psi_{\delta,c}}\left(\xi-\zeta_{1,\kappa}\right)$ for $\xi$ 
in a left-sided neighborhood of $\xi_{1,\kappa}$;
\item $\underline{\theta_{\delta,c,A_{\kappa}}}\left(\xi\right)<\underline{\psi_{\delta,c}}\left(\xi-\zeta_{1,\kappa}\right)$ for $\xi$ 
in a right-sided neighborhood of $\xi_{1,\kappa}$;
\item $\underline{\psi_{\delta,c}}\left(\xi_{1,\kappa}-\zeta_{1,\kappa}\right)\leq\kappa$;
\item $\zeta_{1,\kappa}-\xi_{1,\kappa}\to+\infty$ as $\kappa\to0$.
\end{enumerate}
\end{lem}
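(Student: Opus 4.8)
The plan is to use the explicit homogeneity of $\underline{\theta_{\delta,c,A}}$ in $A$ together with the exponential decay of $\underline{\psi_{\delta,c}}$ at $-\infty$, and to manufacture the crossing point by prescribing both its value and its position. Note first that $\kappa\in(0,\delta]$ forces $\delta>0$. Writing $\mu_{\pm}=\frac{1}{2d}\bigl(\pm\sqrt{c^{2}+4rd(b-1+\delta)}-c\bigr)$, so that $\mu_{+}>0>\mu_{-}$ and $\xi_{\theta}=\frac{\ln A}{\mu_{+}-\mu_{-}}$, a one-line computation gives the homogeneity
\[
\underline{\theta_{\delta,c,A}}(\xi)=A^{p}\,\Theta(\xi),\qquad \Theta(\xi):=\text{e}^{\mu_{+}\xi}-\text{e}^{\mu_{-}\xi},\qquad p:=\frac{-\mu_{-}}{\mu_{+}-\mu_{-}}>0,
\]
where $\Theta$ is increasing, $\Theta(0)=0$, $\Theta>0$ on $(0,+\infty)$, and $\Theta'(\xi)/\Theta(\xi)\to\mu_{+}$ as $\xi\to+\infty$. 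In particular, given a target value $v^{\star}>0$ and a point $\xi_{1}>0$, there is a unique $A>0$ with $\underline{\theta_{\delta,c,A}}(\xi_{1})=v^{\star}$, and then $\underline{\theta_{\delta,c,A}}'(\xi_{1})=v^{\star}\,\Theta'(\xi_{1})/\Theta(\xi_{1})$.

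Second, I would record the behaviour of $\underline{\psi_{\delta,c}}$: it is an increasing bijection of $\mathbb{R}$ onto $(0,1-2\delta)$ and $\underline{\psi_{\delta,c}}(\eta)=B_{\delta}\text{e}^{\nu_{\delta}\eta}+\text{h.o.t.}$ as $\eta\to-\infty$ for some $B_{\delta}>0$, where $\nu_{\delta}$ is the positive root of $d\nu^{2}+c\nu+r\bigl(1-2\delta-b(1+\delta)\bigr)=0$; this is the $F_{\delta}$-analogue of \lemref{Exponential_estimates_super-critical_waves}, and here it is particularly transparent because the linearization of $F_{\delta}$ at $(1+\delta,0)$ is upper triangular, so the $v$-component solves a scalar linear ODE up to exponentially small terms and, decaying to $0$, must be asymptotically proportional to the slower mode. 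Consequently $\underline{\psi_{\delta,c}}'(\eta)/\underline{\psi_{\delta,c}}(\eta)\to\nu_{\delta}$. The decisive algebraic fact is $\nu_{\delta}>\mu_{+}$ for $\delta>0$: both are positive roots of $dX^{2}+cX+r\gamma=0$, with $\gamma=1-2\delta-b(1+\delta)$ for $\nu_{\delta}$ and $\gamma=1-\delta-b$ for $\mu_{+}$, and since the difference of these constants equals $-\delta(b+1)<0$ while the positive root is decreasing in $\gamma$, one gets $\nu_{\delta}>\mu_{+}$.

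Now the construction. Set $\varepsilon_{\delta}:=\tfrac{1}{4}(\nu_{\delta}-\mu_{+})>0$; by the asymptotics above there is $v_{0}\in(0,1-2\delta)$ with $\underline{\psi_{\delta,c}}'(\eta)>(\mu_{+}+2\varepsilon_{\delta})\,\underline{\psi_{\delta,c}}(\eta)$ whenever $\underline{\psi_{\delta,c}}(\eta)\le v_{0}$. Given $\kappa\in(0,\delta]$, put $v^{\star}_{\kappa}:=\min(\kappa,v_{0})\in(0,\kappa]$, let $\eta^{\star}_{\kappa}$ be the unique real with $\underline{\psi_{\delta,c}}(\eta^{\star}_{\kappa})=v^{\star}_{\kappa}$, pick $\xi_{1,\kappa}>0$ large enough that $\Theta'(\xi_{1,\kappa})/\Theta(\xi_{1,\kappa})<\mu_{+}+\varepsilon_{\delta}$, and define $A_{\kappa}>0$ by $A_{\kappa}^{p}=v^{\star}_{\kappa}/\Theta(\xi_{1,\kappa})$ and $\zeta_{1,\kappa}:=\xi_{1,\kappa}-\eta^{\star}_{\kappa}$. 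Then $\underline{\theta_{\delta,c,A_{\kappa}}}(\xi_{1,\kappa})=v^{\star}_{\kappa}=\underline{\psi_{\delta,c}}(\xi_{1,\kappa}-\zeta_{1,\kappa})$, so $\xi_{1,\kappa}$ solves the equation and $(3)$ holds. Moreover
\[
\underline{\theta_{\delta,c,A_{\kappa}}}'(\xi_{1,\kappa})=v^{\star}_{\kappa}\,\frac{\Theta'(\xi_{1,\kappa})}{\Theta(\xi_{1,\kappa})}<v^{\star}_{\kappa}(\mu_{+}+\varepsilon_{\delta})<v^{\star}_{\kappa}(\mu_{+}+2\varepsilon_{\delta})<\underline{\psi_{\delta,c}}'(\eta^{\star}_{\kappa})=\frac{d}{d\xi}\underline{\psi_{\delta,c}}(\xi-\zeta_{1,\kappa})\Big|_{\xi=\xi_{1,\kappa}},
\]
so $g:=\underline{\theta_{\delta,c,A_{\kappa}}}-\underline{\psi_{\delta,c}}(\cdot-\zeta_{1,\kappa})$ satisfies $g(\xi_{1,\kappa})=0$ and $g'(\xi_{1,\kappa})<0$; hence $g$ is strictly decreasing near $\xi_{1,\kappa}$, which makes $\xi_{1,\kappa}$ an isolated zero with $g>0$ in a left-sided neighbourhood and $g<0$ in a right-sided neighbourhood, i.e. $(1)$–$(2)$. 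Finally $\zeta_{1,\kappa}-\xi_{1,\kappa}=-\eta^{\star}_{\kappa}$, and since $v^{\star}_{\kappa}=\min(\kappa,v_{0})\to0$ as $\kappa\to0$ and $\underline{\psi_{\delta,c}}$ is an increasing bijection onto $(0,1-2\delta)$, we get $\eta^{\star}_{\kappa}\to-\infty$, that is $(4)$.

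The only genuinely delicate ingredients are (i) the $-\infty$ asymptotic $\underline{\psi_{\delta,c}}(\eta)\sim B_{\delta}\text{e}^{\nu_{\delta}\eta}$ and (ii) the inequality $\nu_{\delta}>\mu_{+}$; everything else is bookkeeping with monotone functions. For (i) there is no critical-speed degeneracy at this end, since the discriminant $c^{2}+4rd\bigl(b(1+\delta)+2\delta-1\bigr)$ of the relevant quadratic is strictly positive for every $c\ge c_{LLW}^{\delta}$ (indeed $b(1+\delta)+2\delta-1>0$ because $b>1$), so the two exponents are real and distinct and the decaying profile is asymptotic to the slower mode; if one wishes to avoid invoking the asymptotic constant, the weaker statement $\limsup_{\eta\to-\infty}\underline{\psi_{\delta,c}}'(\eta)/\underline{\psi_{\delta,c}}(\eta)\ge\nu_{\delta}$ already suffices, by running the construction along a sequence $\eta_{n}\to-\infty$. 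Item (ii) is the elementary computation above, and it is exactly the place where the hypothesis $\delta>0$ (equivalently $\kappa>0$) enters.
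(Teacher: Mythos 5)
Your proposal is correct and follows essentially the same route as the paper's proof: both arguments hinge on the strict gap $\mu_{+}<\nu_{\delta}$ (the paper's $\lambda_{\theta}^{+}<\lambda^{-\infty}$, available precisely because $\kappa\le\delta$ forces $\delta>0$) together with the exponential asymptotics $\underline{\psi_{\delta,c}}'/\underline{\psi_{\delta,c}}\to\lambda^{-\infty}$ at $-\infty$, and then match values using the scaling freedom in $A$ and positions using the translation $\zeta$, obtaining the transversal crossing from the comparison of logarithmic derivatives; your use of the homogeneity $\underline{\theta_{\delta,c,A}}=A^{p}\Theta$ is only a cosmetic variant of the paper's explicit solve for $\xi_{1}$.
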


\begin{proof}
Let $\delta$, $c$ and $\kappa$ be given as in the statement, define 
\[
\lambda^{-\infty}=\frac{1}{2d}\left(\sqrt{c^{2}+4rd\left(b-1+\left(b+2\right)\delta\right)}-c\right),
\]
\[
\lambda_{\theta}^{+}=\frac{1}{2d}\left(\sqrt{c^{2}+4rd\left(b-1+\delta\right)}-c\right),
\]
\[
\lambda_{\theta}^{-}=\frac{1}{2d}\left(-\sqrt{c^{2}+4rd\left(b-1+\delta\right)}-c\right),
\]
\[
\xi_{\theta}=\frac{d\ln A}{\sqrt{c^{2}+4rd\left(b-1+\delta\right)}}=\frac{\ln A}{\lambda_{\theta}^{+}-\lambda_{\theta}^{-}},
\]
and notice that 
\[
\lambda_{\theta}^{-}<0<\lambda_{\theta}^{+}<\lambda^{-\infty}.
\]

Let $\tilde{\kappa}\in\left(0,\kappa\right]$ such that $\left(1-\tilde{\kappa}\right)\lambda^{-\infty}>\lambda_{\theta}^{+}$.

In view of \lemref{Exponential_estimates_super-critical_waves},
\[
\lim_{\xi\to-\infty}\left(\frac{\underline{\psi_{\delta,c}}'\left(\xi\right)}{\underline{\psi_{\delta,c}}\left(\xi\right)}\right)=\lambda^{-\infty}.
\]
 Therefore, by monotonicity of $\underline{\psi_{\delta,c}}$, there
exists $\zeta_{\kappa}\in\mathbb{R}$ such that for all $\xi\leq0$,
\[
\underline{\psi_{\delta,c}}\left(\xi-\zeta_{\kappa}\right)\leq\kappa,
\]
\[
\left(1-\frac{\tilde{\kappa}}{2}\right)\lambda^{-\infty}\leq\frac{\underline{\psi_{\delta,c}}'\left(\xi-\zeta_{\kappa}\right)}{\underline{\psi_{\delta,c}}\left(\xi-\zeta_{\kappa}\right)}\leq\left(1+\frac{\tilde{\kappa}}{2}\right)\lambda^{-\infty}.
\]

Note that $\zeta_{\kappa}\to+\infty$ as $\kappa\to 0$. It remains to find $A>0$, $\zeta_{1}>\zeta_{\kappa}$
and $\xi_{1}\in\left(0,\zeta_{1}-\zeta_{\kappa}\right]$ such that
\[
\underline{\theta_{\delta,c,A}}\left(\xi_{1}\right)=\underline{\psi_{\delta,c}}\left(\xi_{1}-\zeta_{1}\right),
\]
\[
\frac{\underline{\theta_{\delta,c,A}}'\left(\xi_{1}\right)}{\underline{\theta_{\delta,c,A}}\left(\xi_{1}\right)}\leq\left(1-\tilde{\kappa}\right)\lambda^{-\infty}.
\]

For all $\xi\in\mathbb{R}$, 
\[
\underline{\theta_{\delta,c,A}}'\left(\xi\right)=A\lambda_{\theta}^{+}\textup{e}^{\lambda_{\theta}^{+}\left(\xi-\xi_{\theta}\right)}-\lambda_{\theta}^{-}\textup{e}^{\lambda_{\theta}^{-}\left(\xi-\xi_{\theta}\right)}>0,
\]
whence for all $\xi>0$ the condition
\[
\frac{\underline{\theta_{\delta,c,A}}'\left(\xi\right)}{\underline{\theta_{\delta,c,A}}\left(\xi\right)}<\left(1-\tilde{\kappa}\right)\lambda^{-\infty}
\]
 holds true if and only if 
\[
\left(1-\tilde{\kappa}\right)\lambda^{-\infty}>\lambda_{\theta}^{+}+\frac{\lambda_{\theta}^{+}-\lambda_{\theta}^{-}}{A\textup{e}^{\left(\lambda_{\theta}^{+}-\lambda_{\theta}^{-}\right)\left(\xi-\xi_{\theta}\right)}-1},
\]
that is if and only if 
\[
A\textup{e}^{\left(\lambda_{\theta}^{+}-\lambda_{\theta}^{-}\right)\left(\xi-\xi_{\theta}\right)}-1>\frac{\lambda_{\theta}^{+}-\lambda_{\theta}^{-}}{\left(1-\tilde{\kappa}\right)\lambda^{-\infty}-\lambda_{\theta}^{+}},
\]
that is if and only if $\xi>\xi_{1}$ where 
\[
\xi_{1}=\max\left(0,\xi_{\theta}+\frac{1}{\lambda_{\theta}^{+}-\lambda_{\theta}^{-}}\left(\ln\left(1+\frac{\lambda_{\theta}^{+}-\lambda_{\theta}^{-}}{\left(1-\tilde{\kappa}\right)\lambda^{-\infty}-\lambda_{\theta}^{+}}\right)-\ln A\right)\right).
\]
In view of the definition of $\xi_{\theta}$, 
\begin{align*}
\xi_{1} & =\max\left(0,\frac{1}{\lambda_{\theta}^{+}-\lambda_{\theta}^{-}}\ln\left(1+\frac{\lambda_{\theta}^{+}-\lambda_{\theta}^{-}}{\left(1-\tilde{\kappa}\right)\lambda^{-\infty}-\lambda_{\theta}^{+}}\right)\right)\\
 & =\frac{1}{\lambda_{\theta}^{+}-\lambda_{\theta}^{-}}\ln\left(1+\frac{\lambda_{\theta}^{+}-\lambda_{\theta}^{-}}{\left(1-\tilde{\kappa}\right)\lambda^{-\infty}-\lambda_{\theta}^{+}}\right).
\end{align*}
In particular, $\xi_{1}>0$ does not depend on $A$ and, by construction,
we have
\[
\frac{\underline{\theta_{\delta,c,A}}'\left(\xi\right)}{\underline{\theta_{\delta,c,A}}\left(\xi\right)}<\left(1-\tilde{\kappa}\right)\lambda^{-\infty}\text{ for all }\xi>\xi_{1},
\]
\[
\frac{\underline{\theta_{\delta,c,A}}'\left(\xi_{1}\right)}{\underline{\theta_{\delta,c,A}}\left(\xi_{1}\right)}=\left(1-\tilde{\kappa}\right)\lambda^{-\infty}.
\]

Now, the function $\underline{\theta_{\delta,c,A}}$ is increasing
with $\underline{\theta_{\delta,c,A}}\left(0\right)=0$ and 
\begin{align*}
\underline{\theta_{\delta,c,A}}\left(\xi_{1}\right) & =A\textup{e}^{\lambda_{\theta}^{+}\left(\xi_{1}-\xi_{\theta}\right)}-\textup{e}^{\lambda_{\theta}^{-}\left(\xi_{1}-\xi_{\theta}\right)}\\
 & =A^{1-\frac{\lambda_{\theta}^{+}}{\lambda_{\theta}^{+}-\lambda_{\theta}^{-}}}\textup{e}^{\lambda_{\theta}^{+}\xi_{1}}-A^{-\frac{\lambda_{\theta}^{-}}{\lambda_{\theta}^{+}-\lambda_{\theta}^{-}}}\textup{e}^{\lambda_{\theta}^{-}\xi_{1}}\\
 & =A^{\frac{-\lambda_{\theta}^{-}}{\lambda_{\theta}^{+}-\lambda_{\theta}^{-}}}\left(\textup{e}^{\lambda_{\theta}^{+}\xi_{1}}-\textup{e}^{\lambda_{\theta}^{-}\xi_{1}}\right).
\end{align*}
As a function of $A$, this quantity is increasing (recall $\lambda_{\theta}^{-}<0$)
and vanishes as $A\to0$. We fix now $A$ such that
\[
\underline{\theta_{\delta,c,A}}\left(\xi_{1}\right)=\underline{\psi_{\delta,c}}\left(-\zeta_{\kappa}\right)\leq\kappa.
\]
Defining $\zeta_{1}=\xi_{1}+\zeta_{\kappa}>\zeta_{\kappa}$, we obtain
indeed
\[
\underline{\theta_{\delta,c,A}}\left(\xi_{1}\right)=\underline{\psi_{\delta,c}}\left(\xi_{1}-\zeta_{1}\right)\leq\kappa,
\]
\begin{align*}
\underline{\theta_{\delta,c,A}}'\left(\xi_{1}\right) & =\left(1-\tilde{\kappa}\right)\lambda^{-\infty}\underline{\theta_{\delta,c,A}}\left(\xi_{1}\right)\\
 & =\left(1-\tilde{\kappa}\right)\lambda^{-\infty}\underline{\psi_{\delta,c}}\left(\xi_{1}-\zeta_{1}\right)\\
 & <\underline{\psi_{\delta,c}}'\left(\xi_{1}-\zeta_{1}\right),
\end{align*}
as well as the limit
\[
\lim_{\kappa\to0}\left(\zeta_{1}-\xi_{1}\right)=\lim_{\kappa\to0}\zeta_{\kappa}=+\infty.
\]
This completes the proof. 

\end{proof}
\begin{lem}
\label{lem:interface_2} There exists $\delta_0\in\left(0,\frac{1}{2}\right)$ such that, 
for all $\delta\in\left[0,\delta_0\right)$,
$c>c_{\textup{LLW}}^{\delta}$ and $\tilde{c}\geq\max\left(c,f_{\delta}\left(c\right)\right)$,
there exists $\zeta_{2}\in\mathbb{R}$ such that the equation
\[
\overline{\varphi_{\delta,c}}\left(x-ct\right)=\overline{w_{\delta,c,\tilde{c}}}\left(t,x-\zeta_{2}\right)
\]
admits for all $t\geq0$ an isolated solution $x_{2}\left(t\right)\in\mathbb{R}$
such that
\begin{enumerate}
\item $\overline{\varphi_{\delta,c}}\left(x-ct\right)>\overline{w_{\delta,c,\tilde{c}}}\left(t,x-\zeta_{2}\right)$
for all $x\in\left(x_{2}\left(t\right),+\infty\right)$;
\item $\overline{\varphi_{\delta,c}}\left(x-ct\right)<\overline{w_{\delta,c,\tilde{c}}}\left(t,x-\zeta_{2}\right)$
for all $x\in\left(-\infty,x_{2}\left(t\right)\right)$;
\item $\overline{\varphi_{\delta,c,\tilde{c}}}\left(x_{2}\left(t\right)-ct\right)\leq\frac{\delta}{b}$.
\end{enumerate}
Furthermore, 
\begin{enumerate}
\item $x_{2}\in\mathscr{C}^{1}\left(\left[0,+\infty\right),\left(\zeta_{2},+\infty\right)\right)$;
\item $x_{2}\left(t\right)=\tilde{c}t+O\left(1\right)$ as $t\to+\infty$.
\end{enumerate}
\end{lem}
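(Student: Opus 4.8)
\emph{Strategy.} Both sides of the interface equation are positive, $\mathscr{C}^{1}$ in $x$, and strictly decreasing in $x$: the left-hand side because $\overline{\varphi_{\delta,c}}$ is a monotone profile, the right-hand side because $x\mapsto\overline{w_{\delta,c,\tilde{c}}}(t,x-\zeta_{2})$ is a decreasing exponential. The natural object is therefore the difference of logarithms $h(t,x):=\ln\overline{\varphi_{\delta,c}}(x-ct)-\ln\overline{w_{\delta,c,\tilde{c}}}(t,x-\zeta_{2})$, which is $\mathscr{C}^{1}$ in $(t,x)$; its zeros at a fixed $t$ are exactly the solutions of the interface equation, and properties $(1)$--$(2)$ say precisely that $h(t,\cdot)$ changes sign from $-$ to $+$ at $x_{2}(t)$ and stays positive thereafter. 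The plan is to fix $\delta_{0}\in(0,\tfrac12)$ small and then, for every admissible $(\delta,c,\tilde{c})$, to choose $\zeta_{2}$ large enough that for all $t\geq0$ the function $h(t,\cdot)$ has a single zero with this sign pattern, lying in the far tail of $\overline{\varphi_{\delta,c}}$.

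\emph{Asymptotics and the exponent inequality.} As $x-ct\to-\infty$ one has $\overline{\varphi_{\delta,c}}\to1+\delta$, so $\overline{\varphi_{\delta,c}}'/\overline{\varphi_{\delta,c}}\to0$; as $x-ct\to+\infty$, \lemref{Exponential_estimates_super-critical_waves} applied through the rescaling introduced for $c_{LLW}^{\delta}$ in \subsecref{4.1.4} (legitimate since $c>c_{LLW}^{\delta}$) gives $\overline{\varphi_{\delta,c}}(\xi)=A\text{e}^{-\nu\xi}(1+o(1))$ for some $A>0$ and some $\nu>0$, and differentiating the profile equation yields $\overline{\varphi_{\delta,c}}'/\overline{\varphi_{\delta,c}}\to-\nu$. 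On the other side, $\ln\overline{w_{\delta,c,\tilde{c}}}(t,x-\zeta_{2})$ is affine in $x$ with slope $-\Lambda_{\delta}(c,\tilde{c})$. The key algebraic input — for which the threshold $\delta_{0}$ is needed — is the strict inequality $\nu<\Lambda_{\delta}(c,\tilde{c})$. For $\delta=0$ this is immediate: $\Lambda(c,\tilde{c})$ is the smaller root of $\mu\mapsto\mu^{2}-\tilde{c}\mu+\lambda(c)(\tilde{c}-c)+1$, which equals $\lambda(c)^{2}-c\lambda(c)+1=a>0$ at $\mu=\lambda(c)$ (using $\lambda(c)^{2}-c\lambda(c)+1-a=0$) and $\lambda(c)(\tilde{c}-c)+1>0$ at $\mu=0$, while $\lambda(c)<\tilde{c}/2$ does not exceed the larger root; hence $\lambda(c)<\Lambda(c,\tilde{c})$, and one transfers the strict inequality to small $\delta>0$ by continuity of all the quantities involved (\lemref{Continuity_of_underline_c_delta} and the explicit formulas), the identifications becoming exact as $\delta\to0$. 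It follows that $h(t,x)\to+\infty$ as $x\to+\infty$ and $h(t,x)\to-\infty$ as $x\to-\infty$, and that $\partial_{x}h(t,x)=\overline{\varphi_{\delta,c}}'(x-ct)/\overline{\varphi_{\delta,c}}(x-ct)+\Lambda_{\delta}(c,\tilde{c})\to\Lambda_{\delta}(c,\tilde{c})-\nu>0$ as $x-ct\to+\infty$. Fix $\xi^{\star}$ large enough that both $\partial_{x}h(t,\cdot)>0$ on $\{x-ct\geq\xi^{\star}\}$ and $\overline{\varphi_{\delta,c}}(\xi^{\star})\leq\delta/b$.

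\emph{Choosing $\zeta_{2}$ and locating $x_{2}$.} Increasing $\zeta_{2}$ multiplies $\overline{w_{\delta,c,\tilde{c}}}$ by the constant factor $\text{e}^{\Lambda_{\delta}(c,\tilde{c})\zeta_{2}}$. At $x=ct+\xi^{\star}$ one computes $\overline{w_{\delta,c,\tilde{c}}}(t,ct+\xi^{\star}-\zeta_{2})=\text{e}^{(\Lambda_{\delta}(c,\tilde{c})-\lambda_{\delta}(c))(\tilde{c}-c)t}\text{e}^{\Lambda_{\delta}(c,\tilde{c})(\zeta_{2}-\xi^{\star})}\geq\text{e}^{\Lambda_{\delta}(c,\tilde{c})(\zeta_{2}-\xi^{\star})}$ (using $\Lambda_{\delta}(c,\tilde{c})>\lambda_{\delta}(c)$ and $\tilde{c}\geq c$), so once $\zeta_{2}>\xi^{\star}+\Lambda_{\delta}(c,\tilde{c})^{-1}\ln(1+\delta)$ this quantity exceeds $1+\delta$ for all $t\geq0$; since $\overline{w_{\delta,c,\tilde{c}}}(t,\cdot-\zeta_{2})$ is decreasing in $x$, we get $\overline{w_{\delta,c,\tilde{c}}}(t,x-\zeta_{2})>1+\delta\geq\overline{\varphi_{\delta,c}}(x-ct)$, i.e. $h(t,\cdot)<0$, on all of $(-\infty,ct+\xi^{\star}]$. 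On $[ct+\xi^{\star},+\infty)$, $h(t,\cdot)$ is strictly increasing, negative at the left endpoint and $+\infty$ at $+\infty$, hence vanishes at a unique point $x_{2}(t)>ct+\xi^{\star}$, with $h(t,\cdot)<0$ to its left and $>0$ to its right; this $x_{2}(t)$ is the only zero of $h(t,\cdot)$ on $\mathbb{R}$, whence $(1)$--$(2)$, and $(3)$ follows since $x_{2}(t)-ct>\xi^{\star}$ and $\overline{\varphi_{\delta,c}}$ is decreasing. Because $\partial_{x}h(t,x_{2}(t))>0$ and $h$ is $\mathscr{C}^{1}$, the implicit function theorem gives $x_{2}\in\mathscr{C}^{1}$; checking $h(t,\zeta_{2})<0$ — at $x=\zeta_{2}$ one has $\overline{w_{\delta,c,\tilde{c}}}(t,0)\geq1>\overline{\varphi_{\delta,c}}(\zeta_{2})$, using $\zeta_{2}>\xi^{\star}$ and $\overline{\varphi_{\delta,c}}(\xi^{\star})\leq\delta/b$ — shows $x_{2}(t)>\zeta_{2}$ for all $t$.

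\emph{Asymptotic location, and the main obstacle.} For the expansion $x_{2}(t)=\tilde{c}t+O(1)$ one first notes that $\overline{w_{\delta,c,\tilde{c}}}(t,ct+\xi-\zeta_{2})=\text{e}^{(\Lambda_{\delta}(c,\tilde{c})-\lambda_{\delta}(c))(\tilde{c}-c)t}\text{e}^{\Lambda_{\delta}(c,\tilde{c})(\zeta_{2}-\xi)}\to+\infty$ as $t\to+\infty$ for each fixed $\xi$ when $\tilde{c}>c$ (the case $\tilde{c}=c$, allowed only when $c\geq2$, being trivial since then $x_{2}(t)-ct$ is constant), so that $h(t,ct+\xi)<0$ eventually and hence $x_{2}(t)-ct\to+\infty$; one may then substitute the asymptotics established above into $\ln\overline{\varphi_{\delta,c}}(x_{2}(t)-ct)=\ln\overline{w_{\delta,c,\tilde{c}}}(t,x_{2}(t)-\zeta_{2})$ and solve the resulting relation, affine in $x_{2}(t)$, to obtain $x_{2}(t)=\tilde{c}t+O(1)$. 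The real work is the sign analysis of $h$: a priori a monotone profile can cross the exponential $\overline{w_{\delta,c,\tilde{c}}}$ many times, and what forbids this is precisely the sharp exponent inequality $\nu<\Lambda_{\delta}(c,\tilde{c})$ — which forces $\partial_{x}h>0$ far out in the tail — combined with the choice of $\zeta_{2}$ large, which confines the (then unique) crossing to that tail region, where $\overline{\varphi_{\delta,c}}$ is already below $\delta/b$. Everything else is routine intermediate-value and implicit-function-theorem bookkeeping.
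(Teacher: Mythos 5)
Your argument is essentially the paper's own: the same exponential tail of $\overline{\varphi_{\delta,c}}$ obtained from \lemref{Exponential_estimates_super-critical_waves} through the rescaling of \subsecref{4.1.4}, the same quadratic computation giving the exponent gap (the quadratic defining $\Lambda_{\delta}\left(c,\tilde{c}\right)$ evaluated at $\lambda_{\delta}\left(c\right)$ equals $a_{\delta}>0$), confinement of any crossing to the far tail by taking $\zeta_{2}$ large, monotonicity of the logarithmic difference in the tail for uniqueness and the sign pattern, the implicit function theorem for $\mathscr{C}^{1}$ regularity, and a sandwich of the asymptotics for the location of $x_{2}$. Your way of confining the crossing (choose $\zeta_{2}$ so that $\overline{w_{\delta,c,\tilde{c}}}\left(t,\cdot-\zeta_{2}\right)>1+\delta\geq\overline{\varphi_{\delta,c}}$ on $\left(-\infty,ct+\xi^{\star}\right]$, then use strict monotonicity of $h\left(t,\cdot\right)$ on $\left[ct+\xi^{\star},+\infty\right)$) is a mild repackaging of the paper's lower bound on solutions, and your sign $\partial_{x}h>0$ at the crossing is the one consistent with properties (1)--(2).

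Two points need attention. The substantive one is the final claim $x_{2}\left(t\right)=\tilde{c}t+O\left(1\right)$. You keep the decay rate of $\overline{\varphi_{\delta,c}}$ as an unidentified $\nu>0$ and use continuity only to get $\nu<\Lambda_{\delta}\left(c,\tilde{c}\right)$, adding that ``the identifications become exact as $\delta\to0$''. But substituting $\overline{\varphi_{\delta,c}}\left(\xi\right)\sim A\text{e}^{-\nu\xi}$ into the interface equation and solving the affine relation yields $x_{2}\left(t\right)-ct=\frac{\left(\Lambda_{\delta}\left(c,\tilde{c}\right)-\lambda_{\delta}\left(c\right)\right)\left(\tilde{c}-c\right)}{\Lambda_{\delta}\left(c,\tilde{c}\right)-\nu}\,t+O\left(1\right)$, which equals $\left(\tilde{c}-c\right)t+O\left(1\right)$ only if $\nu$ coincides exactly with the exponent $\lambda_{\delta}\left(c\right)$ entering the definition of $\overline{w_{\delta,c,\tilde{c}}}$; an identification that is only asymptotic in $\delta$ gives $x_{2}\left(t\right)=\tilde{c}_{\delta}t+O\left(1\right)$ with $\tilde{c}_{\delta}\neq\tilde{c}$, which is weaker than the lemma. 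So you must pin down the tail exponent of $\overline{\varphi_{\delta,c}}$ precisely, as the paper does before performing the two-sided sandwich of $\overline{\varphi_{\delta,c}}$ between $\left(1\mp\delta\right)\text{e}^{-\lambda_{\delta}\left(c\right)\xi}$. The second point is minor: when checking $x_{2}\left(t\right)>\zeta_{2}$ you compare $\overline{w_{\delta,c,\tilde{c}}}\left(t,0\right)\geq1$ with $\overline{\varphi_{\delta,c}}\left(\zeta_{2}\right)$, but at $x=\zeta_{2}$ and time $t$ the relevant value is $\overline{\varphi_{\delta,c}}\left(\zeta_{2}-ct\right)$, which for $t>0$ need not lie below $1$. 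The fix is immediate: either $\zeta_{2}\leq ct+\xi^{\star}$, where you already proved $h\left(t,\zeta_{2}\right)<0$, or $\zeta_{2}-ct>\xi^{\star}$, in which case $\overline{\varphi_{\delta,c}}\left(\zeta_{2}-ct\right)\leq\frac{\delta}{b}<1\leq\overline{w_{\delta,c,\tilde{c}}}\left(t,0\right)$, the last inequality following from (\ref{eq:Lambda_delta}).
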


\begin{rem*}
As $\delta\to0$, $f_{\delta}\left(c\right)\to f\left(c\right)$.
It can be verified that $\left(f_{\delta}\left(c\right)\right)_{\delta\in\left[0,\frac{1}{2}\right)}$
is increasing, so that the convergence occurs from above.
\end{rem*}
\begin{proof}
Recall from \lemref{Exponential_estimates_super-critical_waves} that
there exists $\zeta\in\mathbb{R}$ such that,
\[
\overline{\varphi_{\delta,c}}\left(\xi-\zeta\right)\sim\textup{e}^{-\lambda_{\delta}\left(c\right)\xi}\text{ as }\xi\to+\infty.
\]
Hence, by the intermediate value theorem, for each $t\geq0$ and each
$\zeta_{2}\in\mathbb{R}$, the equation 
\begin{align*}
\overline{\varphi_{\delta,c}}\left(x-ct\right) & =\overline{w_{\delta,c,\tilde{c}}}\left(t,x-\zeta_{2}\right)\\
 & =\textup{e}^{-\lambda_{\delta}\left(c\right)\left(\tilde{c}-c\right)t}\textup{e}^{-\Lambda_{\delta}\left(c,\tilde{c}\right)\left(x-\zeta_2-\tilde{c}t\right)}
\end{align*}
admits at least one solution $x\left(t\right)$ provided $\Lambda_{\delta}\left(c,\tilde{c}\right)>\lambda_{\delta}\left(c\right)$.
This inequality is true indeed, since it is equivalent to
\[
\tilde{c}-\sqrt{\tilde{c}^{2}-4\left(\lambda_{\delta}\left(c\right)\left(\tilde{c}-c\right)+1\right)}>2\lambda_{\delta}\left(c\right),
\]
that is to 
\[
\tilde{c}^{2}-4\lambda_{\delta}\left(c\right)\tilde{c}+4\left(\lambda_{\delta}\left(c\right)\right)^{2}>\tilde{c}^{2}-4\left(\lambda_{\delta}\left(c\right)\left(\tilde{c}-c\right)+1\right),
\]
that is to 
\[
\left(\lambda_{\delta}\left(c\right)\right)^{2}-c\lambda_{\delta}\left(c\right)+1>0,
\]
that is (recalling that $\lambda_{\delta}\left(c\right)$ is characterized
by $\left(\lambda_{\delta}\left(c\right)\right)^{2}-c\lambda_{\delta}\left(c\right)+1-a_{\delta}=0$)
to the obviously true following inequality, 
\[
a_{\delta}>0.
\]

Since $\overline{\varphi_{\delta,c}}\left(\xi\right)<1+\delta$ for all $\xi\in\mathbb{R}$,
any such solution satisfies 
\[
-\ln\left(1+\delta\right)<\lambda_{\delta}\left(c\right)\left(\tilde{c}-c\right)t+\Lambda_{\delta}\left(c,\tilde{c}\right)\left(x\left(t\right)-\zeta_{2}-\tilde{c}t\right),
\]
that is 
\[
x\left(t\right)>\zeta_{2}+\left(\tilde{c}-\frac{\lambda_{\delta}\left(c\right)\left(\tilde{c}-c\right)+\ln\left(1+\delta\right)/t}{\Lambda_{\delta}\left(c,\tilde{c}\right)}\right)t.
\]
By
\[
\lim_{\delta'\to0}\left(\tilde{c}-\frac{\lambda_{\delta'}\left(c\right)\left(\tilde{c}-c\right)+\ln\left(1+\delta'\right)/t}{\Lambda_{\delta'}\left(c,\tilde{c}\right)}\right)=\tilde{c}-\frac{\lambda\left(c\right)\left(\tilde{c}-c\right)}{\Lambda\left(c,\tilde{c}\right)}
\]
uniformly for $t \geq 1$, and, due to the preceding observation, 
\[
\frac{\lambda\left(c\right)\left(\tilde{c}-c\right)}{\Lambda\left(c,\tilde{c}\right)}<\tilde{c}-c,
\]
we deduce that $x(t)>\zeta_2+ct$ provided $\delta$ is small enough. 
Therefore the set of solutions is bounded from below and admit an infimum
$I\left(t\right)>\zeta_{2}+ct$. Back to the exponential estimates, it
is also clear that the set of solutions is bounded from above and
admits therefore a supremum $S\left(t\right)$. 

Recall that the asymptotic estimate for $\overline{\varphi_{\delta,c}}$
can be differentiated. Setting $g:\left(t,x\right)\mapsto\overline{\varphi_{\delta,c}}\left(x-ct\right)-\overline{w_{\delta,c,\tilde{c}}}\left(t,x-\zeta_{2}\right)$,
we find that for any $t\geq0$ and any solution $x\left(t\right)\in\left[I\left(t\right),S\left(t\right)\right]$,
\[
\partial_{x}g\left(t,x\left(t\right)\right)=\overline{\varphi_{\delta,c}}\left(x\left(t\right)-ct\right)\left(\left(\frac{\overline{\varphi_{\delta,c}}'}{\overline{\varphi_{\delta,c}}}\right)\left(x\left(t\right)-ct\right)+\Lambda_{\delta}\left(c,\tilde{c}\right)\right).
\]
Since 
\[
\lim_{\xi\to+\infty}\left(\frac{\overline{\varphi_{\delta,c}}'}{\overline{\varphi_{\delta,c}}}\right)\left(\xi\right)+\Lambda_{\delta}\left(c,\tilde{c}\right)=-\lambda_{\delta}\left(c\right)+\Lambda_{\delta}\left(c,\tilde{c}\right)<0,
\]
we can choose $\zeta_{2}$ large enough so that $\left(\frac{\overline{\varphi_{\delta,c}}'}{\overline{\varphi_{\delta,c}}}\right)\left(\xi\right)<0$
for all $\xi\geq\zeta_{2}$. Since $x\left(t\right)-ct>\zeta_{2}$ for all $t\geq0$, we deduce
\[
\left(\frac{\overline{\varphi_{\delta,c}}'}{\overline{\varphi_{\delta,c}}}\right)\left(x\left(t\right)-ct\right)+\Lambda_{\delta}\left(c,\tilde{c}\right)<0,
\]
whence $g$ is decreasing with respect to $x$ in a neighborhood of
$x\left(t\right)$. This implies directly
the uniqueness of $x\left(t\right)$, namely $I\left(t\right)=S\left(t\right)$.
From now on, we denote this unique solution $x_{2}\left(t\right)$.
Of course, the regularity of $x_{2}$ follows directly from the implicit
function theorem. The above yields that $x_{2}(t)-ct\geq\zeta_{2}$ for all $t\geq0$.

Provided $\zeta_{2}$ is large enough, for all $\xi\geq\zeta+\zeta_{2}$,
\[
\left(1-\delta\right)\textup{e}^{-\lambda_{\delta}\left(c\right)\xi}\leq\overline{\varphi_{\delta,c}}\left(\xi-\zeta\right)\leq\left(1+\delta\right)\textup{e}^{-\lambda_{\delta}\left(c\right)\xi}.
\]
 At $\xi=x_{2}\left(t\right)-ct+\zeta\geq\zeta+\zeta_{2}$, this reads
\[
\left(1-\delta\right)\textup{e}^{-\lambda_{\delta}\left(c\right)\left(x_{2}\left(t\right)-ct+\zeta\right)}\leq\overline{w_{\delta,c,\tilde{c}}}\left(t,x_{2}\left(t\right)-\zeta_{2}\right)\leq\left(1+\delta\right)\textup{e}^{-\lambda_{\delta}\left(c\right)\left(x_{2}\left(t\right)-ct+\zeta\right)},
\]
 that is 
\begin{align*}
\ln\left(1-\delta\right)-\lambda_{\delta}\left(c\right)\left(x_{2}\left(t\right)-ct+\zeta\right) &\leq-\lambda_{\delta}\left(c\right)\left(\tilde{c}-c\right)t-\Lambda_{\delta}\left(c,\tilde{c}\right)\left(x_{2}\left(t\right)-\zeta_{2}-\tilde{c}t\right)\\
&\qquad \qquad \leq\ln\left(1+\delta\right)-\lambda_{\delta}\left(c\right)\left(x_{2}\left(t\right)-ct+\zeta\right).
\end{align*}

The first inequality yields 
\[
x_{2}\left(t\right)\leq\frac{\tilde{c}\left(\Lambda_{\delta}\left(c,\tilde{c}\right)-\lambda_{\delta}\left(c\right)\right)t-\ln\left(1-\delta\right)+\lambda_{\delta}\left(c\right)\zeta+\Lambda_{\delta}\left(c,\tilde{c}\right)\zeta_{2}}{\Lambda_{\delta}\left(c,\tilde{c}\right)-\lambda_{\delta}\left(c\right)}
\]
and the second inequality yields 
\[
x_{2}\left(t\right)\geq\frac{\tilde{c}\left(\Lambda_{\delta}\left(c,\tilde{c}\right)-\lambda_{\delta}\left(c\right)\right)t-\ln\left(1+\delta\right)+\lambda_{\delta}\left(c\right)\zeta+\Lambda_{\delta}\left(c,\tilde{c}\right)\zeta_{2}}{\Lambda_{\delta}\left(c,\tilde{c}\right)-\lambda_{\delta}\left(c\right)}.
\]
Together these two estimates give that the asymptotic speed of $x_{2}$
is exactly $\tilde{c}$. 

Finally, using once again $x_2\left(t\right)-ct\geq\zeta_2$, we find 
\[
\overline{\varphi_{\delta,c}}\left(x_{2}\left(t\right)-ct\right)\leq\left(1+\delta\right)\textup{e}^{-\lambda_{\delta}\left(c\right)\left(x_{2}\left(t\right)-ct+\zeta\right)}\leq\left(1+\delta\right)\textup{e}^{-\lambda_{\delta}\left(c\right)\left(\zeta+\zeta_{2}\right)},
\]
and the inequality 
\[
\overline{\varphi_{\delta,c}}\left(x_{2}\left(t\right)-ct\right)\leq\frac{\delta}{b}\text{ for all }t\geq0
\]
 is indeed satisfied provided $\zeta_{2}$ is large enough.
\end{proof}

Thanks again to the intermediate value theorem and the implicit function
theorem, we can similarly establish the following lemmas. Since they involve the quantities $L$, $x_L$ and $h^{\star}$,
we recall that these are defined in \lemref{estimates_alpha} and \lemref{pi_subsolution_and_estimate} respectively.

\begin{lem}
\label{lem:interface_3_small_c_1} There exists $\delta_{1}\in\left(0,\frac{1}{2}\right)$ such that, 
for all $\delta\in\left(0,\delta_{1}\right)$, $c\in\left[c_{\textup{LLW}}^{\delta},2\right)$ and $\zeta_{3}\in\mathbb{R}$, the equation
\[
\underline{\psi_{\delta,c}}\left(x-ct\right)=\underline{\omega_{\delta,R_{\delta}}}\left(x-\left(2\sqrt{r\left(1-2\delta\right)d}-\delta\right)t-\zeta_{3}\right)
\]
 admits for all $t\geq0$ an isolated solution $x_{3}\left(t\right)\in\mathbb{R}$
such that
\begin{enumerate}
\item $\underline{\psi_{\delta,c}}\left(x-ct\right)>\underline{\omega_{\delta,R_{\delta}}}\left(x-\left(2\sqrt{r\left(1-2\delta\right)d}-\delta\right)t-\zeta_{3}\right)$ for $x$ 
in a left-sided neighborhood of $x_{3}\left(t\right)$;
\item $\underline{\psi_{\delta,c}}\left(x-ct\right)<\underline{\omega_{\delta,R_{\delta}}}\left(x-\left(2\sqrt{r\left(1-2\delta\right)d}-\delta\right)t-\zeta_{3}\right)$ for $x$ 
in a right-sided neighborhood of $x_{3}\left(t\right)$;
\item for all $t\geq0$,
\[
-R_{\delta}<x_{3}\left(t\right)-\left(2\sqrt{r\left(1-2\delta\right)d}-\delta\right)t-\zeta_{3}<x_{\delta,R_{\delta}}.
\]
\end{enumerate}
Furthermore, $x_{3}\in\mathscr{C}^{1}\left(\left[0,+\infty\right),\mathbb{R}\right)$.
\end{lem}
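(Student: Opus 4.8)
The plan is to follow the scheme already used for \lemref{interface_2} and \lemref{interface_0}: realize $x_{3}(t)$ as a sign change of the difference of the two moving profiles, produce it by the intermediate value theorem on a bounded window, and then upgrade it to a $\mathscr{C}^{1}$ curve by the implicit function theorem. Writing $c_{\omega}=2\sqrt{r\left(1-2\delta\right)d}-\delta$ for the translation speed of the bump, set
\[
g\left(t,x\right)=\underline{\psi_{\delta,c}}\left(x-ct\right)-\underline{\omega_{\delta,R_{\delta}}}\left(x-c_{\omega}t-\zeta_{3}\right).
\]
Since $\underline{\omega_{\delta,R_{\delta}}}$ is supported in $\left[-R_{\delta},R_{\delta}\right]$ while $\underline{\psi_{\delta,c}}>0$ everywhere, every zero of $g\left(t,\cdot\right)$ lies in the window $x-c_{\omega}t-\zeta_{3}\in\left(-R_{\delta},R_{\delta}\right)$; I will in fact locate $x_{3}(t)$ in the sub-window $x-c_{\omega}t-\zeta_{3}\in\left(-R_{\delta},x_{\delta,R_{\delta}}\right)$, i.e.\ strictly before the maximum point of the bump, which is exactly property (3). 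First I would fix $\delta_{1}$ small enough that, for $\delta\in\left(0,\delta_{1}\right)$, the interval $\left[c_{LLW}^{\delta},2\right)$ is nonempty, $R_{\delta}$ of \lemref{estimates_omega} exists, and both the translation speed $c_{\omega}$ and the drift speed of $\underline{\omega_{\delta,R_{\delta}}}$ exceed $2>c$ (all of these tend as $\delta\to0$ to the constants determined by $2\sqrt{rd}>2$).

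For existence: at the left end of the sub-window $\underline{\omega_{\delta,R_{\delta}}}$ vanishes, so $g>0$ there; at the right end $g=\underline{\psi_{\delta,c}}(\cdot)-\max\underline{\omega_{\delta,R_{\delta}}}$, and since $\underline{\psi_{\delta,c}}<1-2\delta$ pointwise while $\max\underline{\omega_{\delta,R_{\delta}}}\geq1-2\delta$ by \lemref{estimates_omega}, one gets $g<0$ there — and both signs are uniform in $t\geq0$ and $\zeta_{3}\in\mathbb{R}$. Hence $g\left(t,\cdot\right)$ vanishes somewhere in the open sub-window, and since $\underline{\psi_{\delta,c}}$ and $\underline{\omega_{\delta,R_{\delta}}}$ are (real-)analytic wherever $\underline{\omega_{\delta,R_{\delta}}}$ is positive, all such zeros are isolated.

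The delicate step — and the only place where $c<c_{\omega}$ genuinely enters — is the transversality needed for properties (1)--(2) and for the regularity. At any zero $x^{\star}$ of $g$ lying in the increasing part of the bump which is also a critical point of $g$, one has $\underline{\psi_{\delta,c}}\left(x^{\star}-ct\right)=\underline{\omega_{\delta,R_{\delta}}}\left(x^{\star}-c_{\omega}t-\zeta_{3}\right)=:v_{0}>0$, and the common slope $p:=\underline{\psi_{\delta,c}}'=\underline{\omega_{\delta,R_{\delta}}}'$ there is positive; eliminating the second derivatives using the two profile equations, namely $-d\,\underline{\psi_{\delta,c}}''-c\,\underline{\psi_{\delta,c}}'=r\underline{\psi_{\delta,c}}\left(1-2\delta-\underline{\psi_{\delta,c}}-b\,\overline{\varphi_{\delta,c}}\right)$ and \eqref{eq:4.1.7}, yields
\[
\partial_{xx}g\left(t,x^{\star}\right)=\frac{1}{d}\left(\left(c_{\omega}-c\right)p+r\,v_{0}\left(\delta+b\,\overline{\varphi_{\delta,c}}\right)\right)>0 .
\]
Thus $g$ can never attain a local maximum with value $0$ on the increasing part of the bump, so the \emph{largest} zero $x_{3}(t)$ of $g\left(t,\cdot\right)$ in the sub-window (which exists, and satisfies $g<0$ immediately to its right since $g<0$ at the right end) must have $g>0$ immediately to its left and $\partial_{x}g<0$ at it — that is, it is a transversal crossing, which gives (1), (2), (3) and its isolation. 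Finally the implicit function theorem at $\left(t,x_{3}(t)\right)$, where $g$ is smooth with $\partial_{x}g\neq0$, gives the local $\mathscr{C}^{1}$ dependence, and a compactness argument on the bounded sub-window (again invoking that $g$ has no local maximum at height $0$ there, so that no zero can appear to the right of $x_{3}(t)$ as $t$ varies) shows that this selection varies $\mathscr{C}^{1}$ globally, whence $x_{3}\in\mathscr{C}^{1}\left(\left[0,+\infty\right),\mathbb{R}\right)$. Everything besides the displayed second-derivative identity is routine, and that identity is the step I expect to be the crux.
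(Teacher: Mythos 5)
Your proof is correct, and it fills in exactly the kind of argument the paper leaves implicit: for this lemma the paper offers no written proof beyond ``intermediate value theorem and implicit function theorem, similarly to \lemref{interface_2}''. Your endpoint signs are the right ones: at the left edge of the support of $\underline{\omega_{\delta,R_{\delta}}}$ the difference $g$ equals $\underline{\psi_{\delta,c}}>0$, while at the maximum point of the bump it is negative because $\underline{\psi_{\delta,c}}<1-2\delta\leq\max\underline{\omega_{\delta,R_{\delta}}}$ by \lemref{estimates_omega}; this confines the crossing to the increasing part of the bump and yields property (3) directly. The genuinely new ingredient, compared with the paper's model proof of \lemref{interface_2}, is your transversality mechanism: there, transversality comes from the strict ordering of exponential decay rates ($\Lambda_{\delta}>\lambda_{\delta}$), whereas here both profiles are increasing and no decay comparison is available. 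Your identity $\partial_{xx}g=\frac{1}{d}\left(\left(c_{\omega}-c\right)p+r\,v_{0}\left(\delta+b\,\overline{\varphi_{\delta,c}}\right)\right)>0$ at any zero-level critical point (valid since $p=\underline{\psi_{\delta,c}}'>0$, $v_{0}>0$, and the bump's drift exceeds $c$ for small $\delta$) excludes zero-level local maxima, forces the largest zero in the sub-window to be a strict down-crossing (giving (1), (2) and isolation, even without invoking analyticity), and makes the implicit function theorem applicable; together with the continuity of the largest-zero selection (upper semicontinuity is automatic since limits of zeros are zeros, lower semicontinuity follows from the transversality) this gives the global $\mathscr{C}^{1}$ regularity.

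Two caveats are worth recording, neither of which I count as a gap in your reasoning. First, your argument needs $c$ to be smaller than the drift of the bump, i.e.\ in effect $2\sqrt{rd}>2$ once $\delta$ is small; this hypothesis is not written in the lemma, but it is the only regime in which the lemma is ever invoked (in \propref{Super-solution_compactly_supported} one has $2\sqrt{rd}>f\left(c_{2}\right)>2$), so the omission is the statement's rather than yours. Second, you substitute the translation speed $2\sqrt{r\left(1-2\delta\right)d}-\delta$ into the elimination of $\underline{\omega_{\delta,R_{\delta}}}''$, whereas the drift written in (\ref{eq:4.1.7}) is $2\sqrt{r\left(1-\delta\right)d}-\delta$; the constant entering your second-derivative identity should be the one from the ODE, but the discrepancy (almost surely a typographical inconsistency in the paper itself) is immaterial for the sign, since both speeds exceed $c$ for $\delta$ small.
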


\begin{lem}
\label{lem:interface_3_large_c_1} For all $\delta\in\left(0,\frac{1}{2}\right)$,
$c\geq c_{\textup{LLW}}^{\delta}$, $\tilde{c}>2\sqrt{rd}$ such that $\tilde{c}\geq c$ and $h\in\left(0,h^{\star}\right)$,
there exists $\zeta_{3}^{0}\in\mathbb{R}$ such that, for all $\zeta_{3}\geq\zeta_{3}^{0}$,
the equation
\[
\underline{\psi_{\delta,c}}\left(x-ct\right)=\underline{\pi_{\delta,\tilde{c},h}}\left(x-\tilde{c}t-\zeta_{3}\right)
\]
 admits for all $t\geq0$ an isolated solution $x_{3}\left(t\right)\in\mathbb{R}$
such that
\begin{enumerate}
\item $\underline{\psi_{\delta,c}}\left(x-ct\right)>\underline{\pi_{\delta,\tilde{c},h}}\left(x-\tilde{c}t-\zeta_{3}\right)$ for $x$ 
in a left-sided neighborhood of $x_{3}\left(t\right)$;
\item $\underline{\psi_{\delta,c}}\left(x-ct\right)<\underline{\pi_{\delta,\tilde{c},h}}\left(x-\tilde{c}t-\zeta_{3}\right)$ for $x$ 
in a right-sided neighborhood of $x_{3}\left(t\right)$;
\item for all $t\geq0$,
\[
-\sqrt{\frac{c}{rh}} < x_{3}\left(t\right)-\tilde{c}t-\zeta_{3}<0.
\]
\end{enumerate}
Furthermore, $x_{3}\in\mathscr{C}^{1}\left(\left[0,+\infty\right),\mathbb{R}\right)$.
\end{lem}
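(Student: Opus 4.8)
The plan is to prove \lemref{interface_3_large_c_1} by the scheme already used for \lemref{interface_2}: realize the curve $t\mapsto x_{3}(t)$ as a zero of the difference of the two profiles via the intermediate value theorem, and obtain its $\mathscr{C}^{1}$ regularity from the implicit function theorem once the crossing is shown to be transverse. The only ingredients beyond those of \lemref{interface_2} are the shape estimates for $\underline{\pi_{\delta,\tilde{c},h}}$ supplied by \lemref{pi_subsolution_and_estimate} together with the elementary facts that the wave component $\underline{\psi_{\delta,c}}$ is strictly monotone and tends to $1-2\delta$ at $+\infty$.

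I would first pass to the frame $\xi=x-\tilde{c}t-\zeta_{3}$, in which the equation reads $\underline{\psi_{\delta,c}}(\xi+(\tilde{c}-c)t+\zeta_{3})=\underline{\pi_{\delta,\tilde{c},h}}(\xi)$ and one seeks $\xi_{3}(t)\in(-\sqrt{c/(rh)},0)$. Two observations organize the argument. Since $\tilde{c}\ge c$, the argument $\xi+(\tilde{c}-c)t+\zeta_{3}$ is at least $\zeta_{3}-\sqrt{c/(rh)}$ uniformly for $\xi\in[-\sqrt{c/(rh)},0]$ and $t\ge0$; hence, taking $\zeta_{3}^{0}=\zeta_{3}^{0}(\delta,c,\tilde{c},h)$ large, the strictly increasing function $\xi\mapsto\underline{\psi_{\delta,c}}(\xi+(\tilde{c}-c)t+\zeta_{3})$ stays pinned in a prescribed small one-sided neighbourhood of its limit $1-2\delta$, uniformly in $t$. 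On the other hand, \lemref{pi_subsolution_and_estimate} (applied with the speed $\tilde{c}>2\sqrt{rd}$ in place of the generic speed, and for $h$ small, using $(-\sqrt{c/(rh)},0)\subseteq(-\sqrt{\tilde{c}/(rh)},0)$ since $c\le\tilde{c}$) shows that $\underline{\pi_{\delta,\tilde{c},h}}$ attains on $(-\sqrt{c/(rh)},0)$ a value at least $1-2\delta$, strictly above its value $\underline{\pi_{\delta,\tilde{c},h}}(0)=\frac{1-\delta}{2}$ at the right endpoint. Comparing the two sides at that interior point and at $\xi=0$, the intermediate value theorem produces, for every $t\ge0$, a solution $\xi_{3}(t)$ strictly between them, hence inside $(-\sqrt{c/(rh)},0)$, across which the two profiles exchange order in the manner recorded in (1)--(2).

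For isolatedness and $t$-regularity I would take $\xi_{3}(t)$ to be the largest solution below $\xi=0$. Beyond its interior maximum, $\underline{\pi_{\delta,\tilde{c},h}}(\xi)=\underline{\pi_{\delta,\tilde{c}}}(\xi)+h\xi$ is strictly decreasing --- it decreases wherever $\underline{\pi_{\delta,\tilde{c}}}'<-h$, which for small $h$ contains precisely the part of the transition region of $\underline{\pi_{\delta,\tilde{c}}}$ where the level $1-2\delta$ is crossed --- so at $\xi_{3}(t)$ the derivatives of the two sides have opposite, hence distinct, signs. This transversality, combined with the implicit function theorem applied to $g(t,\xi)=\underline{\psi_{\delta,c}}(\xi+(\tilde{c}-c)t+\zeta_{3})-\underline{\pi_{\delta,\tilde{c},h}}(\xi)$, yields at once the isolatedness of $\xi_{3}(t)$ and that $x_{3}(t)=\xi_{3}(t)+\tilde{c}t+\zeta_{3}$ defines an element of $\mathscr{C}^{1}([0,+\infty),\mathbb{R})$; validity for all $t\ge0$ is immediate since none of the estimates above depended on $t$.

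The main obstacle, and the only point requiring genuine care, is locating the crossing precisely: \lemref{pi_subsolution_and_estimate} controls only the position of the maximum of $\underline{\pi_{\delta,\tilde{c},h}}$ relative to the endpoints, not its monotonicity throughout $[-\sqrt{c/(rh)},0]$, so one cannot argue via a single globally monotone branch and must instead work with the largest crossing below $0$ together with the explicit monotonicity of $\underline{\pi_{\delta,\tilde{c}}}(\xi)+h\xi$ in the transition region. The complementary point is to choose $\zeta_{3}^{0}$ large enough that $\underline{\psi_{\delta,c}}(\cdot)$ is pinned just below $1-2\delta$ uniformly in $t$, which is exactly where the hypothesis $\tilde{c}\ge c$ enters. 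Everything else is routine and parallels the proof of \lemref{interface_2}.
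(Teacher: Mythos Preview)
Your approach is the same as the paper's, which does not give a separate proof and simply says the result follows, like its neighbours, from the intermediate value theorem and the implicit function theorem as in \lemref{interface_2}.

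Two small slips are worth fixing. First, the inclusion $(-\sqrt{c/(rh)},0)\subseteq(-\sqrt{\tilde c/(rh)},0)$ goes the wrong way for what you want: a lower bound on the maximum over the \emph{larger} interval does not transfer to the \emph{smaller} one. The conclusion is still correct, but the right justification is the direct evaluation $\underline{\pi_{\delta,\tilde c,h}}(-\sqrt{c/(rh)})=\underline{\pi_{\delta,\tilde c}}(-\sqrt{c/(rh)})-\sqrt{ch/r}$, which exceeds $1-2\delta$ for $h$ small (this is exactly the computation in the proof of \lemref{pi_subsolution_and_estimate}). Second, your endpoint comparison at $\xi=0$ needs $1-2\delta>\tfrac{1-\delta}{2}$, i.e.\ $\delta<\tfrac13$; this is not covered by the stated range $\delta\in(0,\tfrac12)$ but is harmless, since the paper only ever applies the lemma for $\delta\in(0,\delta^\star)$ with $\delta^\star$ small (see the closing sentence of the subsection of preliminary lemmas).
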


\begin{rem*}
We have to point out here that the preceding two lemmas defining $x_3$ will never be used concurrently and 
no conflict of notation will occur. \lemref{interface_3_small_c_1} will be used only in the proof of 
\propref{Super-solution_compactly_supported} whereas \lemref{interface_3_large_c_1} will be used only in the proof of 
\propref{Super-sub-solution_terraces}. In other words, going back to \thmref{Compactly_supported} and 
\thmref{Generalized_terraces}, they corresponds to different values of $c_1$: 
\lemref{interface_3_small_c_1} corresponds to $c_1=2\sqrt{rd}$ whereas \lemref{interface_3_large_c_1} corresponds to $c_1>2\sqrt{rd}$. 
\end{rem*}

\begin{lem}
\label{lem:interface_4} For all $\delta\in\left[0,\frac{1}{2}\right)$,
$c>2\sqrt{rd}$, $\eta\in\left(0,\frac{1}{d}\sqrt{c^{2}-4rd}\right)$,
$B>0$, there exists $\zeta_{4}\in\mathbb{R}$ such that the equation
\[
\underline{\pi_{\delta,c,h}}\left(\xi\right)=\underline{\beta_{c,B,\eta}}\left(\xi-\zeta_{4}\right)
\]
admits an isolated solution $\xi_{4}\in\mathbb{R}$ such that
\begin{enumerate}
\item $\underline{\pi_{\delta,c,h}}\left(\xi+\zeta_{4}\right)>\underline{\beta_{c,B,\eta}}\left(\xi\right)$ for $\xi$ 
in a left-sided neighborhood of $\xi_{4}$;
\item $\underline{\pi_{\delta,c,h}}\left(\xi+\zeta_{4}\right)<\underline{\beta_{c,B,\eta}}\left(\xi\right)$ for $\xi$ 
in a right-sided neighborhood of $\xi_{4}$;
\item $\xi_{4}>0$.
\end{enumerate}
\end{lem}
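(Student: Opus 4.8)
The plan is to manufacture a single transversal crossing between $\underline{\pi_{\delta,c,h}}$ and a suitable translate of $\underline{\beta_{c,B,\eta}}$, exactly in the spirit of \lemref{interface_1}, \lemref{interface_2} and \lemref{interface_3_large_c_1}: isolate a bounded interval on which one of the two functions starts strictly above the other and ends strictly below it, with controlled monotonicities, and then invoke the intermediate value theorem.

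First I would record the two relevant profiles. Differentiating the explicit formula for $\underline{\beta_{c,B,\eta}}$ on $(0,+\infty)$ shows that $\underline{\beta_{c,B,\eta}}\equiv0$ on $(-\infty,0]$, that $\underline{\beta_{c,B,\eta}}'(0^{+})=\eta\text{e}^{-\lambda_{v}(c)\xi_{\beta}}>0$, and that the bracketed factor $-\lambda_{v}(c)+(\lambda_{v}(c)+\eta)K_{\beta}\text{e}^{-\eta(\xi+\xi_{\beta})}$ is strictly decreasing in $\xi$ and changes sign exactly once, at some $\xi_{\beta}^{\star}>0$; hence $\underline{\beta_{c,B,\eta}}$ is strictly increasing on $(0,\xi_{\beta}^{\star})$, strictly decreasing on $(\xi_{\beta}^{\star},+\infty)$, and $M_{\beta}:=\max\underline{\beta_{c,B,\eta}}=\underline{\beta_{c,B,\eta}}(\xi_{\beta}^{\star})$ is a fixed positive constant (depending only on $c,\eta,B$). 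On the other side, since $\underline{\pi_{\delta,c}}$ is a strictly decreasing positive traveling-wave profile with $\underline{\pi_{\delta,c}}(0)=\frac{1-\delta}{2}$, $\underline{\pi_{\delta,c}}'<0$ on $\mathbb{R}$, and (the wave being super-critical, as $c>2\sqrt{rd}\geq2\sqrt{rd(1-\delta)}$) $\underline{\pi_{\delta,c}}$ and $\underline{\pi_{\delta,c}}'$ decay exponentially at $+\infty$, the function $\underline{\pi_{\delta,c,h}}=\underline{\pi_{\delta,c}}+h(\cdot)$ is positive on $[0,+\infty)$ with $\underline{\pi_{\delta,c,h}}(0)=\frac{1-\delta}{2}$; for $h$ small enough that $\underline{\pi_{\delta,c}}'(0)<-h$, letting $\xi_{1}(h)$ be the first point where $\underline{\pi_{\delta,c}}'=-h$, the function $\underline{\pi_{\delta,c,h}}$ is strictly decreasing on $[0,\xi_{1}(h)]$; moreover $\xi_{1}(h)\to+\infty$ and $m(h):=\underline{\pi_{\delta,c,h}}(\xi_{1}(h))=\underline{\pi_{\delta,c}}(\xi_{1}(h))+h\,\xi_{1}(h)\to0$ as $h\to0^{+}$ (the first summand vanishes because $\xi_{1}(h)\to+\infty$, the second because the exponential decay of $\underline{\pi_{\delta,c}}'$ forces $\xi_{1}(h)=O(\ln\frac1h)$).

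Then I would fix $h$ small enough that $\underline{\pi_{\delta,c}}'(0)<-h$, $\xi_{1}(h)>\xi_{\beta}^{\star}$ and $m(h)<M_{\beta}$, set $\zeta_{4}:=\xi_{1}(h)-\xi_{\beta}^{\star}>0$ (so that $\underline{\beta_{c,B,\eta}}(\cdot-\zeta_{4})$ attains its maximum $M_{\beta}$ at $\xi=\xi_{1}(h)$), and study $g(\xi):=\underline{\pi_{\delta,c,h}}(\xi)-\underline{\beta_{c,B,\eta}}(\xi-\zeta_{4})$ on $[\zeta_{4},\xi_{1}(h)]$. One has $g(\zeta_{4})=\underline{\pi_{\delta,c,h}}(\zeta_{4})-\underline{\beta_{c,B,\eta}}(0)=\underline{\pi_{\delta,c,h}}(\zeta_{4})>0$ and $g(\xi_{1}(h))=m(h)-M_{\beta}<0$; and on $(\zeta_{4},\xi_{1}(h))$ the map $\underline{\pi_{\delta,c,h}}$ is strictly decreasing while $\underline{\beta_{c,B,\eta}}(\cdot-\zeta_{4})$ is strictly increasing (its argument ranges over $(0,\xi_{\beta}^{\star})$), so $g$ is strictly decreasing there. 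Hence $g$ has a unique zero $\xi_{4}\in(\zeta_{4},\xi_{1}(h))$, it is isolated, $g>0$ on $(\zeta_{4},\xi_{4})$ and $g<0$ on $(\xi_{4},\xi_{1}(h))$ — these are properties (1)–(2) — and $\xi_{4}>\zeta_{4}>0$, property (3). If smooth dependence of $\xi_{4}$ on the data were needed it would follow from $g'(\xi_{4})<0$ and the implicit function theorem, though it is not required.

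The only genuinely delicate aspect is the bookkeeping of the parameter $h$: the crossing is produced by pushing the minimum of $\underline{\pi_{\delta,c,h}}$ over a controlled interval below the \emph{fixed} height $M_{\beta}$, which compels $h$ to lie below a threshold depending on $\delta,c,\eta,B$. This causes no difficulty, since $h$ is anyway a small parameter in the construction of \propref{Super-sub-solution_terraces} (it already has to satisfy $h\in(0,h^{\star})$ from \lemref{pi_subsolution_and_estimate}); one simply shrinks $h^{\star}$, if necessary, so that the three smallness requirements above hold as well. Everything else reduces to the intermediate value theorem and the elementary monotonicities recorded above.
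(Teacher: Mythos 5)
Your proof is correct and follows exactly the route the paper indicates for this family of lemmas (the paper omits the details, saying only that \lemref{interface_4} is established ``similarly'' to \lemref{interface_2} via the intermediate value theorem): you pin down the hump shape of $\underline{\beta_{c,B,\eta}}$ (zero on $(-\infty,0]$, increasing up to its maximum $M_\beta$ at $\xi_\beta^{\star}=\frac1\eta\ln\frac{\lambda_v(c)+\eta}{\lambda_v(c)}$, then decreasing), isolate a stretch $[0,\xi_1(h)]$ on which $\underline{\pi_{\delta,c,h}}$ is strictly decreasing with terminal value $m(h)\to0$, and then get a single transversal sign change of $g(\xi)=\underline{\pi_{\delta,c,h}}(\xi)-\underline{\beta_{c,B,\eta}}(\xi-\zeta_4)$ on $[\zeta_4,\xi_1(h)]$ with $\zeta_4=\xi_1(h)-\xi_\beta^{\star}>0$; the monotonicity computations and the limits $\xi_1(h)\to+\infty$, $h\,\xi_1(h)\to0$ (from the exponential decay of $\underline{\pi_{\delta,c}}'$) are all sound, and the crossing you produce has precisely the orientation needed for the $\max$-gluing of \thmref{Generalized_super-solutions}.

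The substantive point is your smallness requirement $m(h)<M_\beta$, and you are right that it cannot be dispensed with: since $\underline{\beta_{c,B,\eta}}\leq M_\beta$ with $M_\beta\to0$ as $B\to+\infty$ (because $K_\beta\to+\infty$), while $\inf_{\xi>0}\underline{\pi_{\delta,c,h}}(\xi)>0$ is a fixed positive constant once $\delta,c,h$ are fixed, property (2) together with $\xi_4>0$ forces $\inf_{\xi>0}\underline{\pi_{\delta,c,h}}<M_\beta$; so the statement, read with $h$ fixed independently of $B$ (as the unquantified $h$ in the lemma suggests, and as the construction in \subsecref{4.3.1} does with $h=h^{\star}/2$), would actually fail for large $B$. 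Your reading therefore repairs a genuine imprecision. Be aware, though, that the remedy ``shrink $h^{\star}$'' is less innocent than you suggest at the level of \propref{Super-sub-solution_terraces}: there the paper fixes $h$ \emph{before} $B$, and $B$ is itself defined through $\xi_\beta$ and $\zeta_4$, i.e.\ through the output of this very lemma; making $h$ depend on $B$ thus introduces a circular dependency among $h$, $\zeta_4$ and $B$ that has to be untangled (for instance by exhibiting an a priori bound on the admissible $B$, or by reorganizing the order in which the parameters are fixed) before the lemma can be invoked in the construction. That extra bookkeeping concerns the use of the lemma rather than your proof of it, which, under your quantification of $h$, is complete.
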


\begin{lem}
\label{lem:interface_0_nonexistence} For all $c>2\sqrt{1-a}$, there
exists $\zeta_{0}\in\mathbb{R}$ such that, for all $\kappa\in\left(0,\frac{1-a}{2}\right)$,
the equation
\[
\underline{\alpha_{L}}\left(x\right)=\underline{\chi_{c}}\left(x-ct-\zeta_{0}\right)
\]
admits for all $t\geq0$ a minimal solution $x_{0,\kappa}\left(t\right)\in\mathbb{R}$
such that 
\begin{enumerate}
\item $\underline{\alpha_{L}}\left(x\right)>\underline{\chi_{c}}\left(x-ct-\zeta_{0}\right)$
for $x$ in a left-sided neighborhood of $x_{0,\kappa}\left(t\right)$;
\item $\underline{\chi_{c}}\left(x_{0,\kappa}\left(0\right)-\zeta_{0}\right)=\kappa$;
\item $x_{L}<x_{0,\kappa}\left(t\right)<L$.
\end{enumerate}
Furthermore, $x_{0,\kappa}\in\mathscr{C}^{1}\left(\left[0,+\infty\right),\left(x_{L},L\right)\right)$.
\end{lem}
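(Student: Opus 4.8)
The plan is to mimic the proofs of \lemref{interface_0} and \lemref{interface_2}: the intermediate value theorem produces and locates the crossing point, the implicit function theorem gives its regularity, and a one-parameter continuity argument in $\zeta_0$ adjusts the height of the crossing to the prescribed value $\kappa$. Throughout I use that, since $c>2\sqrt{1-a}\geq2\sqrt{(1-a)/2}$, the profile $\underline{\chi_c}$ is well-defined, smooth, strictly decreasing, with $0<\underline{\chi_c}<(1-a)/2$ on $\mathbb{R}$, $\underline{\chi_c}(-\infty)=(1-a)/2$ and $\underline{\chi_c}(+\infty)=0$; and that, by \lemref{estimates_alpha}, $\underline{\alpha_L}$ is supported in $[0,L]$, vanishes at $0$ and $L$, is strictly increasing on $[0,x_L]$ and strictly decreasing on $[x_L,L]$, with $\underline{\alpha_L}(x_L)=\max\underline{\alpha_L}\geq(1-a)/2$.

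Set $g_t(x)=\underline{\alpha_L}(x)-\underline{\chi_c}(x-ct-\zeta_0)$. Because $\underline{\chi_c}$ never attains $(1-a)/2$ we have $g_t(x_L)>0$, and because $\underline{\chi_c}>0$ we have $g_t(L)<0$, both uniformly in $t\geq0$; hence $g_t$ has a zero in $(x_L,L)$, and I would let $x_{0,\kappa}(t)$ be the smallest one. (Zeros of $g_t$ also occur in $(0,x_L)$, but there $g_t$ passes from negative to positive, so they violate property~(1); the smallest zero in $(x_L,L)$ is the one meant by ``minimal solution''.) By minimality and continuity, $g_t>0$ on $[x_L,x_{0,\kappa}(t))$, which is property~(1), and $x_L<x_{0,\kappa}(t)<L$, which is property~(3).

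The one point that requires a computation is transversality of this smallest zero; granting it, the zero is isolated, and since $(t,x)\mapsto g_t(x)$ is smooth with nonvanishing $x$-derivative there, the implicit function theorem yields $x_{0,\kappa}\in\mathscr{C}^1([0,+\infty),(x_L,L))$. Suppose $g_t'(x_{0,\kappa}(t))=0$ and write $x^\star=x_{0,\kappa}(t)$, $v=\underline{\alpha_L}(x^\star)=\underline{\chi_c}(x^\star-ct-\zeta_0)>0$, $p=\underline{\alpha_L}'(x^\star)=\underline{\chi_c}'(x^\star-ct-\zeta_0)$. Substituting the ODEs $-\underline{\alpha_L}''=\underline{\alpha_L}(1-a-\underline{\alpha_L})$ and $-\underline{\chi_c}''-c\underline{\chi_c}'=\underline{\chi_c}((1-a)/2-\underline{\chi_c})$ gives $g_t''(x^\star)=cp-(1-a)v/2$; since $x^\star\in(x_L,L)$, where $\underline{\alpha_L}$ is strictly decreasing, we have $p<0$ and thus $g_t''(x^\star)<0$. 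Then $x^\star$ is a strict local maximum of $g_t$ at value $0$, so $g_t<0$ in a punctured neighbourhood of $x^\star$, contradicting $g_t>0$ on $[x_L,x^\star)$. Hence $g_t'(x_{0,\kappa}(t))<0$. This is exactly the analogue of the transversality argument in the proof of \lemref{interface_2}, and I expect it to be essentially the only nontrivial step.

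It remains to choose $\zeta_0$ so that property~(2) holds at $t=0$, i.e. $\underline{\chi_c}(x_{0,\kappa}(0)-\zeta_0)=\underline{\alpha_L}(x_{0,\kappa}(0))=\kappa$. The same second-derivative identity shows that no zero of $g_0$ can be created or destroyed by a tangency inside $(x_L,L)$, so the smallest zero depends continuously (indeed smoothly) on $\zeta_0$, and hence so does $\zeta_0\mapsto\underline{\alpha_L}(x_{0,\kappa}(0))$. As $\zeta_0\to-\infty$ one has $\underline{\chi_c}(\cdot-\zeta_0)\to0$ and $g_0\to\underline{\alpha_L}$ uniformly on $[x_L,L]$, so $x_{0,\kappa}(0)\to L$ and $\underline{\alpha_L}(x_{0,\kappa}(0))\to0$; as $\zeta_0\to+\infty$ one has $\underline{\chi_c}(\cdot-\zeta_0)\to(1-a)/2$ and $g_0\to\underline{\alpha_L}-(1-a)/2$ uniformly on $[x_L,L]$, so $\underline{\alpha_L}(x_{0,\kappa}(0))\to(1-a)/2$ from below. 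By the intermediate value theorem every $\kappa\in(0,(1-a)/2)$ is attained for a suitable $\zeta_0$, which we fix; properties~(1)--(3) and the $\mathscr{C}^1$ regularity then hold as shown above.
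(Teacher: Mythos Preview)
Your proof is correct and follows exactly the route the paper indicates: it omits this proof entirely, saying only that it follows ``similarly'' via the intermediate value theorem and the implicit function theorem, as in \lemref{interface_0} and \lemref{interface_2}. Your transversality computation $g_t''(x^\star)=cp-(1-a)v/2<0$ from the two ODEs is the natural way to obtain the nondegeneracy needed for the implicit function theorem, and your interpretation of ``minimal solution'' as the smallest zero in $(x_L,L)$ (rather than in $(0,L)$) is the one compatible with property~(1) and with the gluing performed in \subsecref{4.5.1}.

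One remark: the quantifier order in the statement (``there exists $\zeta_0$ such that for all $\kappa$'') cannot be taken literally, since for fixed $\zeta_0$ the minimal crossing in $(x_L,L)$ is determined and property~(2) then singles out a unique $\kappa$. Your reading---choosing $\zeta_0=\zeta_0(\kappa)$ by continuity in $\zeta_0$---is the only consistent one, and is confirmed by the paper's own remark $x_{0,\kappa}(0)=(\underline{\chi_c})^{-1}(\kappa)+\zeta_0$ and by how the construction depends on $\kappa$ in \subsecref{4.5.1}.
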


Notice that in the above lemma, $x_{0,\kappa}\left(0\right)=\left(\underline{\chi_{c}}\right)^{-1}\left(\kappa\right)+\zeta_{0}$.

\begin{lem}
\label{lem:interface_1_nonexistence} For all $c>2\sqrt{1-a}$, $\tilde{c}\geq c$
such that $\tilde{c}\in\left(f\left(c\right)-4\sqrt{a},f\left(c\right)\right)$,
$\delta\in\left[0,\frac{1}{4}\left(-\tilde{c}^{2}+4\left(\lambda\left(c\right)\left(\tilde{c}-c\right)+1\right)\right)\right)$
and $\zeta>\left(\underline{\chi_{c}}\right)^{-1}\left(\frac{\delta}{2}\right)$,
the equation
\[
\underline{\chi_{c}}\left(x-ct\right)=\frac{\underline{\chi_{c}}\left(\zeta\right)}{\underline{z_{c,\tilde{c},\delta}}\left(0,X_{z}\right)}\underline{z_{c,\tilde{c},\delta}}\left(t,x-\zeta\right)
\]
admits for all $t\geq0$ an isolated solution $x_{1}\left(t\right)\in\mathbb{R}$
such that 
\begin{enumerate}
\item $\underline{\chi_{c}}\left(x-ct\right)>\frac{\underline{\chi_{c}}\left(\zeta\right)}{\underline{z_{c,\tilde{c},\delta}}\left(0,X_{z}\right)}\underline{z_{c,\tilde{c},\delta}}\left(t,x-\zeta\right)$ 
for $x$ 
in a left-sided neighborhood of $x_{1}\left(t\right)$;
\item $\underline{\chi_{c}}\left(x-ct\right)<\frac{\underline{\chi_{c}}\left(\zeta\right)}{\underline{z_{c,\tilde{c},\delta}}\left(0,X_{z}\right)}\underline{z_{c,\tilde{c},\delta}}\left(t,x-\zeta\right)$ 
for $x$ 
in a right-sided neighborhood of $x_{1}\left(t\right)$;
\item $\tilde{c}t+\zeta<x_{1}\left(t\right)<X_{z}+\tilde{c}t+\zeta$.
\end{enumerate}
Furthermore, $x_{1}\in\mathscr{C}^{1}\left(\left[0,+\infty\right),\left(\zeta,+\infty\right)\right)$.
\end{lem}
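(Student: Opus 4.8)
The plan is to run the same intermediate value theorem plus implicit function theorem scheme as in \lemref{interface_0} and \lemref{interface_0_nonexistence}. Set
\[
G(t,x)=\underline{\chi_{c}}(x-ct)-\frac{\underline{\chi_{c}}(\zeta)}{\underline{z_{c,\tilde{c},\delta}}(0,X_{z})}\,\underline{z_{c,\tilde{c},\delta}}(t,x-\zeta),
\]
so that the sought $x_{1}(t)$ is a zero of $G(t,\cdot)$ lying in the window $W_{t}:=(\tilde{c}t+\zeta,\,X_{z}+\tilde{c}t+\zeta)$. Recall that $x\mapsto\underline{\chi_{c}}(x-ct)$ is strictly decreasing and that, by the explicit form of $\underline{z_{c,\tilde{c},\delta}}$ in \subsecref{4.1.15}, the map $x\mapsto\underline{z_{c,\tilde{c},\delta}}(t,x-\zeta)$ vanishes on $(-\infty,\tilde{c}t+\zeta]$, is strictly increasing on $\overline{W_{t}}$ up to its unique interior maximum attained at $x=X_{z}+\tilde{c}t+\zeta$, and decreases beyond. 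Hence $G(t,\cdot)$ is strictly decreasing on $\overline{W_{t}}$; consequently $G(t,\cdot)$ has at most one zero in $W_{t}$, such a zero is automatically isolated, and the sign change there is exactly the one required by (1)--(2).

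It then only remains to control the two endpoint signs. At the left endpoint $\underline{z_{c,\tilde{c},\delta}}(t,\tilde{c}t)=0$, so $G(t,\tilde{c}t+\zeta)=\underline{\chi_{c}}\big((\tilde{c}-c)t+\zeta\big)>0$ for all $t\geq0$. At the right endpoint, using the identity $\underline{z_{c,\tilde{c},\delta}}(t,X_{z}+\tilde{c}t)=\text{e}^{-\lambda(c)(\tilde{c}-c)t}\,\underline{z_{c,\tilde{c},\delta}}(0,X_{z})$ built into the definition of $\underline{z_{c,\tilde{c},\delta}}$, one gets
\[
G(t,X_{z}+\tilde{c}t+\zeta)=\underline{\chi_{c}}\big(X_{z}+(\tilde{c}-c)t+\zeta\big)-\underline{\chi_{c}}(\zeta)\,\text{e}^{-\lambda(c)(\tilde{c}-c)t},
\]
and the point is to show that this is negative for every $t\geq0$. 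For $t=0$ it is just $\underline{\chi_{c}}(X_{z}+\zeta)<\underline{\chi_{c}}(\zeta)$, true because $X_{z}>0$ and $\underline{\chi_{c}}$ decreases. For $t>0$ one invokes the exponential tail of $\underline{\chi_{c}}$: since $c>2\sqrt{1-a}$, standard KPP theory gives $\underline{\chi_{c}}(\xi)\sim A''\text{e}^{-\lambda(c)\xi}$ as $\xi\to+\infty$ (the estimate already used in \lemref{interface_0}), and this decay exponent is precisely the exponent of the time-decay factor $\text{e}^{-\lambda(c)(\tilde{c}-c)t}$ carried by $\underline{z_{c,\tilde{c},\delta}}$ (cf. \lemref{z_sub-solution}, which is exactly what forces that factor). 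Feeding the tail estimate into the displayed expression reduces the claim to a $t$-independent inequality of the form $\text{e}^{-\lambda(c)X_{z}}<\underline{\chi_{c}}(\zeta)\,\text{e}^{\lambda(c)\zeta}/A''$, whose left side is a fixed number in $(0,1)$ and whose right side tends to $1$ as $\zeta\to+\infty$; it therefore holds once $\zeta$ is deep enough in the tail of $\underline{\chi_{c}}$, which is what the hypothesis $\zeta>(\underline{\chi_{c}})^{-1}(\delta/2)$ provides for $\delta$ small. With both endpoint signs known, the intermediate value theorem yields $x_{1}(t)\in W_{t}$ for every $t\geq0$; in particular $x_{1}(t)>\tilde{c}t+\zeta\geq\zeta$, giving (3).

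Finally, $\underline{z_{c,\tilde{c},\delta}}$ is smooth on the open strip $\{\,0<x-\zeta-\tilde{c}t<2R_{z}\,\}$, which contains the graph of $x_{1}$, so $G$ is $\mathscr{C}^{1}$ near each $(t,x_{1}(t))$, and $\partial_{x}G(t,x_{1}(t))<0$ by the strict monotonicity of $G(t,\cdot)$ on $W_{t}$; the implicit function theorem then gives $x_{1}\in\mathscr{C}^{1}\big([0,+\infty),(\zeta,+\infty)\big)$. I expect the one genuinely non-routine step to be the uniform-in-$t$ right-endpoint estimate, that is, playing the tail of $\underline{\chi_{c}}$ off against the decay factor of $\underline{z_{c,\tilde{c},\delta}}$ and extracting from it the role of the hypothesis on $\zeta$; everything else is the same intermediate value theorem / implicit function theorem bookkeeping already carried out in \lemref{interface_0}, \lemref{interface_1} and \lemref{interface_0_nonexistence}.
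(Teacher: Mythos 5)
Your intermediate value theorem plus implicit function theorem scheme is exactly the route the paper indicates (it gives no detailed proof of this lemma), and you correctly isolate the only delicate point: the uniform-in-$t$ negativity at the right endpoint, i.e. $\underline{\chi_{c}}\left(X_{z}+\left(\tilde{c}-c\right)t+\zeta\right)<\underline{\chi_{c}}\left(\zeta\right)\mathrm{e}^{-\lambda\left(c\right)\left(\tilde{c}-c\right)t}$ for all $t\geq0$. The way you close this step, however, has a genuine gap. The tail you invoke is not that of the function defined in \subsecref{4.1.11}: $\underline{\chi_{c}}$ is the front of $\partial_{t}u-\partial_{xx}u=u\left(\frac{1-a}{2}-u\right)$, whose linearization at $0$ has rate $\frac{1-a}{2}$, so for $c>2\sqrt{1-a}$ its decay exponent is $\frac{1}{2}\left(c-\sqrt{c^{2}-2\left(1-a\right)}\right)$, which is strictly smaller than $\lambda\left(c\right)=\frac{1}{2}\left(c-\sqrt{c^{2}-4\left(1-a\right)}\right)$. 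The exact cancellation of exponents on which your reduction rests (``this decay exponent is precisely the exponent of the time-decay factor'') therefore does not occur: along $x=X_{z}+\tilde{c}t+\zeta$ the left member decays in $t$ at the slower rate $\frac{1}{2}\left(c-\sqrt{c^{2}-2\left(1-a\right)}\right)\left(\tilde{c}-c\right)$, while the bump amplitude decays at rate $\lambda\left(c\right)\left(\tilde{c}-c\right)$, so for $\tilde{c}>c$ the comparison you set up cannot yield the endpoint sign for large $t$ (the ratio of the two sides diverges), and the $t$-independent inequality you extract is not what the claim reduces to. You inherit the $\mathrm{e}^{-\lambda\left(c\right)\xi}$ asymptotics from the proof of \lemref{interface_0}, but quoting it does not repair the step: with the reaction actually used in \subsecref{4.1.11} the exponent bookkeeping must be redone (or the tail estimate corrected) before the right-endpoint estimate can be obtained.

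There is also a quantifier slip at the end: the lemma is asserted for every $\delta$ in the stated range and every $\zeta>\left(\underline{\chi_{c}}\right)^{-1}\left(\frac{\delta}{2}\right)$, whereas your argument concludes only ``once $\zeta$ is deep enough in the tail, for $\delta$ small''. The hypothesis on $\zeta$ is a lower bound tied to the given $\delta$ (it says $\underline{\chi_{c}}\left(\zeta\right)<\frac{\delta}{2}$), not a licence to take $\zeta$ arbitrarily large, and the threshold ``deep enough'' in your reduction is governed by the constant $A''$ and by the accuracy of the asymptotic equivalence, neither of which is controlled by $\delta$. So even granting the tail you quote, the step is not established with the lemma's quantification. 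The remaining parts (strict monotonicity of $G$ on the window and the resulting sign configuration, the left endpoint, the $t=0$ case, and the implicit function theorem argument for the regularity of $x_{1}$) are fine.
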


\begin{rem*}
Similarly to the third interface $x_3$ which is defined in two separate lemmas, the zeroth interface is defined concurrently by
\lemref{interface_0} and \lemref{interface_0_nonexistence} and the first interface is defined concurrently by \lemref{interface_1}
and \lemref{interface_1_nonexistence}. \lemref{interface_0} will be used only in the proof of \propref{Super-sub-solution_terraces}, 
\lemref{interface_1} will be used only in the proof of \propref{Super-sub-solution_terraces} and in that of 
\propref{Super-solution_compactly_supported}, \lemref{interface_0_nonexistence} and \lemref{interface_1_nonexistence} will be 
used only in the proof of \propref{Sub-solution_nonexistence}.
\end{rem*}

There exists a small $\delta^{\star}\in\left(0,\frac{1}{2}\right)$ such that all the lemmas of this subsection involving a parameter $\delta$ can be applied in the range $\delta\in\left(0,\delta^{\star}\right)$. By construction, all the objects depending on $\delta$ defined in the preceding subsection are also well-defined in this range.

\subsection{Construction of the super-solutions and sub-solutions for \thmref{Generalized_terraces}}
In this subsection, we prove \propref{Super-sub-solution_terraces}.

Let $c_{1}>2\sqrt{rd}$ and $c_{2}>c_{\textup{LLW}}$ such that $c_{1}>c_{2}$
and $c_{1}>f\left(c_{2}\right)$. In order to construct a satisfying approximated speed $c_2^\delta\simeq c_2$, 
we need to find $c_2^\delta$ such that:
\begin{enumerate}
	\item $c_2^\delta>c_{\textup{LLW}}^\delta$;
	\item $c_2^\delta \to c_2$ as $\delta\to 0$;
    \item $c_2<c_2^\delta<c_1$;
    \item $f_\delta\left(c_2^\delta\right)<c_1$
    \item $\Lambda_\delta\left(c_2^\delta,c_1\right)$ is well-defined;
    \item $\Lambda_\delta\left(c_2^\delta,c_1\right)\leq\Lambda\left(c_2,c_1\right)$.
\end{enumerate}
The condition $(6)$ above is equivalent to 
$\lambda_\delta\left(c_2^\delta\right)\left(c_1-c_2^\delta\right)\leq\lambda\left(c_2\right)\left(c_1-c_2\right)$,
that is to 
\[
\frac{\lambda_\delta\left(c_2^\delta\right)}{\lambda\left(c_2\right)}\leq\frac{\left(c_1-c_2\right)}{\left(c_1-c_2^\delta\right)},
\]
with a right-hand side necessarily larger than $1$ provided $(3)$ above is satisfied. Since the function 
$\left(\delta,c\right)\mapsto\lambda_\delta\left(c\right)$ is increasing with respect to $c$ and decreasing with respect to $\delta$,
the sign of $\lambda_\delta\left(c_2^\delta\right)-\lambda\left(c_2\right)$ is unclear if we only assume $c_2^\delta>c_2$. 
Hence some care is needed and we cannot simply take a rough approximation like $c_2^\delta=c_2+\delta$. 

In fact, since $a_\delta<a$ and $\lambda(c_2) < \sqrt{1-a}$, we can choose $\delta\in\left(0,\delta^{\star}\right)$ such that
\[
\lambda\left(c_{2}\right)<\sqrt{1-a_{\delta}}.
\]
Consequently, the following quantity is well-defined: 
\[
c_{2}^{\delta}=\left(\lambda_{\delta}^{-1}\circ\lambda\right)\left(c_{2}\right).
\]
 Since $\lambda$ and $\lambda_\delta$ are both decreasing functions and $\lambda\left(c_{2}\right)<\lambda_{\delta}\left(c_{2}\right)$, it follows that 
 $c_{2}^{\delta}>c_{2}$, whence 
\begin{align*}
4\left(\lambda_{\delta}\left(c_{2}^{\delta}\right)\left(c_{1}-c_{2}^{\delta}\right)+1\right) & =4\left(\lambda\left(c_{2}\right)\left(c_{1}-c_{2}^{\delta}\right)+1\right)\\
 & <4\left(\lambda\left(c_{2}\right)\left(c_{1}-c_{2}\right)+1\right)\\
 & <c_{1}^2,
\end{align*}
where the last inequality is due to $c_1-f\left(c_2\right)>0$ (see also Subsection \ref{subsec:4.1.12}). By continuity, we can further assume that $\delta$ is so small that
\[
\left\{ \begin{matrix}c_{\textup{LLW}}\leq c_{\textup{LLW}}^{\delta}<c_{2}^{\delta}\\
c_{2}<c_{2}^{\delta}<c_{1}\\
-4\sqrt{a_{\delta}}<c_{1}-f_{\delta}\left(c_{2}^{\delta}\right)
\end{matrix}\right..
\]
 It follows then, from Subsection \ref{subsec:4.1.12}, that 
\[
f_{\delta}\left(c_{2}^{\delta}\right)<f\left(c_{2}\right)<c_{1},
\]
whence the quantity $\Lambda_{\delta}\left(c_{2}^{\delta},c_{1}\right)$
is well-defined. By definition, it satisfies 
\begin{align*}
\Lambda_{\delta}\left(c_{2}^{\delta},c_{1}\right) & =\frac{1}{2}\left(c_{1}-\sqrt{c_{1}^{2}-4\left(\lambda_{\delta}\left(c_{2}^{\delta}\right)\left(c_{1}-c_{2}^{\delta}\right)+1\right)}\right)\\
 & =\frac{1}{2}\left(c_{1}-\sqrt{c_{1}^{2}-4\left(\lambda\left(c_{2}\right)\left(c_{1}-c_{2}^{\delta}\right)+1\right)}\right)\\
 & <\frac{1}{2}\left(c_{1}-\sqrt{c_{1}^{2}-4\left(\lambda\left(c_{2}\right)\left(c_{1}-c_{2}\right)+1\right)}\right),
\end{align*}
so that $\Lambda_{\delta}\left(c_{2}^{\delta},c_{1}\right)<\Lambda\left(c_{2},c_{1}\right)$. 

\subsubsection{Super-solution}\label{subsec:4.3.1}

The pair $\left(\overline{u_{\delta}},\underline{v_{\delta}}\right)$
is defined by (see Figure \ref{fig:1.6b})
\[
\overline{u_{\delta}}\left(t,x\right)=\left\{ \begin{matrix}\min\left(1,\overline{\varphi_{\delta,c_{2}^{\delta}}}\left(x-c_{2}^{\delta}t-\zeta_{1,\kappa}\right)\right) & \text{if }x<x_{2}\left(t\right)+\zeta_{1,\kappa}\\
\overline{w_{\delta,c_{2}^{\delta},c_{1}}}\left(t,x-\zeta_{1,\kappa}-\zeta_{2}\right) & \text{if }x\geq x_{2}\left(t\right)+\zeta_{1,\kappa}
\end{matrix}\right.,
\]
\[
\underline{v}_{\delta}\left(t,x\right)=\left\{ \begin{matrix}\max\left(0,\underline{\theta_{\delta,c_{2}^{\delta},A_{\kappa}}}\left(x-c_{2}^{\delta}t\right)\right) & \text{if }x<\xi_{1,\kappa}+c_{2}^{\delta}t\\
\underline{\psi_{\delta,c_{2}^{\delta}}}\left(x-c_{2}^{\delta}t-\zeta_{1,\kappa}\right) & \text{if }x\in\left[\xi_{1,\kappa}+c_{2}^{\delta}t,x_{3}\left(t\right)+\zeta_{1,\kappa}\right)\\
\underline{\pi_{\delta,c_{1},h}}\left(x-c_{1}t-\zeta_{1,\kappa}-\zeta_{3}\right) & \text{if }x\in\left[x_{3}\left(t\right)+\zeta_{1,\kappa},\xi_{4}+c_{1}t+\zeta_{1,\kappa}+\zeta_{3}\right)\\
\underline{\beta_{c_{1},B,\eta_{\beta}}}\left(x-c_{1}t-\zeta_{1,\kappa}-\zeta_{3}-\zeta_{4}\right) & \text{if }x\geq\xi_{4}+c_{1}t+\zeta_{1,\kappa}+\zeta_{3}
\end{matrix}\right.,
\]
where
\begin{itemize}
\item $\kappa\in\left(0,\delta\right]$ is fixed so small that $\zeta_{1,\kappa}-\xi_{1,\kappa} +x_2\left(0\right)$ is large enough so that for all $t\geq0$, $\xi_{1,\kappa}+c_{2}^{\delta}t<x_{2}\left(t\right)+\zeta_{1,\kappa}$
(see \lemref{interface_1}(4) and \lemref{interface_2} and use $x_2(t) \geq c_1 t + O(1) \geq c^\delta_2t + O(1)$);
\item $\zeta_{3}$ is fixed so large that, for all $t\geq0$, $x_{2}\left(t\right)<x_{3}\left(t\right)$
(by \lemref{interface_2} \lemref{interface_3_large_c_1}, $x_{2}(t)-c_{1}t$
and $x_{3}(t)-c_{1}t$ are both bounded uniformly in $t\geq0$, whence
we can translate $x_{3}(t)$ to the right by increasing $\zeta_{3}$);
\item $h=\frac{h^{\star}}{2}$;
\item $\eta_{\beta}=\frac{1}{2}\min\left(\frac{1}{d}\sqrt{c_{1}^{2}-4rd},\Lambda_{\delta}\left(c_{2}^{\delta},c_{1}\right)\right)$;
\item $B=\textup{e}^{\Lambda_{\delta}\left(c_{2}^{\delta},c_{1}\right)\xi_{\beta}}2\overline{u_{\delta}}\left(0,\zeta_{1,\kappa}+\zeta_{3}+\zeta_{4}\right)$. 
\end{itemize}
The inequality 
\[
x_{3}\left(t\right)+\zeta_{1,\kappa}<\xi_{4}+c_{1}t+\zeta_{1,\kappa}+\zeta_{3}
\]
 is guaranteed by \lemref{interface_3_large_c_1} and \lemref{interface_4}
which respectively show that $x_{3}\left(t\right)<c_{1}t+\zeta_{3}$
and $\xi_{4}>0$. In conclusion, we have 
\[
\xi_{1,\kappa}+c_{2}^{\delta}t<x_{2}\left(t\right)+\zeta_{1,\kappa}<x_{3}\left(t\right)+\zeta_{1,\kappa}<\xi_{4}+c_{1}t+\zeta_{1,\kappa}+\zeta_{3}\quad\text{ for all }t\geq0,
\]
i.e. $\underline{v}_\delta$ is well-defined for all $t \geq 0$.

\subsubsection{Sub-solution}\label{subsec:4.3.2}  First define the pair $\left(\underline{u},\overline{v}\right)$  by (see Figure \ref{fig:1.6a}) 
\[
\underline{u}\left(t,x\right)=\left\{ \begin{matrix}\underline{\chi_{c_{2}}}\left(x-c_{2}t+\zeta_{0}\right) & \text{if }x<x_{0}\left(t\right)\\
\underline{w_{c_{2},c_{1},A,\eta_{w}}}\left(t,x\right) & \text{if }x\geq x_{0}\left(t\right)
\end{matrix}\right.,
\]
\[
\overline{v}\left(t,x\right)=\min\left(1,\textup{e}^{-\lambda_{v}\left(c_{1}\right)\left(x-c_{1}t\right)}\right),
\]
where $\eta_{w}=\frac{1}{2}\min\left(\sqrt{c_{1}^{2}-4\left(\lambda\left(c_{2}\right)\left(c_{1}-c_{2}\right)+1\right)},\lambda_{v}\left(c_{1}\right)\right)$. 

The function $\underline{u}$ depends on a constant $A>0$ which will
be fixed later on. 

\subsubsection{Up to some translations, the sub-solution $(\underline{u},\overline{v})$
is initially smaller than the super-solution $(\overline{u_{\delta}},\underline{v_{\delta}})$}

First, let $\underline{V}:\mathbb{R}\to\left[0,1\right]$ be the smallest
nonincreasing continuous function such that 
\[
\underline{v_{\delta}}\left(0,x\right)\leq\underline{V}\left(x\right)\text{ for all }x\in\mathbb{R}
\]
and let $\zeta_{5}\in\mathbb{R}$ such that, for all $t\geq0$, $x\mapsto\underline{v_{\delta}}\left(t,x+c_{1}t\right)$
is $\mathscr{C}^{1}$ and nonincreasing in $\left(\zeta_{5},+\infty\right)$.
The existence of $\zeta_{5}$ follows from the fact that the last
discontinuity of $\partial_{x}\underline{v_{\delta}}$ and the last
local maximum of $\underline{v_{\delta}}$ move both at most at speed
$c_{1}$. The limit of $\underline{V}$ at $-\infty$ is smaller than
$1$ and $\underline{V}\left(x\right)=\underline{v_{\delta}}\left(0,x\right)$
if $x>\zeta_{5}$ . Therefore, since $\overline{v}$ and $\underline{v_{\delta}}$
have the same exponential decay at $+\infty$, there exists $\zeta_{6}\geq\zeta_{5}$
such that:
\begin{enumerate}
\item for all $t\geq0$, $x\mapsto\underline{v_{\delta}}\left(t,x+c_{1}t\right)$
is $\mathscr{C}^{1}$ and nonincreasing in $\left(\zeta_{6},+\infty\right)$; 
\item for all $x\in\mathbb{R}$, $\underline{v_{\delta}}\left(0,x\right)\leq\underline{V}\left(x\right)\leq\overline{v}\left(0,x-\zeta_{6}\right)$.
\end{enumerate}
Notice that with this definition of $\zeta_{5}$ and $\zeta_{6}$,
the irregularity of $\overline{v}$ is initially on the right of the
last irregularity of $\underline{v_{\delta}}$. Since the distance
between these two points is nondecreasing with respect to $t$, it
is bounded from below by the initial distance. 

Next, quite similarly, we define $\zeta_{7}\in\mathbb{R}$ such that
\[
\underline{u}\left(0,x+\zeta_{7}\right)\leq\overline{u_{\delta}}\left(0,x\right)\text{ for all }x\in\mathbb{R}.
\]
The irregularity of $\underline{u}$ moves faster than the first irregularity
of $\overline{u_{\delta}}$ (as $c_{1}>c_{2}^{\delta}$), whence it is
impossible to guarantee that they stay ordered. This is not a major
issue but some additional care will be required later on. Still, without
loss of generality, we assume that $\zeta_{7}$ is so large that the
irregularity of $\underline{u}$ and the second (last) irregularity
of $\overline{u_{\delta}}$, which both move at speed $c_{1}$, stay
ordered. 

\subsubsection{Cleansing}

Now that all required translations are done, we fix 
\[
A=2\textup{e}^{\lambda_{v}\left(c_{1}\right)x_{w}}\textup{e}^{\lambda_v\left(c_1\right)\left(\zeta_6+\zeta_7\right)},
\]
and thus there remains only one parameter: $\delta$.

From now on, all the subscripts referring to fixed parameters are
omitted. Furthermore, since all the properties of the functions $\underline{\chi}$,
$\underline{w}$, $\overline{w_{\delta}}$, $\left(\overline{\varphi_{\delta}},\underline{\psi_{\delta}}\right)$,
$\underline{\theta_{\delta}}$, $\underline{\omega_{\delta}}$, $\underline{\pi_{\delta}}$,
$\underline{\beta}$ we are interested in are invariant by translation,
we assume that these functions were correctly normalized from the
beginning, so that $\zeta_{1,\kappa}=\zeta_{3}=\zeta_{4}=\zeta_{7}=0$,
and we fix $x_{0}\left(0\right)=0$. Similarly, we define $C_{\delta}=\textup{e}^{\lambda_{v}\left(c_{1}\right)\zeta_{6}}>0$
so that $\overline{v_\delta}\left(t,x\right)=\min\left(1,C_{\delta}\textup{e}^{-\lambda_{v}\left(c_{1}\right)\left(x-c_{1}t\right)}\right)$
and $x_{w}$ and $\xi_{\beta}$ are redefined so that \lemref{w_sub-solution}
and \lemref{beta_subsolution} stay true as stated. Notice that $\underline{\chi}$, $\underline{w}$, $\underline{\beta}$,
$\underline{u}$ and $\overline{v}$ now depend on $\delta$ because of these
various normalizations (and consequently these notations come with a subscript $\delta$ from now on).

To summarize, the super- and sub-solutions are now defined as follows:
\[
\underline{u_{\delta}}\left(t,x\right)=\left\{ \begin{matrix}\underline{\chi_{\delta}}\left(x-c_{2}t\right) & \text{if }x<x_{0}\left(t\right)\\
\underline{w_{\delta}}\left(t,x\right) & \text{if }x\geq x_{0}\left(t\right)
\end{matrix}\right.,
\]
\[
\overline{v_{\delta}}\left(t,x\right)=\min\left(1,C_{\delta}\textup{e}^{-\lambda_{v}\left(c_{1}\right)\left(x-c_{1}t\right)}\right),
\]
\[
\overline{u_{\delta}}\left(t,x\right)=\left\{ \begin{matrix}\min\left(1,\overline{\varphi_{\delta}}\left(x-c_{2}^{\delta}t\right)\right) & \text{if }x<x_{2}\left(t\right)\\
\overline{w_{\delta}}\left(t,x\right) & \text{if }x\geq x_{2}\left(t\right)
\end{matrix}\right.,
\]
\[
\underline{v_{\delta}}\left(t,x\right)=\left\{ \begin{matrix}\max\left(0,\underline{\theta_{\delta}}\left(x-c_{2}^{\delta}t\right)\right) & \text{if }x<\xi_{1}+c_{2}^{\delta}t\\
\underline{\psi_{\delta}}\left(x-c_{2}^{\delta}t\right) & \text{if }x\in\left[\xi_{1}+c_{2}^{\delta}t,x_{3}\left(t\right)\right)\\
\underline{\pi_{\delta}}\left(x-c_{1}t\right) & \text{if }x\in\left[x_{3}\left(t\right),\xi_{4}+c_{1}t\right)\\
\underline{\beta_{\delta}}\left(x-c_{1}t\right) & \text{if }x\geq\xi_{4}+c_{1}t
\end{matrix}\right..
\]

Furthermore, the interfaces satisfy, for all $t\geq0$, 
\[
\left\{ \begin{matrix}x_{0}\left(t\right)<x_{2}\left(t\right)\\
\xi_{1}+c_{2}^{\delta}t<x_{2}\left(t\right)\\
x_{2}\left(t\right)<x_{3}\left(t\right)\\
\xi_{4}+c_{1}t<\frac{\ln C_{\delta}}{\lambda_{v}\left(c_{1}\right)}+c_{1}t
\end{matrix}\right..
\]

\begin{figure}
\resizebox{\hsize}{!}{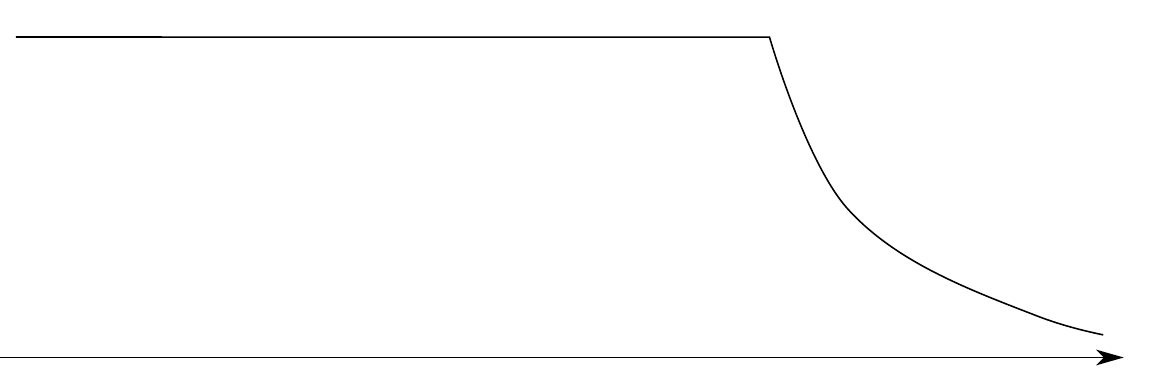}

\caption{Sub-solution $\left(\underline{u_{\delta}},\overline{v_{\delta}}\right)$
for \thmref{Generalized_terraces}}
\label{fig:1.6a}
\end{figure}
\begin{figure}
\resizebox{\hsize}{!}{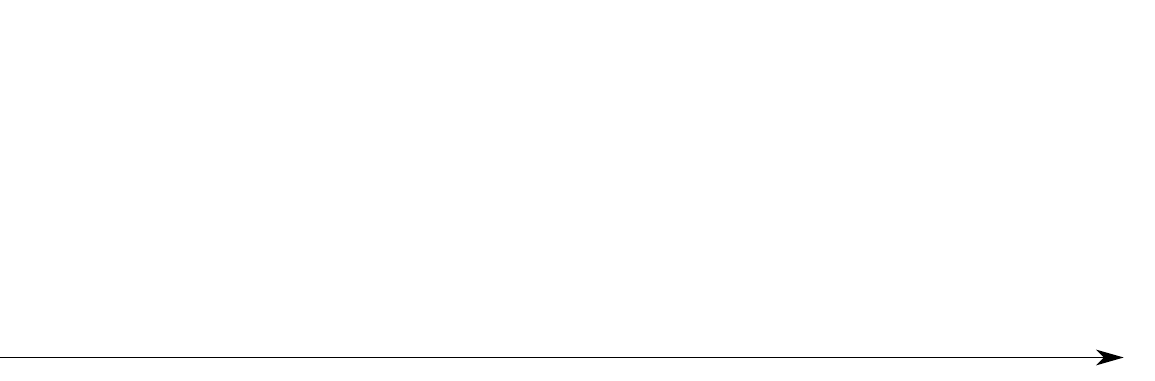}

\caption{Super-solution $\left(\overline{u_{\delta}},\underline{v_{\delta}}\right)$
for \thmref{Generalized_terraces}}
\label{fig:1.6b}
\end{figure}

\subsubsection{Verification of the differential inequalities\label{subsec:Verif_diff_inequalities}}

Let us point out that by \thmref{Generalized_super-solutions} and
\thmref{Generalized_sub-solutions} and by construction of the pairs
$\left(\overline{u_{\delta}},\underline{v_{\delta}}\right)$ and $\left(\underline{u_{\delta}},\overline{v_{\delta}}\right)$,
it suffices to verify the differential inequalities 
\begin{equation}
P\left(\overline{u_{\delta}},\underline{v_{\delta}}\right)\succeq F\left(\overline{u_{\delta}},\underline{v_{\delta}}\right)\label{eq:u_delta_v_delta_super-solution}
\end{equation}
 and 
\begin{equation}
P\left(\underline{u_{\delta}},\overline{v_{\delta}}\right)\preceq F\left(\underline{u_{\delta}},\overline{v_{\delta}}\right)\label{eq:u_delta_v_delta_sub-solution}
\end{equation}
 where the functions are regular in order to establish that $\left(\overline{u_{\delta}},\underline{v_{\delta}}\right)$
and $\left(\underline{u_{\delta}},\overline{v_{\delta}}\right)$ are
indeed a super-solution and a sub-solution of (\ref{eq:competition_diffusion_system})
 respectively. Also, the differential inequalities can also be verified before the translations are performed.
 
In what follows, for the sake of brevity, we voluntarily omit the
mentions of the points $\left(t,x\right)$, $x-c_{1}t$, $x-c_{2}t$
or $x-c_{2}^{\delta}t$ where the various functions are evaluated.
In view of the construction, it should be unambiguous.

First, we consider (\ref{eq:u_delta_v_delta_super-solution}). By
\lemref{theta_sub-solution} and \lemref{interface_1}, for all $\left(t,x\right)$ 
such that 
\[
\left(\overline{u_{\delta}},\underline{v_{\delta}}\right)\left(t,x\right)=\left(1,\underline{\theta_{\delta}}\left(x-c_{2}^{\delta}t\right)\right),
\]
we find $\underline{\theta_{\delta}}\leq\kappa\leq\delta$ and
\begin{align*}
P\left(\overline{u_{\delta}},\underline{v_{\delta}}\right)-F\left(\overline{u_{\delta}},\underline{v_{\delta}}\right) & =\left(\begin{matrix}a\underline{\theta_{\delta}}\\
-c_{2}^{\delta}\underline{\theta_{\delta}}'-d\underline{\theta_{\delta}}''-r\underline{\theta_{\delta}}\left(1-\underline{\theta_{\delta}}-b\right)
\end{matrix}\right)\\
 & =\left(\begin{matrix}a\underline{\theta_{\delta}}\\
-r\underline{\theta_{\delta}}\left(\delta-\underline{\theta_{\delta}}\right)
\end{matrix}\right)\\
 & \succeq\left(0,0\right).
\end{align*}
By definition of $\left(\overline{\varphi_{\delta}},\underline{\psi_{\delta}}\right)$,
for all $\left(t,x\right)$ such that 
\[
\left(\overline{u_{\delta}},\underline{v_{\delta}}\right)\left(t,x\right)=\left(1,\underline{\psi_{\delta}}\left(x-c_{2}^{\delta}t\right)\right),
\]
we find, using $\underline{\psi_{\delta}}\leq1+\delta$, 
\begin{align*}
P\left(\overline{u_{\delta}},\underline{v_{\delta}}\right)-F\left(\overline{u_{\delta}},\underline{v_{\delta}}\right) & =\left(\begin{matrix}a\underline{\psi_{\delta}}\\
-c_{2}^{\delta}\underline{\psi_{\delta}}'-d\underline{\psi_{\delta}}''-r\underline{\psi_{\delta}}\left(1-\underline{\psi_{\delta}}-b\right)
\end{matrix}\right)\\
 & =\left(\begin{matrix}a\underline{\psi_{\delta}}\\
-r\underline{\psi_{\delta}}\left(2\delta+b\left(\overline{\varphi_{\delta}}-1\right)\right)
\end{matrix}\right)\\
& \succeq\left(\begin{matrix}0\\
-2r\delta\underline{\psi_{\delta}}
\end{matrix}\right)\\
 & \succeq\left(0,0\right).
\end{align*}
 Similarly, for all $\left(t,x\right)$ such that 
\[
\left(\overline{u_{\delta}},\underline{v_{\delta}}\right)\left(t,x\right)=\left(\overline{\varphi_{\delta}}\left(x-c_{2}^{\delta}t\right),\underline{\psi_{\delta}}\left(x-c_{2}^{\delta}t\right)\right),
\]
we find
\begin{align*}
P\left(\overline{u_{\delta}},\underline{v_{\delta}}\right)-F\left(\overline{u_{\delta}},\underline{v_{\delta}}\right) & =\left(\begin{matrix}-c_{2}^{\delta}\overline{\varphi_{\delta}}'-\overline{\varphi_{\delta}}''-\overline{\varphi_{\delta}}\left(1-\overline{\varphi_{\delta}}-a\underline{\psi_{\delta}}\right)\\
-c_{2}^{\delta}\underline{\psi_{\delta}}'-d\underline{\psi_{\delta}}''-r\underline{\psi_{\delta}}\left(1-\underline{\psi_{\delta}}-b\overline{\varphi_{\delta}}\right)
\end{matrix}\right)\\
 & =\left(\begin{matrix}\delta\overline{\varphi_{\delta}}\\
-2r\delta\underline{\psi_{\delta}}
\end{matrix}\right)\\
 & \succeq\left(0,0\right).
\end{align*}
By \lemref{w_super-solution} and \lemref{interface_2}, for all $\left(t,x\right)$
such that 
\[
\left(\overline{u_{\delta}},\underline{v_{\delta}}\right)\left(t,x\right)=\left(\overline{w_{\delta}}\left(t,x\right),\underline{\psi_{\delta}}\left(x-c_{2}^{\delta}t\right)\right),
\]
we find, using $\overline{w_{\delta}}\leq\overline{\varphi_{\delta}}$ (\lemref{interface_2}(1)),
\begin{align*}
P\left(\overline{u_{\delta}},\underline{v_{\delta}}\right)-F\left(\overline{u_{\delta}},\underline{v_{\delta}}\right) & =\left(\begin{matrix}\partial_{t}\overline{w_{\delta}}-\partial_{xx}\overline{w_{\delta}}-\overline{w_{\delta}}\left(1-\overline{w_{\delta}}-a\underline{\psi_{\delta}}\right)\\
-c_{2}^{\delta}\underline{\psi_{\delta}}'-d\underline{\psi_{\delta}}''-r\underline{\psi_{\delta}}\left(1-\underline{\psi_{\delta}}-b\overline{w_{\delta}}\right)
\end{matrix}\right)\\
 & =\left(\begin{matrix}\overline{w_{\delta}}\left(\overline{w_{\delta}}+a\underline{\psi_{\delta}}\right)\\
-r\underline{\psi_{\delta}}\left(2\delta+b\left(\overline{\varphi_{\delta}}-\overline{w_{\delta}}\right)\right)
\end{matrix}\right)\\
 & \succeq\left(0,0\right).
\end{align*}
By \lemref{pi_subsolution_and_estimate} and \lemref{interface_2},
for all $\left(t,x\right)$ such that 
\[
\left(\overline{u_{\delta}},\underline{v_{\delta}}\right)\left(t,x\right)=\left(\overline{w_{\delta}}\left(t,x\right),\underline{\pi_{\delta}}\left(x-c_{1}t\right)\right),
\]
we find, using $\overline{w_{\delta}}\leq\frac{\delta}{b}$ (\lemref{interface_2}(3)),
\begin{align*}
P\left(\overline{u_{\delta}},\underline{v_{\delta}}\right)-F\left(\overline{u_{\delta}},\underline{v_{\delta}}\right) & =\left(\begin{matrix}\partial_{t}\overline{w_{\delta}}-\partial_{xx}\overline{w_{\delta}}-\overline{w_{\delta}}\left(1-\overline{w_{\delta}}-a\underline{\pi_{\delta}}\right)\\
-c_{1}\underline{\pi_{\delta}}'-d\underline{\pi_{\delta}}''-r\underline{\pi_{\delta}}\left(1-\underline{\pi_{\delta}}-b\overline{w_{\delta}}\right)
\end{matrix}\right)\\
 & \succeq\left(\begin{matrix}\overline{w_{\delta}}\left(\overline{w_{\delta}}+a\underline{\pi_{\delta}}\right)\\
-r\underline{\pi_{\delta}}\left(\delta+\underline{\pi_{\delta}}-b\overline{w_{\delta}}\right)
\end{matrix}\right)\\
 & \succeq\left(0,0\right).
\end{align*}
By \lemref{beta_subsolution}, \lemref{interface_2} and construction
of $B$, for all $\left(t,x\right)$ such that 
\[
\left(\overline{u_{\delta}},\underline{v_{\delta}}\right)\left(t,x\right)=\left(\overline{w_{\delta}}\left(t,x\right),\underline{\beta_{\delta}}\left(x-c_{1}t\right)\right),
\]
we find, By definition of $\underline{\beta_\delta}$, and that of $\eta_\beta$ in Subsection \ref{subsec:4.3.1},
\begin{align*}
P\left(\overline{u_{\delta}},\underline{v_{\delta}}\right)-F\left(\overline{u_{\delta}},\underline{v_{\delta}}\right) & =\left(\begin{matrix}\partial_{t}\overline{w_{\delta}}-\partial_{xx}\overline{w_{\delta}}-\overline{w_{\delta}}\left(1-\overline{w_{\delta}}-a\underline{\beta_{\delta}}\right)\\
-c_{1}\underline{\beta_{\delta}}'-d\underline{\beta_{\delta}}''-r\underline{\beta_{\delta}}\left(1-\underline{\beta_{\delta}}-b\overline{w_{\delta}}\right)
\end{matrix}\right)\\
 & \succeq\left(\begin{matrix}\overline{w_{\delta}}\left(\overline{w_{\delta}}+a\underline{\beta_{\delta}}\right)\\
rb\underline{\beta_{\delta}}\left(\overline{w_{\delta}}-B\textup{e}^{-\Lambda_{\delta}\left(c_{2}^{\delta},c_{1}\right)\left(x-c_{1}t+\xi_{\beta}\right)}\right)
\end{matrix}\right)\\
 & \succeq\left(0,0\right).
\end{align*}

Finally, we consider the differential inequalities associated with $(\underline{u_\delta}, \overline{v_\delta})$. 
By definition of $\underline{\chi_{\delta}}$, for all $\left(t,x\right)$
such that 
\[
\left(\underline{u_{\delta}},\overline{v_{\delta}}\right)\left(t,x\right)=\left(\underline{\chi_{\delta}}\left(x-c_{2}t\right),1\right),
\]
we find 
\begin{align*}
P\left(\underline{u_{\delta}},\overline{v_{\delta}}\right)-F\left(\underline{u_{\delta}},\overline{v_{\delta}}\right) & =\left(\begin{matrix}-c_{2}\underline{\chi_{\delta}}'-\underline{\chi_{\delta}}''-\underline{\chi_{\delta}}\left(1-a-\underline{\chi_{\delta}}\right)\\
rb\underline{\chi_{\delta}}
\end{matrix}\right)\\
 & =\left(\begin{matrix}0\\
rb\underline{\chi_{\delta}}
\end{matrix}\right)\\
 & \preceq\left(0,0\right).
\end{align*}
By \lemref{w_sub-solution}, \lemref{interface_0} and by construction of $A=2C_{\delta}\textup{e}^{\lambda_v\left(c_1\right)x_w}$, for all $\left(t,x\right)$
such that 
\[
\left(\underline{u_{\delta}},\overline{v_{\delta}}\right)\left(t,x\right)=\left(\underline{w_{\delta}}\left(t,x\right),1\right),
\]
we find, using $C_{\delta}\textup{e}^{-\lambda_{v}\left(c_{1}\right)\left(x-c_{1}t\right)}\geq 1$,
\begin{align*}
P\left(\underline{u_{\delta}},\overline{v_{\delta}}\right)-F\left(\underline{u_{\delta}},\overline{v_{\delta}}\right) & =\left(\begin{matrix}\partial_{t}\underline{w_{\delta}}-\partial_{xx}\underline{w_{\delta}}-\underline{w_{\delta}}\left(1-\underline{w_{\delta}}-a\right)\\
rb\underline{w_{\delta}}
\end{matrix}\right)\\
 & \preceq\left(\begin{matrix}a\underline{w_{\delta}}\left(1-A\textup{e}^{-\lambda_{v}\left(c_{1}\right)\left(x-c_{1}t+x_{w}\right)}\right)\\
0
\end{matrix}\right)\\
 & \preceq\left(\begin{matrix}a\underline{w_{\delta}}\left(1-2C_{\delta}\textup{e}^{-\lambda_{v}\left(c_{1}\right)\left(x-c_{1}t\right)}\right)\\
0
\end{matrix}\right)\\
 & \preceq\left(0,0\right).
\end{align*}
 Similarly, for all $\left(t,x\right)$ such that 
\[
\left(\underline{u_{\delta}},\overline{v_{\delta}}\right)\left(t,x\right)=\left(\underline{w_{\delta}}\left(t,x\right),C_{\delta}\textup{e}^{-\lambda_{v}\left(c_{1}\right)\left(x-c_{1}t\right)}\right),
\]
we find
\begin{align*}
P\left(\underline{u_{\delta}},\overline{v_{\delta}}\right)-F\left(\underline{u_{\delta}},\overline{v_{\delta}}\right) & =\left(\begin{matrix}\partial_{t}\underline{w_{\delta}}-\partial_{xx}\underline{w_{\delta}}-\underline{w_{\delta}}\left(1-\underline{w_{\delta}}-aC_{\delta}\textup{e}^{-\lambda_{v}\left(c_{1}\right)\left(x-c_{1}t\right)}\right)\\
rb\underline{w_{\delta}}
\end{matrix}\right)\\
 & \preceq\left(\begin{matrix}a\underline{w_\delta}\textup{e}^{-\lambda_{v}\left(c_{1}\right)\left(x-c_{1}t\right)}\left(C_\delta -A\textup{e}^{-\lambda_v\left(c_1\right)x_w}\right)\\
0
\end{matrix}\right)\\
 & \preceq\left(0,0\right).
\end{align*}

\subsection{Construction of the super-solutions for \thmref{Compactly_supported}}\label{subsec:4.4}
In this subsection, we prove \propref{Super-solution_compactly_supported}.

Let $c_{2}>\max\left(c_{\textup{LLW}},f^{-1}\left(2\sqrt{rd}\right)\right)$
and let $\delta\in\left(0,\delta^{\star}\right)$ such that $c_{\textup{LLW}}^{\delta}<c_{2}$. Define
\[
c_{1}^{\delta}=2\sqrt{r\left(1-2\delta\right)d}-\delta.
\]

Recall from \subsecref{f_delta_Lambda_delta} that, given a fixed $\tilde{c}$, the function $c\mapsto\Lambda_{\delta}\left(c,\tilde{c}\right)$ is 
decreasing and bijectively maps $\left[2\sqrt{1-a_{\delta}},+\infty\right)$ onto 
\[
\left(\frac{1}{2}\left(\tilde{c}-\sqrt{\tilde{c}^{2}-4\left(a_{\delta}+1\right)}\right),\frac{1}{2}\left(\tilde{c}-\sqrt{\tilde{c}^{2}-4\left(\tilde{c}\sqrt{1-a_{\delta}}+2a_{\delta}-1\right)}\right)\right].
\] Thus the equation
\[
\Lambda_\delta\left(c_2^\delta, c^\delta_1\right) = \Lambda\left(c_2, 2\sqrt{rd}\right).
\]
admits a unique solution $c_2^\delta$ if and only if 
\begin{equation}\label{eq:rangeee}
c^\delta_1-\sqrt{\left(c^\delta_1\right)^{2}-4\left(a_{\delta}+1\right)}<2\Lambda\left(c_2, 2\sqrt{rd}\right)\leq c^\delta_1-\sqrt{\left(c^\delta_1\right)^{2}-4\left(c^\delta_1\sqrt{1-a_{\delta}}+2a_{\delta}-1\right)}.
\end{equation}
Since  $c_2\in\left(c_{\textup{LLW}},+\infty\right)\subset\left(2\sqrt{1-a},+\infty\right)$, we have by the above discussion 
\[
2\sqrt{rd}-\sqrt{\left(2\sqrt{rd}\right)^{2}-4\left(a+1\right)}<2\Lambda\left(c_2, 2\sqrt{rd}\right)< 2\sqrt{rd}-\sqrt{\left(2\sqrt{rd}\right)^{2}-4\left(2\sqrt{rd}\sqrt{1-a}+2a-1\right)}.
\]
By the facts that  $c^\delta_1 \to 2\sqrt{rd}$ and $a_\delta \to a$ as $\delta \to 0$, we deduce that 
we can in fact assume that $\delta$ is so small that (\ref{eq:rangeee}) holds. 
Hence $c_2^\delta$ is well-defined. 

Furthermore, by continuity, $c_{2}^{\delta}$ converges to $c_{2}$ as $\delta\to 0$, and thus $c^\delta_2 > c^\delta_{\textup{LLW}}$. In summary, we can assume
that $\delta$ is so small that $c_{1}^{\delta}$ and $c_{2}^{\delta}$ are well-defined, respectively close 
to $c_1$ and $c_2$, and satisfy the following:

\begin{equation}\label{eq:c2delta}
c_{\textup{LLW}}^{\delta}<c_{2}^{\delta} \quad \text{ and }\quad 
\Lambda_{\delta}\left(c_{2}^{\delta},c_{1}^{\delta}\right)=\Lambda\left(c_{2},2\sqrt{rd}\right)
\end{equation}

\subsubsection{Super-solution}\label{subsec:4.4.1}

The pair $\left(\overline{u_{\delta}},\underline{v_{\delta}}\right)$
is defined by 
\[
\overline{u_{\delta}}\left(t,x\right)=\left\{ \begin{matrix}\min\left(1,\overline{\varphi_{\delta,c_{2}^{\delta}}}\left(x-c_{2}^{\delta}t-\zeta_{1,\kappa}\right)\right) & \text{if }x<x_{2}\left(t\right)+\zeta_{1,\kappa}\\
\overline{w_{\delta,c_{2}^{\delta},c_{1}^{\delta}}}\left(t,x-\zeta_{1,\kappa}-\zeta_{2}\right) & \text{if }x\geq x_{2}\left(t\right)+\zeta_{1,\kappa}
\end{matrix}\right.,
\]
\[
\underline{v_{\delta}}\left(t,x\right)=\left\{ \begin{matrix}\max\left(0,\underline{\theta_{\delta,c_{2}^{\delta},A_{\kappa}}}\left(x-c_{2}^{\delta}t\right)\right) & \text{if }x<\xi_{1,\kappa}+c_{2}^{\delta}t\\
\underline{\psi_{\delta,c_{2}^{\delta}}}\left(x-c_{2}^{\delta}t-\zeta_{1,\kappa}\right) & \text{if }x\in\left[\xi_{1,\kappa}+c_{2}^{\delta}t,x_{3}\left(t\right)+\zeta_{1,\kappa}\right)\\
\underline{\omega_{\delta,R_{\delta}}}\left(x-c_{1}^{\delta}t-\zeta_{1,\kappa}-\zeta_{3}\right) & \text{if }x\geq x_{3}\left(t\right)+\zeta_{1,\kappa}
\end{matrix}\right.,
\]
where 
\begin{itemize}
\item $\kappa\in\left(0,\delta\right]$ is fixed so small that, for all
$t\geq0$, $\xi_{1,\kappa}+c_{2}^{\delta}t<x_{2}\left(t\right)+\zeta_{1,\kappa}$
(see \lemref{interface_1});
\item $\zeta_{3}$ is fixed so large that, for all $t\geq0$, $x_{2}\left(t\right)<x_{3}\left(t\right)$
(see \lemref{interface_3_small_c_1}). 
\end{itemize}

Thus, we have 
\[
\xi_{1,\kappa}+c_{2}^{\delta}t<x_{2}\left(t\right)+\zeta_{1,\kappa}<x_{3}\left(t\right)+\zeta_{1,\kappa}\quad\text{ for all }t\geq0.
\]

\subsubsection{Cleansing}

Just as in the previous case, we normalize and simplify the notations
so that $x_{2}\left(0\right)=0$ and the super-solution is defined
as follows:
\[
\overline{u_{\delta}}\left(t,x\right)=\left\{ \begin{matrix}\min\left(1,\overline{\varphi_{\delta}}\left(x-c_{2}^{\delta}t\right)\right) & \text{if }x<x_{2}\left(t\right)\\
\overline{w_{\delta}}\left(t,x\right) & \text{if }x\geq x_{2}\left(t\right)
\end{matrix}\right.,
\]
\[
\underline{v_{\delta}}\left(t,x\right)=\left\{ \begin{matrix}\max\left(0,\underline{\theta_{\delta}}\left(x-c_{2}^{\delta}t\right)\right) & \text{if }x<\xi_{1}+c_{2}^{\delta}t\\
\underline{\psi_{\delta}}\left(x-c_{2}^{\delta}t\right) & \text{if }x\in\left[\xi_{1}+c_{2}^{\delta}t,x_{3}\left(t\right)\right)\\
\underline{\omega_{\delta}}\left(x-c_{1}^{\delta}t\right) & \text{if }x\geq x_{3}\left(t\right)
\end{matrix}\right..
\]

\begin{figure}
\resizebox{\hsize}{!}{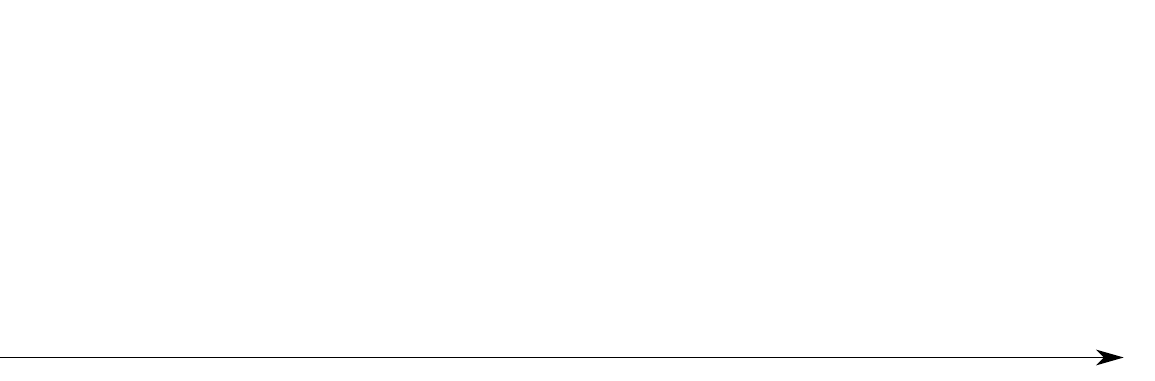}

\caption{Super-solution $\left(\overline{u_{\delta}},\underline{v_{\delta}}\right)$
for \thmref{Compactly_supported}}
\label{fig:1.5}
\end{figure}

\subsubsection{Verification of the differential inequalities}

Just as in the previous case, we verify that $\left(\overline{u_{\delta}},\underline{v_{\delta}}\right)$
is indeed a super-solution. The only new component to account for
is $\underline{\omega_{\delta}}$, which can be handled easily in view
of its definition.

\subsection{Construction of the sub-solutions for \thmref{Nonexistence}}
In this subsection, we prove \propref{Sub-solution_nonexistence}.

Let $c_{1}>2\sqrt{rd}$ and $c_{2}>c_{\textup{LLW}}$ such that $c_{1}\geq c_{2}$
and $c_{1}<f\left(c_{2}\right)$. Let $c>c_{2}$ so close to $c_{2}$
that $c_{1}<f\left(c\right)$ and let $\delta\in\left(0,\delta^{\star}\right)$ and
\[
\tilde{c}\in\left(\max\left(c_{1},f\left(c\right)-4\sqrt{a}\right),f\left(c\right)\right).
\]

\subsubsection{Sub-solution}\label{subsec:4.5.1}

The pair $\left(\underline{u_{\delta,\zeta,\kappa}},\overline{v_{\delta,\zeta}}\right)$
is defined by 
\[
\underline{u_{\delta,\zeta,\kappa}}\left(t,x\right)=\left\{ \begin{matrix}\underline{\alpha_{L}}\left(x\right) & \text{if }x<x_{0,\kappa}\left(t\right)\\
\underline{\chi_{c}}\left(x-ct-\zeta_{0}\right) & \text{if }x\in\left[x_{0,\kappa}\left(t\right),x_{1}\left(t\right)+\zeta_{0}\right)\\
\frac{\underline{\chi_{c}}\left(\zeta-\zeta_{0}\right)}{\underline{z_{c,\tilde{c},\delta}}\left(0,X_{z}-\zeta_{0}\right)}\underline{z_{c,\tilde{c},\delta}}\left(t,x-\zeta_{0}-\zeta\right) & \text{if }x\geq x_{1}\left(t\right)+\zeta_{0}
\end{matrix}\right.,
\]
\[
\overline{v_{\delta,\zeta}}\left(t,x\right)=\min\left(1,\textup{e}^{-\lambda_{v}\left(\tilde{c}\right)\left(x-y_{\delta,\zeta}-\tilde{c}t\right)}\right),
\]
where
\[
y_{\delta,\zeta}=\frac{\ln\delta-\ln\left(2a\right)}{\lambda_{v}\left(\tilde{c}\right)}+\zeta_{0}+\zeta
\]
and $\kappa\in\left(0,\min\left(\frac{1-a}{2},\frac{\delta}{2}\right)\right)$
and $\zeta>L$ are parameters.

Note that $\overline{v_{\delta,\zeta}}\left(t,x\right)\leq\frac{\delta}{2a}$
for all $x\geq\zeta_{0}+\zeta+\tilde{c}t$. 
By \lemref{interface_1_nonexistence}(3), we have $x_1(t) > \tilde{c}t + \zeta$ and thus 
$\overline{v_{\delta, \zeta}}\left(t,x\right)\leq\frac{\delta}{2a}$
for all $x\geq x_{1}\left(t\right)+\zeta_{0}$. Notice also that the
support of $x\mapsto\underline{u_{\delta,\zeta,\kappa}}\left(0,x\right)$
is included in $\left[0,L+\zeta+2R_{z}\right]$. 

\subsubsection{Cleansing}

Again, we normalize and simplify:
\[
\underline{u_{\delta,\zeta,\kappa}}\left(t,x\right)=\left\{ \begin{matrix}\underline{\alpha}\left(x\right) & \text{if }x<x_{0}\left(t\right)\\
\underline{\chi}\left(x-ct\right) & \text{if }x\in\left[x_{0}\left(t\right),x_{1}\left(t\right)\right)\\
\underline{z_{\delta}}\left(t,x-\zeta\right) & \text{if }x\geq x_{1}\left(t\right)
\end{matrix}\right.,
\]
\[
\overline{v_{\delta,\zeta}}\left(t,x\right)=\min\left(1,C_{\delta}\textup{e}^{-\lambda_{v}\left(\tilde{c}\right)\left(x-\zeta-\tilde{c}t\right)}\right).
\]

\begin{figure}
\resizebox{\hsize}{!}{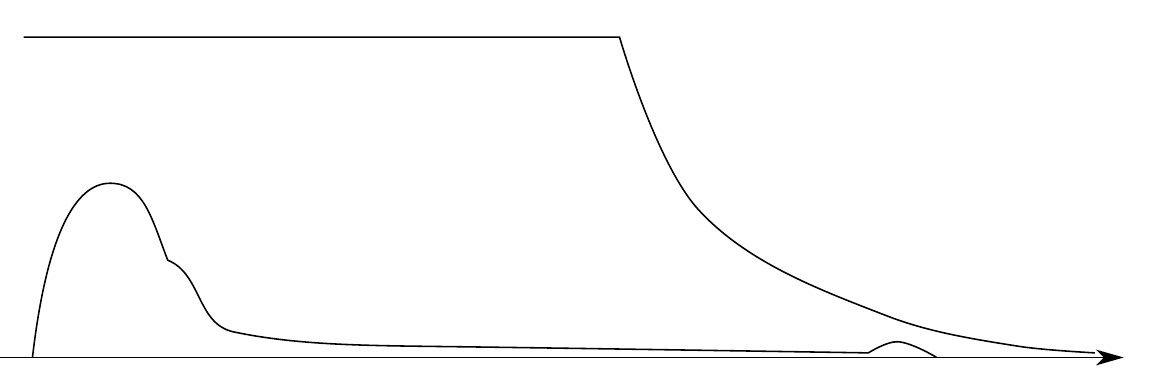}

\caption{Sub-solution $\left(\underline{u_{\delta,\zeta,\kappa}},\overline{v_{\delta,\zeta}}\right)$
for \thmref{Nonexistence}}
 \label{fig:1.4}
\end{figure}

\subsubsection{Verification of the differential inequalities}

Again, we verify that $\left(\underline{u_{\delta,\zeta,\kappa}},\overline{v_{\delta,\zeta}}\right)$
is a sub-solution. The only new components are $\underline{\alpha}$
and $\underline{z_{\delta}}$, the latter being handled with \lemref{z_sub-solution}.

\section{Discussion}

As a preliminary remark, let us point out that analogous results can
be obtained with the exact same method for the coexistence case $a<1$,
$b<1$. In that case the solutions are characterized by a profile connecting $(0,0)$ to $(0,1)$ to $\left(\frac{1-a}{1-ab}, \frac{1-b}{1-ab} \right)$.

\subsection{On the consequences of \thmref{Compactly_supported}, \thmref{Nonexistence} and \thmref{Generalized_terraces}\label{subsec:discussion_consequences}}
Consider here the Cauchy problem associated with \thmref{Compactly_supported}, namely the initial condition $u_{0}$ of the slower and stronger species has a support included in $(-\infty,0]$ while the initial condition $v_{0}$ of the faster and weaker species has compact support. Treating $2\sqrt{rd}$ as a parameter, \thmref{Compactly_supported} says that, while the species $v$ always spreads at speed $2\sqrt{rd}$ if it persists, the species $u$: 
\begin{itemize}
\item lags behind $v$ and spreads at speed $c_{\textup{LLW}}$ if $2\sqrt{rd}\geq f(c_{\textup{LLW}})$;
\item lags behind $v$ and spreads at speed $f^{-1}(2\sqrt{rd})>c_{\textup{LLW}}$ if $2<2\sqrt{rd}<f(c_{\textup{LLW}})$;
\item drives $v$ to extinction and spreads at speed $2$ if $2\sqrt{rd}< 2$.
\end{itemize}

In general, it is unclear whether $c_{\textup{LLW}}=2\sqrt{1-a}$ or not. Hence
the condition $2\sqrt{rd}\geq f\left(c_{\textup{LLW}}\right)$ might be difficult
to check in practice. However, since 
\[
\max_{c\in\left[2\sqrt{1-a},2\right]}f\left(c\right)=f(2\sqrt{1-a})=2\left(\sqrt{1-a}+\sqrt{a}\right),
\]
the condition $\sqrt{rd}>\sqrt{1-a}+\sqrt{a}$ always implies $2\sqrt{rd}>f\left(c_{\textup{LLW}}\right)$
and consequently always implies that $u$ invades at speed $c_{\textup{LLW}}$.
In particular, the maximum of $a\mapsto\sqrt{1-a}+\sqrt{a}$ in $\left(0,1\right)$
being $\sqrt{2}$, if $rd>2$, then $u$ invades at speed $c_{\textup{LLW}}$
independently of the value of $a$ and $b$. In ecological terms,
if $v$ is a sufficiently fast invader, then it decelerates optimally
any stronger and slower competitor. 

Applied to a pair $\left(u_{0},0\right)$, the nonexistence result
reduces to a well-known property of the KPP equation satisfied by
$u$ in isolation: all solutions spread at least at speed $2$. 

In view of \figref{Set_of_speeds}, it is tempting to refer to the
pair of speeds 
\[
\left(c_{2}^{\star},c_{1}^{\star}\right)=\left(\max\left(c_{\textup{LLW}},f^{-1}\left(2\sqrt{rd}\right)\right),2\sqrt{rd}\right)
\]
as a \textquotedblleft minimal pair\textquotedblright{} and to the corresponding
terrace as a \textquotedblleft minimal terrace\textquotedblright. But in our opinion,
such a terminology would be misleading. Indeed,
a very natural conjecture in view of the KPP literature is that the
propagating terraces attract initial data with appropriate exponential
decays ($\lambda_{v}\left(c_{1}\right)$ for $v_{0}$ and $\Lambda\left(c_{2},c_{1}\right)$
for $u_{0}$). Assume this conjecture is true indeed, assume $2<2\sqrt{rd}<f\left(c_{\textup{LLW}}\right)$
and fix a compactly supported or Heavyside-like $u_{0}$. Then decreasing
the decay of $v_{0}$ will increase the speed of $v$ but
decrease that of $u$ (with the obvious convention that a compactly
supported $v_{0}$ has an infinite decay). 

More generally, this paper presents several results that are complementary
to that of Lewis,
Li and Weinberger, with several surprising consequences. It shows
that $c_{\textup{LLW}}$ is not always the relevant speed when predicting the
speed of the invasion of $u$ in the territory of $v$. The initial
spatial distribution of $v$ has to be taken into account and in particular,
it can be inappropriate to approximate a very large territory by an
unbounded territory. Also, even if $c_{\textup{LLW}}$ is linearly determined
and therefore only depends on $a$, the speed of $u$ might still
depend on $rd$. 

It would be interesting to verify the existence of nonlocally pulled fronts
in real biological invasions. Indeed, at first glance, our result
might very well be described by ecological modelers as a strong case
against diffusion equations: dispersal operators preserving compact
supports, like the nonlinear diffusion of the porous form, $\partial_{t}u-\Delta\left(u^{m}\right)$,
should never lead to such a result.
Actually, Du and Wu \cite{Du_Wu_2018} studied the analogous problem in a model with 
free boundaries, first considered by Guo and Wu \cite{Guo_Wu_2015}, and showed
under appropriate assumptions on the initial conditions that the spreading speed of
the slower species is uniquely determined by the semi-wave system whose
solutions were classified by Du, Wang and Zhou \cite{Du_Wang_Zhou_2017}.
Therefore, in such a model, the second speed is never enhanced.
Let us also point out here that Li \cite{Li_2018} very recently addressed 
similar questions in the framework of integro-difference systems and did not find nonlocal pulling.

In 2014, Holzer and Scheel \cite{Holzer_Scheel} considered a partially decoupled two-species system,
which contains as a particular case the system (\ref{eq:competition_diffusion_system}) when $b=0$.
They found a sufficient condition for nonlocal pulling which is consistent
with our necessary and sufficient condition $c_{\textup{nlp}}>c_{\textup{LLW}}$.
Although we did not know their work at the time of writing of this paper, we discovered it
during the reviewing process and acknowledge that they were the first to uncover the phenomenon
of nonlocally pulled fronts. Interestingly, the same year, a more applied paper on 
horizontally transmitted hitchhiking traits, by Venegas, Allen and Evans \cite{Venegas_Ortiz_}, 
suggested a speed formula that is consistent with our expression 
for $c_{\textup{nlp}}$; yet a rigorous proof was missing. 

To the best of our knowledge, the present paper is the first one to investigate general
propagating terraces for reaction--diffusion systems with unstable intermediate steady states.
By showing that multidimensional manifolds of terraces do exist, it opens new interesting
research directions.

\subsection{On the boundary of the set of admissible pairs of speeds}

In the present paper, the question of existence at the boundary of
the set of admissible pairs is not settled. It is in fact more subtle
than expected.

Assuming only $2\sqrt{rd}>2$, this boundary is naturally partitioned
as $V\cup G\cup H\cup D$, where 
\[
V=\left\{ c_{\textup{LLW}}\right\} \times\left(\max\left(2\sqrt{rd},f\left(c_{\textup{LLW}}\right)\right),+\infty\right),
\]
\[
G=\left\{ \left(c,f\left(c\right)\right)\ |\ c\in\left[c_{\textup{LLW}},\max\left(c_{\textup{LLW}},f^{-1}\left(2\sqrt{rd}\right)\right)\right)\right\} ,
\]
\[
H=\left[\max\left(c_{\textup{LLW}},f^{-1}\left(2\sqrt{rd}\right)\right),2\sqrt{rd}\right)\times\left\{ 2\sqrt{rd}\right\} ,
\]
\[
D=\left\{ \left(c,c\right)\ |\ c\geq2\sqrt{rd}\right\} ,
\]
 and where $G$ is possibly empty whereas $V$, $H$ and $D$ are
always nonempty. 

Points on $V\cup G$ should correspond to pairs $\left(u_{0},v_{0}\right)$
with $u_{0}$ supported in a left half-line and $v_{0}$ exponentially
decaying. Using both \thmref{Generalized_terraces} and \thmref{Nonexistence} 
as well as a limiting argument and
the comparison principle, it is possible to obtain the existence of
such a terrace with a pair $\left(u_{0},v_{0}\right)$ of this form. 

However, on $H$, which corresponds naively to pairs $\left(u_{0},v_{0}\right)$
with compactly supported $v_{0}$ and exponentially decaying $u_{0}$,
such a construction seems to be impossible. A different, likely more
delicate, argument is needed to deal with $H$. Still, we believe
existence holds there.

On the contrary, on $D$, the question remains completely open.
Indeed, on $D$, propagating terraces reduce to non-monotonic traveling waves
connecting $\left(0,0\right)$ to $\left(1,0\right)$ with an intermediate bump
of $v$. To the best of our knowledge, such traveling waves have never been 
studied. Even though it might be tempting to conjecture their nonexistence, 
we prefer to remain cautious here. 

\subsection{On the proofs}

In the proof of \thmref{Compactly_supported}, the approximated speed
$c_{1}^{\delta}$ is necessary in the following sense: it is impossible to construct another
$\underline{v}$ spreading this time exactly at speed $2\sqrt{rd}$.
This is an immediate consequence of the Bramson shift for the KPP
equation \cite{Bramson_1983}: the level sets of the solution of the
KPP equation satisfied by $v$ in isolation with compactly supported
initial data are asymptotically located at $2\sqrt{rd}t+s_{\textup{Bramson}}\left(t\right)$,
with $s_{\textup{Bramson}}\left(t\right)=-\frac{3}{2\sqrt{r}}\log t+o\left(\log t\right)$.
By comparison, it is then easily verified that for the solution $\left(u,v\right)$
of our competitive system, there exists a time shift $s\left(t\right)\leq s_{\textup{Bramson}}\left(t\right)$
such that the level sets of $v$ in our problem are located at $2\sqrt{rd}t+s\left(t\right)$.

Similarly, in the proofs of \thmref{Compactly_supported} and of 
\thmref{Generalized_terraces}, we believe that
the approximated speed $c_{2}^{\delta}$ are needed to account for a time
shift $\tilde{s}\left(t\right)\neq0$ describing the position of the
level sets of $u$. The characterization of this shift is completely
open; the only hint provided by our approach is that $\tilde{s}\left(t\right)$
is asymptotically nonnegative (contrarily to $s\left(t\right)$ and
$s_{\textup{Bramson}}\left(t\right)$). 

\section*{Acknowledgments}

This research was conducted at the Ohio State University and co-funded
by the Mathematics Research Institute of the Ohio State University
and the ANR NONLOCAL project. The authors thank deeply both institutions.
The authors also thank M. Holzer for very interesting discussions and
the anonymous referees for their work.

\appendix
\section{On competition\textendash diffusion traveling waves connecting $\left(1,0\right)$ to $\left(0,1\right)$}
In this appendix, the parameters $\left(d,r,a,b\right)$ are not fixed anymore and can vary. We define 
\[
\Pi=\left(0,+\infty\right)^{2}\times\left(0,1\right)\times\left(1,+\infty\right).
\]
For all $\left(d,r,a,b\right)\in\Pi$, $c_{\textup{LLW}}^{d,r,a,b}$ denotes
the associated spreading speed of the system (\ref{eq:related_competition_diffusion_system}).
Subsequently, we define
\[
E=\left\{ \left(c,d,r,a,b\right)\in\left(0,+\infty\right)\times\Pi\ |\ c\geq c_{\textup{LLW}}^{d,r,a,b}\right\}.
\]
\subsection{Exact exponential decays}
For all $P=\left(c,d,r,a,b\right)\in E$, we define
\[
\lambda_{1,P}^{-\infty}=\frac{\sqrt{c^{2}+4}-c}{2},
\]
\[
\lambda_{2,P}^{-\infty}=\frac{\sqrt{c^{2}+4rd\left(b-1\right)}-c}{2d},
\]
\[
\lambda_{1,P}^{+\infty}=\frac{c+\sqrt{c^{2}+4rd}}{2d},
\]
\[
\lambda_{2,P}^{+\infty}=\frac{c+\sqrt{c^{2}-4\left(1-a\right)}}{2},
\]
\[
\lambda_{3,P}^{+\infty}=\frac{c-\sqrt{c^{2}-4\left(1-a\right)}}{2}.
\]
\begin{lem}
\label{lem:Exponential_decays_profiles} Let $P=\left(c,d,r,a,b\right)\in E$
and $\left(\varphi,\psi\right)$ be a profile of traveling wave solution
of (\ref{eq:competition_diffusion_system}) with speed $c$. Define $R_{P}^{-\infty}:\lambda\mapsto\lambda^{2}+c\lambda-1$
and $R_{P}^{+\infty}:\lambda\mapsto d\lambda^{2}-c\lambda-r$.

Then the asymptotic behaviors of $\left(\varphi,\psi\right)$ are
as follows.
\begin{enumerate}
\item There exist $A>0$ and $B>0$ such that, as $\xi\to-\infty$:
\begin{enumerate}
\item if $\lambda_{2,P}^{-\infty}>\lambda_{1,P}^{-\infty}$, then 
\[
\left\{ \begin{matrix}\varphi\left(\xi\right)=1-A\textup{e}^{\lambda_{1,P}^{-\infty}\xi}+\text{h.o.t.}\\
\psi\left(\xi\right)=B\textup{e}^{\lambda_{2,P}^{-\infty}\xi}+\text{h.o.t.}
\end{matrix}\right.;
\]
\item if $\lambda_{2,P}^{-\infty}<\lambda_{1,P}^{-\infty}$, then $R_{P}^{-\infty}\left(\lambda_{2,P}^{-\infty}\right)<0$
and 
\[
\left\{ \begin{matrix}\varphi\left(\xi\right)=1+\frac{a}{R_{P}^{-\infty}\left(\lambda_{2,P}^{-\infty}\right)}B\textup{e}^{\lambda_{2,P}^{-\infty}\xi}+\text{h.o.t.}\\
\psi\left(\xi\right)=B\textup{e}^{\lambda_{2,P}^{-\infty}\xi}+\text{h.o.t.}
\end{matrix}\right.;
\]
\item if $\lambda_{2,P}^{-\infty}=\lambda_{1,P}^{-\infty}$, then $c+2\lambda_{2,P}^{-\infty}=\sqrt{c^{2}+4}>0$
and 
\[
\left\{ \begin{matrix}\varphi\left(\xi\right)=1-B\left|\xi\right|\textup{e}^{\lambda_{2,P}^{-\infty}\xi}+\text{h.o.t.}\\
\psi\left(\xi\right)=\frac{c+2\lambda_{2,P}^{-\infty}}{a}B\textup{e}^{\lambda_{2,P}^{-\infty}\xi}+\text{h.o.t.}
\end{matrix}\right..
\]
\end{enumerate}
\item There exist $A\in\mathbb{R}$, $B\in\mathbb{R}$ and $C\geq0$ such
that $B>0$ if $C=0$ and, as $\xi\to+\infty$:
\begin{enumerate}
\item if $c>2\sqrt{1-a}$, 
\begin{enumerate}
\item if $\lambda_{1,P}^{+\infty}<\lambda_{3,P}^{+\infty}$, then $A>0$
and
\[
\left\{ \begin{matrix}\varphi\left(\xi\right)=B\textup{e}^{-\lambda_{2,P}^{+\infty}\xi}+C\textup{e}^{-\lambda_{3,P}^{+\infty}\xi}+\text{h.o.t.}\\
\psi\left(\xi\right)=1-A\textup{e}^{-\lambda_{1,P}^{+\infty}\xi}+\text{h.o.t.}
\end{matrix}\right.;
\]
\item if $\lambda_{1,P}^{+\infty}=\lambda_{3,P}^{+\infty}$, then $A>0$
if $C=0$ and
\[
\left\{ \begin{matrix}\varphi\left(\xi\right)=\frac{2d\lambda_{1,P}^{+\infty}-c}{a}C\textup{e}^{-\lambda_{1,P}^{+\infty}\xi}+B\textup{e}^{-\lambda_{2,P}^{+\infty}\xi}+\text{h.o.t.}\\
\psi\left(\xi\right)=1-\left(A+C\xi\right)\textup{e}^{-\lambda_{1,P}^{+\infty}\xi}+\text{h.o.t.}
\end{matrix}\right.;
\]
\item if $\lambda_{1,P}^{+\infty}\in\left(\lambda_{3,P}^{+\infty},\lambda_{2,P}^{+\infty}\right)$,
then $R_{P}^{+\infty}\left(\lambda_{3,P}^{+\infty}\right)<0$, $A>0$
if $C=0$ and
\[
\left\{ \begin{matrix}\varphi\left(\xi\right)=B\textup{e}^{-\lambda_{2,P}^{+\infty}\xi}+C\textup{e}^{-\lambda_{3,P}^{+\infty}\xi}+\text{h.o.t.}\\
\psi\left(\xi\right)=1-A\textup{e}^{-\lambda_{1,P}^{+\infty}\xi}+\frac{rb}{R_{P}^{+\infty}\left(\lambda_{3,P}^{+\infty}\right)}C\textup{e}^{-\lambda_{3,P}^{+\infty}\xi}+\text{h.o.t.}
\end{matrix}\right.;
\]
\item if $\lambda_{1,P}^{+\infty}=\lambda_{2,P}^{+\infty}$, then $R_{P}^{+\infty}\left(\lambda_{3,P}^{+\infty}\right)<0$
and 
\[
\left\{ \begin{matrix}\varphi\left(\xi\right)=\frac{2d\lambda_{1,P}^{+\infty}-c}{a}B\textup{e}^{-\lambda_{1,P}^{+\infty}\xi}+C\textup{e}^{-\lambda_{3,P}^{+\infty}\xi}+\text{h.o.t.}\\
\psi\left(\xi\right)=1-B\xi\textup{e}^{-\lambda_{1,P}^{+\infty}\xi}+\frac{rb}{R_{P}^{+\infty}\left(\lambda_{3,P}^{+\infty}\right)}C\textup{e}^{-\lambda_{3,P}^{+\infty}\xi}+\text{h.o.t.}
\end{matrix}\right.;
\]
\item if $\lambda_{1,P}^{+\infty}>\lambda_{2,P}^{+\infty}$, then $R_{P}^{+\infty}\left(\lambda_{2,P}^{+\infty}\right)<0$,
$R_{P}^{+\infty}\left(\lambda_{3,P}^{+\infty}\right)<0$ and
\[
\left\{ \begin{matrix}\varphi\left(\xi\right)=B\textup{e}^{-\lambda_{2,P}^{+\infty}\xi}+C\textup{e}^{-\lambda_{3,P}^{+\infty}\xi}+\text{h.o.t.}\\
\psi\left(\xi\right)=1+\frac{rb}{R_{P}^{+\infty}\left(\lambda_{2,P}^{+\infty}\right)}B\textup{e}^{-\lambda_{2,P}^{+\infty}\xi}+\frac{rb}{R_{P}^{+\infty}\left(\lambda_{3,P}^{+\infty}\right)}C\textup{e}^{-\lambda_{3,P}^{+\infty}\xi}+\text{h.o.t.}
\end{matrix}\right.;
\]
\end{enumerate}
\item if $c=2\sqrt{1-a}$,
\begin{enumerate}
\item if $\lambda_{1,P}^{+\infty}<\lambda_{2,P}^{+\infty}$, then $A>0$
and 
\[
\left\{ \begin{matrix}\varphi\left(\xi\right)=\left(B+C\xi\right)\textup{e}^{-\lambda_{2,P}^{+\infty}\xi}+\text{h.o.t.}\\
\psi\left(\xi\right)=1-A\textup{e}^{-\lambda_{1,P}^{+\infty}\xi}+\text{h.o.t.}
\end{matrix}\right..
\]
\item if $\lambda_{1,P}^{+\infty}=\lambda_{2,P}^{+\infty}$, then $2d\lambda_{1,P}^{+\infty}-c=\sqrt{c^{2}+4rd}>0$
and
\[
\left\{ \begin{matrix}\varphi\left(\xi\right)=\frac{2d\lambda_{1,P}^{+\infty}-c}{a}\left(B+C\xi\right)\textup{e}^{-\lambda_{1,P}^{+\infty}\xi}+\text{h.o.t.}\\
\psi\left(\xi\right)=1-\left(B+\frac{1}{2}C\xi\right)\xi\textup{e}^{-\lambda_{1,P}^{+\infty}\xi}+\text{h.o.t.}
\end{matrix}\right.;
\]
\item if $\lambda_{1,P}^{+\infty}>\lambda_{2,P}^{+\infty}$, then $R_{P}^{+\infty}\left(\lambda_{2,P}^{+\infty}\right)<0$
and 
\[
\left\{ \begin{matrix}\varphi\left(\xi\right)=\left(B+C\xi\right)\textup{e}^{-\lambda_{2,P}^{+\infty}\xi}+\text{h.o.t.}\\
\psi\left(\xi\right)=1+\frac{rb}{R_{P}^{+\infty}\left(\lambda_{2,P}^{+\infty}\right)}\left(B+C\xi\right)\textup{e}^{-\lambda_{2,P}^{+\infty}\xi}+\text{h.o.t.}
\end{matrix}\right.;
\]
\end{enumerate}
\end{enumerate}
\end{enumerate}
\end{lem}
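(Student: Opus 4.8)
The plan is to treat the two endpoints $\xi \to -\infty$ and $\xi \to +\infty$ separately, and in each case to linearize the system around the relevant equilibrium, compute the characteristic roots, and identify which exponential modes can appear in a bounded, nonnegative, monotone profile. First I would recall that a profile $(\varphi,\psi)$ satisfies the ODE system $\varphi'' + c\varphi' + \varphi(1-\varphi-a\psi)=0$ and $d\psi'' + c\psi' + r\psi(1-\psi-b\varphi)=0$, with $(\varphi,\psi)(-\infty)=(1,0)$ and $(\varphi,\psi)(+\infty)=(0,1)$. Near $\xi=-\infty$, set $\varphi = 1-\tilde\varphi$ with $\tilde\varphi\to 0^+$ and $\psi\to 0^+$; the linearization decouples partially: $\psi$ satisfies $d\psi'' + c\psi' + r(1-b)\psi \approx 0$ to leading order (since $\varphi\approx 1$), whose positive-decaying root (as $\xi\to-\infty$, so we need the \emph{positive} exponent) is $\lambda_{2,P}^{-\infty}$, coming from $R_P^{+\infty}$-type reasoning; meanwhile $\tilde\varphi$ satisfies $\tilde\varphi'' + c\tilde\varphi' - \tilde\varphi \approx a\psi + \text{h.o.t.}$, a forced linear equation whose homogeneous decaying mode has exponent $\lambda_{1,P}^{-\infty}$ (the positive root of $R_P^{-\infty}(\lambda)=\lambda^2+c\lambda-1$). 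The three subcases $\lambda_{2,P}^{-\infty}$ vs. $\lambda_{1,P}^{-\infty}$ are then the standard resonance analysis for a linear forced ODE: when the forcing exponent differs from the homogeneous one the particular solution carries the forcing exponent (with coefficient $a/R_P^{-\infty}(\lambda_{2,P}^{-\infty})$, noting $R_P^{-\infty}(\lambda_{2,P}^{-\infty})<0$ precisely when $\lambda_{2,P}^{-\infty}<\lambda_{1,P}^{-\infty}$), and when they coincide a $|\xi|$ factor appears. The coefficient signs ($A>0$, $B>0$) follow from positivity and monotonicity of the profile together with the strong maximum principle.

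For $\xi\to+\infty$ I would do the symmetric computation around $(0,1)$: set $\psi = 1-\tilde\psi$, so $\varphi\to 0^+$ and $\tilde\psi\to 0^+$. Now $\varphi$ satisfies $\varphi'' + c\varphi' + (1-a)\varphi \approx 0$, whose characteristic roots are $\lambda_{2,P}^{+\infty}$ and $\lambda_{3,P}^{+\infty}$ (real and distinct iff $c>2\sqrt{1-a}$, equal iff $c=2\sqrt{1-a}$) — this is exactly why the lemma bifurcates into the cases $c>2\sqrt{1-a}$ and $c=2\sqrt{1-a}$, the latter producing the $(B+C\xi)e^{-\lambda_{2,P}^{+\infty}\xi}$ form. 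Then $\tilde\psi$ satisfies $d\tilde\psi'' + c\tilde\psi' - r\tilde\psi \approx -rb\varphi$, forced by the $\varphi$-modes; its homogeneous decaying exponent is $\lambda_{1,P}^{+\infty}$ (positive root of $d\lambda^2-c\lambda-r=0$), and again one runs through the resonance subcases according to whether $\lambda_{1,P}^{+\infty}$ is less than, equal to, between, or greater than $\lambda_{3,P}^{+\infty}$ and $\lambda_{2,P}^{+\infty}$. The forced-mode coefficients are $rb/R_P^{+\infty}(\lambda_{j,P}^{+\infty})$ for $j=2,3$, with the stated sign conditions $R_P^{+\infty}(\lambda_{j,P}^{+\infty})<0$ holding exactly in the indicated regimes (this is a matter of where $\lambda_{j,P}^{+\infty}$ sits relative to the roots of $R_P^{+\infty}$); when $\lambda_{1,P}^{+\infty}$ resonates with one of the $\varphi$-exponents the extra $\xi$ or $\xi^2$ factors appear. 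The presence of two $\varphi$-exponents $\lambda_{2,P}^{+\infty}>\lambda_{3,P}^{+\infty}$ is what forces the two-parameter family $B,C$ in $\varphi$, with the constraint "$B>0$ if $C=0$" reflecting that the slowest-decaying mode must dominate and be positive unless it is absent.

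The technical engine throughout is a standard but careful asymptotic-expansion / center-manifold argument: one shows the linearized operator at each equilibrium is hyperbolic in the relevant half-line direction, so the profile approaches the equilibrium along a finite-dimensional stable manifold, and then matches coefficients order by order. I would invoke the classical results on asymptotics of solutions of linear ODE systems with asymptotically constant coefficients (Coddington–Levinson type theorems), applied after writing the profile system as a first-order system and checking the dichotomy. The main obstacle — and the part requiring genuine care rather than bookkeeping — is the \emph{resonant} cases, where the forcing exponent of the slower equation equals a homogeneous exponent: there one must verify that the logarithmic (polynomial-in-$\xi$) correction genuinely appears with the claimed coefficient and sign, rather than being killed by a cancellation, and one must rule out the degenerate possibility that the leading coefficient vanishes (which is where monotonicity and the strong maximum principle, plus the fact that $c\ge c_{LLW}^{d,r,a,b}$ forbids oscillation, do the work). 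The sign determinations $R_P^{\pm\infty}(\lambda_{j,P}^{\pm\infty})<0$ in each subcase are elementary quadratic inequalities but must be checked one by one. Since all of this is routine modulo the resonance care, and since (as the paper notes) only the coarse consequence recorded in \lemref{Exponential_estimates_super-critical_waves} is actually needed downstream, I would present the full case analysis compactly, proving one representative subcase at each endpoint in detail and indicating that the remaining subcases follow identically.
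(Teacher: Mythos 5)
Your proposal follows essentially the same route as the paper, which does not write the argument out but delegates exactly this standard linearization/phase-plane analysis (Coddington--Levinson-type asymptotics at the two hyperbolic rest states in the travelling-wave ODE system, resonance bookkeeping, and sign determination for the leading coefficients) to the cited works of Kan-on and Morita--Tachibana. Two small cautions: the forcing in the equation for $\tilde\varphi=1-\varphi$ near $-\infty$ is $-a\psi$ rather than $a\psi$ (your final coefficient $\frac{a}{R_{P}^{-\infty}\left(\lambda_{2,P}^{-\infty}\right)}$ for $\varphi$ is nevertheless the correct one), and the sign conditions on $A$, $B$, $C$ should be extracted from positivity and $0\leq\varphi,\psi\leq1$ together with the strong maximum principle alone, not from monotonicity, since component-wise monotonicity is proved in \propref{monotonicity_of_the_profiles} as a consequence of this lemma and invoking it here would be circular.
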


\begin{proof}
This result follows from a standard yet lengthy phase-plane analysis.
The detailed proof can be found for instance in Kan-on
\cite{Kan_On_1997} or in Morita\textendash Tachibana \cite{Morita_Tachibana}. 
\end{proof}

Compiling these estimates, we obtain the following two corollaries.

\begin{cor}
\label{cor:decays_near_unstable_state} Let $P=\left(c,d,r,a,b\right)\in E$
and $\left(\varphi,\psi\right)$ be a profile of traveling wave solution
of the corresponding system with speed $c$. Then there exist $i\in\left\{ 2,3\right\} $,
$C>0$, $D>0$ and $\left(i_{+},j_{+}\right)\in\left\{ 0,1\right\} \times\left\{ 0,1,2\right\} $
such that, as $\xi\to+\infty$,
\[
\left\{ \begin{matrix}\varphi\left(\xi\right)=C\xi^{i_{+}}\textup{e}^{-\lambda_{i,P}^{+\infty}\xi}+\text{h.o.t.}\\
\psi\left(\xi\right)=1-D\xi^{j_{+}}\textup{e}^{-\min\left(\lambda_{1,P}^{+\infty},\lambda_{i,P}^{+\infty}\right)\xi}+\text{h.o.t.}
\end{matrix}\right..
\]
\end{cor}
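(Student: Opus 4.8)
The plan is to derive the corollary directly from \lemref{Exponential_decays_profiles}(2) by a case-by-case bookkeeping: in each of the cases listed there, $\varphi$ behaves at $+\infty$ like a single exponential (times a power of $\xi$), and so does $\psi-1$, and all that is needed is to identify which exponential dominates and to check the signs of the leading coefficients. (Note that the symbol $C$ is used both in \lemref{Exponential_decays_profiles} and in the corollary; below, $C$ always denotes the coefficient supplied by the lemma, and we shall \emph{derive} positivity of the leading coefficient of $\varphi$, which is the corollary's $C$.)

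First I would record the two elementary orderings that organize the discussion. One is that $\lambda_{3,P}^{+\infty}\leq\lambda_{2,P}^{+\infty}$ always, with equality exactly when $c=2\sqrt{1-a}$; thus among the two rates $\lambda_{2,P}^{+\infty}$ and $\lambda_{3,P}^{+\infty}$ that can appear in the expansion of $\varphi$, the one with rate $\lambda_{3,P}^{+\infty}$ decays no faster. The other is that $2d\lambda_{1,P}^{+\infty}-c=\sqrt{c^{2}+4rd}>0$, so that in the degenerate subcases where $\lambda_{1,P}^{+\infty}$ coincides with $\lambda_{2,P}^{+\infty}$ or with $\lambda_{3,P}^{+\infty}$ the prefactor $\frac{2d\lambda_{1,P}^{+\infty}-c}{a}$ occurring in \lemref{Exponential_decays_profiles} is strictly positive.

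Then I would walk through the subcases (2)(a)(i)--(v) and (2)(b)(i)--(iii). Recall from the lemma that $C\geq0$ and that $B>0$ whenever $C=0$. If $C>0$, the $C$-term -- namely $C e^{-\lambda_{3,P}^{+\infty}\xi}$ when $c>2\sqrt{1-a}$, or $C\xi\, e^{-\lambda_{2,P}^{+\infty}\xi}$ when $c=2\sqrt{1-a}$, possibly multiplied by the positive prefactor above -- is the slowest-decaying contribution to $\varphi$; it therefore gives the leading term, with a strictly positive coefficient, and one sets $i=3$ (or $i=2$ in the merged case $\lambda_{2,P}^{+\infty}=\lambda_{3,P}^{+\infty}$) and $i_{+}\in\{0,1\}$. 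If $C=0$, then $B>0$ and $\varphi\sim(\text{positive})\,e^{-\lambda_{2,P}^{+\infty}\xi}$, so one sets $i=2$, $i_{+}=0$. In either case $i\in\{2,3\}$ and $i_{+}\in\{0,1\}$. For $\psi$, in each subcase $\psi-1$ is a combination of $e^{-\lambda_{1,P}^{+\infty}\xi}$ and $e^{-\lambda_{i,P}^{+\infty}\xi}$ up to polynomial factors $1,\xi,\xi^{2}$, so its dominant term decays at rate $\min(\lambda_{1,P}^{+\infty},\lambda_{i,P}^{+\infty})$ with some power $j_{+}\in\{0,1,2\}$; its coefficient is either $-A$ with $A>0$ (which is guaranteed precisely in those subcases where the $A$-term is the one that dominates), or of the form $\frac{rb}{R_{P}^{+\infty}(\lambda)}\times(\text{positive})$ with $R_{P}^{+\infty}(\lambda)<0$ at the relevant root $\lambda\in\{\lambda_{2,P}^{+\infty},\lambda_{3,P}^{+\infty}\}$ flagged by the lemma, hence strictly negative, yielding $D>0$. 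Assembling these across all subcases gives the stated expansion.

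The only genuinely delicate point -- and the step I expect to require the most care -- is the sign and dominance bookkeeping in the handful of degenerate subcases where two of the $\lambda_{\cdot,P}^{+\infty}$ coincide: there one must correctly decide which term actually dominates (for instance $C\xi\, e^{-\lambda_{3,P}^{+\infty}\xi}$ versus $A e^{-\lambda_{1,P}^{+\infty}\xi}$ when $\lambda_{1,P}^{+\infty}=\lambda_{3,P}^{+\infty}$) and pin down the sign of its coefficient, using $\sqrt{c^{2}+4rd}>0$ and the negativity of $R_{P}^{+\infty}$ at the roots singled out by \lemref{Exponential_decays_profiles}. Everything else is a mechanical transcription of that lemma.
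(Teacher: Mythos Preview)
Your proposal is correct and takes essentially the same approach as the paper, which simply states that the corollary is obtained by ``compiling these estimates'' from \lemref{Exponential_decays_profiles} without giving any further detail. Your case-by-case bookkeeping is precisely the compilation the paper alludes to, carried out explicitly and carefully.
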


\begin{cor}
\label{cor:decays_near_stable_state}Let $P=\left(c,d,r,a,b\right)\in E$
and $\left(\varphi,\psi\right)$ be a profile of traveling wave solution
of the corresponding system with speed $c$. Let $i_{-}=2-\#\left\{ \lambda_{1,P}^{-\infty},\lambda_{2,P}^{-\infty}\right\} $. 

Then there exist $A>0$ and $B>0$ such that, as $\xi\to-\infty$,
\[
\left\{ \begin{matrix}\varphi\left(\xi\right)=1-A\left|\xi\right|^{i_{-}}\textup{e}^{\min\left(\lambda_{1,P}^{-\infty},\lambda_{2,P}^{-\infty}\right)\xi}+\text{h.o.t.}\\
\psi\left(\xi\right)=B\textup{e}^{\lambda_{2,P}^{-\infty}\xi}+\text{h.o.t.}
\end{matrix}\right..
\]
\end{cor}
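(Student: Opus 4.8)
The plan is to obtain Corollary~\ref{cor:decays_near_stable_state} as an immediate consequence of part~(1) of Lemma~\ref{lem:Exponential_decays_profiles}: I would go through its three exhaustive subcases (a), (b), (c) and check that, in each, the asymptotics of $(\varphi,\psi)$ at $-\infty$ supplied there can be rewritten in the single form claimed, with two strictly positive constants $A$ and $B$. The only points that require more than a glance are the sign verifications for the coefficient of the leading term of $1-\varphi$.

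First I would record the bookkeeping. Since $i_{-}=2-\#\{\lambda_{1,P}^{-\infty},\lambda_{2,P}^{-\infty}\}$, one has $i_{-}=0$ exactly when $\lambda_{1,P}^{-\infty}\neq\lambda_{2,P}^{-\infty}$ (subcases (a) and (b)) and $i_{-}=1$ when $\lambda_{1,P}^{-\infty}=\lambda_{2,P}^{-\infty}$ (subcase (c)); moreover, in each subcase the exponent in the leading term of $1-\varphi$ given by the lemma is precisely $\min(\lambda_{1,P}^{-\infty},\lambda_{2,P}^{-\infty})$, while the leading term of $\psi$ is of the form (positive constant)$\cdot e^{\lambda_{2,P}^{-\infty}\xi}$. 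It then suffices to extract the correct positive constant. In subcase (a) the lemma directly gives $\varphi(\xi)=1-Ae^{\lambda_{1,P}^{-\infty}\xi}+\text{h.o.t.}$ with $A>0$, which is the claim with $i_{-}=0$. In subcase (b) the lemma gives $\varphi(\xi)=1+\frac{a}{R_{P}^{-\infty}(\lambda_{2,P}^{-\infty})}Be^{\lambda_{2,P}^{-\infty}\xi}+\text{h.o.t.}$ together with the inequality $R_{P}^{-\infty}(\lambda_{2,P}^{-\infty})<0$; since $a>0$, the quantity $-\frac{a}{R_{P}^{-\infty}(\lambda_{2,P}^{-\infty})}B$ is positive, so renaming it $A$ yields the claim with $i_{-}=0$ and $\min=\lambda_{2,P}^{-\infty}$. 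In subcase (c) the lemma gives $\varphi(\xi)=1-B|\xi|e^{\lambda_{2,P}^{-\infty}\xi}+\text{h.o.t.}$ and $\psi(\xi)=\frac{c+2\lambda_{2,P}^{-\infty}}{a}Be^{\lambda_{2,P}^{-\infty}\xi}+\text{h.o.t.}$ together with $c+2\lambda_{2,P}^{-\infty}=\sqrt{c^{2}+4}>0$; since $a>0$, both constants are positive after renaming, which is the claim with $i_{-}=1$.

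I do not expect a genuine obstacle here: the entire analytic content — the existence and form of the exponential decays near the stable state $(1,0)$ — is already packaged in Lemma~\ref{lem:Exponential_decays_profiles}(1), whose proof (a phase-plane analysis, with the references to Kan-on and Morita--Tachibana) has been recorded. The remaining work is the purely mechanical repackaging above, the only non-automatic steps being the two sign checks in subcases (b) and (c), each of which follows directly from an inequality already stated in the lemma. The same scheme, applied to part~(2) of Lemma~\ref{lem:Exponential_decays_profiles} instead, gives Corollary~\ref{cor:decays_near_unstable_state}.
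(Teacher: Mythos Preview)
Your proposal is correct and matches the paper's own approach: the paper simply states that this corollary (together with Corollary~\ref{cor:decays_near_unstable_state}) is obtained by ``compiling these estimates'' from Lemma~\ref{lem:Exponential_decays_profiles}, with no further argument given. Your case-by-case verification with the two sign checks is exactly the mechanical repackaging this entails.
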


\subsection{Component-wise monotonicity of the profiles}
Thanks to \corref{decays_near_unstable_state} and a sliding argument, we can show the component-wise monotonicity. 

\begin{prop}
\label{prop:monotonicity_of_the_profiles} Let $P=\left(c,d,r,a,b\right)\in E$. 
Let $\left(\varphi,\psi\right)\in\mathscr{C}^2\left(\mathbb{R},\left[0,1\right]^2\right)$ be a
profile of traveling wave solution of (\ref{eq:competition_diffusion_system}) with speed
$c$ connecting $\left(1,0\right)$ to $\left(0, 1\right)$. 

Then $\left(\varphi,\psi\right)$ is component-wise strictly monotonic, i.e. 
$$
\left(\varphi,\psi\right)(\xi_1)\succ \left(\varphi,\psi\right)(\xi_2) \quad \text{ whenever} \quad \xi_1 < \xi_2.
$$
\end{prop}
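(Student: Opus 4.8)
\subsection*{Proof proposal}

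The plan is to reduce to a cooperative system and then run a sliding argument using the precise exponential behavior of the profile established in the appendix. Setting $w=1-\psi$, the pair $\left(\varphi,w\right)\in\mathscr{C}^2\left(\mathbb{R},\mathbb{R}^2\right)$ is a traveling wave profile at speed $c$ of
\[
-\varphi''-c\varphi'=\varphi\left(1-a-\varphi+aw\right),\qquad -dw''-cw'=r\left(1-w\right)\left(b\varphi-w\right),
\]
connecting $\left(1,1\right)$ at $-\infty$ to $\left(0,0\right)$ at $+\infty$, and the cross-derivatives $a\varphi$ and $rb\left(1-w\right)$ are nonnegative as soon as $0<w<1$. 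Componentwise strict monotonicity of $\left(\varphi,\psi\right)$ for the competitive order $\prec$ is equivalent to $\varphi$ and $w$ being both strictly decreasing. \emph{Step~1 (a priori bounds).} I would first show $0<\varphi<1$ and $0<\psi<1$ on $\mathbb{R}$: positivity follows from the uniqueness theorem for the second-order ODE (a zero of $\varphi$, resp.\ $\psi$, would be an interior minimum with vanishing derivative, forcing $\varphi\equiv0$, resp.\ $\psi\equiv0$), and the upper bound $\varphi<1$ follows by evaluating the sign of $\varphi''$ at a would-be interior maximum $\geq1$, where $1-\varphi-a\psi\geq0$ gives $\varphi\leq1-a\psi<1$; likewise $\psi<1$. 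In particular the system for $\left(\varphi,w\right)$ is genuinely cooperative along the profile.

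\emph{Step~2 (tail behavior).} Since $\left(c,d,r,a,b\right)\in E$, \corref{decays_near_unstable_state} and \corref{decays_near_stable_state} give the exact leading-order expansions of $\varphi$ and $1-\psi=w$ near $+\infty$ and of $1-\varphi$ and $\psi$ near $-\infty$. In every case the leading term is a genuine (possibly polynomially corrected) exponential with a strictly positive coefficient, and differentiating shows that $\varphi$ and $w$ are strictly decreasing on $\left(-\infty,-R\right]\cup\left[R,+\infty\right)$ for some $R>0$. Consequently $\varphi\left(\cdot+\tau\right)\leq\varphi$ and $w\left(\cdot+\tau\right)\leq w$ automatically hold on $\mathbb{R}\setminus\left[-R-\tau,R\right]$ for every $\tau\geq0$, so only a fixed compact window ever matters.

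\emph{Step~3 (sliding).} Put $\tau^{\star}=\inf\left\{\tau>0\ :\ \varphi\left(\cdot+s\right)\leq\varphi\text{ and }w\left(\cdot+s\right)\leq w\text{ on }\mathbb{R}\text{ for all }s\geq\tau\right\}$; by Step~2 and compactness this is finite, and by continuity the inequalities hold at $\tau^{\star}$. Rephrased with $\psi=1-w$, this says $\left(\varphi,\psi\right)\left(\cdot+\tau^{\star}\right)\preceq\left(\varphi,\psi\right)$. Suppose $\tau^{\star}>0$. Either the ordering is strict at every point of $\mathbb{R}$ at $\tau=\tau^{\star}$, in which case strictness on the fixed compact window plus the tail control of Step~2 allows one to keep the ordering for all $s$ in a left neighborhood of $\tau^{\star}$, contradicting the definition of $\tau^{\star}$; or there is a finite $\xi_0$ at which one component of $\left(\varphi,\psi\right)\left(\cdot+\tau^{\star}\right)$ equals the corresponding component of $\left(\varphi,\psi\right)$. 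In the latter case, applying \thmref{Competitive_comparison_principle} to the two entire solutions $\left(t,x\right)\mapsto\left(\varphi,\psi\right)\left(x-ct\right)$ and $\left(t,x\right)\mapsto\left(\varphi,\psi\right)\left(x+\tau^{\star}-ct\right)$ of (\ref{eq:competition_diffusion_system}) forces $\left(\varphi,\psi\right)\left(\cdot+\tau^{\star}\right)\equiv\left(\varphi,\psi\right)$, i.e.\ $\tau^{\star}$-periodicity; having a limit at $-\infty$, the profile must then be constant equal to $\left(1,0\right)$, contradicting its limit $\left(0,1\right)$ at $+\infty$. Hence $\tau^{\star}=0$, so $\varphi$ and $w$ are nonincreasing. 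Finally, if $\varphi$ (or $w$) fails to be strictly decreasing, there are $\xi_1<\xi_2$ with equality, which with $\tau=\xi_2-\xi_1$ produces a finite contact point and, by \thmref{Competitive_comparison_principle} again, the same periodicity contradiction. This yields the claimed strict monotonicity of $\varphi$ and $\psi$.

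\emph{Main obstacle.} The delicate point is the dichotomy in Step~3: excluding that at the critical shift the two profiles approach one another only ``at infinity'' without any finite contact. This is precisely what the exact tail asymptotics of \corref{decays_near_unstable_state} and \corref{decays_near_stable_state} are needed for — they provide an exponential separation in the tails that an arbitrarily small change of $\tau$ cannot close, so that a contact, if $\tau^{\star}>0$, must occur inside a fixed compact set where \thmref{Competitive_comparison_principle} bites. The many subcases of \lemref{Exponential_decays_profiles} (critical versus supercritical speed, coincidences among the exponents $\lambda_{i,P}^{\pm\infty}$) are only a bookkeeping nuisance, since in each of them the relevant leading coefficient is positive and the structure of the argument is unchanged.
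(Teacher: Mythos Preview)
Your proof is correct and follows essentially the same sliding argument as the paper's: tail asymptotics give strict monotonicity outside a compact interval, a contact point at the optimal shift $\tau^\star$ forces periodicity via the strong comparison principle, and periodicity contradicts the distinct limits at $\pm\infty$. One remark on your ``main obstacle'': for a profile slid against \emph{itself}, the strict tail monotonicity alone already guarantees that any positive shift gives a strict ordering outside a fixed compact window, so contact at infinity is automatically excluded and the \emph{precise} exponents from \corref{decays_near_unstable_state} play no role here---they become essential only in the next lemma, where two \emph{different} profiles (possibly with identical leading decay) are compared.
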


\begin{proof}
The proof relies upon a sliding argument. 

The sliding argument for monostable problems has three main steps: first, showing that if two profiles are correctly 
ordered at some point far on the left, then they remain correctly ordered everywhere on the left of this point; next, showing 
thanks to 
the first step and the exponential estimates at $+\infty$ that, up to some translation, the two profiles are globally ordered; 
finally, showing by optimizing the aforementioned translation that the two profiles actually coincide.

Notice that since the exponential estimates of \lemref{Exponential_decays_profiles} can be differentiated, they imply the 
component-wise strict monotonicity of $\left(\varphi,\psi\right)$ near $\pm\infty$. Thus we can define $R>0$ such that 
$\left(\varphi,\psi\right)$ is component-wise strictly monotonic in $\mathbb{R}\backslash\left[-R,R\right]$. In particular, we can assume that 
\begin{equation}\label{eq:monotone1}
(\varphi,\psi)(-R) \succ (\varphi,\psi) (\xi) \succ (\varphi,\psi)(R)\text{ for all } \xi \in (-R, R).
\end{equation}

\textbf{Step 1:} We claim that there is $\tau_1>0$ such that for all $\tau \geq \tau_1$, 
\begin{equation}\label{eq:monotone2}
\left(\varphi,\psi\right)\left(\xi-\tau\right)\succ\left(\varphi,\psi\right)\left(\xi\right) \quad \text{ for all }\xi \in \mathbb{R}.
\end{equation}
In view of the monotonicity of $(\varphi,\psi)$ in $\mathbb{R} \setminus(-R,R)$, and (\ref{eq:monotone1}), the claim clearly holds once we take $\tau_1 = 2R$.

\textbf{Step 2:} Define $\tau^{\star}$ to be the infimum of all $\tau\in (0, 2R]$ such that (\ref{eq:monotone2}) holds true. It remains to show that $\tau^{\star} = 0$. Suppose to the contrary that $\tau^{\star}>0$. By construction, 
\[
\left(\varphi,\psi\right)\left(\xi-\tau^{\star}\right)\succeq\left(\varphi,\psi\right)\left(\xi\right)\text{ for all }\xi\in\mathbb{R}.
\]
Moreover, by (\ref{eq:monotone1}) and monotonicity of $(\varphi,\psi)$ in $\mathbb{R}\setminus(-R,R)$, we see that 
for each $\tau \in \left[ \frac{\tau^{\star}}{2}, 2\tau^{\star}\right]$, 
\[
\left(\varphi,\psi\right)\left(\xi-\tau\right) \succ \left(\varphi,\psi\right)\left(\xi\right)\text{ for all } \xi \in \mathbb{R}\setminus (-R+\tau, R), 
\]
and in particular for all $\xi \in \mathbb{R} \setminus (-R + \tau^{\star}/2, R)$. 
By the minimality of $\tau^{\star}>0$, there exists $\xi^{\star} \in [-R + \tau^{\star}/2, R]$ 
such that equality holds for at least one of the components. The strong comparison principle yields 
\[
(\varphi,\psi)(\xi - \tau^{\star}) = (\varphi,\psi)(\xi) \quad \text{ for all }\xi \in \mathbb{R}.
\]
This implies $(\varphi,\psi)$ is periodic with period $\tau^{\star}$, and contradicts $(\varphi,\psi)(-\infty) = (1,0)$ and $(\varphi,\psi)(+\infty) = (0,1)$.
Hence $\tau^\star=0$ and, subsequently, for all $\tau>0$, we have 
\[
\left(\varphi,\psi\right)\left(\xi-\tau\right)\succ\left(\varphi,\psi\right)\left(\xi\right)\text{ for all }\xi\in\mathbb{R},
\]
which exactly means that $\left(\varphi,\psi\right)$ is component-wise strictly monotonic.
\end{proof}

\subsection{Ordering of the decays}
By a similar proof, we can characterize more precisely the decays. We recall that 
Roques\textendash Hosono\textendash Bonnefon\textendash Boivin \cite{Roques_Hosono} 
showed that the slow or fast decay problem is
related to the pulled or pushed front problem.

\begin{lem}
\label{lem:ordering_of_the_decays_of_the_profiles} Let $p=\left(d,r,a,b\right)\in\Pi$, $c\geq c_{\textup{LLW}}^{p}$ and $\hat{c} \geq c$. Define $P=\left(c,p\right)\in E$ and $\hat{P}=\left(\hat{c},p\right)\in E$.

Let $\left(\varphi,\psi\right)\in\mathscr{C}^2\left(\mathbb{R},\left[0,1\right]^2\right)$ and 
$\left(\hat{\varphi},\hat{\psi}\right)\in\mathscr{C}^2\left(\mathbb{R},\left[0,1\right]^2\right)$ be two
profiles of traveling wave solution of (\ref{eq:competition_diffusion_system}) with speed
$c$ and $\hat{c}$ respectively. Denote $\left(i,C,D,i_+,j_+\right)$ and $\left(\hat{i},\hat{C},\hat{D},\hat{i_+},\hat{j_+}\right)$
the quantities given by \corref{decays_near_unstable_state} when applied to $\left(\varphi,\psi\right)$ and $\left(\hat{\varphi},\hat{\psi}\right)$ respectively.

Then at least one of the following estimates fails:
\[
\hat{C}\xi^{\hat{i_{+}}}\textup{e}^{-\lambda_{\hat{i},\hat{P}}^{+\infty}\xi}=o\left(C\xi^{i_{+}}\textup{e}^{-\lambda_{i,P}^{+\infty}\xi}\right)\text{ as }\xi\to+\infty,
\]
\[
\hat{D}\xi^{\hat{j_{+}}}\textup{e}^{-\min\left(\lambda_{1,\hat{P}}^{+\infty},\lambda_{\hat{i},\hat{P}}^{+\infty}\right)\xi}=o\left(D\xi^{j_{+}}\textup{e}^{-\min\left(\lambda_{1,P}^{+\infty},\lambda_{i,P}^{+\infty}\right)\xi}\right)\text{ as }\xi\to+\infty.
\]
\end{lem}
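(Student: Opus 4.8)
The plan is to argue by contradiction, mimicking the sliding argument of \propref{monotonicity_of_the_profiles}. Assume both estimates hold; then, after a suitable translation in $\xi$, the faster wave $(\hat\varphi,\hat\psi)$ will have both of its components trapped (from above in the first coordinate, from below in the second) by the corresponding components of $(\varphi,\psi)$ near $+\infty$. Precisely, the first estimate says $\hat\varphi$ decays strictly faster than $\varphi$, so for any translate $\hat\varphi(\cdot-\tau)$ we can arrange $\hat\varphi(\xi-\tau)\le\varphi(\xi)$ for all large $\xi$; the second estimate does the analogous thing for $1-\hat\psi$ versus $1-\psi$, giving $\hat\psi(\xi-\tau)\ge\psi(\xi)$ for all large $\xi$. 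In the competitive ordering $\preceq$ this reads $(\hat\varphi,\hat\psi)(\xi-\tau)\preceq(\varphi,\psi)(\xi)$ near $+\infty$.

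\textbf{Key steps.} First I would fix $R>0$ past which both profiles are component-wise strictly monotonic (available from \lemref{Exponential_decays_profiles}, whose estimates differentiate), and record that $(\hat\varphi,\hat\psi)(-\infty)=(1,0)=(\varphi,\psi)(-\infty)$ and $(\hat\varphi,\hat\psi)(+\infty)=(0,1)=(\varphi,\psi)(+\infty)$. Next, using the two assumed asymptotic estimates together with the monotonicity on $(-\infty,-R]$, I would show that for $\tau$ large enough one has the global competitive inequality $(\hat\varphi,\hat\psi)(\cdot-\tau)\preceq(\varphi,\psi)$ on all of $\mathbb R$: on the far right this is the $o(\cdot)$ comparison, on the far left it follows because both pairs converge to $(1,0)$ and are monotone there, and on the compact middle it holds for $\tau$ large by pushing $(\hat\varphi,\hat\psi)(\cdot-\tau)$ far enough left that its first component is near $1$ and its second near $0$. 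Then I would define $\tau^\star$ as the infimum of translations $\tau$ for which this holds and show $\tau^\star$ is attained, so that $(\hat\varphi,\hat\psi)(\cdot-\tau^\star)\preceq(\varphi,\psi)$ with equality of at least one component at some finite point — unless $\tau^\star=-\infty$, i.e. the inequality holds for all $\tau\in\mathbb R$. In the first case, since $(\hat\varphi,\hat\psi)(\cdot-\tau^\star)$ and $(\varphi,\psi)$ satisfy the traveling-wave profile system only if $c=\hat c$, one must examine the two sub-cases $c=\hat c$ and $c<\hat c$: when $c<\hat c$ the two translates solve \emph{different} ODE systems, so touching does not immediately give coincidence and instead I would derive a contradiction directly from the differential inequality at the touching point (the excess drift term $(\hat c-c)$ has a definite sign); when $c=\hat c$ the strong maximum principle for the cooperative reformulation $w=1-\psi$ forces $(\hat\varphi,\hat\psi)(\cdot-\tau^\star)\equiv(\varphi,\psi)$, contradicting the strict decay-rate inequality assumed in the first estimate. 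In the second case, letting $\tau\to-\infty$ in the global inequality forces $\varphi\equiv 1$ and $\psi\equiv 0$, again a contradiction.

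\textbf{Main obstacle.} The delicate point is the case $c<\hat c$: here the two objects being slid against each other do not solve the same equation, so the classical "touch $\Rightarrow$ coincide" conclusion of the sliding method fails, and one must instead exploit the strict monotonicity of the speed-dependent drift to show that a first interior contact between $(\hat\varphi,\hat\psi)(\cdot-\tau^\star)$ and $(\varphi,\psi)$ is incompatible with both being profiles — essentially, the faster wave, once it touches from the correct side, is forced to cross, violating minimality of $\tau^\star$. Making this rigorous requires a careful component-by-component analysis at the contact point using the second-order condition and the fact that $\lambda_{i,P}^{+\infty}$ and $\lambda_{1,P}^{+\infty}$ are monotone in $c$, which is exactly the content of the displayed root formulas; the bookkeeping of which of the two components touches first (and hence which differential inequality to invoke) is where most of the work lies. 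I expect the rest — the far-field trapping and the $c=\hat c$ strong-maximum-principle step — to be routine given \lemref{Exponential_decays_profiles} and \thmref{Competitive_comparison_principle}.
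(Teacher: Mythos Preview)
Your overall architecture (contradiction plus sliding) matches the paper's, and you correctly sense that the drift mismatch $(\hat c-c)$ is the key when $c<\hat c$. But two concrete things are wrong or missing.

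\textbf{The direction of the slide is inverted.} You write $(\hat\varphi,\hat\psi)(\cdot-\tau)\preceq(\varphi,\psi)$ and then say that for $\tau$ large ``the first component is near $1$ and the second near $0$''. That describes $(\hat\varphi,\hat\psi)(\cdot-\tau)\to(1,0)$ pointwise, which is the $\succeq$-\emph{maximal} state, so the claimed $\preceq$-inequality fails on any compact set and on the far left. Likewise your $\tau\to-\infty$ endgame gives only the trivial $(0,1)\preceq(\varphi,\psi)$, not a contradiction. The paper slides the \emph{slower} profile over the faster one: $(\varphi,\psi)(\cdot-\tau)\succeq(\hat\varphi,\hat\psi)$ for $\tau$ large, then decreases $\tau$ to a minimal $\tau^\star$.

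\textbf{The far-left region needs a real argument, and the ``main obstacle'' is not one.} Saying ``both converge to $(1,0)$ and are monotone there'' does not order two different profiles near $-\infty$. The paper handles this by the following clean observation you almost make but do not exploit: since $(\hat\varphi',\hat\psi')\preceq(0,0)$ by \propref{monotonicity_of_the_profiles} and $\hat c\geq c$, one has
\[
-\hat\varphi''-c\hat\varphi'=(\hat c-c)\hat\varphi'+\hat\varphi(1-\hat\varphi-a\hat\psi)\le\hat\varphi(1-\hat\varphi-a\hat\psi),
\]
and similarly for $\hat\psi$ with the reversed inequality. Thus $(\hat\varphi,\hat\psi)$ is a \emph{global} sub-solution of the speed-$c$ system. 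This has two consequences. First, at a contact point the strong comparison principle forces $(\varphi,\psi)(\cdot-\tau^\star)\equiv(\hat\varphi,\hat\psi)$ outright, contradicting the assumed strict $o(\cdot)$ decay---so your worried case-splitting on $c=\hat c$ versus $c<\hat c$ and the pointwise second-order analysis are unnecessary. Second, and this is the missing step, the paper uses the sub-solution structure (with an $\varepsilon$-perturbation $(\hat\varphi-\varepsilon,\hat\psi+\varepsilon)$, checked to be a strict sub-solution on a left half-line where $\hat\varphi$ is close to $1$) to show that once $(\varphi,\psi)(\xi_0-\tau)\succ(\hat\varphi,\hat\psi)(\xi_0)$ at one sufficiently negative $\xi_0$, the inequality propagates to all $\xi\le\xi_0$. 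That is what replaces your hand-wave about the far left; without it the initial global ordering cannot be established.
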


\begin{proof}
The proof is by contradiction: we assume from now on that, on the contrary, the above 
two asymptotic estimates are satisfied. This means that, near $+\infty$, any translation of $\left(\varphi,\psi\right)$ dominates 
 $\left(\hat{\varphi},\hat{\psi}\right)$ (in the sense of the competitive ordering).

Here are the three steps of the sliding argument of this proof.

\textbf{Step 1:} choose $\xi_0\in\mathbb{R}$ sufficiently close to $-\infty$ and such that for all $\xi \leq \xi_0$,
\begin{equation}\label{eq:lemA5step1}
\left(\hat{\varphi},\hat{\psi}\right)\left(\xi\right) \succeq \left( \frac{3}{4}, \frac{1}{4}\right)\text{ and }\hat{\varphi}(\xi) \geq \max\left(\frac{5-a}{8-4a},  \frac{3+b}{4b} + \hat{\psi}(\xi)\right).  
\end{equation}
Notice that such a $\xi_0$ exists indeed, since $(\hat\varphi, \hat\psi)(-\infty)=(1,0)$, and  $\max\left(\frac{5-a}{8-4a},\frac{3+b}{4b}\right)<1$ with $a<1$ and $b>1$. We claim that if there exists $\tau \in \mathbb{R}$ such that
\[
\left(\varphi,\psi\right)\left(\xi_0-\tau\right)\succ\left(\hat{\varphi},\hat{\psi}\right)\left(\xi_0\right),
\]
then
\[
\left(\varphi,\psi\right)\left(\xi-\tau\right)\succ\left(\hat{\varphi},\hat{\psi}\right)\left(\xi\right)\text{ for all }\xi\leq\xi_0.
\]
Clearly, there exists $\varepsilon\in\left(0,\frac{1}{4}\right]$ such that 
\[
\left(\varphi,\psi\right)\left(\xi-\tau\right)\succeq\left(\hat{\varphi},\hat{\psi}\right)\left(\xi\right)+\varepsilon\left(-1,1\right)\text{ for all }\xi\leq\xi_0.
\]

Now, let $\varepsilon^{\star}\in\left[0,\frac{1}{4}\right]$ be the infimum of all these $\varepsilon$ and assume by contradiction that 
$\varepsilon^{\star}>0$. In view of the limiting values at $-\infty$ and of the inequality at $\xi_0$, there exists 
$\xi^{\star}\in\left(-\infty,\xi_0\right)$ such that 
\[
\left(\varphi,\psi\right)\left(\xi^{\star}-\tau\right)\succeq\left(\hat{\varphi},\hat{\psi}\right)\left(\xi^{\star}\right)+\varepsilon^{\star}\left(-1,1\right)
\]
with, most importantly, equality for at least one of the components. 
Let us verify that $\left(\varphi_{\varepsilon^{\star}},\psi_{\varepsilon^{\star}}\right)=\left(\hat{\varphi},\hat{\psi}\right)+\varepsilon^{\star}\left(-1,1\right)$ is a sub-solution. Since 
$\left(\varphi,\psi\right)$ satisfies by definition
\[
\left\{ \begin{matrix}-\hat{\varphi}''-c\hat{\varphi}'= (\hat{c} - c) \hat{\varphi}' + \hat{\varphi}\left(1-\hat{\varphi}-a\hat{\psi}\right)\\
-d\hat{\psi}''-c\hat{\psi}'=(\hat{c} - c) \hat{\psi}' +  r\hat{\psi}\left(1-\hat{\psi}-b\hat{\varphi}\right)
\end{matrix}\right.,
\]
we find (note that, by \propref{monotonicity_of_the_profiles}, $(\hat{\varphi}', \hat{\psi}') \preceq (0,0)$)
\[
\left\{ \begin{matrix}-\varphi_{\varepsilon^{\star}}''-c\varphi_{\varepsilon^{\star}}'-\varphi_{\varepsilon^{\star}}\left(1-\varphi_{\varepsilon^{\star}}-a\psi_{\varepsilon^{\star}}\right) < \varepsilon^{\star}\left(1-\left(2-a\right)\varphi_{\varepsilon^{\star}}-\left(1-a\right)\varepsilon^{\star}-a\psi_{\varepsilon^{\star}}\right)\\
-d\psi_{\varepsilon^{\star}}''-c\psi_{\varepsilon^{\star}}'-r\psi_{\varepsilon^{\star}}\left(1-\psi_{\varepsilon^{\star}}-b\varphi_{\varepsilon^{\star}}\right) >-r\varepsilon^{\star}\left(1-\left(2-b\right)\psi_{\varepsilon^{\star}}-\left(b-1\right)\varepsilon^{\star}-b\varphi_{\varepsilon^{\star}}\right)
\end{matrix}\right..
\]
From 
\[
\left(\begin{matrix}1-\left(2-a\right)\varphi_{\varepsilon^{\star}}-\left(1-a\right)\varepsilon^{\star}-a\psi_{\varepsilon^{\star}} \\
-\left(1-\left(2-b\right)\psi_{\varepsilon^{\star}}-\left(b-1\right)\varepsilon^{\star}-b\varphi_{\varepsilon^{\star}}\right)
\end{matrix}\right)
\preceq\left(\begin{matrix}1-\left(2-a\right)\hat{\varphi}+\frac{1-a}{4}-a\hat{\psi} \\
-\left(1-\left(2-b\right)\hat{\psi}+\frac{b-1}{4}-b\hat{\varphi}\right)
\end{matrix}\right),
\]
we deduce by (\ref{eq:lemA5step1}) that 
\[
\left(\begin{matrix}1-\left(2-a\right)\varphi_{\varepsilon^{\star}}-\left(1-a\right)\varepsilon^{\star}-a\psi_{\varepsilon^{\star}} \\
-\left(1-\left(2-b\right)\psi_{\varepsilon^{\star}}-\left(b-1\right)\varepsilon^{\star}-b\varphi_{\varepsilon^{\star}}\right)
\end{matrix}\right)\preceq\left(\begin{matrix}0 \\ 0 \end{matrix}\right)\text{ for all }\xi\leq\xi_0.
\]

We are now in position to apply the strong comparison principle of \thmref{Competitive_comparison_principle} and 
deduce from the existence of $\xi^{\star}$ a contradiction. Hence $\varepsilon^{\star}=0$, that is 
\[
\left(\varphi,\psi\right)\left(\xi-\tau\right)\succeq\left(\hat{\varphi},\hat{\psi}\right)\left(\xi\right)\text{ for all }\xi\leq\xi_0.
\]
Finally, by strong comparison principle the strict inequality must hold for any $\xi \leq \xi_0$.

\textbf{Step 2:} in this step, we show the existence of $\tau_1$ such that, for all $\tau\geq\tau_1$, 
\[
\left(\varphi,\psi\right)\left(\xi-\tau\right)\succ\left(\hat{\varphi},\hat{\psi}\right)\left(\xi\right)\text{ for all }\xi\in\mathbb{R}.
\]
To this end, we fix $\xi_0$ as in (\ref{eq:lemA5step1}) and choose $\tau_0>0$ large so that 
\[
\left(\varphi,\psi\right)\left(\xi_0 - \tau\right) \succ \left(\hat{\varphi}, \hat{\psi}\right)\left(\xi_0\right)\text{ for all }\tau \geq \tau_0.
\]
By Step 1, we deduce that for all $\tau \geq \tau_0$, 
\[
\left(\varphi,\psi\right)\left(\xi-\tau\right)\succeq\left(\hat{\varphi},\hat{\psi}\right)\left(\xi\right)\text{ for all }\xi\leq\xi_0.
\]
Next, we use the asymptotic behavior of $(\varphi,\psi)$ and $(\hat{\varphi},\hat{\psi})$ at $+\infty$ to 
choose $\tau_1 \geq \tau_0$ such that for all $\tau \geq \tau_1$, 
\[
\left(\varphi,\psi\right)\left(\xi-\tau\right)\succeq\left(\hat{\varphi},\hat{\psi}\right)\left(\xi\right)\text{ for all }\xi\geq\xi_0.
\]
The above two inequalities complete Step 2.

\textbf{Step 3:} define $\tau^{\star}$ as the infimum of all $\tau$ such that the preceding inequality holds true. By construction, 
\[
\left(\varphi,\psi\right)\left(\xi-\tau^{\star}\right)\succeq\left(\hat{\varphi},\hat{\psi}\right)\left(\xi\right)\text{ for all }\xi\in\mathbb{R}.
\]
It suffices to show  the existence of $\xi^{\star}\in\mathbb{R}$ such that 
$\left(\varphi,\psi\right)\left(\xi^{\star}-\tau^{\star}\right)\succeq\left(\varphi,\psi\right)\left(\xi^{\star}\right)$ 
with equality for at least one component. Granted, then the strong comparison principle yields 
\[
\left(\varphi,\psi\right)\left(\xi-\tau^{\star}\right)=\left(\hat{\varphi},\hat{\psi}\right)\left(\xi\right)\text{ for all }\xi\in\mathbb{R},
\]
and the proof is ended. Suppose by contradiction that such a $\xi^\star$ does not exist, that is 
\[
\left(\varphi,\psi\right)\left(\xi-\tau^{\star}\right)\succ\left(\hat{\varphi},\hat{\psi}\right)\left(\xi\right)\text{ for all }\xi\in\mathbb{R}.
\]
Now, the asymptotic behavior assumed at the beginning of this proof implies  
\[
\lim_{\xi\to+\infty}\frac{\varphi\left(\xi-\tau^\star\right)}{\hat{\varphi}\left(\xi\right)}= +\infty, \quad \text{ and }
\quad 
\lim_{\xi\to+\infty}\frac{1-\psi\left(\xi-\tau^\star\right)}{1-\hat{\psi}\left(\xi\right)}= +\infty.
\]
Hence, there exists $\xi_1 >0$ large and $\delta>0$ small such that for all $\tau \in (\tau^{\star} - \delta, \tau^{\star})$, 
\[
\left(\varphi,\psi\right)\left(\xi-\tau\right)\succ\left(\hat{\varphi},\hat{\psi}\right)\left(\xi\right)\text{ for all }\xi \geq \xi_1.
\]
By taking $\delta>0$ small, we have also that, for all $\tau \in (\tau^{\star} - \delta, \tau^{\star})$, 
\[
\left(\varphi,\psi\right)\left(\xi-\tau\right)\succ\left(\hat{\varphi},\hat{\psi}\right)\left(\xi\right) \quad \text{ for all }\xi \in [\xi_0, \xi_1].
\]
Finally, the result in Step 1 implies that for all $\tau \in (\tau^{\star} - \delta, \tau^{\star})$,
\[
\left(\varphi,\psi\right)\left(\xi-\tau\right)\succ\left(\hat{\varphi},\hat{\psi}\right)\left(\xi\right)\text{ for all }\xi \in \mathbb{R}.
\]
This contradicts the minimality of $\tau^{\star}$.
\end{proof}

From the preceding lemma, \lemref{Exponential_decays_profiles} and the respective monotonicities of 
$c\mapsto\lambda_{1,P}^{+\infty}$, $c\mapsto\lambda_{2,P}^{+\infty}$ and $c\mapsto\lambda_{3,P}^{+\infty}$, we deduce the 
following corollary which is a refinement of \lemref{Exponential_decays_profiles}. Basically, it discards the possibility 
of solutions having a fast decay and a super-critical speed. 

\begin{cor}\label{cor:Precise_decays_near_unstable_state}Let $P=\left(c,d,r,a,b\right)\in E$ with $c>c_{\textup{LLW}}^{d,r,a,b}$ and $\left(\varphi,\psi\right)$ be a profile 
of traveling wave solution of (\ref{eq:competition_diffusion_system}) with speed $c$. 

Then there exist $A>0$ and $C>0$ such that, as $\xi\to+\infty$:
\begin{enumerate}
\item if $\lambda_{1,P}^{+\infty}<\lambda_{3,P}^{+\infty}$, then
\[
\left\{ \begin{matrix}\varphi\left(\xi\right)=C\textup{e}^{-\lambda_{3,P}^{+\infty}\xi}+\text{h.o.t.}\\
\psi\left(\xi\right)=1-A\textup{e}^{-\lambda_{1,P}^{+\infty}\xi}+\text{h.o.t.}
\end{matrix}\right.;
\]
\item if $\lambda_{1,P}^{+\infty}=\lambda_{3,P}^{+\infty}$, then
\[
\left\{ \begin{matrix}\varphi\left(\xi\right)=\frac{2d\lambda_{1,P}^{+\infty}-c}{a}C\textup{e}^{-\lambda_{1,P}^{+\infty}\xi}+\text{h.o.t.}\\
\psi\left(\xi\right)=1-C\xi\textup{e}^{-\lambda_{1,P}^{+\infty}\xi}+\text{h.o.t.}
\end{matrix}\right.;
\]
\item if $\lambda_{1,P}^{+\infty}\in\left(\lambda_{3,P}^{+\infty},\lambda_{2,P}^{+\infty}\right)$,
then $R_{P}^{+\infty}\left(\lambda_{3,P}^{+\infty}\right)<0$ and
\[
\left\{ \begin{matrix}\varphi\left(\xi\right)=C\textup{e}^{-\lambda_{3,P}^{+\infty}\xi}+\text{h.o.t.}\\
\psi\left(\xi\right)=1+\frac{rb}{R_{P}^{+\infty}\left(\lambda_{3,P}^{+\infty}\right)}C\textup{e}^{-\lambda_{3,P}^{+\infty}\xi}+\text{h.o.t.}
\end{matrix}\right.;
\]
\item if $\lambda_{1,P}^{+\infty}=\lambda_{2,P}^{+\infty}$, then $R_{P}^{+\infty}\left(\lambda_{3,P}^{+\infty}\right)<0$ and 
\[
\left\{ \begin{matrix}\varphi\left(\xi\right)=C\textup{e}^{-\lambda_{3,P}^{+\infty}\xi}+\text{h.o.t.}\\
\psi\left(\xi\right)=1+\frac{rb}{R_{P}^{+\infty}\left(\lambda_{3,P}^{+\infty}\right)}C\textup{e}^{-\lambda_{3,P}^{+\infty}\xi}+\text{h.o.t.}
\end{matrix}\right.;
\]
\item if $\lambda_{1,P}^{+\infty}>\lambda_{2,P}^{+\infty}$, then $R_{P}^{+\infty}\left(\lambda_{3,P}^{+\infty}\right)<0$ and
\[
\left\{ \begin{matrix}\varphi\left(\xi\right)=C\textup{e}^{-\lambda_{3,P}^{+\infty}\xi}+\text{h.o.t.}\\
\psi\left(\xi\right)=1+\frac{rb}{R_{P}^{+\infty}\left(\lambda_{3,P}^{+\infty}\right)}C\textup{e}^{-\lambda_{3,P}^{+\infty}\xi}+\text{h.o.t.}
\end{matrix}\right.;
\]
\end{enumerate}
\end{cor}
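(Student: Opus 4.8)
The plan is to argue by contradiction: I would show that a profile with the \emph{fast} (steep) decay at $+\infty$ cannot exist at a supercritical speed, after which the five alternatives fall out immediately by specializing \lemref{Exponential_decays_profiles}. So fix $P=\left(c,d,r,a,b\right)\in E$ with $c>c_{LLW}^{d,r,a,b}$ and let $\left(\varphi,\psi\right)$ be a profile of speed $c$. Since $c>c_{LLW}^{d,r,a,b}\geq2\sqrt{1-a}$, one has $\lambda_{3,P}^{+\infty}<\lambda_{2,P}^{+\infty}$. By \corref{decays_near_unstable_state} there are $i\in\left\{2,3\right\}$, $C>0$, $D>0$ and polynomial powers $i_{+},j_{+}$ with, as $\xi\to+\infty$, $\varphi\left(\xi\right)\sim C\xi^{i_{+}}\text{e}^{-\lambda_{i,P}^{+\infty}\xi}$ and $1-\psi\left(\xi\right)\sim D\xi^{j_{+}}\text{e}^{-\min\left(\lambda_{1,P}^{+\infty},\lambda_{i,P}^{+\infty}\right)\xi}$. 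The alternative $i=3$ is exactly what the corollary asserts (once combined with \lemref{Exponential_decays_profiles}, see the last paragraph), so I would assume for contradiction that $i=2$, i.e. that $\varphi$ decays with the steep rate $\lambda_{2,P}^{+\infty}$.

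Next I would bring in an auxiliary profile $\left(\varphi',\psi'\right)$ of some speed $c'\in\left[c_{LLW}^{d,r,a,b},c\right)$; such a profile exists by the classical existence theory (cf. \cite{Li_Weinberger_}), and I set $P'=\left(c',d,r,a,b\right)\in E$. Applying \corref{decays_near_unstable_state} to it, $\varphi'\left(\xi\right)\sim C'\xi^{i'_{+}}\text{e}^{-\lambda_{i',P'}^{+\infty}\xi}$ and $1-\psi'\left(\xi\right)\sim D'\xi^{j'_{+}}\text{e}^{-\min\left(\lambda_{1,P'}^{+\infty},\lambda_{i',P'}^{+\infty}\right)\xi}$ with $C',D'>0$ and $i'\in\left\{2,3\right\}$.

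The heart of the matter is then a bookkeeping of exponents, using that $c\mapsto\lambda_{1,P}^{+\infty}$ and $c\mapsto\lambda_{2,P}^{+\infty}$ are strictly increasing while $c\mapsto\lambda_{3,P}^{+\infty}$ is strictly decreasing. From $c'<c$ and $\lambda_{3,P'}^{+\infty}\leq\lambda_{2,P'}^{+\infty}$ I get $\lambda_{i',P'}^{+\infty}\leq\lambda_{2,P'}^{+\infty}<\lambda_{2,P}^{+\infty}$, hence $\varphi=o\left(\varphi'\right)$ at $+\infty$ (the polynomial prefactors are harmless because the exponential rates are strictly ordered). Likewise $\lambda_{1,P'}^{+\infty}<\lambda_{1,P}^{+\infty}$ and $\lambda_{i',P'}^{+\infty}<\lambda_{2,P}^{+\infty}$, so $\min\left(\lambda_{1,P'}^{+\infty},\lambda_{i',P'}^{+\infty}\right)<\min\left(\lambda_{1,P}^{+\infty},\lambda_{2,P}^{+\infty}\right)$, whence $1-\psi=o\left(1-\psi'\right)$ at $+\infty$. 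But $c'\leq c$, so \lemref{ordering_of_the_decays_of_the_profiles} applied to the pair of speeds $\left(c',c\right)$ forbids both of these $o$-estimates from holding at once: contradiction. Therefore no supercritical fast-decay profile exists, i.e. $i=3$ for every profile of speed $c>c_{LLW}^{d,r,a,b}$.

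It remains to assemble the statement. Having $i=3$ means that in case 2(a) of \lemref{Exponential_decays_profiles} the coefficient of the slow mode $\text{e}^{-\lambda_{3,P}^{+\infty}\xi}$ is positive; renaming it $C>0$, the five alternatives of the corollary are precisely subcases (i)--(v) of \lemref{Exponential_decays_profiles}(2)(a), in which the $\text{e}^{-\lambda_{2,P}^{+\infty}\xi}$ contribution to $\varphi$ and (in (iii)--(v)) the $\text{e}^{-\lambda_{1,P}^{+\infty}\xi}$ contribution to $\psi$ are discarded as higher-order relative to this $C$; the positivity of $A$ in alternative (1) is the unconditional ``$A>0$'' already recorded in \lemref{Exponential_decays_profiles}(2)(a)(i). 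The only point requiring care throughout is keeping track of the three monotonicities and identifying the genuinely slowest exponent in each branch. All the real analytic work --- the sliding argument --- is already done in \lemref{ordering_of_the_decays_of_the_profiles}, so I expect no substantial further obstacle; the subtlety, if any, is simply making sure the comparison in the third paragraph is carried out with the correct (fast versus slow) decay for the speed-$c$ wave and with the general form of \corref{decays_near_unstable_state} for the auxiliary one.
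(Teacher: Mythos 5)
Your argument is correct and is essentially the paper's own deduction: the paper derives this corollary precisely from \lemref{ordering_of_the_decays_of_the_profiles}, \lemref{Exponential_decays_profiles} and the monotonicities of $c\mapsto\lambda_{1,P}^{+\infty},\lambda_{2,P}^{+\infty},\lambda_{3,P}^{+\infty}$, i.e.\ by ruling out a fast-decay profile at supercritical speed through comparison with a wave of strictly smaller admissible speed. Your write-up simply makes explicit the bookkeeping the paper leaves implicit, and the details (choice of $c'\in\left[c_{LLW}^{d,r,a,b},c\right)$, the strict ordering of the exponential rates, and the final reading of the five cases from \lemref{Exponential_decays_profiles}(2)(a)) all check out.
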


\begin{rem*}
We emphasize that there exists a unique translation of the profile such that the normalization $C=1$ holds. The remaining degree of freedom in the first case above ($A$ can still take any positive value \textit{a priori}) is the main difficulty regarding uniqueness.
\end{rem*}

\subsection{Uniqueness and continuity}
We are now in position to establish the following uniqueness result.

\begin{prop}
\label{prop:uniqueness_of_the_profiles} Let $P=\left(c,d,r,a,b\right)\in E$ such that 
$\lambda_{1,P}^{+\infty}\geq\lambda_{3,P}^{+\infty}$.

Let $\left(\varphi,\psi\right)\in\mathscr{C}^2\left(\mathbb{R},\left[0,1\right]^2\right)$ and 
$\left(\hat{\varphi},\hat{\psi}\right)\in\mathscr{C}^2\left(\mathbb{R},\left[0,1\right]^2\right)$ be two
profiles of traveling wave solution of (\ref{eq:competition_diffusion_system}) with speed
$c$. 

Then $\left(\varphi,\psi\right)$ and $\left(\hat{\varphi},\hat{\psi}\right)$ coincide up to translation.
\end{prop}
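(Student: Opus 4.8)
The plan is to run a sliding argument in the spirit of those already used for \propref{monotonicity_of_the_profiles} and \lemref{ordering_of_the_decays_of_the_profiles}. The essential new input is \corref{Precise_decays_near_unstable_state}: the hypothesis $\lambda_{1,P}^{+\infty}\geq\lambda_{3,P}^{+\infty}$ rules out case $(1)$ of that corollary and places us in cases $(2)$–$(5)$, where at $+\infty$ one has $\varphi\left(\xi\right)=\kappa_{\varphi}C\text{e}^{-\mu\xi}+\text{h.o.t.}$ and $\psi\left(\xi\right)=1-\kappa_{\psi}C\text{e}^{-\mu\xi}+\text{h.o.t.}$ for a single exponent $\mu=\lambda_{3,P}^{+\infty}$, positive constants $\kappa_{\varphi},\kappa_{\psi}$ depending only on $P$, and \emph{one} free parameter $C>0$; crucially there is no independent coefficient like the constant $A$ of case $(1)$, which is precisely the obstruction to uniqueness in the fast-decay super-critical regime. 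First I would translate $\left(\varphi,\psi\right)$ and $\left(\hat\varphi,\hat\psi\right)$ so that $C=\hat C=1$. After this normalization the two profiles have identical leading asymptotics at $+\infty$, and by \corref{decays_near_stable_state} both converge to $(1,0)$ at $-\infty$ with an exponentially small correction (with, a priori, unmatched prefactors).

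Then I would set
\[
\tau^{\star}=\inf\{\tau\in\mathbb{R}\ |\ \left(\varphi,\psi\right)\left(\cdot-\tau\right)\succeq\left(\hat\varphi,\hat\psi\right)\left(\cdot\right)\text{ on }\mathbb{R}\}
\]
and check that this set is nonempty and bounded below. Boundedness below follows from the matched asymptotics at $+\infty$: for $\tau$ sufficiently negative, the relation $\left(\varphi,\psi\right)\left(\cdot-\tau\right)\succeq\left(\hat\varphi,\hat\psi\right)$ already fails on a right half-line. For nonemptiness I would reuse the \textbf{Step 1} mechanism from the proof of \lemref{ordering_of_the_decays_of_the_profiles}: fixing $\xi_0$ so negative that both profiles stay in a neighborhood of $(1,0)$ on $\left(-\infty,\xi_0\right]$, where a perturbation $\left(\hat\varphi,\hat\psi\right)+\varepsilon(-1,1)$ is a generalized sub-solution of the wave equation, one shows that $\left(\varphi,\psi\right)\left(\xi_0-\tau\right)\succeq\left(\hat\varphi,\hat\psi\right)\left(\xi_0\right)$ propagates to all $\xi\leq\xi_0$ via the strong comparison principle of \thmref{Competitive_comparison_principle}; together with the behavior at $+\infty$ this shows that every large enough $\tau$ belongs to the set. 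Hence $\tau^{\star}\in\mathbb{R}$, and by continuity $\left(\varphi,\psi\right)\left(\cdot-\tau^{\star}\right)\succeq\left(\hat\varphi,\hat\psi\right)$.

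To conclude I would show that $\left(\varphi,\psi\right)\left(\cdot-\tau^{\star}\right)\equiv\left(\hat\varphi,\hat\psi\right)$, which is exactly the claim. If equality between the two holds in at least one component at some finite point, the strong comparison principle gives the identity immediately. Otherwise the inequality is strict everywhere, and I would contradict the minimality of $\tau^{\star}$ by decreasing $\tau$: near $+\infty$ the normalized asymptotics provide a strict margin — of exponential size $\text{e}^{\mu\tau^{\star}}-1>0$ if $\tau^{\star}>0$, or a fixed positive margin coming from the higher-order terms if $\tau^{\star}=0$ — that survives a small decrease of $\tau$ on a right half-line; on a left half-line the Step 1 mechanism keeps the strict ordering for all nearby $\tau$; and on the remaining compact interval strictness is obviously stable under a small perturbation of $\tau$. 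Thus $\left(\varphi,\psi\right)\left(\cdot-\tau\right)\succeq\left(\hat\varphi,\hat\psi\right)$ for some $\tau<\tau^{\star}$, a contradiction — so the profiles coincide up to translation.

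I expect the main obstacle to be the left-tail bookkeeping: since both profiles tend to the same state $(1,0)$ as $\xi\to-\infty$, the pointwise gap degenerates there, so the convenient ``strict inequality on a compact set plus continuity'' shortcut is unavailable on $\left(-\infty,\xi_0\right]$ and one must genuinely carry the sub-solution-near-$(1,0)$ argument through the sliding. A secondary, but conceptually important, point is to verify that the case distinction according to the sign of $\lambda_{1,P}^{+\infty}-\lambda_{3,P}^{+\infty}$ in \corref{Precise_decays_near_unstable_state} is exactly the dividing line: when $\lambda_{1,P}^{+\infty}<\lambda_{3,P}^{+\infty}$ the surviving constant $A$ is not removable by translation and uniqueness is genuinely open, as remarked after that corollary.
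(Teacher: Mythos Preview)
Your sliding scheme is essentially the paper's, and Steps 1--2 (nonemptiness, boundedness below, existence of $\tau^{\star}$) are fine. The gap is in your endgame for the case $\tau^{\star}=0$. You write that ``a fixed positive margin coming from the higher-order terms'' allows a further decrease of $\tau$, but this is exactly what your own boundedness-below argument rules out: once the profiles are normalized so that $C=\hat C=1$, you have $\varphi(\xi)/\hat\varphi(\xi)\to 1$ and $(1-\psi(\xi))/(1-\hat\psi(\xi))\to 1$ as $\xi\to+\infty$, so for any $\tau<0$ the shifted leading term satisfies $\varphi(\xi-\tau)\sim\text{e}^{\mu\tau}\text{e}^{-\mu\xi}<\text{e}^{-\mu\xi}\sim\hat\varphi(\xi)$ on a right half-line. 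Higher-order terms are $o(\text{e}^{-\mu\xi})$ and cannot compensate this leading-order deficit. Hence at $\tau^{\star}=0$ there is no room to slide further, and no contradiction arises from your argument.

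The paper resolves this differently: having reduced to $\tau^{\star}=0$ (your $\tau^{\star}>0$ case does correctly force this), one observes that $(\varphi,\psi)\succeq(\hat\varphi,\hat\psi)$ with identical leading asymptotics at $+\infty$; then one \emph{reverses the roles} and runs the same sliding argument with $(\hat\varphi,\hat\psi)$ on top, obtaining $(\hat\varphi,\hat\psi)\succeq(\varphi,\psi)$. The two inequalities together give equality, contradicting the assumed strict ordering. This symmetrization is the missing idea. A minor additional point: \corref{Precise_decays_near_unstable_state} only covers $c>c_{LLW}^{d,r,a,b}$; at the critical speed you must invoke \lemref{Exponential_decays_profiles} directly, where the decays may not coincide but are at least comparable, so one can fix the roles before sliding.
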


\begin{proof}
The proof relies upon a sliding argument again.

In view of \corref{Precise_decays_near_unstable_state}, if $c>c_{\textup{LLW}}^{d,r,a,b}$, the assumption 
$\lambda_{1,P}^{+\infty}\geq\lambda_{3,P}^{+\infty}$ immediately yields that the two profiles can be normalized so that 
they have the same decay at $+\infty$. Similarly, in view of \lemref{Exponential_decays_profiles}, if $c=c_{\textup{LLW}}^{d,r,a,b}$, 
then the two profiles can be normalized so that their decays either coincide or are well-ordered. In all cases, we can fix 
\textit{a priori} the roles of the two profiles so that $\left(\varphi,\psi\right)$ dominates 
$\left(\hat{\varphi},\hat{\psi}\right)$ near $+\infty$. 
By following the first two steps of the proof of 
\lemref{ordering_of_the_decays_of_the_profiles},   we can assume without loss of generality the existence 
of $\tau_0\in\mathbb{R}$ such that, for all $\tau\geq\tau_0$, 
\[
\left(\varphi,\psi\right)\left(\xi-\tau\right)\succeq\left(\hat{\varphi},\hat{\psi}\right)\left(\xi\right)\text{ for all }\xi \in \mathbb{R}.
\]

Next, define $\tau^{\star} \in \mathbb{R}$ as the infimum of all $\tau$ such that the preceding inequality holds true. 
It again suffices to show that there exists $\xi^{\star} \in \mathbb{R}$ where equality holds for one of the components. Assume 
on the contrary that no such $\xi^{\star}$ exists. Thus the preceding inequality is strict for both components for all 
$\xi\in\mathbb{R}$. Now, note that
\[
\lim_{\xi\to+\infty}\frac{\varphi\left(\xi-\tau^\star\right)}{\hat{\varphi}\left(\xi\right)}\geq 1\quad \text{ and }\quad \lim_{\xi\to+\infty}\frac{1-\psi\left(\xi-\tau^\star\right)}{1-\hat{\psi}\left(\xi\right)}\geq 1.
\]
Next, we claim that 
\[
\lim_{\xi\to+\infty}\frac{\varphi\left(\xi-\tau^\star\right)}{\hat{\varphi}\left(\xi\right)}= 1\quad \text{ or }\quad \lim_{\xi\to+\infty}\frac{1-\psi\left(\xi-\tau^\star\right)}{1-\hat{\psi}\left(\xi\right)}=1.
\]
Otherwise we may further reduce $\tau^{\star}$, 
just as in the proof of \lemref{ordering_of_the_decays_of_the_profiles}. Notice that this equality directly yields $\tau^\star=0$, 
that is
\[
\left(\varphi,\psi\right)\left(\xi\right)\succeq\left(\hat{\varphi},\hat{\psi}\right)\left(\xi\right)\text{ for all }\xi \in \mathbb{R}.
\]

Next, from the fact that $\lambda^{+\infty}_{1,P} \geq \lambda^{+\infty}_{3,P}$ and, depending on $c$, \corref{Precise_decays_near_unstable_state} or \lemref{Exponential_decays_profiles}, both of the above limits are equal to 1.

Since the decay rate at $+\infty$ of both profiles coincide, we can reverse the profiles and repeat the proof. This leads to 
\[
\left(\hat{\varphi},\hat{\psi}\right)\left(\xi\right)\succeq\left(\varphi,\psi\right)\left(\xi\right)\text{ for all }\xi\in\mathbb{R}.
\]
Hence the two profiles actually coincide, which directly contradicts the assumption of nonexistence of $\xi^\star$. 

In the end, $\xi^\star$ exists indeed and, by virtue of the strong comparison principle, the two normalized profiles coincide. 
In other words, the two profiles coincide up to translation. 
\end{proof}

\begin{cor}\label{cor:uniqueness_of_the_profiles_for_all_speeds}
Let $\left(d,r,a,b\right)\in\Pi$ such that $d\leq 2+\frac{r}{1-a}$. Then each speed $c\geq c_{\textup{LLW}}^{d,r,a,b}$ is associated with a unique profile (up to translation).
\end{cor}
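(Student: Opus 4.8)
The plan is to reduce the statement to \propref{uniqueness_of_the_profiles} by checking that the hypothesis $d\leq 2+\frac{r}{1-a}$ forces $\lambda_{1,P}^{+\infty}\geq\lambda_{3,P}^{+\infty}$ for \emph{every} $P=\left(c,d,r,a,b\right)$ with $c\geq c_{LLW}^{d,r,a,b}$. Once this inequality is in hand, \propref{uniqueness_of_the_profiles} applies to each such $c$ and yields that the corresponding profile is unique up to translation, which is exactly the claim. (Existence is the standard fact recalled in \subsecref{4.1.5}, so only uniqueness has to be addressed here.)

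To carry this out I would first record the monotonicity in $c$ of the two relevant exponents. From
\[
\lambda_{1,P}^{+\infty}=\frac{c+\sqrt{c^{2}+4rd}}{2d},\qquad \lambda_{3,P}^{+\infty}=\frac{c-\sqrt{c^{2}-4\left(1-a\right)}}{2},
\]
one sees that $c\mapsto\lambda_{1,P}^{+\infty}$ is increasing on $\left[2\sqrt{1-a},+\infty\right)$ and that $c\mapsto\lambda_{3,P}^{+\infty}$ is decreasing there, the latter because $c/\sqrt{c^{2}-4\left(1-a\right)}>1$. Hence $c\mapsto\lambda_{1,P}^{+\infty}-\lambda_{3,P}^{+\infty}$ is increasing, and since $c_{LLW}^{d,r,a,b}\geq 2\sqrt{1-a}$ it suffices to verify the inequality $\lambda_{1,P}^{+\infty}\geq\lambda_{3,P}^{+\infty}$ at the single value $c=2\sqrt{1-a}$; note that $\lambda_{3,P}^{+\infty}$ is indeed well defined on $\left[2\sqrt{1-a},+\infty\right)\supseteq\left[c_{LLW}^{d,r,a,b},+\infty\right)$.

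At $c=2\sqrt{1-a}$ one computes $\lambda_{3,P}^{+\infty}=\sqrt{1-a}$ and $\lambda_{1,P}^{+\infty}=\frac{\sqrt{1-a}+\sqrt{\left(1-a\right)+rd}}{d}$, so the desired inequality becomes
\[
\sqrt{\left(1-a\right)+rd}\geq\left(d-1\right)\sqrt{1-a}.
\]
This holds trivially when $d\leq 1$ (the right-hand side is nonpositive); when $d>1$ both sides are nonnegative and squaring turns it into $rd\geq d\left(d-2\right)\left(1-a\right)$, i.e. $r\geq\left(d-2\right)\left(1-a\right)$, i.e. precisely $d\leq 2+\frac{r}{1-a}$. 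This closes the argument.

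There is no substantial obstacle in this corollary: it is essentially an algebraic unpacking of the hypothesis of \propref{uniqueness_of_the_profiles}. The only points that call for a little care are the legitimacy of squaring (both sides are nonnegative in the case $d>1$), the well-definedness of $\lambda_{3,P}^{+\infty}$ along the whole range of admissible speeds, and the remark that the monotonicity in $c$ is what allows one to reduce to the clean boundary value $c=2\sqrt{1-a}$ rather than to $c_{LLW}^{d,r,a,b}$ itself, for which no explicit formula is available.
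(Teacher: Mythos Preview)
Your proof is correct and follows essentially the same approach as the paper: both reduce to \propref{uniqueness_of_the_profiles} by establishing $\lambda_{1,P}^{+\infty}\geq\lambda_{3,P}^{+\infty}$ for all $c\geq c_{LLW}^{d,r,a,b}$, use the monotonicity of these exponents in $c$ to reduce the verification to the single endpoint $c=2\sqrt{1-a}$, and then unpack the resulting algebraic condition into $d\leq 2+\frac{r}{1-a}$. The only cosmetic difference is that the paper rewrites the inequality $\lambda_{1,P}^{+\infty}\geq\lambda_{3,P}^{+\infty}$ as $R_{P}^{+\infty}(\lambda_{3,P}^{+\infty})\leq 0$ and then uses the characteristic relation for $\lambda_{3,P}^{+\infty}$ before taking the infimum, whereas you work directly with the difference $\lambda_{1,P}^{+\infty}-\lambda_{3,P}^{+\infty}$; your route is slightly more direct but the substance is identical.
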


\begin{proof}
It suffices to prove that, for all $c\geq c_{\textup{LLW}}^{d,r,a,b}$, 
$\lambda_{1,\left(c,d,r,a,b\right)}^{+\infty}\geq\lambda_{3,\left(c,d,r,a,b\right)}^{+\infty}$.

Noticing that this inequality is equivalent to 
$R_{\left(c,d,r,a,b\right)}^{+\infty}\left(\lambda_{3,\left(c,d,r,a,b\right)}^{+\infty}\right)\leq 0$, 
we find that we just have to prove that, for all $c\geq c_{\textup{LLW}}^{d,r,a,b}$, 
\[
d\leq\frac{c\lambda_{3,\left(c,d,r,a,b\right)}^{+\infty}+r}{\left(\lambda_{3,\left(c,d,r,a,b\right)}^{+\infty}\right)^2}
\]
and, using the polynomial equation satisfied by $\lambda_{3,\left(c,d,r,a,b\right)}^{+\infty}=\frac{c-\sqrt{c^{2}-4\left(1-a\right)}}{2}$, this reads 
\[
d\leq\frac{\left(\lambda_{3,\left(c,d,r,a,b\right)}^{+\infty}\right)^2+1-a+r}{\left(\lambda_{3,\left(c,d,r,a,b\right)}^{+\infty}\right)^2}=1+\frac{1-a+r}{\left(\lambda_{3,\left(c,d,r,a,b\right)}^{+\infty}\right)^2}
\]

It only remains to show that 
\[
\inf_{c\geq c_{\textup{LLW}}^{d,r,a,b}} \frac{1-a+r}{\left(\lambda_{3,\left(c,d,r,a,b\right)}^{+\infty}\right)^2}\geq 1+\frac{r}{1-a}.
\]

The above inequality follows actually quite easily: 
\[
\inf_{c\geq  c_{\textup{LLW}}^{d,r,a,b}} \frac{1-a+r}{\left(\lambda_{3,\left(c,d,r,a,b\right)}^{+\infty}\right)^2} \geq \inf_{c\geq 2\sqrt{1-a}} \frac{1-a+r}{\left(\lambda_{3,\left(c,d,r,a,b\right)}^{+\infty}\right)^2} = \frac{1-a+r}{\sup\limits_{c\geq 2\sqrt{1-a}} \left(\lambda_{3,\left(c,d,r,a,b\right)}^{+\infty}\right)^2}
\]
and, by monotonicity,
\[
\sup\limits_{c\geq 2\sqrt{1-a}} \left(\lambda_{3,\left(c,d,r,a,b\right)}^{+\infty}\right)^2=\left(\lambda_{3,\left(2\sqrt{1-a},d,r,a,b\right)}^{+\infty}\right)^2=1-a.
\]

\end{proof}

Finally, as a consequence of the uniqueness, we also have the continuity of the profiles with respect to the parameters.

\begin{prop}\label{prop:continuity_of_the_profiles} Let
\[
E_u=\left\{P\in E\ |\ \lambda_{1,P}^{+\infty}\geq\lambda_{3,P}^{+\infty}\right\}.
\]

For all $P\in E_u$, let $\left(\Phi^{P},\Psi^{P}\right)$ be the unique profile
of traveling wave solution of (\ref{eq:competition_diffusion_system}) with speed $c$ satisfying $\Psi^{P}\left(0\right)=\frac{1}{2}$.

Then $P\mapsto\left(\Phi^{P},\Psi^{P}\right)$ is in $\mathscr{C}\left(\interior{E_u},\mathscr{C}_{b}\left(\mathbb{R},\mathbb{R}^{2}\right)\right)$.
\end{prop}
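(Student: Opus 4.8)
Since $\text{int}E_{u}$, with the metric induced from $\mathbb{R}^{5}$, is a metric space, it suffices to establish sequential continuity. The plan is to combine an \emph{a priori} compactness estimate for the family of normalized profiles with the uniqueness statement of \propref{uniqueness_of_the_profiles}, and then to upgrade the resulting locally uniform convergence to convergence in $\mathscr{C}_{b}\left(\mathbb{R},\mathbb{R}^{2}\right)$ by exploiting the component-wise monotonicity proved in \propref{monotonicity_of_the_profiles}. Fix $P_{\infty}=\left(c_{\infty},d_{\infty},r_{\infty},a_{\infty},b_{\infty}\right)\in\text{int}E_{u}$; since $E\subset\left\{ c\geq c_{LLW}^{d,r,a,b}\right\}$ with $c_{LLW}$ continuous, any interior point of $E_{u}$ (a fortiori of $E$) satisfies $c>c_{LLW}$, so \propref{uniqueness_of_the_profiles} applies at $P_{\infty}$. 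Take an arbitrary sequence $P_{n}\to P_{\infty}$ in $\text{int}E_{u}$ and write $\left(\Phi_{n},\Psi_{n}\right)=\left(\Phi^{P_{n}},\Psi^{P_{n}}\right)$; the goal is $\left(\Phi_{n},\Psi_{n}\right)\to\left(\Phi^{P_{\infty}},\Psi^{P_{\infty}}\right)$ in $\mathscr{C}_{b}$.

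First I would extract a limit. Each $\left(\Phi_{n},\Psi_{n}\right)$ solves the second-order ODE system obtained from (\ref{eq:competition_diffusion_system}) at parameters $P_{n}$, with coefficients that are bounded (being convergent) and right-hand sides bounded because the unknowns lie in $\left[0,1\right]^{2}$; hence standard interior regularity gives uniform $\mathscr{C}^{2,\alpha}_{loc}\left(\mathbb{R}\right)$ bounds, and a diagonal extraction produces a subsequence converging in $\mathscr{C}^{2}_{loc}\left(\mathbb{R}\right)$ to some pair $\left(\Phi_{\infty},\Psi_{\infty}\right)$ solving the travelling-wave ODE at speed $c_{\infty}$. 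Passing to the limit in $\Psi_{n}\left(0\right)=\tfrac{1}{2}$ gives $\Psi_{\infty}\left(0\right)=\tfrac{1}{2}$, and passing to the limit in the component-wise monotonicity of \propref{monotonicity_of_the_profiles} gives that $\Phi_{\infty}$ is nonincreasing and $\Psi_{\infty}$ nondecreasing. Therefore $\left(\Phi_{\infty},\Psi_{\infty}\right)\left(\pm\infty\right)$ exist and are zeroes of the reaction $F$, i.e. belong to $\left\{ \left(0,0\right),\left(1,0\right),\left(0,1\right)\right\}$ (there is no positive coexistence equilibrium when $a<1<b$); together with $\Psi_{\infty}\left(0\right)=\tfrac{1}{2}$ and monotonicity this forces $\Psi_{\infty}\left(+\infty\right)=1$, $\Psi_{\infty}\left(-\infty\right)=0$, hence $\Phi_{\infty}\left(+\infty\right)=0$ and $\Phi_{\infty}\left(-\infty\right)\in\left\{ 0,1\right\}$.

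The one genuine obstacle is to discard the degenerate alternative $\Phi_{\infty}\equiv0$, in which $\Psi_{\infty}$ would be a mere scalar KPP front and the identification with $\left(\Phi^{P_{\infty}},\Psi^{P_{\infty}}\right)$ would fail. I would rule it out with a \emph{uniform nondegeneracy estimate}: there are a neighbourhood $\mathscr{V}$ of $P_{\infty}$ and constants $\delta_{0},\delta_{0}',\ell_{0}>0$ such that, for all $P\in\mathscr{V}$, denoting by $\xi_{P}$ the unique point with $\Psi^{P}\left(\xi_{P}\right)=\delta_{0}$, one has $\Phi^{P}\left(\xi_{P}-\ell_{0}\right)\geq\delta_{0}'$. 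This is obtained by a scalar comparison on the half-line $\left\{ \Psi^{P}\leq\delta_{0}\right\} =\left(-\infty,\xi_{P}\right]$ (a half-line by strict monotonicity of $\Psi^{P}$): there $-\left(\Phi^{P}\right)''-c\left(\Phi^{P}\right)'\geq\Phi^{P}\left(1-a\delta_{0}-\Phi^{P}\right)$, so, choosing $\delta_{0}$ small, $\Phi^{P}$ is a super-solution on $\left(-\infty,\xi_{P}\right]$ of $-w''-cw'=w\left(\tfrac{1-a}{2}-w\right)$; since $c\geq c_{LLW}\geq2\sqrt{1-a}\geq2\sqrt{\tfrac{1-a}{2}}$, the decreasing front of that scalar equation exists and can be translated so as to lie below $\Phi^{P}$ at $-\infty$ and at $\xi_{P}$, whence the comparison principle yields a lower bound for $\Phi^{P}$ a bounded distance to the left of $\xi_{P}$, with $\delta_{0}',\ell_{0}$ depending continuously — hence boundedly — on $P\in\mathscr{V}$. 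Granted this, if a subsequential limit had $\Phi_{\infty}\equiv0$, then $\Psi_{\infty}$ would be a nontrivial front (nonconstant because $\Psi_{\infty}\left(0\right)=\tfrac{1}{2}$), so $\xi_{P_{n}}\to\xi_{P_{\infty}}\in\mathbb{R}$ and, by local uniform convergence, $0=\Phi_{\infty}\left(\xi_{P_{\infty}}-\ell_{0}\right)=\lim_{n}\Phi_{n}\left(\xi_{P_{n}}-\ell_{0}\right)\geq\delta_{0}'>0$, a contradiction. Hence $\left(\Phi_{\infty},\Psi_{\infty}\right)$ genuinely connects $\left(1,0\right)$ to $\left(0,1\right)$, and being a profile at speed $c_{\infty}$ with $P_{\infty}\in E_{u}$ and the normalization $\Psi_{\infty}\left(0\right)=\tfrac{1}{2}$, it equals $\left(\Phi^{P_{\infty}},\Psi^{P_{\infty}}\right)$ by \propref{uniqueness_of_the_profiles}, the normalization killing the translation freedom.

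Since every subsequential $\mathscr{C}^{2}_{loc}$ limit is $\left(\Phi^{P_{\infty}},\Psi^{P_{\infty}}\right)$, the whole sequence $\left(\Phi_{n},\Psi_{n}\right)$ converges to it in $\mathscr{C}^{2}_{loc}\left(\mathbb{R}\right)$. Finally I would upgrade this to uniform convergence using only monotonicity and the limits at $\pm\infty$: given $\varepsilon>0$, pick $R>0$ with $\Psi^{P_{\infty}}\left(-R\right)<\tfrac{\varepsilon}{3}$, $\Psi^{P_{\infty}}\left(R\right)>1-\tfrac{\varepsilon}{3}$, $\Phi^{P_{\infty}}\left(-R\right)>1-\tfrac{\varepsilon}{3}$, $\Phi^{P_{\infty}}\left(R\right)<\tfrac{\varepsilon}{3}$; for $n$ large, local uniform convergence on $\left[-R,R\right]$ makes the difference $<\tfrac{\varepsilon}{3}$ there, and then monotonicity of $\Phi_{n},\Psi_{n}$ and of $\Phi^{P_{\infty}},\Psi^{P_{\infty}}$, squeezed between their values at $\pm R$ and the constants $0$ and $1$, controls the difference on the half-lines $\left\{ \xi<-R\right\}$ and $\left\{ \xi>R\right\}$ by $\varepsilon$ as well. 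Hence $\left\Vert \left(\Phi_{n},\Psi_{n}\right)-\left(\Phi^{P_{\infty}},\Psi^{P_{\infty}}\right)\right\Vert _{\mathscr{C}_{b}}\to0$, which is exactly the asserted continuity. I expect the uniform nondegeneracy estimate of the third paragraph — keeping the $\varphi$-component from collapsing to $0$ uniformly in the parameters — to be the main point; the compactness, the identification through uniqueness, and the monotone upgrade to uniform convergence are all soft.
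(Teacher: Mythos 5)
Your overall skeleton — interior parabolic/elliptic estimates and diagonal extraction, monotonicity of the limit, identification of its limits at $\pm\infty$ among the equilibria, exclusion of the degenerate alternative $\Phi_{\infty}\equiv0$, identification via \propref{uniqueness_of_the_profiles} using the normalization, and the monotone upgrade from locally uniform to uniform convergence (your $\varepsilon/3$ argument is the paper's "variant of the Dini theorem") — is the same as the paper's. The gap is precisely in the step you single out as the main point: the \emph{uniform} nondegeneracy estimate is not established by the comparison you describe. To slide the decreasing front of $-w''-cw'=w\left(\tfrac{1-a}{2}-w\right)$ below $\Phi^{P}$ on $\left(-\infty,\xi_{P}\right]$ you must in particular place it below $\Phi^{P}$ at the right endpoint $\xi_{P}$; since that front is everywhere positive, the admissible translations $\tau$ are exactly those with $\underline{\chi_{c}}\left(\xi_{P}-\tau\right)\leq\Phi^{P}\left(\xi_{P}\right)$, and the comparison then yields only $\Phi^{P}\left(\xi_{P}-\ell\right)\geq\underline{\chi_{c}}\left(\underline{\chi_{c}}^{-1}\left(\Phi^{P}\left(\xi_{P}\right)\right)-\ell\right)$, i.e. roughly a multiplicative gain $e^{\lambda\left(c\right)\ell}\,\Phi^{P}\left(\xi_{P}\right)$, not an absolute lower bound. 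Reaching a fixed $\delta_{0}'$ requires $\ell\sim\lambda\left(c\right)^{-1}\ln\left(1/\Phi^{P}\left(\xi_{P}\right)\right)$, and there is no a priori uniform positive lower bound on $\Phi^{P}\left(\xi_{P}\right)$ over a parameter neighbourhood: such a bound is essentially equivalent to the continuity you are trying to prove, so the clause "$\delta_{0}',\ell_{0}$ depending continuously --- hence boundedly --- on $P$" is circular. The dangerous scenario (the $u$-transition escaping to $-\infty$ relative to the normalization of $\Psi$, i.e. $\Phi^{P_{n}}\left(\xi_{P_{n}}\right)\to0$) is exactly the regime in which your comparison produces no contradiction.

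The repair is the paper's much shorter observation, made directly at the limit rather than uniformly in $P$: if $\Phi_{\infty}\equiv0$, then $\Psi_{\infty}$ solves $-d\Psi_{\infty}''-c\Psi_{\infty}'=r\Psi_{\infty}\left(1-\Psi_{\infty}\right)$ with $\Psi_{\infty}\left(-\infty\right)=0$ and $\Psi_{\infty}\left(+\infty\right)=1$, so $\xi\mapsto\Psi_{\infty}\left(-\xi\right)$ is a monotone KPP front of speed $-c<0<2\sqrt{rd}$, which is impossible. Substituting this for your third paragraph makes the argument complete; the remaining parts of your proposal (compactness, identification by uniqueness, and the monotone upgrade to $\mathscr{C}_{b}$ convergence, plus the standard subsequence argument to remove the extraction) agree with the paper and are correct.
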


\begin{proof}
    Let $P_{\infty}\in\overline{\interior{E_u}}$ and $\left(P_{n}\right)_{n\in\mathbb{N}}\in\left(\interior{E_u}\right)^{\mathbb{N}}$
such that $\lim\limits _{n\to+\infty}P_{n}=P_{\infty}$. By standard
elliptic estimates (see Gilbarg\textendash Trudinger \cite{Gilbarg_Trudin}),
the sequence $\left(\left(\Phi^{P_{n}},\Psi^{P_{n}}\right)\right)_{n\in\mathbb{N}}$
converges, up to a diagonal extraction, in $\mathscr{C}_{loc}^{2}$.
\textit{A fortiori} it converges pointwise in $\mathbb{R}$. The limit
$\left(\Phi_{\infty},\Psi_{\infty}\right)$ is continuous, monotonic,
and satisfies $\Psi_{\infty}\left(0\right)=\frac{1}{2}$. Using standard
elliptic estimates to study the asymptotic behaviors, we find easily
\[
\lim_{-\infty}\left(\Phi_{\infty},\Psi_{\infty}\right)\in\left\{ \left(1,0\right),\left(0,0\right)\right\} \text{ and }\lim_{+\infty}\left(\Phi_{\infty},\Psi_{\infty}\right)=\left(0,1\right).
\]
 If $\Phi_{\infty}$ is null in $\mathbb{R}$, then $\xi\mapsto\Psi_{\infty}\left(-\xi\right)$
is a KPP traveling wave with negative speed, which is impossible.
Therefore the limit at $-\infty$ of $\left(\Phi_{\infty},\Psi_{\infty}\right)$
is $\left(1,0\right)$. This shows that the sequence of monotonic
functions $\left(\left(\Phi^{P_{n}},\Psi^{P_{n}}\right)\right)_{n\in\mathbb{N}}$
converges pointwise in $\left[-\infty,+\infty\right]$, whence by
a variant of the Dini theorem it converges uniformly in $\mathbb{R}$.
In view of the preceding uniqueness result, the limit is exactly $\left(\Phi^{P_{\infty}},\Psi^{P_{\infty}}\right)$.
Finally, a classical uniqueness and compactness argument shows that
the previous diagonal extraction was not necessary and the sequence
$\left(\left(\Phi^{P_{n}},\Psi^{P_{n}}\right)\right)_{n\in\mathbb{N}}$
converges indeed in $\mathscr{C}_{b}\left(\mathbb{R},\mathbb{R}^{2}\right)$
to $\left(\Phi^{P_{\infty}},\Psi^{P_{\infty}}\right)$.
\end{proof}

\bibliographystyle{plain}
\bibliography{ref}

\begin{thebibliography}{10}

\bibitem{Alhasanat_Ou}
Ahmad Alhasanat and Chunhua Ou.
\newblock Minimal-speed selection of traveling waves to the
  {L}otka{--}{V}olterra competition model.
\newblock {\em Journal of Differential Equations}, 266(11):7357--7378, 2019.

\bibitem{Aronson_Weinbe}
Donald~G. Aronson and Hans~F. Weinberger.
\newblock Nonlinear diffusion in population genetics, combustion, and nerve
  pulse propagation.
\newblock In {\em Partial differential equations and related topics}, pages
  5--49. Springer, 1975.

\bibitem{Bramson_1983}
Maury Bramson.
\newblock Convergence of solutions of the {K}olmogorov equation to travelling
  waves.
\newblock {\em Mem. Amer. Math. Soc.}, 44(285):--190, 1983.

\bibitem{Carrere_2017}
C{\'{e}}cile Carr{\`{e}}re.
\newblock Spreading speeds for a two-species competition-diffusion system.
\newblock {\em J. Differential Equations}, 264(3):2133--2156, 2018.

\bibitem{Du_Lin_2010}
Yihong Du and Zhigui Lin.
\newblock Spreading-vanishing dichotomy in the diffusive logistic model with a
  free boundary.
\newblock {\em SIAM Journal on Mathematical Analysis}, 42(1):377--405, 2010.

\bibitem{Du_Lin_2010_er}
Yihong Du and Zhigui Lin.
\newblock Erratum: Spreading-vanishing dichotomy in the diffusive logistic
  model with a free boundary.
\newblock {\em SIAM Journal on Mathematical Analysis}, 45(3):1995--1996, 2013.

\bibitem{Du_Wang_Zhou_2017}
Yihong Du, Mingxin Wang, and Maolin Zhou.
\newblock Semi-wave and spreading speed for the diffusive competition model
  with a free boundary.
\newblock {\em J. Math. Pures Appl. (9)}, 107(3):253--287, 2017.

\bibitem{Du_Wu_2018}
Yihong Du and Chang-Hong Wu.
\newblock Spreading with two speeds and mass segregation in a diffusive
  competition system with free boundaries.
\newblock {\em Calc. Var. Partial Differential Equations}, 57(2):--52, 2018.

\bibitem{Ducrot_Giletti_Matano_2}
Arnaud Ducrot, Thomas Giletti, and Hiroshi Matano.
\newblock Spreading speeds for multidimensional reaction-diffusion systems of
  the prey-predator type.
\newblock {\em personal communication}.

\bibitem{Ducrot_Giletti_Matano}
Arnaud Ducrot, Thomas Giletti, and Hiroshi Matano.
\newblock Existence and convergence to a propagating terrace in one-dimensional
  reaction-diffusion equations.
\newblock {\em Trans. Amer. Math. Soc.}, 366(10):5541--5566, 2014.

\bibitem{Faye_Holzer_20}
Gr{\'{e}}gory Faye and Matt Holzer.
\newblock Bifurcation to locked fronts in two component reaction-diffusion
  systems.
\newblock {\em ArXiv e-prints}, apr 2017.

\bibitem{Fife_McLeod_19}
Paul~C. Fife and J.~B. McLeod.
\newblock The approach of solutions of nonlinear diffusion equations to
  travelling front solutions.
\newblock {\em Archive for Rational Mechanics and Analysis}, 65(4):335{--}361,
  1977.

\bibitem{Fisher_1937}
Ronald~Aylmer Fisher.
\newblock The wave of advance of advantageous genes.
\newblock {\em Annals of eugenics}, 7(4):355--369, 1937.

\bibitem{Gilbarg_Trudin}
David Gilbarg and Neil~S. Trudinger.
\newblock {\em Elliptic Partial Differential Equations of Second Order}.
\newblock Classics in Mathematics. Springer-Verlag, 2001.

\bibitem{Girardin_2017}
L\'{e}o Girardin.
\newblock Non-cooperative {Fisher{--}KPP} systems: Asymptotic behavior of
  traveling waves.
\newblock {\em Mathematical Models and Methods in Applied Sciences},
  28(06):1067--1104, 2018.

\bibitem{Guo_Wu_2015}
Jong-Shenq Guo and Chang-Hong Wu.
\newblock Dynamics for a two-species competition-diffusion model with two free
  boundaries.
\newblock {\em Nonlinearity}, 28(1):1--27, 2015.

\bibitem{Hamel_Nadin_2012}
Fran{\c{c}}ois Hamel and Gr{\'{e}}goire Nadin.
\newblock Spreading properties and complex dynamics for monostable
  reaction{--}diffusion equations.
\newblock {\em Communications in Partial Differential Equations}, 37:511--537,
  2012.

\bibitem{Holzer_Scheel}
Matt Holzer and Arnd Scheel.
\newblock Accelerated fronts in a two-stage invasion process.
\newblock {\em SIAM J. Math. Anal.}, 46(1):397--427, 2014.

\bibitem{Hosono_1989}
Yuzo Hosono.
\newblock Singular perturbation analysis of travelling waves for diffusive
  {Lotka--Volterra} competition models.
\newblock In {\em Numerical and applied mathematics, Part II (Paris, 1988)},
  volume~1, chapter IMACS Ann. Comput. Appl. Math., pages 687--692. Baltzer,
  Basel, 1989.

\bibitem{Huang_2010}
Wenzhang Huang.
\newblock Problem on minimum wave speed for a {Lotka--Volterra}
  reaction-diffusion competition model.
\newblock {\em J. Dynam. Differential Equations}, 22(2):285--297, 2010.

\bibitem{Huang_Han_2011}
Wenzhang Huang and Maoan Han.
\newblock Non-linear determinacy of minimum wave speed for a {Lotka--Volterra}
  competition model.
\newblock {\em J. Differential Equations}, 251(6):1549--1561, 2011.

\bibitem{Iida_Lui_Ninomiya}
Masato Iida, Roger Lui, and Hirokazu Ninomiya.
\newblock Stacked fronts for cooperative systems with equal diffusion
  coefficients.
\newblock {\em SIAM J. Math. Anal.}, 43(3):1369--1389, 2011.

\bibitem{Kan_On_1997}
Yukio Kan-on.
\newblock Fisher wave fronts for the {Lotka--Volterra} competition model with
  diffusion.
\newblock {\em Nonlinear Anal.}, 28(1):145--164, 1997.

\bibitem{KPP_1937}
Andrei~N. Kolmogorov, I.~G. Petrovsky, and N.~S. Piskunov.
\newblock {\'Etude} de l'\'equation de la diffusion avec croissance de la
  quantit\'e de mati\`ere et son application \`a un probl\`eme biologique.
\newblock {\em Bulletin Universit\'e d'\'Etat {\`{a}} Moscou}, 1:1--25, 1937.

\bibitem{Lewis_Weinberg}
Mark~A. Lewis, Bingtuan Li, and Hans~F. Weinberger.
\newblock Spreading speed and linear determinacy for two-species competition
  models.
\newblock {\em J. Math. Biol.}, 45(3):219--233, 2002.

\bibitem{Li_2018}
Bingtuan Li.
\newblock Multiple invasion speeds in a two-species integro-difference
  competition model.
\newblock {\em J. Math. Biol.}, 76(7):1975--2009, 2018.

\bibitem{Li_Weinberger_}
Bingtuan Li, Hans~F. Weinberger, and Mark~A. Lewis.
\newblock Spreading speeds as slowest wave speeds for cooperative systems.
\newblock {\em Math. Biosci.}, 196(1):82--98, 2005.

\bibitem{Lieberman_2005}
Gary~M. Lieberman.
\newblock {\em Second order parabolic differential equations}.
\newblock World Scientific Publishing Co., Inc., River Edge, NJ, 1996.

\bibitem{Lin_Li_2012}
Guo Lin and Wan-Tong Li.
\newblock Asymptotic spreading of competition diffusion systems: the role of
  interspecific competitions.
\newblock {\em European J. Appl. Math.}, 23(6):669--689, 2012.

\bibitem{Morita_Tachibana}
Yoshihisa Morita and Koichi Tachibana.
\newblock An entire solution to the {L}otka-{V}olterra competition-diffusion
  equations.
\newblock {\em SIAM J. Math. Anal.}, 40(6):2217--2240, 2009.

\bibitem{Nadin_Perthame_Tang}
Gr{\'{e}}goire Nadin, Beno\^{i}t Perthame, and Min Tang.
\newblock Can a traveling wave connect two unstable states?: The case of the
  nonlocal {F}isher equation.
\newblock {\em C. R. Math. Acad. Sci. Paris}, 349(9-10):553--557, 2011.

\bibitem{Petrovskii_Kawasaki_Takasu_Shigesada}
Sergei Petrovskii, Kohkichi Kawasaki, Fugo Takasu, and Nanako Shigesada.
\newblock Diffusive waves, dynamical stabilization and spatio-temporal chaos in
  a community of three competitive species.
\newblock {\em Japan J. Indust. Appl. Math.}, 18(2):459--481, 2001.
\newblock Recent topics in mathematics moving toward science and engineering.

\bibitem{Polacik_2018}
Peter Pol{\'{a}}{\v{c}}ik.
\newblock Propagating terraces and the dynamics of front-like solutions of
  reaction-diffusion equations on $\mathbb{R}$.
\newblock {\em Mem. Amer. Math. Soc., to appear}, 2016.

\bibitem{Protter_Weinberger}
Murray~H. Protter and Hans~F. Weinberger.
\newblock {\em Maximum Principles in Differential Equations}.
\newblock Springer-Verlag, 1984.

\bibitem{Roques_Hosono}
Lionel Roques, Yuzo Hosono, Olivier Bonnefon, and Thomas Boivin.
\newblock The effect of competition on the neutral intraspecific diversity of
  invasive species.
\newblock {\em J. Math. Biol.}, 71(2):465--489, 2015.

\bibitem{Sattinger_1972}
David~H. Sattinger.
\newblock Monotone methods in nonlinear elliptic and parabolic boundary value
  problems.
\newblock {\em Indiana Univ. Math. J.}, 21:979--1000, 1971/72.

\bibitem{Sherratt_1998}
Jonathan~A. Sherratt.
\newblock Invading wave fronts and their oscillatory wakes are linked by a
  modulated travelling phase resetting wave.
\newblock {\em Phys. D}, 117(1-4):145--166, 1998.

\bibitem{Kawasaki_Shige}
Nanako Shigesada and Kohkichi Kawasaki.
\newblock {\em Biological invasions: theory and practice}.
\newblock Oxford University Press, UK, 1997.

\bibitem{Venegas_Ortiz_}
Juan Venegas-Ortiz, Rosalind~J. Allen, and Martin~R. Evans.
\newblock Speed of invasion of an expanding population by a horizontally
  transmitted trait.
\newblock {\em Genetics}, 196(2):497--507, 2014.

\end{thebibliography}

\end{document}